\theoremstyle{plain}
\newtheorem{theorem}[equation]{Theorem}
\newtheorem{proposition}[equation]{Proposition}
\newtheorem{lemma}[equation]{Lemma}
\newtheorem{corollary}[equation]{Corollary}
\newtheorem{conjecture}[equation]{Conjecture}
\theoremstyle{remark}
\newtheorem{remark}[equation]{Remark}
\newtheorem{definition}[equation]{Definition}
\numberwithin{equation}{section}
\newcommand{\dee}{\partial}
\newcommand{\w}{\wedge}
\newenvironment{enum}{  
\begin{enumerate}[\upshape(\arabic{section}.\arabic{equation}a)] }  
{  \end{enumerate}   }
\newcommand{\itemref}[2] {{\upshape(\ref{#1}\ref{#2})}}
\DeclareMathOperator{\Span}{Span}
\renewcommand{\hat}{\widehat}
\renewcommand{\bar}{\overline}
\newcommand{\chih}{\hat\chi}
\newcommand{\wt}{\widetilde}
\newcommand\reallywidehat[1]{%
\savestack{\tmpbox}{\stretchto{%
  \scaleto{%
    \scalerel*[\widthof{\ensuremath{#1}}]{\kern.1pt\mathchar"0362\kern.1pt}%
    {\rule{0ex}{\textheight}}%WIDTH-LIMITED CIRCUMFLEX
  }{\textheight}% 
}{2.4ex}}%
\stackon[-6.9pt]{#1}{\tmpbox}%
}
\newcommand{\im}{\text{Im}}
\DeclareMathOperator{\re}{Re}
\def\norm#1{\left\Vert#1\right\Vert}
\def\normm#1{\Vert#1\Vert}
\def\Gammaf#1{\Gamma\left(#1\right)}
\newcommand{\cf}{{\mathcal F}}
\newcommand{\ci}{{\mathcal I}}
\newcommand{\cj}{{\mathcal J}}
\newcommand{\co}{{\mathcal O}}
\newcommand{\cs}{{\mathcal S}}
\newcommand{\sB}{{\mathscr B}}
\newcommand{\sF}{{\mathscr F}}
\newcommand{\C}{{\mathbb C}}
\newcommand{\p}{{\mathbb P}}
\newcommand{\R}{{\mathbb R}}
\newcommand{\Z}{{\mathbb Z}}
\begin{document}

\title[Leray Transform]{High frequency behavior of the Leray transform: model hypersurfaces and projective duality }
\author{David E. Barrett \& Luke D. Edholm}
%\subjclass[2010]{32W05}
\begin{abstract}
The Leray transform $\bm{L}$ is studied on a family $M_\gamma$ of unbounded hypersurfaces in two complex dimensions.  For a large class of measures, we obtain necessary and sufficient conditions for the $L^2$-boundedness of $\bm{L}$, along with an exact spectral description of $\bm{L}^*\bm{L}$.  This yields both the norm and high-frequency norm of $\bm{L}$, the latter affirming an unbounded analogue of an open conjecture relating the essential norm of $\bm{L}$ to a projective invariant on a bounded hypersurface.  $\bm{L}$ is also shown to play a central role in bridging the function theoretic and projective geometric notions of duality.  Our work leads to the construction of projectively invariant Hardy spaces on the $M_\gamma$, along with the realization of their duals as invariant Hardy spaces on the dual hypersurfaces.
\end{abstract}
%\date{\today}
\thanks{The first author was supported in part by NSF grant number DMS-1500142}
\thanks{The second author was supported in part by Austrian Science Fund (FWF): AI0455721}
\thanks{{\em 2010 Mathematics Subject Classification:} 32A26 (Primary); 32F17; 32A25; 32V05; 42A38 (Secondary)}
\address{Department of Mathematics\\University of Michigan, Ann Arbor, MI, USA}
\address{Department of Mathematics\\Universit\"at Wien, Vienna, Austria}
\email{barrett@umich.edu}
\email{luke.david.edholm@univie.ac.at}

\maketitle

%%%%%%%%%%%%%%%%%%%%%%%%%%%%%%%% INTRODUCTION %%%%%%%%%%%%%%%%%%%%%%%%%%%%%%%%%%%%%%%%%%%%
\section{Introduction}\label{S:Introduction}

This article continues a series aimed at further developing the theory of the Leray transform from a projective dual point of view.  Much of our focus here will be on the following family of real hypersurfaces.  For $ \gamma \ge 1$, define
\begin{equation}\label{D:M_gamma}
M_{\gamma} := \left\{ (\zeta_1,\zeta_2)\in \C^2\colon \im(\zeta_2) = \left|\zeta_1\right|^\gamma\right\},
\end{equation}
together with the unbounded domain lying on its $\C$-convex side
\begin{equation}\label{D:Omega_gamma}
\Omega_{\gamma} := \left\{ (z_1,z_2)\in \C^2\colon \im(z_2) > \left|z_1\right|^\gamma\right\}.
\end{equation}
\indent The Leray transform is a higher-dimensional analogue of the Cauchy transform of a planar domain; it acts by taking in data given on a real hypersurface to construct holomorphic functions on the interior of the hypersurface.  (See Section \ref{SS:LerayTransform} for precise definitions.)  As is typical in multi-dimensional complex analysis constuctions, convexity conditions play a crucial role.  In particular, the Leray transform $\bm{L}_{\cs}:= \bm{L}$ is a well-defined integral operator on any smoothly bounded $\C$-convex hypersurface $\cs$ in $\C^n$.  When $\cs$ is an unbounded hypersurface (such as $M_\gamma$), additional care must be taken.  As in the case of the Cauchy transform, knowledge of both quantitative and qualitative information related to $\bm{L}$ provides insight into holomorphic function spaces associated to $\cs$.\\
\indent A great deal of additional information can be obtained by viewing the Leray transform through the lens of projective duality. The projective dual of a hypersurface $\cs \subset \C\p^n$, denoted $\cs^*$, is the set of complex hyperplanes tangent to $\cs$.  In \cite{Bar16}, the first author demonstrates that the efficiency of a natural pairing of two dual Hardy spaces associated respectively to $\cs$ and $\cs^*$ is measured by the $L^2$-norm of $\bm{L}_\cs$.    The Cauchy transform plays an analogous role in the pairing of interior and exterior Hardy spaces associated to a planar curve; see \cite{Meyer82}.  New aspects of this theory are set forth in Section \ref{S:Duality}, where they are carefully illustrated in the setting of $M_\gamma$.\\
\indent Each $M_\gamma$ is homogeneous with respect to certain projective automorphisms (see Section \ref{SS:Symmetries}) and $\bm{L}$ admits a transformation law with respect to such maps; see \eqref{E:LerayProjTrans}.  It thus makes sense to pay special attention to $L^2$-spaces on $M_\gamma$ which are built from measures satisfying desirable transformation laws.  Our analysis begins by considering the measure $\sigma := \alpha^{\gamma-1} d\alpha \wedge d\theta \wedge ds$, with $\alpha=|\zeta_1|,\theta=\arg(\zeta_1), s=\re(\zeta_2)$.  This is a constant multiple of the Leray-Levi measure corresponding to the most natural choice of defining function for $M_\gamma$; see Section \ref{SS:LerayTransform}.  

We now highlight several results established in this paper.  Continuing the theme of \cite{BarEdh17}, the exact $L^2(M_\gamma,\sigma)$-norm of the Leray transform is obtained:

\begin{theorem}\label{T:Norm}
Let $\bm{L}$ be the Leray transform of $M_\gamma$.  Then $\bm{L}\colon L^2(M_\gamma,\sigma) \to L^2(M_\gamma,\sigma)$ is a bounded projection operator with norm
\begin{equation*}\label{E:Norm}
\norm{\bm{L}}_{L^2(M_\gamma,\sigma)} = \frac{\gamma}{2\sqrt{\gamma-1}}.
\end{equation*}
\end{theorem}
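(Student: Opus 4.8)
The plan is to exploit the large symmetry group of $M_\gamma$ to diagonalize $\bm{L}^*\bm{L}$ explicitly, and then read off the norm as the square root of the top of its spectrum. First I would set up coordinates adapted to the homogeneity: writing points of $M_\gamma$ as $(\alpha e^{i\theta}, s + i\alpha^\gamma)$ with $\alpha>0$, $\theta\in[0,2\pi)$, $s\in\R$, the measure $\sigma = \alpha^{\gamma-1}\,d\alpha\wedge d\theta\wedge ds$ is (a constant times) the Leray–Levi measure, and the projective automorphisms of $M_\gamma$ act by translations in $s$, rotations in $\theta$, and the scaling $(\alpha,\theta,s)\mapsto(\lambda\alpha,\theta,\lambda^\gamma s)$. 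The Leray transform intertwines these actions via the transformation law \eqref{E:LerayProjTrans}. The strategy is therefore to take a Fourier/Mellin transform in the variables on which the symmetry group acts: a Fourier series in $\theta$ (parameter $k\in\Z$), a Fourier transform in $s$ (parameter $\tau\in\R$), and—after a logarithmic change of variable $\alpha = e^t$—a Fourier (Mellin) transform in $t$. Under these, the integral kernel of $\bm{L}$ should become a multiplication operator, or at worst a finite-rank/block operator, on the transform side.

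The key steps, in order, are: (1) write down the Leray kernel of $M_\gamma$ explicitly from the defining function $\im(\zeta_2)-|\zeta_1|^\gamma$, using the formulas in Section \ref{SS:LerayTransform}; (2) compute $\bm{L}^*\bm{L}$ as an integral operator on $L^2(M_\gamma,\sigma)$ and check, using the boundedness already asserted (or rather, proved as part of this analysis), that it is well-defined; (3) decompose $L^2(M_\gamma,\sigma)$ via the joint spectral decomposition of the symmetry group into pieces indexed by $(k,\tau)$, and on each piece reduce $\bm{L}^*\bm{L}$ to an operator on $L^2(\R_+,\alpha^{\gamma-1}d\alpha)$ that is invariant under the scaling action, hence diagonalized by the Mellin transform; (4) evaluate the resulting Mellin symbol — a ratio of Gamma-function-type integrals (Beta integrals), coming from integrating powers of $\alpha$ against the kernel — and identify its supremum over all the parameters; (5) take the square root, simplifying to $\gamma/(2\sqrt{\gamma-1})$. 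The projection statement follows separately from the reproducing property of the Leray kernel on holomorphic functions together with the identity $\bm{L}^2=\bm{L}$, which is essentially formal once the kernel is pinned down.

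I expect the main obstacle to be step (4): carrying out the Mellin-side computation of the symbol of $\bm{L}^*\bm{L}$ and then maximizing it. The integrals producing the symbol will be Beta integrals whose arguments involve $\gamma$, $k$, and the Mellin dual variable; the supremum is presumably attained in a limit (e.g. $k=0$ and the Mellin variable tending to a boundary value, corresponding to "high frequency" behavior), and one must justify that the supremum is not attained, compute the limiting value, and confirm no interior critical point beats it. A secondary technical point is handling the unboundedness of $M_\gamma$: the Fourier transform in $s$ and the Mellin transform in $\alpha$ must be justified as unitary isomorphisms on the relevant $L^2$ spaces, and the interchange of the Leray integral with these transforms requires either an a priori bound or a limiting argument on compactly supported data. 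Once the symbol is in hand, the norm computation and the verification that $\gamma>1$ is exactly the condition for finiteness are immediate, and the case $\gamma=2$ (the Heisenberg group / sphere model) can be checked against known results as a sanity check.
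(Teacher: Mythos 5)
Your overall skeleton --- reduce by the $S^1$ action in $\theta$ and the translation action in $s$, compute a Gamma-function symbol on each piece, and maximize over the parameters --- is indeed the route the paper takes (Sections \ref{SS:SeriesExpn}--\ref{SS:norm-of-subLeray} and \ref{SS:proof-of-norm-formula}), but the step you lean on to finish the diagonalization fails as stated. Because $t_s\circ\delta_a\neq \delta_a\circ t_s$, fixing the Fourier variable $\xi$ dual to $s$ destroys the dilation symmetry: $\delta_a$ moves $\xi$ to $a^{-\gamma}\xi$, so the reduced operator on $L^2\big((0,\infty),\alpha^{\gamma-1}d\alpha\big)$ at fixed $(k,\xi)$ is \emph{not} invariant under scalings in $\alpha$ (its building blocks $\tau_k(\xi,\alpha)=\alpha^k e^{2\pi\xi\alpha^\gamma}$ and $\kappa_k(\xi,\alpha)=\alpha^{k(\gamma-1)}e^{2\pi\xi(\gamma-1)\alpha^\gamma}$ are not homogeneous in $\alpha$), and the Mellin transform does not turn it into a multiplication operator. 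What actually makes the computation work is simpler and is missing from your plan: after the Fourier transforms in $\theta$ and $s$ alone, each block is the \emph{rank-one} operator $f_k(\xi,\cdot)\mapsto \eta_k(\xi)\,\tau_k(\xi,\cdot)\,\big\langle f_k(\xi,\cdot),\kappa_k(\xi,\cdot)\big\rangle_{(\sigma,1)}$ (Proposition \ref{P:LkMultipliers}), so Cauchy--Schwarz gives its exact norm $|\eta_k(\xi)|\,\|\tau_k(\xi,\cdot)\|_{(\sigma,1)}\,\|\kappa_k(\xi,\cdot)\|_{(\sigma,1)}=\sqrt{C_\sigma(\gamma,k)}$, and this quantity happens to be independent of $\xi$ (Remark \ref{R:XiIndependenceOfSymbolFunction}); the dilation symmetry only enters, implicitly, to explain that $\xi$-independence.

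Your expectations for step (4) are also at odds with what happens, and they conceal the real work. The symbol $C_\sigma(\gamma,k)$ of \eqref{E:first-encounter-sigma-symbol-function} has no transform-variable dependence left to maximize over; the supremum over $k$ is attained at $k=0$, where $C_\sigma(\gamma,0)=\gamma^2/(4(\gamma-1))$, and the norm of $\bm{L}$ is attained on an infinite-dimensional subspace (Remark \ref{R:Leray-norm-achieved-on-infinite-dimensional-space}) --- it is the high-frequency limit $k\to\infty$ that produces the strictly smaller value $\sqrt{\gamma/(2\sqrt{\gamma-1})}$ of Theorem \ref{T:HighFreqNorm}, not the norm. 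Establishing that $k=0$ wins, i.e.\ that $C_\sigma(\gamma,k)$ strictly decreases in $k$ for $\gamma\neq2$, is a genuinely nontrivial special-function argument (Section \ref{SS:ProofOfC(gamma,k)DecreasingWithk}, via trigamma estimates and Proposition \ref{P:Wijit}) that your outline leaves entirely open. Finally, the projection property is not ``essentially formal'' on this unbounded hypersurface: the paper proves $\bm{L}_k\circ\bm{L}_k=\bm{L}_k$ through the explicit identity $\langle\tau_k(\xi,\cdot),\kappa_k(\xi,\cdot)\rangle_{(\sigma,1)}=1/\eta_k(\xi)$ (Theorem \ref{T:L_k-is-a-projection-sigma}), while the holomorphy and reproducing issues your soft argument would need are only settled in the Appendix.
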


We go on to consider a more general class of measures: for $r \in \R$, let $\mu_r = \alpha^r d\alpha \wedge d\theta \wedge ds$.  Then we have

\begin{theorem}\label{T:boundedness-mu_r}
The Leray transform $\bm{L}$ is a bounded projection from $L^2(M_\gamma,\mu_r) \to L^2(M_\gamma,\mu_r)$ if and only if $r \in \ci_0 := (-1,2\gamma-1)$.
\end{theorem}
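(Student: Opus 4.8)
The plan is to reduce everything to a one-parameter family of scalar multiplier operators via the symmetries of $M_\gamma$ and the explicit structure of $\bm{L}$. First I would parametrize $M_\gamma$ by $(\alpha,\theta,s)$ and write down the Leray kernel explicitly, as in the setup for Theorem~\ref{T:Norm}. The rotational symmetry $\zeta_1 \mapsto e^{i\phi}\zeta_1$ and the translation $\zeta_2 \mapsto \zeta_2 + t$ both preserve each measure $\mu_r$, so $\bm{L}$ commutes with the corresponding unitary actions. Decomposing $L^2(M_\gamma,\mu_r)$ along the joint spectrum of these two abelian group actions --- i.e.\ taking a Fourier series in $\theta$ and a Fourier transform in $s$ --- block-diagonalizes $\bm{L}$. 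On each joint eigenspace, indexed by $(n,\tau)$ with $n\in\Z$ and $\tau\in\R$ (or rather $\tau>0$, since the kernel is supported where the relevant frequency is positive), $\bm{L}$ acts as an integral operator in the single variable $\alpha$ against a kernel of the form $K_{n,\tau}(\alpha,\alpha')$ on $L^2\big((0,\infty),\alpha^r\,d\alpha\big)$.

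Next I would identify these one-variable operators concretely. After the change of variables $\alpha \mapsto \alpha^\gamma$ (matching the defining function $\im\zeta_2 = |\zeta_1|^\gamma$) and a Mellin transform in the radial variable, each block should diagonalize further: the Mellin transform turns the $\alpha$-integral operator into multiplication by an explicit function $m_{n}(\xi)$ built from ratios of Gamma functions, exactly as in \cite{BarEdh17}. The point is that $\bm{L}$ is bounded on $L^2(M_\gamma,\mu_r)$ if and only if $\sup_{n,\xi}|m_n(\xi)| < \infty$, where the relevant Mellin line --- the line of integration $\re(\cdot) = c(r)$ --- is determined by $r$ through the weight $\alpha^r\,d\alpha$: Plancherel for the Mellin transform on $L^2\big((0,\infty),\alpha^r\,d\alpha\big)$ forces the critical line to pass through the point corresponding to $\frac{r+1}{2}$ (up to the factor $\gamma$ coming from the substitution). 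One then computes where the Gamma-function multipliers $m_n(\xi)$ have poles: boundedness fails precisely when the Mellin line of integration hits a pole of some $m_n$, and the first poles encountered as $r$ varies give the endpoints $r=-1$ and $r=2\gamma-1$ of the interval $\ci_0$. For $r$ strictly inside $\ci_0$ one checks the multipliers are uniformly bounded (the worst case being $n=0$ or the two extreme values of $n$, and one verifies $\sup|m_n|$ is finite and in fact decreasing in $|n|$), giving boundedness; for $r$ at or outside the endpoints the $n=0$ (or appropriate) multiplier blows up, giving unboundedness.

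Finally, the projection assertion: on each block, once $m_n(\xi)$ is known explicitly one checks directly that $m_n(\xi)^2 = m_n(\xi)$, i.e.\ the multiplier is an indicator-type function taking values in $\{0,1\}$ (the Mellin-side image of a half-line), so $\bm{L}^2 = \bm{L}$; this is the operator-level shadow of the reproducing property of the Leray kernel for holomorphic functions, and it already appears implicitly in the proof of Theorem~\ref{T:Norm}. Since the idempotence is an algebraic identity of the multiplier valid for all $r$ in the bounded range, $\bm{L}$ is a (generally non-orthogonal) bounded projection throughout $\ci_0$.

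The main obstacle I expect is the endpoint analysis: showing that boundedness genuinely \emph{fails} at $r=-1$ and $r=2\gamma-1$ (and beyond) requires pinning down the exact location of the poles of the Gamma-quotient multipliers $m_n(\xi)$ relative to the critical Mellin line and ruling out a cancellation between numerator and denominator that would remove the pole --- a bookkeeping task with the $\Gamma$-factors that is delicate because $\gamma$ is a continuous parameter, not an integer. The interior uniform bound is comparatively routine once the multipliers are in hand, using Stirling asymptotics in $\xi$ and monotonicity in $n$, but care is needed to see that the supremum over the two parameters $n$ and $\xi$ is actually attained (or at least finite) and to handle the transition region $|\xi|$ bounded, $|n|$ large.
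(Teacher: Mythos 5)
Your first step --- block-diagonalizing $\bm{L}$ by the $S^1$ action in $\theta$ and the translation action in $s$ --- is exactly how the paper proceeds, but the next structural claim does not hold: after fixing the Fourier parameters $(k,\xi)$, the resulting operator in $\alpha$ is \emph{not} a Mellin multiplier. The only dilation symmetry of $M_\gamma$ is the non-isotropic one $\delta_a(z_1,z_2)=(az_1,a^\gamma z_2)$, which couples $\alpha$ with $s$; once $\xi$ is fixed there is no dilation invariance left in $\alpha$ alone. What actually happens (Proposition \ref{P:LkMultipliers}, carried over to $\mu_r$ in \eqref{E:LkMultipliers2}) is that the Fourier transform in $s$ factors the kernel, so each block is the \emph{rank-one} operator $f\mapsto \eta_k(\xi)\,\tau_k(\xi,\cdot)\,\langle f,\kappa_k(\xi,\cdot)\rangle_{(\sigma,1)}$; a Mellin transform turns a product kernel into another product kernel, never into multiplication by a scalar $m_n(\xi)$, so the criterion ``bounded iff $\sup_{n,\xi}|m_n(\xi)|<\infty$'' and the endpoint analysis via poles crossing a critical Mellin line have no operator to apply to. The correct continuation is a weighted Cauchy--Schwarz argument (Lemma \ref{L:GeneralizedCauchySchwarz}) with the conjugate weight $r'$ determined by $\tfrac{r+r'}{2}=\gamma-1$ (this pairing is forced because the kernel carries the Leray--Levi weight $\alpha^{\gamma-1}$ no matter which $\mu_r$ one measures with): the block norm equals $|\eta_k(\xi)|\,\|\tau_k(\xi,\cdot)\|_{(\mu_r,1)}\,\|\kappa_k(\xi,\cdot)\|_{(\mu_{r'},1)}=\sqrt{C_{\mu_r}(\gamma,k)}$, finite precisely when two explicit Gamma integrals converge; this yields $\ci_k$ (Theorem \ref{T:leray-boundedness-mu_r}), and the full statement follows since $\ci_0\subset\ci_k$ and the symbols $C_{\mu_r}(\gamma,k)$ are uniformly bounded in $k$ by the Stirling computation of Theorem \ref{T:grade-essential-for-F} (note the paper does not, and need not, prove monotonicity in $k$ for general $r$), while $r\notin\ci_0$ forces $\bm{L}_0$, hence $\bm{L}$, to be unbounded --- the divergence is that of $\|\tau_0\|_{(\mu_r,1)}$ for $r\le-1$, and for $r\ge 2\gamma-1$ one exhibits unit-norm truncations of $\kappa_0$ whose images blow up; no pole-cancellation bookkeeping is involved.

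The projection step is the more serious flaw. If each block were multiplication by an idempotent scalar function $m_n\in\{0,1\}$, then $\bm{L}$ would be an orthogonal projection of norm at most $1$; but $\|\bm{L}_0\|_{L^2(M_\gamma,\mu_r)}=\sqrt{C_{\mu_r}(\gamma,0)}$ exceeds $1$ in general (for $r=\gamma-1$ it is $\gamma/(2\sqrt{\gamma-1})>1$ unless $\gamma=2$), so your structural picture is incompatible with the norm you would need to prove in Theorem \ref{T:Norm}. The true mechanism is idempotence of a \emph{skew} rank-one projection: it rests on the scalar identity $\eta_k(\xi)\,\langle\tau_k(\xi,\cdot),\kappa_k(\xi,\cdot)\rangle_{(\sigma,1)}=1$, i.e.\ \eqref{E:TauKappaInt=Eta}, which gives $\bm{L}_k\circ\bm{L}_k=\bm{L}_k$ exactly as in Theorem \ref{T:L_k-is-a-projection-sigma} and Proposition \ref{P:L_k-is-a-projection-mu_r}, with no claim that the block is a multiplier. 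So the proposal needs to be rebuilt around the rank-one structure of the Fourier blocks rather than a Mellin diagonalization.
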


Theorem \ref{T:Norm} is proved in Section \ref{SS:proof-of-norm-formula}, Theorem \ref{T:boundedness-mu_r} in Section \ref{SS:Leray-boundedness-m_r}.  The analysis involved in proving these results relies heavily on the symmetries of $M_\gamma$.  In particular, the $S^1$ action in the $\zeta_1$ variable of \eqref{D:M_gamma} yields a (partial) Fourier series decomposition of the space 
\begin{equation*}
L^2(M_\gamma,\mu_r) = \bigoplus_{k=-\infty}^\infty L^2_k(M_\gamma,\mu_r).
\end{equation*}
(Here we use the ``subspace" notion of direct sum as set forth, for example, on page 81 of \cite{AxlBouRam}.)
The Leray transform decomposes similarly, but a holomorphic function on $M_\gamma$ may have nonzero components in $L^2_k(M_\gamma,\mu_r)$ only if $k\ge0$.  In other words,
\begin{equation}\label{E:Leray-decomp}
\bm{L} = \bigoplus_{k=0}^\infty \bm{L}_k,
\end{equation}
where the {\em sub-Leray} operator $\bm{L}_k$ is nonzero only when acting on $L^2_k(M_\gamma,\mu_r)$.  Very precise information on each $\bm{L}_k$  is obtained, including both the sharp range of $r$ for which boundedness in $L^2(M_\gamma,\mu_r)$ holds and the exact operator norm when it does.  Define the interval
\begin{equation}\label{E:def-of-interval-I_k-intro}
\ci_k = (-2k-1 , (2k+2)(\gamma-1)+1),
\end{equation}
along with the $\mu_r$-{\em symbol function}
\begin{equation*}
C_{\mu_r}(\gamma,k) = \frac{\Gamma\big(\frac{2k+1+r}{\gamma}\big) \Gamma\big(2k+2 - \frac{2k+1+r}{\gamma}\big)}{\Gamma(k+1)^2} \left( \tfrac{\gamma}{2}\right)^{2k+2} (\gamma-1)^{-\left( 2k+2 - \frac{2k+1+r}{\gamma} \right)}.
\end{equation*}

\begin{theorem}\label{T:Lk-boundedness}
Let $k \ge 0$ be an integer and $r\in \R$.  Then $\bm{L}_k$ is bounded from $L^2_k(M_\gamma,\mu_r) \to L^2_k(M_\gamma,\mu_r)$ if and only if $r \in \ci_k$.  Furthermore when $r \in \ci_k$,
\begin{equation*}
\norm{\bm{L}_k}_{L^2(M_\gamma,\mu_r)} = \sqrt{C_{\mu_r}(\gamma,k)}.
\end{equation*}
\end{theorem}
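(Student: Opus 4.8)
The plan is to diagonalize each sub-Leray operator $\bm{L}_k$ explicitly by reducing it, via the symmetries of $M_\gamma$, to a one-dimensional integral operator acting on functions of a single real variable, and then to recognize that operator as (a conjugate of) convolution on a multiplicative group, whose spectrum is computed by a Mellin transform. Concretely, first I would use the $S^1$-action in $\zeta_1$ together with the translation symmetry in $s=\re(\zeta_2)$ to pass to a Fourier series in $\theta$ and a Fourier transform in $s$; this replaces $L^2_k(M_\gamma,\mu_r)$ by a direct integral over the $s$-frequency variable $\xi$ of weighted $L^2$-spaces in the single variable $\alpha=|\zeta_1|$. Because the Leray kernel of $M_\gamma$ is explicit (it comes from the natural defining function, as recalled in Section \ref{SS:LerayTransform}), the $(k,\xi)$-component of $\bm{L}$ becomes an integral operator in $\alpha$ whose kernel depends on $\alpha,\alpha'$ and $\xi$ only through the scaling-covariant combinations forced by the parabolic dilation symmetry of $M_\gamma$. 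Rescaling $\alpha \mapsto \xi^{1/\gamma}\alpha$ removes the $\xi$-dependence entirely, so that $\bm{L}_k$ is unitarily equivalent to a \emph{single} fixed integral operator $T_k$ on a weighted $L^2(\R_{>0}, \alpha^r\,d\alpha)$ — this is the key structural simplification, and it is exactly the mechanism already used for $\gamma$ in \cite{BarEdh17} and in the proof of Theorem \ref{T:Norm}.

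The second step is to compute $\norm{T_k}$ and its range of boundedness. Since $M_\gamma$ is homogeneous under the dilations $\zeta_1\mapsto t\zeta_1$, $\zeta_2\mapsto t^\gamma\zeta_2$, the operator $T_k$ commutes with the corresponding dilation action on $\R_{>0}$; after the substitution $\alpha = e^{u/\gamma}$ (or a similar logarithmic change of variables) it becomes a convolution operator on the additive group $\R$ with weight absorbed, hence is diagonalized by the Fourier/Mellin transform. Its norm is then the supremum over the frequency line of the modulus of the Mellin symbol, and — crucially — because $T_k$ turns out to be a \emph{positive} operator (it is essentially $\bm{L}_k$ composed with its adjoint structure, or the kernel is a positive reproducing-type kernel), that supremum is attained at the center of the critical strip, yielding a closed-form value. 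Evaluating the Mellin transform of the explicit kernel produces a product of Gamma functions; matching the Beta-integral $\int_0^\infty x^{a-1}(1+x)^{-a-b}\,dx = \Gamma(a)\Gamma(b)/\Gamma(a+b)$ against the kernel's $(1+\,\cdot\,)^{-(2k+2)}$-type denominator is what generates the factors $\Gamma\big(\tfrac{2k+1+r}{\gamma}\big)\Gamma\big(2k+2-\tfrac{2k+1+r}{\gamma}\big)$ and the power $(\gamma-1)^{-(2k+2-(2k+1+r)/\gamma)}$ in $C_{\mu_r}(\gamma,k)$, with the $\big(\tfrac{\gamma}{2}\big)^{2k+2}/\Gamma(k+1)^2$ prefactor coming from normalizing constants in the Leray kernel and the binomial expansion of the $k$-th Fourier coefficient. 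Finiteness of the Mellin integral is exactly the condition that both Gamma arguments be positive, i.e. $0 < \tfrac{2k+1+r}{\gamma} < 2k+2$, which rearranges to $r \in \ci_k = (-2k-1,\, (2k+2)(\gamma-1)+1)$; outside this strip the Beta integral diverges and one checks $\bm{L}_k$ is genuinely unbounded (e.g. by testing on approximate eigenfunctions $\alpha^{i t}$ localized near the bad endpoint), giving the ``only if'' direction.

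To organize the computation I would set up the Leray kernel of $M_\gamma$ against the measure $\mu_r$, extract the $\theta$-Fourier mode $k$ using $\int_0^{2\pi} e^{ik\theta}(1 - \rho e^{i\theta})^{-m}\,d\theta = 2\pi\binom{m+k-1}{k}\rho^k$ for the relevant $m$, then take the $s$-Fourier transform and rescale as above; the resulting one-variable kernel should be of the form $c\,(\alpha\alpha')^k\big(\alpha^\gamma + {\alpha'}^\gamma + (\text{const})\big)^{-(2k+2)}$ up to the weight, after which Step 2 is a direct Mellin/Beta evaluation. The main obstacle, I expect, is not any single integral but getting the bookkeeping of constants exactly right: one must carefully track the normalization of the Leray form, the Jacobian factors from each change of variables ($\theta$-mode extraction, $s$-Fourier transform, and the $\alpha$-rescaling that clears $\xi$), and the shift of the weight exponent induced by that rescaling, since an error in any of these perturbs the Gamma-function arguments and spoils the stated formula. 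A secondary subtlety is justifying the passage to the direct-integral decomposition and the unboundedness claim at the endpoints rigorously — one should confirm that at $r \notin \ci_k$ the formal eigenfunction computation actually detects failure of $L^2$-boundedness rather than merely non-membership in the domain — but this is handled by a standard Schur-test / approximate-eigenfunction argument once the kernel is in hand. Finally, Theorem \ref{T:boundedness-mu_r} follows by taking the intersection $\bigcap_{k\ge 0}\ci_k$, whose binding constraint is $k=0$, giving $\ci_0=(-1,2\gamma-1)$, and Theorem \ref{T:Norm} follows by specializing $r=\gamma-1$ and checking that $\sup_k \sqrt{C_{\mu_{\gamma-1}}(\gamma,k)}$ is achieved at $k=0$ with value $\tfrac{\gamma}{2\sqrt{\gamma-1}}$.
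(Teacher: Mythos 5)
Your first step --- the $\theta$-mode decomposition, the Fourier transform in $s$, and the use of the non-isotropic dilations to see that the fiber operators at different frequencies $\xi$ are unitarily equivalent (equivalently, that their norms are $\xi$-independent) --- is exactly the paper's reduction. The gap is in your second step, which rests on a structural misidentification of the fiber operator. After the $s$-Fourier transform, the $k$-th fiber at frequency $\xi<0$ is \emph{not} a dilation-invariant integral operator on $L^2\big((0,\infty),\alpha^r\,d\alpha\big)$: by Proposition \ref{P:LkMultipliers} it is the rank-one operator $f\mapsto \eta_k(\xi)\,\tau_k(\xi,\cdot)\,\big\langle f,\kappa_k(\xi,\cdot)\big\rangle_{(\sigma,1)}$, with $\tau_k(\xi,\alpha)=\alpha^k e^{2\pi\xi\alpha^\gamma}$ and $\kappa_k(\xi,\alpha)=\alpha^{k(\gamma-1)}e^{2\pi\xi(\gamma-1)\alpha^\gamma}$; the exponential factors come from the residue computation of the inverse Fourier transform of $(s+iC)^{-(k+2)}$ (Proposition \ref{P:InverseFourierComputation}), not from a homogeneous kernel, and your proposed kernel $c(\alpha\alpha')^k\big(\alpha^\gamma+\alpha'^\gamma+\mathrm{const}\big)^{-(2k+2)}$ is not what this transform produces. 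The dilation symmetry is precisely what gets spent in removing the $\xi$-dependence; it does not survive as a symmetry of a fixed fiber. Indeed, a bounded dilation-invariant operator on a weighted $L^2$ of $(0,\infty)$ is a Mellin multiplier and hence never compact unless it is zero, whereas the fiber operator here is rank one --- so it cannot be a Mellin convolution, and its norm is not the supremum of any Mellin symbol. (Relatedly, $\bm{L}_k$ is a skew, non-self-adjoint projection, so the ``positive operator, symbol maximized at the center of the critical strip'' heuristic has nothing to attach to.)

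What replaces your Step 2 is elementary once the rank-one structure is recognized, but it is the essential missing idea: the norm of the fiber operator on $L^2(\alpha^r d\alpha)$ is $|\eta_k(\xi)|\,\norm{\tau_k(\xi,\cdot)}_{(\mu_r,1)}\,\norm{\kappa_k(\xi,\cdot)}_{(\mu_{r'},1)}$, where $r'$ is the conjugate weight with $\tfrac{r+r'}{2}=\gamma-1$; this comes from the weighted Cauchy--Schwarz inequality $\big|\langle f,\kappa_k\rangle_{(\sigma,1)}\big|\le\norm{f}_{(\mu_r,1)}\norm{\kappa_k}_{(\mu_{r'},1)}$ (Lemma \ref{L:GeneralizedCauchySchwarz}), with equality on the functions $f_k(\xi,\alpha)\alpha^{r/2}=m_k(\xi)\kappa_k(\xi,\alpha)\alpha^{r'/2}$. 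The two norms are Gamma integrals $\int_0^\infty\alpha^x e^{-y\alpha^\gamma}d\alpha$ (Proposition \ref{P:IntegralComputation}), not Beta integrals, and they are finite precisely when $r>-2k-1$ and $r<(2k+2)(\gamma-1)+1$ respectively --- that is where $\ci_k$ comes from --- while the $\xi$-independent product of the three factors is exactly $C_{\mu_r}(\gamma,k)$. Your ``only if'' sketch also needs this structure rather than approximate eigenfunctions $\alpha^{it}$ (which again presuppose the nonexistent Mellin picture): for $r\le-2k-1$ one observes $\tau_k(\xi,\cdot)\notin L^2(\alpha^r d\alpha)$, so any $f$ with nonzero pairing against $\kappa_k$ is mapped out of $L^2(M_\gamma,\mu_r)$; for $r\ge(2k+2)(\gamma-1)+1$ one feeds in the normalized truncations of $\kappa_k$ cut off near $\alpha=0$ and uses monotone convergence to force the image norms to blow up. Without the rank-one identification and the conjugate-weight Cauchy--Schwarz, the argument as proposed does not produce the norm formula or the interval $\ci_k$.
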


When $r \in \ci_0$ (the interval of boundedness from Theorem \ref{T:boundedness-mu_r}), the Leray transform admits a bounded adjoint $\bm{L}^{(*,\mu_r)}$ in the $L^2(M_\gamma,\mu_r)$-inner product.  For these $r$ values, we obtain a complete spectral description of the self-adjoint $\bm{L}^{(*,\mu_r)}\bm{L}$ and $\bm{L}\bm{L}^{(*,\mu_r)}$, and anti-self-adjoint $\bm{A}^{\mu_r} := \bm{L}^{(*,\mu_r)}-\bm{L}$ in terms of the symbol function.  In Section \ref{SS:related-operators} it is shown that:

\begin{theorem}\label{T:spectrum-of-L*L}
Let $r \in \ci_0$ and let $\bm{T}$ denote either $\bm{L}^{(*,\mu_r)}\bm{L}$ or $\bm{L}\bm{L}^{(*,\mu_r)}$.  Then $\bm{T}$ admits an orthogonal basis of eigenfunctions and its spectrum is given by
\begin{equation*}
\{0\} \cup \left\{ C_{\mu_r}(\gamma,k): k=0,1,2, \dots \right\}.
\end{equation*}
\end{theorem}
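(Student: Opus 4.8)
The plan is to bootstrap from the sub-Leray decomposition \eqref{E:Leray-decomp} and Theorem \ref{T:Lk-boundedness}. Since $r\in\ci_0$ ensures $\bm{L}$ is bounded with bounded adjoint, and since the decomposition $L^2(M_\gamma,\mu_r)=\bigoplus_k L^2_k(M_\gamma,\mu_r)$ is orthogonal and preserved by $\bm{L}$ (each $\bm{L}_k$ maps $L^2_k$ into itself, being nonzero only there), the adjoint $\bm{L}^{(*,\mu_r)}$ also respects this decomposition: $\bm{L}^{(*,\mu_r)}=\bigoplus_k \bm{L}_k^{(*,\mu_r)}$, where $\bm{L}_k^{(*,\mu_r)}$ is the adjoint of $\bm{L}_k$ on $L^2_k(M_\gamma,\mu_r)$. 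Consequently $\bm{T}=\bigoplus_k \bm{T}_k$ where $\bm{T}_k$ is either $\bm{L}_k^{(*,\mu_r)}\bm{L}_k$ or $\bm{L}_k\bm{L}_k^{(*,\mu_r)}$ acting on $L^2_k$. Each $\bm{T}_k$ is a positive self-adjoint operator on $L^2_k(M_\gamma,\mu_r)$ with $\norm{\bm{T}_k}=\norm{\bm{L}_k}^2=C_{\mu_r}(\gamma,k)$ by Theorem \ref{T:Lk-boundedness}.

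The crux is to show that for each fixed $k$, the operator $\bm{T}_k$ is in fact a \emph{rank-one} (or at least purely atomic with a single nonzero eigenvalue) multiple of an orthogonal projection, with that single nonzero eigenvalue equal to $C_{\mu_r}(\gamma,k)$. This is where the explicit analysis underlying Theorem \ref{T:Lk-boundedness} must be invoked: within $L^2_k(M_\gamma,\mu_r)$, after taking the partial Fourier series in $\theta$ and the Fourier/Mellin transform in the $s$-variable coordinate, $\bm{L}_k$ is realized (fiberwise in the dual variable) as an explicit integral or multiplication operator whose range is spanned by a single holomorphic profile. I expect that $\bm{L}_k$ acts as projection onto a one-dimensional (or one-parameter, fibered) subspace spanned by the relevant power function $\zeta_1^k$ times a holomorphic factor in $\zeta_2$; the constant $C_{\mu_r}(\gamma,k)$ then emerges precisely as the squared ratio of norms $\norm{\bm{L}_k v}^2/\norm{v}^2$ on the orthogonal complement of the kernel, computed via the Beta-function integral $\int_0^\infty \alpha^{2k+r}(1+\alpha^\gamma)^{-(2k+2)}\,d\alpha$, which evaluates through the Gamma factors $\Gamma\!\big(\tfrac{2k+1+r}{\gamma}\big)\Gamma\!\big(2k+2-\tfrac{2k+1+r}{\gamma}\big)$ appearing in the symbol function. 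The condition $r\in\ci_k$ is exactly the convergence condition for this integral, i.e.\ $0<\tfrac{2k+1+r}{\gamma}<2k+2$.

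Granting that $\bm{T}_k = C_{\mu_r}(\gamma,k)\,P_k$ for an orthogonal projection $P_k$ (possibly with infinite-dimensional range in the continuous $s$-parameter, but with $P_k$ genuinely a projection), we assemble the spectrum: on each summand the spectrum of $\bm{T}_k$ is $\{0\}\cup\{C_{\mu_r}(\gamma,k)\}$ (the $0$ present because $\bm{L}$, being a nontrivial projection with nonzero kernel, has $\ker\bm{L}\neq\{0\}$, and that kernel meets $L^2_k$ nontrivially for the $k$ that occur; for $k$ with $L^2_k\not\subset\ker$ the eigenvalue $C_{\mu_r}(\gamma,k)$ also occurs). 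Taking the orthogonal direct sum over $k\ge 0$, and recalling that the negative-$k$ summands lie entirely in $\ker\bm{L}$ hence contribute only $\{0\}$, the spectrum of $\bm{T}$ is the closure of $\{0\}\cup\{C_{\mu_r}(\gamma,k):k\ge0\}$. Since $C_{\mu_r}(\gamma,k)\to 0$ as $k\to\infty$ (which should be read off from the Gamma-factor asymptotics, or else simply noted as giving the high-frequency norm discussed in the introduction), this set is already closed, giving exactly $\{0\}\cup\{C_{\mu_r}(\gamma,k):k=0,1,2,\dots\}$. Finally, because each $\bm{T}_k$ has an orthogonal eigenbasis of its own subspace and the $L^2_k$ are mutually orthogonal, the union of these bases is an orthogonal eigenbasis for $\bm{T}$ on all of $L^2(M_\gamma,\mu_r)$. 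The main obstacle is the middle step — establishing the precise rank/projection structure of each $\bm{L}_k$ and hence of $\bm{T}_k$ — which rests on the fiberwise diagonalization carried out in the proof of Theorem \ref{T:Lk-boundedness}; once that explicit model is in hand, the spectral conclusion is essentially bookkeeping over the orthogonal decomposition.
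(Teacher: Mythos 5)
Your overall strategy is the same as the paper's (Section \ref{SS:related-operators}): decompose $\bm{T}=\bigoplus_k \bm{T}_k$ along the frequency decomposition and show that each $\bm{T}_k$ is $C_{\mu_r}(\gamma,k)$ times an orthogonal projection. But the step you label the crux is precisely the one you leave as ``granting that,'' and it does not follow from the proof of Theorem \ref{T:Lk-boundedness}, which only yields $\norm{\bm{T}_k}=\norm{\bm{L}_k}^2=C_{\mu_r}(\gamma,k)$; a positive self-adjoint operator of norm $C$ can perfectly well have spectrum filling $[0,C]$, so the norm alone gives no spectral description. What is actually needed, and what the paper supplies, is the explicit formula for the adjoint, $\cf^{-1}\bm{L}^{(*,\mu_r)}_k\cf g = \alpha^{\gamma-1-r}\eta_k(\xi)\kappa_k(\xi,\alpha)\langle g_k(\xi,\cdot),\tau_k(\xi,\cdot)\rangle_{(\mu_r,1)}e^{ik\theta}$ (Proposition \ref{P:computation-of-adjoint-by-soft-analysis}, whose derivation itself requires the Fubini justification of Lemma \ref{L:justification-of-fubini:adjoint-comp}), followed by composing it with \eqref{E:LkMultipliers2}: because each fiber of $\cf^{-1}\bm{L}_k\cf$ over $\xi<0$ is rank one, the compositions become $C_{\mu_r}(\gamma,k)$ times the orthogonal projections onto the explicit spaces $X_k$ (spanned fiberwise by $\alpha^{\gamma-1-r}\kappa_k$) and $Y_k$ (spanned fiberwise by $\tau_k$), which is Theorem \ref{T:spectrum-of-Lk*Lk}. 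Until that computation is written down, your proposal is a plan rather than a proof; note also that the proof of Theorem \ref{T:Lk-boundedness} never produces an adjoint formula, so pointing to its ``fiberwise diagonalization'' does not close this gap.

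A second, concrete error: you assert $C_{\mu_r}(\gamma,k)\to 0$ as $k\to\infty$. This contradicts Theorem \ref{T:grade-essential-for-F} (and Theorem \ref{T:HighFreqNorm}): $C_{\mu_r}(\gamma,k)=\norm{\bm{L}_k}^2_{L^2(M_\gamma,\mu_r)}\to \frac{\gamma}{2\sqrt{\gamma-1}}\ge 1$. This matters for your final bookkeeping, since the nonzero eigenvalues accumulate at $\frac{\gamma}{2\sqrt{\gamma-1}}$ rather than at $0$; consequently $\{0\}\cup\{C_{\mu_r}(\gamma,k)\}$ is not a closed set for $\gamma\ne2$, and your appeal to ``spectrum of the direct sum equals the closure of the union of the spectra'' would not literally return the displayed set. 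The statement should instead be obtained the way the paper does: each $\bm{T}_k$ has an orthogonal eigenbasis with eigenvalues $0$ and $C_{\mu_r}(\gamma,k)$, and assembling these over the orthogonal decomposition gives an orthogonal eigenbasis of $\bm{T}$ whose eigenvalue list is exactly $\{0\}\cup\{C_{\mu_r}(\gamma,k):k\ge0\}$, which is the content of the theorem; your closure argument, once the limit is corrected, needs to be reformulated in these terms.
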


\begin{theorem}\label{T:spectrum-of-L*-L}
Let $r \in \ci_0$.  The operator $\bm{A}^{\mu_r} := \bm{L}^{(*,\mu_r)}-\bm{L}$ admits an orthogonal basis of eigenfunctions and its spectrum is given by
\begin{equation*}
\{0\} \cup \left\{ \pm i\sqrt{C_{\mu_r}(\gamma,k)-1}: k=0,1,2, \dots \right\}.
\end{equation*}
\end{theorem}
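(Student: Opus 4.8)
The plan is to deduce the theorem from Theorem \ref{T:spectrum-of-L*L} together with the structural fact, not yet used, that $\bm{L}$ is a (bounded, oblique) projection; see Theorems \ref{T:Norm} and \ref{T:boundedness-mu_r}. Fix $r \in \ci_0$, write $\bm{L}^* := \bm{L}^{(*,\mu_r)}$ and $\bm{A} := \bm{A}^{\mu_r} = \bm{L}^* - \bm{L}$ (bounded and anti-self-adjoint), and set $R := \ran(\bm{L})$. Since $\bm{L}$ is a bounded idempotent, $R = \ker(I - \bm{L})$ is closed and $\bm{L}$ restricts to the identity on $R$; hence the orthogonal projection $Q$ onto $R$ satisfies $Q\bm{L} = \bm{L}$ and $\bm{L} Q = Q$, and relative to the orthogonal splitting $L^2(M_\gamma,\mu_r) = R \oplus R^\perp$ one obtains
\[
\bm{L} = \begin{pmatrix} I & B \\ 0 & 0 \end{pmatrix}, \qquad
\bm{L}^* = \begin{pmatrix} I & 0 \\ B^* & 0 \end{pmatrix}, \qquad
\bm{A} = \begin{pmatrix} 0 & -B \\ B^* & 0 \end{pmatrix},
\]
with $B := Q\bm{L}\rvert_{R^\perp}\colon R^\perp \to R$ bounded; consequently $\bm{L}^*\bm{L} = \left(\begin{smallmatrix} I & B \\ B^* & B^*B\end{smallmatrix}\right)$ and $-\bm{A}^2 = BB^* \oplus B^*B$. (Everything below is compatible with the Fourier decomposition \eqref{E:Leray-decomp} and could be run one block $L^2_k(M_\gamma,\mu_r)$ at a time, since the $S^1$-action commutes with $\bm{L}$, $\bm{L}^*$ and $\bm{A}$; but this is not needed here.)

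Next I would transfer the spectral data from Theorem \ref{T:spectrum-of-L*L}. A direct computation with the block forms shows that for $c \ne 0$ the eigenspace of $\bm{L}^*\bm{L}$ for the eigenvalue $c$ equals $\{(a, B^*a) : BB^*a = (c-1)a\}$, which the first-coordinate projection carries isomorphically onto the $(c-1)$-eigenspace of $BB^*$ in $R$, while the eigenspace of $\bm{L}^*\bm{L}$ for $0$ equals $\{(-Bb, b) : b \in R^\perp\}$. Using these together with the orthogonality relations one checks that the orthogonal eigenbasis of $\bm{L}^*\bm{L}$ supplied by Theorem \ref{T:spectrum-of-L*L} projects, via the first coordinate, onto an orthogonal eigenbasis of $BB^*$ on $R$; in particular $BB^*$ has pure point spectrum. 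Since $BB^* \ge 0$, each $C_{\mu_r}(\gamma,k) = 1 + (\text{an eigenvalue of }BB^*) \ge 1$, and the eigenvalues of $BB^*$ are precisely $\{C_{\mu_r}(\gamma,k) - 1 : k \ge 0\}$. As the nonzero spectra of $BB^*$ and $B^*B$ coincide, $-\bm{A}^2 = BB^* \oplus B^*B$ is diagonalizable with spectrum $\{0\} \cup \{C_{\mu_r}(\gamma,k) - 1 : k \ge 0\}$, the value $0$ being genuinely present (for instance, the negative Fourier modes lie in $\ker\bm{L} \cap \ker\bm{L}^*$).

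Finally I would pass from $\bm{A}^2$ back to $\bm{A}$. For an eigenvalue $C := C_{\mu_r}(\gamma,k) > 1$ and a unit vector $v$ in the $(C-1)$-eigenspace of $BB^*$ inside $R$, set $u := B^*v / \sqrt{C-1} \in R^\perp$; then $\|u\| = 1$, $u \perp v$, $B^*v = \sqrt{C-1}\,u$ and $Bu = \sqrt{C-1}\,v$, so $\mathrm{span}\{v, u\}$ is $\bm{A}$-invariant and $\bm{A}$ acts on it by $\left(\begin{smallmatrix} 0 & -\sqrt{C-1} \\ \sqrt{C-1} & 0 \end{smallmatrix}\right)$, with eigenvectors $v \mp i u$ and eigenvalues $\pm i\sqrt{C-1}$. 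Letting $v$ range over an orthonormal eigenbasis of $BB^*$ on $R$ and adjoining the vectors of $\ker B^* \subseteq R$ and $\ker B \subseteq R^\perp$ (on which $\bm{A}$ vanishes) produces an orthogonal eigenbasis of $L^2(M_\gamma,\mu_r)$ for $\bm{A}$; comparing eigenvalues with Theorem \ref{T:spectrum-of-L*L} yields $\mathrm{spec}(\bm{A}) = \{0\} \cup \{\pm i\sqrt{C_{\mu_r}(\gamma,k) - 1} : k \ge 0\}$, where any block with $C_{\mu_r}(\gamma,k) = 1$ contributes only the value $0$ already present. The substantive content is all contained in Theorem \ref{T:spectrum-of-L*L}; the one step requiring real care — and the one I expect to be the main obstacle — is checking cleanly that the eigenbasis of $\bm{L}^*\bm{L}$ descends to an eigenbasis of $BB^*$ on $R$, equivalently that $\bm{A}$ is itself diagonalizable and not merely $-\bm{A}^2$. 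One can sidestep this by invoking instead the explicit diagonalization carried out in the proof of Theorem \ref{T:spectrum-of-L*L}, where each block is reduced to operators for which the claim is immediate.
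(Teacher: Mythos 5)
Your proposal is correct, but it reaches the theorem by a genuinely different route than the paper. The paper's own proof (Section \ref{SSS:spectrum-of-A}) is a block-by-block computation: it conjugates each $\bm{A}_k^{\mu_r}$ by the Fourier transform, introduces the auxiliary function $\lambda_k$ of \eqref{E:def-of-function-lambda_k} (the component of $\alpha^{\gamma-1-r}\kappa_k$ orthogonal to $\tau_k$), shows the conjugated operator annihilates the orthogonal complement of $\mathrm{span}\{\tau_k e^{ik\theta},\lambda_k e^{ik\theta}\}$, and exhibits the eigenspaces $Z_k^1,Z_k^2$ for $\pm i\sqrt{C_{\mu_r}(\gamma,k)-1}$ explicitly (Theorem \ref{T:spectrum-of-Ak}), then sums over $k$ using $\ci_0\subset\ci_k$. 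You instead treat Theorem \ref{T:spectrum-of-L*L} as a black box and use only the abstract structure of a bounded skew projection --- the same $2\times 2$ block picture the paper records in \eqref{E:AK} (your $B$ is the paper's $\bm{K}$) but exploits only for norm identities --- to transfer the spectral decomposition from $\bm{L}^{(*,\mu_r)}\bm{L}$ to $\bm{A}^{\mu_r}$. What each approach buys: the paper's computation produces the eigenfunctions concretely as multiplier families built from $\tau_k$ and $\lambda_k$, while your argument is shorter, works verbatim for any bounded idempotent on a Hilbert space, and makes structurally transparent why eigenvalue $c$ of $\bm{L}^{(*,\mu_r)}\bm{L}$ becomes $\pm i\sqrt{c-1}$ for $\bm{A}^{\mu_r}$. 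The step you flag as the main obstacle does close, and quickly: if $(a_1,B^*a_1)$ and $(a_2,B^*a_2)$ are eigenvectors for nonzero eigenvalues, then $\langle (a_1,B^*a_1),(a_2,B^*a_2)\rangle=\langle a_1,a_2\rangle+\langle BB^*a_1,a_2\rangle=c_1\langle a_1,a_2\rangle$, so orthogonality descends to the first coordinates; and if $v\in R$ is orthogonal to all those first coordinates, then $(v,0)$ is orthogonal to every nonzero-eigenvalue basis vector, hence lies in the closed span of the $0$-eigenvectors, which is contained in $\ker\bm{L}=\{(-Bb,b)\colon b\in R^\perp\}$, forcing $v=0$. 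Thus $BB^*$ is genuinely diagonalized on the range of $\bm{L}$ and no fallback to the explicit diagonalization inside the proof of Theorem \ref{T:spectrum-of-L*L} is needed.
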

Theorems \ref{T:spectrum-of-L*L} and \ref{T:spectrum-of-L*-L} follow from work carried out in Section \ref{SS:related-operators}.

We now emphasize a fascinating observation about the limiting behavior of the sub-Leray operators.  Define the {\em ($\mu_r$)-high-frequency limit norm} by
\begin{equation}\label{E:HighFreqLimSup}
\norm{\bm{L}}_{L_{HF}^2(M_\gamma,\mu_r)} := \limsup_{k\to\infty} \norm{\bm{L}_k}_{L^2(M_\gamma,\mu_r)}.
\end{equation}
This is definable for any operator admitting a Fourier series decomposition and it can be viewed as a generalized essential norm in the sense of Lef\`evre \cite{Lefevre09} -- in Section \ref{SS:GenEss} we introduce the terminology {\em grade-essential-norm}.

As the value of $r$ varies, the norm $\norm{\bm{L}_k}_{L^2(M_\gamma,\mu_r)}$ expectedly changes.  But in Section \ref{SS:Leray-boundedness-m_r} it is shown that all choices of $r$ yield the same high frequency limit:

\begin{theorem}\label{T:HighFreqNorm}
For $r \in \R$, the $\mu_r$-high-frequency limit norm of $\bm{L}$ is
\begin{equation*}\label{E:HighFreqNorm}
\norm{\bm{L}}_{L_{HF}^2(M_\gamma,\mu_r)} = \sqrt{\frac{\gamma}{2\sqrt{\gamma-1}}}.
\end{equation*}
\end{theorem}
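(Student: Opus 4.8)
The plan is to deduce Theorem \ref{T:HighFreqNorm} directly from the explicit norm formula in Theorem \ref{T:Lk-boundedness} by an asymptotic analysis of $C_{\mu_r}(\gamma,k)$ as $k\to\infty$. First I would observe that for any fixed $r\in\R$, the interval $\ci_k = (-2k-1, (2k+2)(\gamma-1)+1)$ eventually contains $r$ (since its left endpoint tends to $-\infty$ and, as $\gamma\ge1$, its right endpoint is nondecreasing and tends to $+\infty$ when $\gamma>1$; the case $\gamma=1$ must be checked separately, though there $\bm{L}$ is unbounded and the formula should be interpreted accordingly). Hence for all sufficiently large $k$ the quantity $\norm{\bm{L}_k}_{L^2(M_\gamma,\mu_r)} = \sqrt{C_{\mu_r}(\gamma,k)}$ is well-defined, and the $\limsup$ in \eqref{E:HighFreqLimSup} is an honest limit of this explicit expression.

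The core computation is the large-$k$ asymptotics of
\begin{equation*}
C_{\mu_r}(\gamma,k) = \frac{\Gamma\big(\tfrac{2k+1+r}{\gamma}\big)\,\Gamma\big(2k+2-\tfrac{2k+1+r}{\gamma}\big)}{\Gamma(k+1)^2}\left(\tfrac{\gamma}{2}\right)^{2k+2}(\gamma-1)^{-\left(2k+2-\frac{2k+1+r}{\gamma}\right)}.
\end{equation*}
I would set $a_k = \tfrac{2k+1+r}{\gamma} = \tfrac{2k}{\gamma} + O(1)$ and $b_k = 2k+2-a_k = 2k(1-\tfrac1\gamma) + O(1)$, so that $a_k+b_k = 2k+2$. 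The ratio $\Gamma(a_k)\Gamma(b_k)/\Gamma(a_k+b_k)$ is a Beta function $B(a_k,b_k)$, and Stirling's formula gives $B(a_k,b_k)\sim \sqrt{2\pi}\,a_k^{a_k-1/2}b_k^{b_k-1/2}(a_k+b_k)^{-(a_k+b_k)+1/2}$ up to bounded multiplicative factors; dividing by the remaining $\Gamma(k+1)^2 / \Gamma(2k+2) \sim$ (Stirling again) converts the denominator $\Gamma(k+1)^2$ appearing in $C_{\mu_r}$ into its own Stirling expansion. Collecting the exponential pieces, the dominant behavior is governed by $a_k^{a_k} b_k^{b_k} (2k)^{-2k} \cdot (\gamma/2)^{2k} (\gamma-1)^{-b_k}$; substituting $a_k/(2k)\to 1/\gamma$ and $b_k/(2k)\to (\gamma-1)/\gamma$ shows the base of the $2k$-th power exponential is $(1/\gamma)^{1/\gamma}\big((\gamma-1)/\gamma\big)^{(\gamma-1)/\gamma}\cdot(\gamma/2)\cdot(\gamma-1)^{-(\gamma-1)/\gamma}$, and one checks this equals $1$, so the exponential-in-$k$ factors all cancel. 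What survives is a power of $k$ coming from the $\pm1/2$ exponents and the polynomial corrections in $a_k,b_k$; a careful bookkeeping of these half-integer exponents should yield exactly $\sqrt{k}/\sqrt{k}\cdot(\text{const})$, i.e. a finite nonzero limit, and that limit must be computed to be $\dfrac{\gamma}{2\sqrt{\gamma-1}}$ so that $\sqrt{C_{\mu_r}(\gamma,k)}\to\sqrt{\dfrac{\gamma}{2\sqrt{\gamma-1}}}$.

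A cleaner route, which I would actually prefer to present, is to avoid raw Stirling and instead use the known asymptotic $\dfrac{\Gamma(k+c)}{\Gamma(k+d)} \sim k^{c-d}$ together with the \emph{duplication} or \emph{Gauss multiplication formula} to handle $\Gamma\big(\tfrac{2k+1+r}{\gamma}\big)$ when $\gamma$ is not an integer — or, more robustly, to write everything through the Beta function and invoke the standard asymptotic $B(x,y)\sim \Gamma(y)\,x^{-y}$ as $x\to\infty$ with $y$ fixed, \emph{after} first extracting the growing parts of both arguments. Concretely, since both $a_k\to\infty$ and $b_k\to\infty$, neither argument is fixed, so one instead uses $\log B(a_k,b_k) = \log\Gamma(a_k)+\log\Gamma(b_k)-\log\Gamma(a_k+b_k)$ and the expansion $\log\Gamma(x) = (x-\tfrac12)\log x - x + \tfrac12\log(2\pi) + O(1/x)$; the $-x$ terms cancel because $a_k+b_k=2k+2$, the $\tfrac12\log 2\pi$ terms give a net $-\tfrac12\log 2\pi$, and the $(x-\tfrac12)\log x$ terms produce the exponential base computed above (which is $1$) times a residual $k^{-1/2}$-type factor; combined with the $\Gamma(k+1)^{-2}$ factor (contributing $(2\pi k)^{-1}e^{2k}k^{-2k}$ up to lower order) one lands on a clean closed form. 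I expect the main obstacle to be purely organizational: keeping track of the several competing $k^{\pm1/2}$ and $O(1)$ contributions so that the $r$-dependence genuinely disappears (it should, since $r$ only enters through the $O(1)$ shifts $a_k = 2k/\gamma + O(1)$, which do not affect the leading term) and so that the surviving constant is exactly $\dfrac{\gamma}{2\sqrt{\gamma-1}}$, matching the $L^2$-operator norm of Theorem \ref{T:Norm}. Once the limit of $C_{\mu_r}(\gamma,k)$ is identified, taking the square root and noting the $\limsup$ equals the limit finishes the proof; the $r$-independence asserted in the theorem's preamble is then immediate from the computation.
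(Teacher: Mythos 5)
Your proposal is correct and takes essentially the same route as the paper: for fixed $r$ one has $r\in\ci_k$ for all large $k$, so $\norm{\bm{L}_k}_{L^2(M_\gamma,\mu_r)}=\sqrt{C_{\mu_r}(\gamma,k)}$ by Theorem \ref{T:Lk-boundedness}, and the paper's proof of Theorem \ref{T:grade-essential-for-F} likewise applies Stirling's formula to the Gamma-function quotient to show $C_{\mu_r}(\gamma,k)\to\frac{\gamma}{2\sqrt{\gamma-1}}$, the shift by $r$ affecting only lower-order terms. One bookkeeping slip to fix when writing it up: the denominator $\Gamma(k+1)^2$ contributes $k^{-2k}e^{2k}$ (not $(2k)^{-2k}$) to the dominant term, and with that correction the exponential base indeed collapses to $1$, leaving the stated constant.
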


An analogous result was observed in \cite{BarLan09} on smoothly bounded, strongly $\C$-convex Reinhardt hypersurfaces in $\C^2$.  We'll see in Section \ref{SS:EssNormConj} that this quantity is closely connected to a projective geometric invariant of $M_\gamma$.  Note also that this quantity is the square-root of the $L^2(M_\gamma, \sigma)$-norm in Theorem \ref{T:Norm}; a similar observation was recently noted for the Leray transform on boundaries of $L^p$-balls $\{(\zeta_1,\zeta_2) \in \C^2 :|\zeta_1|^p + |\zeta_2|^p = 1 \}$;  see \cite{ShelahThesis}.  

Lanzani and Stein have written a series of recent articles on the Leray transform in various settings.  One of the takeaways in \cite{LanSte13, LanSte14} is that $\bm{L}$ is well behaved (one can expect both $L^2$ and $L^p$ boundedness results) as long as the hypersurface in question is both $\bm{(1)}$ strongly $\C$-convex and $\bm{(2)}$ $C^{1,1}$-smooth.  When either hypothesis is dropped, they are able to construct elementary counter-examples with no Leray $L^p$-boundedness (including $p=2$); see \cite{LanSte17c, LanSte17b}.  The positive Lanzani-Stein results don't apply in our setting since $M_\gamma$ (viewed in projective space) fails even to be $C^1$ when it meets the line at infinity (except when $\gamma=2$); see Section \ref{SS:MgammaAtInfinity}.  But unlike their counter-examples, our results show that $\bm{L}$ is $L^2$-bounded for a range of reasonable measures on $M_\gamma$.

Interest in $M_\gamma$ also stems from other considerations.  In \cite{Bar16}, Barrett defines a scalar invariant (denoted $\beta_\cs(\zeta)$ in Section \ref{SS:EssNormConj} below) associated to any strongly $\C$-convex hypersurface.  The $M_\gamma$ comprise one of the few families for which this invariant is constant. Another such family  
\begin{equation}\label{D:def-of-S_beta}
\cs_{\beta} = \{(\zeta_1,\zeta_2)\in \C^2: \im(\zeta_2) = |\zeta_1|^2 + \beta \re(\zeta_1^2)\}, \qquad 0\le \beta <1,
\end{equation}
was recently studied by Barrett and Edholm in \cite{BarEdh17}.  Both $M_\gamma$ and $\cs_\beta$ are intriguing models in connection with Conjecture \ref{C:EssentialNormConj} on the essential norm of $\bm{L}$, but this conjecture does not directly apply to either family since these hypersurfaces are unbounded (or more to the point, they fail to be $C^1$ at infinity -- except in the very special case of $M_2 = \cs_{0}$).  

Holomorphic function theory on $M_\gamma$ has been previously studied for positive even integers $\gamma = 2n$.  (These are the only $\gamma$ for which $M_\gamma$ is $C^\infty$ at the origin.)  Greiner and Stein \cite{GreSte78}  found an explicit formula for the Szeg\H{o} kernel on $L^2(M_{2n}, dx_1 \w dy_1 \w dx_2)$.  Their work was later used by Diaz \cite{Diaz87} to determine mapping properties of the Szeg\H o projection in this setting.  Note that $dx_1 \w dy_1 \w dx_2$ is the Euclidean surface measure on the parameter space $\R^3$; this measure agrees with $\mu_1$ in our above notation.  Since $r=1 \in \ci_0$ for all $\gamma>1$, a special case of Theorem \ref{T:boundedness-mu_r} implies that $\bm{L}$ is a bounded, skew (unless $\gamma=2$) projection on the same $L^2$-spaces considered by Greiner, Stein and Diaz.

Recall that the Szeg\H{o} projection is the orthogonal projection from a pre-specified $L^2$-space onto its associated Hardy space.  The projection is represented by its Szeg\H{o} kernel, a Hilbert space reproducing kernel.  The existence of this projection is guaranteed, but such kernels can only be concretely written down in a small number of situations.  The Leray transform on the other hand, is always given by an explicit integral formula and can often be shown to be a bounded, skew projection operator onto reasonable Hardy spaces.  An intimate relationship between Szeg\H o projections and ``Cauchy-like'' integral operators $\bm{C}$ (the Leray transform is just one such example) was noticed by Kerzman and Stein in \cite{KerSte78a,KerSte78b}.  They observed that detailed information about the Szeg\H o projection can be extracted from the operator $\bm{A} := \bm{C}^*-\bm{C}$.  See \cite{Bell16} for an expository treatment of these ideas in the complex plane and \cite{Bol06,Bol07,BarLan09,LanSte17a} for more recent developments.

There are important reasons to think of complex projective space (not affine space) as the ideal setting in which to study the Leray transform.  Following the point of view taken in \cite{ScandBook04}, we maintain that $\C$-convexity most naturally belongs in $\C\p^n$ due to the duality seen between points and hyperplanes.  Formula \eqref{E:LerayKernel} shows that $\C$-convexity is built into the definition of $\bm{L}$: the denominator of $\mathscr{L}_\cs$ vanishes on the supporting complex hyperplanes of $\cs$ and $\C$-convexity is precisely the condition needed for this hyperplane to avoid the domain bounded by $\cs$.  The set of complex hyperplanes -- and therefore the set of $\C$-convex hypersurfaces -- is preserved under projective automorphism.  In Section \ref{SSS:mod-Fefferman}, we define the {\em preferred measure} $\wt{\mu}_\cs$.  It follows from work of Bolt \cite{Bol05} that if the Leray kernel is written in terms of $\wt{\mu}_\cs$, then it admits a projective transformation law; cf. \eqref{E:LerayProjTrans}.

%%%%%%%%%%%%%%%%%%%%%%%%%%%%%%%%%
%Intro changes were made starting here
%%%%%%%%%%%%%%%%%%%%%%%%%%%%%%%%%

This material is consolidated into a satisfying circle of ideas in Section \ref{S:Duality}, where $\bm{L}$ is used to bridge the notions of function theoretic and projective geometric duality.  We construct a projectively invariant Hardy space on each $M_\gamma$, then show that its dual space can be naturally identified with the pullback of the invariant Hardy space on the dual hypersurface.  Unlike Bergman spaces which come naturally equipped with a transformation law, Hardy spaces only transform nicely if set up with respect to very specific measures.  This is where our preferred measure comes in: $\wt{\mu}_{M_\gamma} = \wt{\mu}$ is shown to be a constant multiple of some $\mu_r$, $r \in \ci_0$. Therefore, Theorem \ref{T:Norm} allows us to define the Hardy space
\begin{equation*}
H^2(M_\gamma,\wt{\mu}) := \bm{L}\big(L^2(M_\gamma,\wt{\mu}) \big).
\end{equation*}
By its construction, this space admits a projective transformation law.  (One important order of business is the verification that functions in this space actually correspond to holomorphic functions on the domain $\Omega_\gamma$.  This turns out to be the case, but the proof is postponed until the Appendix.)

In Section \ref{SS:dual-hypersurface-of-M_gamma} we show that the projective dual of $M_\gamma$ can be represented by $M_{\gamma^*}$ (where $\gamma^* = \tfrac{\gamma}{\gamma-1}$ is the H\"older conjugate) and construct a diffeomorphism $w:  M_\gamma \to M_{\gamma^*}$.  The pullback of this map is used to define the {\em preferred dual measure} $w^*\big( \wt{\mu}_{M_{\gamma^*}} \big) := \wt{\mu}_{M_\gamma}^* = \wt{\mu}^*$, along with the dual Hardy space
\begin{equation*}
H_{\sf dual}^2\big(M_\gamma,\wt{\mu}^*\big) := w^*\Big( H^2\big(M_{\gamma^*},\wt{\mu}_{M_{\gamma^*}}\big) \Big).
\end{equation*}

A third measure of interest facilitates a bilinear pairing of these two Hardy spaces.  The {\em pairing measure} $\nu$ appears in \cite{BarEdh17} as part of a universal formulation of the Leray transform; see Theorem \ref{T:GenMLeray} below; also see \cite{ScandBook04}.
In the $M_\gamma$ setting, $\nu$ is a constant multiple of $\sigma$ from Theorem \ref{T:Norm}.  A multiplicative constant can be specified to give a sharp Cauchy-Schwarz inequality relating $\wt{\mu}$, $\wt{\mu}^*$ and $\nu$: for $f \in L^2(M_\gamma,\wt{\mu})$ and $g \in L^2(M_\gamma,\wt{\mu}^*)$,
\begin{equation*}
\bigg| \int_{M_\gamma} f g \,\nu \bigg| \le \|f\|^2_{L^2\left(M_\gamma,\wt{\mu} \right)} \|g\|^2_{L^2\left(M_\gamma,\wt{\mu}^{*} \right)},
\end{equation*}
where equality is achieved for any such $f$ (likewise for any such $g$).  Consider now the map
$$
\chi_\gamma: L^2(M_\gamma,\wt{\mu}^* ) \to \left( L^2(M_\gamma,\wt{\mu})\right)^*
$$
given by $\chi_\gamma(g): f \mapsto  \int_{M_\gamma} f g \,\nu$, along with the companion map 
$$
\chih_\gamma\colon H^2_{\sf dual} \big(M_\gamma, \wt{\mu}^*\big) \to \left(H^2\big(M_\gamma, \wt{\mu}\big)\right)^*
$$
given by the restriction $\chih_\gamma(g)= \chi_\gamma(g)\big{|}_{H^2(M_\gamma,\wt{\mu})}$.  

In Section \ref{SS:bilinear-pairing-of-hardy-spaces} it is shown that $\chih_\gamma$ induces a faithful representation of the dual space and that the efficiency of the Hardy space pairing is measured  by the appropriate norm of $\bm{L}$:

\begin{theorem}
The operator $\chih_\gamma\colon H^2_{\sf dual} \big(M_\gamma, \wt{\mu}^* \big) \to \left(H^2\big(M_\gamma, \wt{\mu}\big)\right)^*$ is invertible with norm
\begin{equation*}
\left\| {\chih}_{\gamma}^{\,-1} \right\| = \norm{\bm{L}}_{L^2(M_\gamma,\wt{\mu})}.% = \sqrt{\frac{\gamma}{2\sqrt{\gamma-1}}}.
\end{equation*}
\end{theorem}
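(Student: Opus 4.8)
The plan is to reduce the statement to a soft fact about oblique projections on a Hilbert space, after transporting the bilinear form $(f,g)\mapsto\int_{M_\gamma}fg\,\nu$ into the inner product of $\ch := L^2(M_\gamma,\wt\mu)$. Write $H := H^2(M_\gamma,\wt\mu) = \bm{L}(\ch)$; this is a closed subspace since $\bm{L}$ is a bounded projection (Theorem~\ref{T:boundedness-mu_r}, as $\wt\mu$ is a constant multiple of some $\mu_r$ with $r\in\ci_0$). Let $\bm{L}^{(*,\wt\mu)}$ denote the $\ch$-adjoint of $\bm{L}$ and set $H^\sharp := \operatorname{ran}\bm{L}^{(*,\wt\mu)} = (\ker\bm{L})^\perp$, also closed. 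Expressing $\nu,\wt\mu,\wt\mu^*$ against the common reference form $d\alpha\wedge d\theta\wedge ds$, the normalization of $\nu$ fixed in Section~\ref{S:Duality} --- equivalently, the sharpness of the Cauchy--Schwarz inequality relating $\wt\mu$, $\wt\mu^*$ and $\nu$ --- says precisely that the density of $\nu$ is the pointwise geometric mean of those of $\wt\mu$ and $\wt\mu^*$. Hence the conjugate-linear map $Jg := \bar g\,\big(d\wt\mu^*/d\wt\mu\big)^{1/2}$ is an \emph{isometric} isomorphism $L^2(M_\gamma,\wt\mu^*)\to\ch$ and satisfies $\int_{M_\gamma}fg\,\nu = \langle f,Jg\rangle_{\wt\mu}$ for all $f\in\ch$ and $g\in L^2(M_\gamma,\wt\mu^*)$. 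In particular $\chih_\gamma(g)$ is the restriction to $H$ of the functional $\langle\,\cdot\,,Jg\rangle_{\wt\mu}$, so $\chih_\gamma$ is well defined and bounded with $\|\chih_\gamma\|\le1$.

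The geometric heart of the argument is the identity $J\big(H^2_{\sf dual}(M_\gamma,\wt\mu^*)\big) = H^\sharp$. Unwinding $H^2_{\sf dual}(M_\gamma,\wt\mu^*) := w^*\big(\bm{L}_{M_{\gamma^*}}(L^2(M_{\gamma^*},\wt\mu_{M_{\gamma^*}}))\big)$, this is the assertion that conjugating $\bm{L}^{(*,\wt\mu)}$ by $J$ recovers the pullback under the duality diffeomorphism $w\colon M_\gamma\to M_{\gamma^*}$ of the Leray transform $\bm{L}_{M_{\gamma^*}}$ of the dual hypersurface. This is the point at which projective duality enters: it follows from the transformation law for the Leray transform under $w$ together with the behavior of the preferred measures under $w$, both established in Section~\ref{S:Duality}, via a computation tracking the densities of $\wt\mu,\wt\mu^*$ and the Jacobian of $w$. (In particular $H^2_{\sf dual}(M_\gamma,\wt\mu^*)$ is closed, being the $J$-image of the closed subspace $H^\sharp$.)

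Granting these two facts, $\chih_\gamma = \Psi\circ\big(J|_{H^2_{\sf dual}}\big)$, where $J|_{H^2_{\sf dual}}\colon H^2_{\sf dual}\to H^\sharp$ is an isometric isomorphism and $\Psi\colon H^\sharp\to H^*$ sends $h$ to the functional $f\mapsto\langle f,h\rangle_{\wt\mu}$ on $H$. It therefore suffices to prove the abstract statement: \emph{for any bounded projection $P$ on a Hilbert space, with $R=\operatorname{ran}P$ and $R^\sharp=\operatorname{ran}P^*=(\ker P)^\perp$, the map $\Psi\colon R^\sharp\to R^*$, $\Psi(h)=\langle\,\cdot\,,h\rangle|_R$, is a Banach-space isomorphism with $\|\Psi^{-1}\|=\|P\|$.} Injectivity follows from $R^\sharp\cap R^\perp=\operatorname{ran}P^*\cap\ker P^*=\{0\}$. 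For surjectivity, given $\Lambda\in R^*$ the functional $\Lambda\circ P$ on $\ch$ annihilates $\ker P$, so its Riesz representative $h$ lies in $(\ker P)^\perp=R^\sharp$ and satisfies $\langle f,h\rangle=\Lambda(Pf)=\Lambda(f)$ for all $f\in R$. For the norm of the inverse: if $\Lambda\in R^*$ has representative $h\in R^\sharp$, then $\|\Lambda\|_{R^*}=\sup_{f\in R,\,\|f\|\le1}|\langle f,h\rangle|=\|\pi_R h\|$ with $\pi_R$ the orthogonal projection onto $R$, and as $\Lambda$ runs over $R^*$ the representative $h$ runs over all of $R^\sharp$; hence $\|\Psi^{-1}\|=\sup_{h\in R^\sharp\setminus\{0\}}\|h\|/\|\pi_R h\|$, which equals $\|P^*\|=\|P\|$ by the standard identity $\|Q\|=\sup_{v\in\operatorname{ran}Q\setminus\{0\}}\|v\|/\operatorname{dist}(v,\ker Q)$ for a bounded projection $Q$, applied to $Q=P^*$ (whose kernel is $R^\perp$, so that $\operatorname{dist}(v,\ker P^*)=\|\pi_R v\|$).

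Taking $P=\bm{L}$ on $\ch=L^2(M_\gamma,\wt\mu)$ then gives that $\Psi$ is invertible with $\|\Psi^{-1}\|=\|\bm{L}\|_{L^2(M_\gamma,\wt\mu)}$; since $J|_{H^2_{\sf dual}}$ is an isometry, $\chih_\gamma$ is invertible and $\|\chih_\gamma^{\,-1}\|=\|\Psi^{-1}\|=\|\bm{L}\|_{L^2(M_\gamma,\wt\mu)}$ --- which is the assertion, and in particular $\chih_\gamma$ furnishes a faithful representation of $H^*$. I expect the one genuinely substantive step to be the duality identification $J\big(H^2_{\sf dual}\big)=\operatorname{ran}\bm{L}^{(*,\wt\mu)}$ of the second paragraph: this is where the projective-dual structure of $M_\gamma$ is used, and it calls for careful bookkeeping of the three measures $\wt\mu,\wt\mu^*,\nu$ and of the map $w$. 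By contrast, the transport of the pairing and the abstract projection lemma are routine.
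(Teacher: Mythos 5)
Your proof is correct, but it runs along a genuinely different track from the paper's. The paper works directly with the bilinear pairing: injectivity of $\chih_\gamma$ by testing $\chih_\gamma(g)$ against $\bm{L}_\gamma\wt{\bm{R}}\,\bar g$, surjectivity by representing $T\in (H^2(M_\gamma,\wt\mu))^*$ by some $h\in L^2(M_\gamma,\wt\mu^*)$ and showing $T=\chih_\gamma(\wt{\bm L}_\gamma h)$ via $\langle\!\langle \bm{L}_\gamma f,g\rangle\!\rangle=\langle\!\langle f,\wt{\bm L}_\gamma g\rangle\!\rangle$ (Lemma \ref{L:PairingAdjoint}), and the norm identity from $\normm{\wt{\bm L}_\gamma}_{\wt\mu^*}=\norm{\bm{L}_\gamma}_{\wt\mu}$ plus a near-extremal choice of $h$. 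You instead transport the pairing into $L^2(M_\gamma,\wt\mu)$ via the conjugate-linear isometry $J$, identify $J\big(H^2_{\sf dual}\big)$ with $\operatorname{ran}\bm{L}^{(*,\wt\mu)}=(\ker\bm{L})^\perp$, and finish with the abstract oblique-projection fact $\norm{\Psi^{-1}}=\norm{P}$, proved via $\norm{Q}=\sup_{v\in\operatorname{ran}Q\setminus\{0\}}\norm{v}/\operatorname{dist}(v,\ker Q)$ applied to $Q=P^*$; all of this checks out, including the norm bookkeeping through the conjugate-linear maps. Your route isolates the functional analysis (valid for any bounded projection on a Hilbert space) from a single piece of duality input, whereas the paper's route needs no structure theory but redoes the extremal argument by hand. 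As for the step you flag as substantive: no fresh computation with $w^{A_\gamma}$ is actually required, since $J$ is (up to a constant) $\wt{\bm R}\circ\mathfrak{c}$, Corollary \ref{C:dual-Leray-in-terms-of-adjoint} gives $\wt{\bm L}_\gamma=\mathfrak{c}\circ\bm{L}_\gamma^{(*,\sigma)}\circ\mathfrak{c}$, and the proof of Theorem \ref{T:leray-adjoint-wrt-mu_r} gives $\bm{L}^{(*,\wt\mu)}=\wt{\bm R}\,\bm{L}^{(*,\sigma)}\,\wt{\bm R}^{-1}$, so $J\,\wt{\bm L}_\gamma\,J^{-1}=\bm{L}^{(*,\wt\mu)}$ and $J$ carries $H^2_{\sf dual}=\operatorname{ran}\wt{\bm L}_\gamma$ (Proposition \ref{P:dual-Hardy-space-alt-rep}) onto $\operatorname{ran}\bm{L}^{(*,\wt\mu)}$, exactly as your argument needs.
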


%%%%%%%%%%%%%%%%%%%%%%%%%%%%%%%%%
%Intro changes end here
%%%%%%%%%%%%%%%%%%%%%%%%%%%%%%%%%

The paper is organized as follows.  Sections \ref{SS:LerayTransform}-\ref{SS:EssNormConj} cover necessary background material, while Section \ref{SS:notation} is included to collect frequently used notation.  Section \ref{SS:Symmetries} is concerned with geometric symmetries of $M_\gamma$, objects that greatly assist our analysis of $\bm{L}_\gamma$ in Sections \ref{SS:LerayReparam}-\ref{SS:norm-of-subLeray}.  (The computations in Section \ref{S:AnalysisLgamma} only involve the measure $\sigma$, but the stage there is set for the larger class of measures considered later.)  In Section \ref{SS:MgammaAtInfinity}, we study the geometry of $M_\gamma$ at infinity.  Section \ref{S:SymbolFunction} mainly concerns properties of the $\sigma$-symbol function $C_\sigma(\gamma,k)$.  In Section \ref{SS:Leray-boundedness-m_r} we widen our scope and examine the action of $\bm{L}$ with respect to the more general measures $\mu_r$.  Sections \ref{SS:Leray-adjoints} and \ref{SS:related-operators} focus on adjoints and other related operators.  Section \ref{S:Duality} begins with an introduction to projective duality, leading directly into a detailed study of the projective dual of $M_\gamma$ in Section \ref{SS:dual-hypersurface-of-M_gamma}.  Section \ref{SS:distinguished-measures} is concerned with certain distinguished measures on $M_\gamma$, while \ref{SS:dual-Hardy} and \ref{SS:bilinear-pairing-of-hardy-spaces} set up the invariant Hardy spaces and their dual spaces. A computationally intensive Appendix is included at the end of the paper to establish crucial facts without disrupting the exposition of our main results.

The authors would like to emphasize their surprise at the exactness of many results in this paper.  We suspect that mathematicians interested in special function theory may be particularly drawn to computations found in Sections \ref{SS:props-of-C_sigma(gamma,k)}, \ref{SS:ProofOfC(gamma,k)DecreasingWithk}, \ref{SS:Leray-boundedness-m_r} and the Appendix.

\section{Background and preliminaries}\label{S:Prelims}

\subsection{The Leray transform}\label{SS:LerayTransform}

A domain $\Omega \subset \C^n$ is said to be $\C$-convex if its intersection with any complex line is both connected and simply connected (when non-empty).  It is said to be $\C$-linearly convex if the complement can be written as the union of complex hyperplanes.  These two notions coincide when the boundary of $\Omega$ is $C^1$ (see Section 2.5 in \cite{ScandBook04}).  A hypersurface $\cs$ bounding a domain $\Omega$ is said to be {\em $\C$-convex} (resp. {\em $\C$-linearly convex}) if $\Omega$ is $\C$-convex (resp. $\C$-linearly convex).  A hypersurface $\cs$ that is locally projectively equivalent to a strongly convex hypersurface is said to be  {\em strongly $\C$-convex}.  (Section 5.2 of \cite{Bar16} discusses equivalent characterizations of strong $\C$-convexity.)

Let $\cs\subset\C^n$ be a $\C$-linearly convex hypersurface with defining function $\rho$ and $f$ a function defined on $\cs$.  The Leray transform maps $f$ to a holomorphic function on $\Omega$ whenever the following integral makes sense (Theorem \ref{T:range-of-r-building-holomorphic-full-Leray} in the Appendix gives conditions on $f$ which guarantee the holomorphicity of $\bm{L}f$ on $\Omega_\gamma$):
\begin{subequations}\label{E:Leray}
\begin{align}
\bm{L}_{\cs}f(z) &:= \int_{\cs} f(\zeta) \, \mathscr{L}_{\cs}(z,\zeta), \label{E:LerayIntegralFormula} \\
\mathscr{L}_{\cs}(z,\zeta) &:= \frac{1}{(2\pi i)^n}  \frac{\partial\rho(\zeta) \wedge (\bar \partial\partial\rho(\zeta))^{n-1}}{\left< \partial\rho(\zeta),(\zeta - z) \right>^n} \label{E:LerayKernel}.
\end{align}
Note that the {\em Leray kernel} $\mathscr{L}_{\cs}$ is a form of bi-degree $(n,n-1)$.  Here $\left<\cdot,\cdot \right>$ denotes the natural {\em bilinear} pairing between $(1,0)$-forms and vectors.  This definition is independent of the choice of $\rho$; see Chapter IV of \cite{Range86} for more information.

Separate the Leray kernel \eqref{E:LerayKernel} into two pieces, each of which {\em does} depend on the choice of defining function:
\begin{align}
\ell_\rho(z,\zeta) &:= \frac{1}{\left< \partial\rho(\zeta),(\zeta - z) \right>^n} \label{E:LerayFunction}\\
\lambda_\rho(\zeta) &:= \frac{1}{(2\pi i)^n} \, \partial\rho(\zeta) \wedge (\bar \partial\partial\rho(\zeta))^{n-1}. \label{E:LerayLeviMeasure}
\end{align}
\end{subequations}
Refer to $\lambda_\rho$ as {\em the Leray-Levi measure for the defining function $\rho$}.  We often suppress the subscript $\rho$ when it is clear from context.

Now calculate the pieces of the Leray transform on $M_\gamma$.  Using the defining function
\begin{equation}\label{E:DefiningFunction}
\rho(z) = |z_1|^\gamma - \im{(z_2)},
\end{equation}
equations \eqref{E:LerayFunction} and \eqref{E:LerayLeviMeasure} yield
\begin{subequations}
\begin{align}
\ell_\rho(z,\zeta) &= \frac{4}{\left(\gamma\bar{\zeta}_1|\zeta_1|^{\gamma-2}(\zeta_1-z_1)+i(\zeta_2-z_2)\right)^2}\label{E:ell_M(z,zeta)},\\
\lambda_\rho(\zeta) &= \frac{\gamma^2}{32\pi^2 i} |\zeta_1|^{\gamma-2}\, d\zeta_2\wedge d\bar{\zeta}_1\wedge d\zeta_1. \label{E:LerayLevi}
\end{align}
In this paper, the Leray transform $\bm{L}_{M_\gamma}$ will frequently be denoted by $\bm{L}$, or sometimes by $\bm{L}_\gamma$ when it is important to keep track of the exponent.
Equations \eqref{E:LerayIntegralFormula} and \eqref{E:LerayKernel} give
\begin{align}
\bm{L} f(z) &= \frac{\gamma^2}{8\pi^2 i} \int_{M_\gamma} f(\zeta)\frac{|\zeta_1|^{\gamma-2}d\zeta_2\wedge d\bar{\zeta}_1\wedge d\zeta_1}{\left[\gamma\bar{\zeta}_1|\zeta_1|^{\gamma-2}(\zeta_1-z_1)+i(\zeta_2-z_2)\right]^2}. \label{E:LerayMgamma}
\end{align}
\end{subequations}

\subsection{Projective invariants and transformation laws}\label{S:ProjStuff}

Recall that, with respect to the standard affinization, automorphisms of  $\C\mathbb{P}^2$ have the form 
\begin{equation}\label{E:Xhat}
\hat X\colon(z_1,z_2)\mapsto
\left( \frac{D+Ez_1+Fz_2}{A+Bz_1+Cz_2}, \frac{G+Hz_1+Iz_2}{A+Bz_1+Cz_2} \right),
\end{equation}
where $\hat X$ is induced by the invertible matrix 
\begin{equation}\label{E:matrix-X}
X:=\begin{pmatrix}
A  & B  &  C \\
D  & E  &  F \\
 G &  H &   I
\end{pmatrix}.
\end{equation}
In homogeneous coordinates, these are linear maps.  Projective automorphisms map strongly $\C$-convex hypersurfaces to strongly $\C$-convex hypersurfaces; see Section 5.2 of \cite{Bar16}.  See also \cite{Bol08, Bol10} for a detailed treatment of M\"obius geometry on hypersurfaces.

Direct computation reveals the Jacobian determinant of $\hat X$ is given by
\[
\det\hat X'=\frac{\det X} {(A+Bz_1+Cz_2)^3}.
\]  
A similar formula holds in dimension $n$ with exponent $n+1$ in the denominator.  Below we will have occasion to refer to $(\det\hat X')^{n/(n+1)}:$  it is clear that there are $n+1$ distinct well-defined branches of this function.  It is understood that the same branch is to be used within a single computation when this expression appears repeatedly.

\subsubsection{A projective scalar invariant} \label{SSS:BetaInvariant}
For a strongly pseudoconvex hypersurface $\cs\subset\C^2$ with defining function $\rho$ and a point $\zeta\in\cs$, the scalar quantity 
\begin{equation}\label{D:BetaInvariant}
\beta_\cs(\zeta) := \left|\frac{\det
{ \begin{pmatrix}
 0  & \rho_{z_1}  & \rho_{z_2}  \\
\rho_{z_1}  &  \rho_{z_1 z_1} & \rho_{z_2 z_1}  \\
\rho_{z_2}  & \rho_{z_1 z_2}  &   \rho_{z_2 z_2} 
\end{pmatrix}
}}
{
\det \begin{pmatrix}
 0  & \rho_{z_1}  & \rho_{z_2}  \\
\rho_{\bar z_ 1 }  & \rho_{z_1 \bar z_ 1} & \rho_{z_2 \bar z_ 1}  \\
\rho_{\bar z_2 }  & \rho_{z_1 \bar z_ 2}  &   \rho_{z_2 \bar z_ 2}  
\end{pmatrix}}
(\zeta)\right|
\end{equation}
(where subscripts denote derivatives) is directly invariant under projective automorphisms.  This scalar is the absolute value of an invariant tensor ${\mathscr B}_S$ introduced by the first author in Section 5.3 of \cite{Bar16}.  A computation shows that for all $\zeta\in M_\gamma$,
\begin{equation}\label{E:beta-invariant-on-Mgamma}
\beta_{M_\gamma}(\zeta)=\frac{|\gamma-2|}{\gamma}.
\end{equation}

\subsubsection{Fefferman surface measure}\label{SSS:FeffermanMeasure}
On page 259 of \cite{Fef79}, Fefferman introduced an invariant measure $\mu_\cs^{\sf Fef}$ for an arbitrary
smooth strongly pseudoconvex hypersurface $\cs\subset\C^n$.  Viewing
$\mu_\cs^{\sf Fef}$ as a positive
$(2n-1)$-form,  it is characterized by the equation 
\begin{equation}\label{E:fefdef}
\mu_\cs^{\sf Fef}\w d\rho =  c_n \,M(\rho)^{1/(n+1)} \omega_{\C^n},
\end{equation}
where $\omega_{\C^n}$ is the Euclidean volume $2n$-form, $\rho$ is a defining
function for $\cs$  and
$M(\rho)$ denotes Fefferman's complex Monge-Amp\`ere operator defined by
\begin{equation}\label{E:madef}
M(\rho)=(-1)^{n} \det
\begin{pmatrix}
\rho & \rho_{z_j}\\
\rho_{\bar{z}_k} & \rho_{z_{j}{\bar{z}_k}}
\end{pmatrix}.
\end{equation}
(The subscripts denote differentiation and $c_n$ is a dimensional constant.  In Section \ref{SS:distinguished-measures} we specify an explicit value of this constant that is convenient for the purposes of this paper.)

Fefferman's measure satisfies the transformation law
\begin{equation}\label{E:TransGen}
\Psi^*\left( \mu_{\Psi(\cs)}^{\sf Fef}\right)=\left|\det\Psi'\right|^{2n/(n+1)}\,\mu_\cs^{\sf Fef}
\end{equation}
for CR diffeomorphisms $\Psi$; see \cite{Hirachi90}.

If  a Szeg\H{o} projection operator for a compact strongly pseudoconvex $\cs$ is defined with respect to  $\mu_\cs^{\sf Fef}$  and $\Psi$ is a CR diffeomorphism with a well-defined branch of $\left(\det\Psi'\right)^{n/(n+1)}$, then from \eqref{E:TransGen} we find that the operator 
\[
f\mapsto \left(\det\Psi'\right)^{n/(n+1)}\cdot \left(f\circ\Psi\right)
\]
maps $L^2\big(\Psi(\cs),\mu_{\Psi(\cs)}^{\sf Fef}\big)$ isometrically to $L^2\left(\cs,\mu_\cs^{\sf Fef}\right)$, preserving the corresponding Hardy spaces. This leads to the transformation law demonstrated in \cite{Hirachi90}:
\begin{equation}\label{E:Hirachi-szego-trans-law}
\bm{S}_{\cs} \left(\left( \det \Psi'\right)^{n/(n+1)}\left(f\circ \Psi\right)\right) = \left( \det \Psi'\right)^{n/(n+1)}
\left(\left( \bm{S}_{\Psi(\cs)} f\right)\circ\Psi\right).
\end{equation}

From \eqref{E:DefiningFunction}, \eqref{E:fefdef} and \eqref{E:madef}, the Fefferman measure on $M_\gamma$ is seen to be given by
\begin{align}\label{E:FefMeasureMgamma}
\mu_{M_\gamma}^{\sf Fef} &= c_2 \frac{ \gamma^{2/3}}{2^{7/3}i} |\zeta_1|^{\frac{\gamma-2}{3}}\,d\zeta_2\w d\bar{\zeta}_1\w d\zeta_1.
\end{align}

\subsubsection{The preferred measure}\label{SSS:mod-Fefferman}
For the general theory of the Leray transform it is useful to work with a modified version of Fefferman's measure.  In Section 8 of \cite{Bar16}, the first author defines a projective modification of $\mu_\cs^{\sf Fef}$ tailored to suit a natural pairing of Hardy spaces for which the Fefferman measure is not optimal; we examine this pairing in Section \ref{SS:bilinear-pairing-of-hardy-spaces}.

Throughout this paper, we refer to this modification of Fefferman's measure as the {\em preferred measure} and denote it by $\wt{\mu}_\cs$.  In two dimensions it is given by the formula
\begin{equation*}
\widetilde{\mu}_\cs(\zeta)=\dfrac{\mu_\cs^{\sf Fef}(\zeta)}{\sqrt[3]{1-\beta_\cs(\zeta)^2}},
\end{equation*}
where $\beta_\cs(\zeta)$ is defined in \eqref{D:BetaInvariant}.  By \eqref{E:beta-invariant-on-Mgamma}, this invariant is constant on $M_\gamma$.  Thus,
\begin{align}
\widetilde{\mu}_{M_\gamma}
&=\frac{\gamma^{2/3}}{2^{2/3}{(\gamma-1)}^{1/3}}
\,\mu_{M_\gamma}^{\sf Fef}\notag\\
&= c_2 \frac{\gamma^{4/3}}{8(\gamma-1)^{1/3} i}\, |\zeta_1|^{\frac{\gamma-2}{3}} \,d\zeta_2\w d\bar{\zeta}_1\w d\zeta_1. \label{E:ModifedFefMGamma}
\end{align}
Equation \eqref{E:TransGen} and the projective invariance of $\beta_\cs(\zeta)$ now show that
\begin{equation}\label{E:TransLFT}
\Psi^*\left( \widetilde{\mu}_{\Psi(\cs)}\right)=\left|\det\Psi'\right|^{4/3}\,\widetilde{\mu}_\cs
\end{equation}
for any projective automorphism $\Psi$.

For such $\Psi$ we can use \eqref{E:TransLFT} to deduce that a transformation law analogous to \eqref{E:Hirachi-szego-trans-law} holds for Szeg\H{o} projections based on the preferred measure $\wt{\mu}_\cs$.  If $\cs$ is strongly $\C$-convex and $\Psi$ is a projective automorphism, then Bolt \cite{Bol05} has shown that the Leray transform satisfies the same transformation law.  In two dimensions, this is
\begin{equation}\label{E:LerayProjTrans}
 \bm{L}_{\cs} \left(\left( \det \Psi'\right)^{2/3}\left(f\circ \Psi\right)\right) = \left( \det \Psi'\right)^{2/3}
\left(\left( \bm{L}_{\Psi(\cs)} f\right)\circ\Psi\right).
\end{equation}

\subsection{Projective invariants and the essential norm conjecture}\label{SS:EssNormConj}

In \cite{BarEdh17}, the authors conjecture a quantitative relationship between the projective invariant $\beta_\cs(\zeta)$ defined in \eqref{D:BetaInvariant} and the essential norm of the Leray transform on smoothly bounded, strongly $\C$-convex hypersurfaces $\cs \subset \C^2$.  We restate a version of this conjecture here:

\begin{conjecture}\label{C:EssentialNormConj}
Let $\cs \subset \C^2$ be a smoothly bounded, strongly $\C$-convex hypersurface and $\bm{L}$ its Leray transform.  There is a ``reasonable" family $\sF$ of measures on $\cs$ such that if $\mu \in \sF$, the essential $L^2(\cs,\mu)$-norm is given by
\begin{equation}\label{E:LerayEssentialConjecture}
\norm{\bm{L}}_{{L^2_{\sf ess}(\cs,\mu)}} = \sup_{\zeta\in \cs }\frac{1}{\sqrt[4]{1-\beta_\cs(\zeta)^2}}.
\end{equation}
\end{conjecture}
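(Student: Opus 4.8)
Since the statement is a conjecture, what follows is a proposed line of attack rather than a complete argument; the engine of the strategy is Theorem~\ref{T:HighFreqNorm}, which establishes the ``high-frequency'' analogue of \eqref{E:LerayEssentialConjecture} on the unbounded models $M_\gamma$. The key numerical coincidence is that the constant value of the right-hand side of \eqref{E:LerayEssentialConjecture} on $M_\gamma$ equals the high-frequency limit norm: using \eqref{E:beta-invariant-on-Mgamma},
\[
\frac{1}{\sqrt[4]{1-\beta_{M_\gamma}^2}} = \left(\frac{\gamma^2}{4(\gamma-1)}\right)^{1/4} = \sqrt{\frac{\gamma}{2\sqrt{\gamma-1}}} = \norm{\bm{L}}_{L_{HF}^2(M_\gamma,\mu_r)}.
\]
This suggests treating $M_\gamma$ as the local osculating model at a boundary point: given a smoothly bounded, strongly $\C$-convex $\cs\subset\C^2$ and $\zeta_0\in\cs$, a projective normalization of the contact of $\cs$ with its tangent complex hyperplane should bring $\cs$ into agreement, to the order that controls the singularity of the Leray kernel, with $M_{\gamma(\zeta_0)}$, where $\gamma(\zeta_0)\ge 1$ is determined by $|\gamma-2|/\gamma=\beta_\cs(\zeta_0)$ (possible by the projective invariance of $\beta$).

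For the lower bound one fixes $\zeta_0$ with $\beta_\cs(\zeta_0)$ near $\sup_\cs\beta_\cs$ and builds test functions $f_j$ concentrating at $\zeta_0$ in the non-isotropic (parabolic) fashion dictated by the Leray kernel. Such sequences tend weakly to $0$, so any compact operator sends them to $0$ in norm; hence $\norm{\bm{L}}_{L^2_{\sf ess}(\cs,\mu)}$ is at least the limiting ratio $\norm{\bm{L}_\cs f_j}/\norm{f_j}$. Using the projective transformation law \eqref{E:LerayProjTrans}, the identification of the osculating model, and the requirement that the ``reasonable'' measures $\mu\in\sF$ agree near $\zeta_0$ with a constant multiple of the preferred measure $\wt\mu_\cs$ (which is a multiple of some $\mu_r$ with $r\in\ci_0$), Theorem~\ref{T:HighFreqNorm} forces this ratio toward $(1-\beta_\cs(\zeta_0)^2)^{-1/4}$; taking the supremum over $\zeta_0$ gives ``$\ge$''.

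The upper bound is the harder direction. The plan is to show that, modulo compact operators on $L^2(\cs,\mu)$, $\bm{L}_\cs$ is controlled pointwise by its osculating model: cut $\bm{L}_\cs$ up with a partition of unity subordinate to a fine cover of $\cs$, replace each localized piece by the corresponding $M_{\gamma(\zeta)}$-model operator plus a remainder whose kernel is less singular (hence compact, indeed smoothing), and reassemble via a Cotlar--Stein almost-orthogonality estimate. Each model piece contributes at most $\sup_k\norm{\bm{L}_k}_{L^2(M_{\gamma(\zeta)},\mu_r)}$, which by Theorem~\ref{T:Lk-boundedness} and the monotone convergence of $C_{\mu_r}(\gamma,k)$ to its high-frequency limit equals $(1-\beta_\cs(\zeta)^2)^{-1/4}$, so the supremum over $\zeta$ bounds the essential norm from above. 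Two obstacles dominate: (i) pinning down $\sF$ — it should consist precisely of measures matching $\wt\mu_\cs$ locally up to a constant, and one must check this local matching suffices for both the model reduction and the compactness of remainders; and (ii), more seriously, proving the compactness of the kernel remainders and the almost-orthogonality patching on a genuinely curved hypersurface, where the exact $S^1$-symmetry that diagonalizes everything on $M_\gamma$ is unavailable — this harmonic-analytic step is presumably why the statement remains open. A reasonable first milestone would be to prove \eqref{E:LerayEssentialConjecture} for compact perturbations of the Reinhardt models of \cite{BarLan09} and for the $L^p$-ball boundaries of \cite{ShelahThesis}, where residual symmetry still helps.
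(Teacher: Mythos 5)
You are attempting a statement that the paper leaves as an open conjecture; the paper supplies no proof of Conjecture \ref{C:EssentialNormConj}, and your proposal does not supply one either — it is a program whose two load-bearing steps are precisely the missing mathematics. Both your lower bound (where ``Theorem \ref{T:HighFreqNorm} forces this ratio toward $(1-\beta_\cs(\zeta_0)^2)^{-1/4}$'') and your upper bound (replacement of localized pieces by model operators with compact or smoothing remainders, reassembled by Cotlar--Stein) presuppose a localization principle asserting that, modulo compact operators, $\bm{L}_\cs$ near a point is governed by a homogeneous model and that only the model's high-frequency behavior survives. No such principle is proved here or in the paper, and on a genuinely curved bounded hypersurface there is no $S^1$-grading, so even the meaning of ``high frequency'' must be built from the non-isotropic structure you only gesture at; acknowledging this in your item (ii) does not narrow the gap — it \emph{is} the conjecture. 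Your lower bound has the same hidden dependence: a weakly null concentrating sequence is killed by compacts, but to evaluate the limiting ratio $\norm{\bm{L}_\cs f_j}/\norm{f_j}$ you need exactly the transfer-to-model statement you defer, together with a reason the supremum is approached through high-frequency (rather than $k=0$) behavior of the model, where the norm would instead be $(1-\beta^2)^{-1/2}$.

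There is also a structural misstep: the osculating object at a smooth point of a strongly $\C$-convex $\cs\subset\C^2$ is the quadric $\cs_{\beta_\cs(\zeta_0)}$ of \eqref{D:def-of-S_beta} (second-order contact after projective normalization), not $M_{\gamma(\zeta_0)}$. For $\gamma\neq2$, $M_\gamma$ fails to be $C^2$ along $\zeta_1=0$ when $1<\gamma<2$ and is only weakly $\C$-convex there when $\gamma>2$, and at its own smooth points its second-order model is again some $\cs_\beta$; so ``agreement to the order that controls the singularity of the Leray kernel'' with $M_{\gamma(\zeta_0)}$ cannot hold at a generic smooth boundary point. The paper uses $M_\gamma$ only as global evidence: its grade-essential (high-frequency) norm matches the conjectured right-hand side, while its genuine essential norm is the \emph{square} of that quantity by \eqref{E:norm-and-ess-norm-agree-Mgamma} — a warning that essential-norm-type quantities do not transfer naively from unbounded models. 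A program closer to the evidence assembled in the paper would localize to the $\cs_\beta$ models and the Reinhardt theorem of \cite{BarLan09} (cf.\ \cite{BarEdh17}, and the wider symmetric class treated in \cite{EdhShe22}); your proposed first milestone (compact perturbations of the Reinhardt models) is reasonable, but as written the proposal neither closes the conjecture nor reproduces an argument from the paper, which presents the $M_\gamma$ computations as supporting evidence only.
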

At this stage, the word ``reasonable" is left intentionally vague, but there is good reason to expect such a relationship.  This has been previously established (Theorem 3 in \cite{BarLan09}) when $\cs$ is the boundary of a $C^2$-smooth, bounded, strongly convex {\em Reinhardt} domain and $\mu$ is any continuous, positive multiple of surface measure on $\cs$.

While Conjecture \ref{C:EssentialNormConj} only pertains to {\em bounded} hypersurfaces, the invariant $\beta_\cs(\zeta)$ does not require $\cs$ to be bounded.  It was noted in \cite{Bar16} that this invariant is constant for both $\cs_\beta$ (defined in \eqref{D:def-of-S_beta}) and $M_\gamma$.  It easily follows from \eqref{D:BetaInvariant} that
\begin{equation*}
\beta_{\cs_\beta}(\zeta) = \beta \qquad \textrm{and}\qquad \beta_{M_\gamma} = \frac{|\gamma-2|}{\gamma},
\end{equation*}
and consequently, the quantity on the right hand side of \eqref{E:LerayEssentialConjecture} is equal to 
\begin{equation}\label{E:important-quantities-Sbeta-Mgamma}
\frac{1}{\sqrt[4]{1-\beta^2}} \,\, \textrm{when} \,\,  \cs=\cs_\beta \qquad \textrm{and} \qquad \sqrt{\frac{\gamma}{2\sqrt{\gamma-1}}} \,\, \textrm{when} \,\, \cs = M_\gamma.
\end{equation}
We raise the following question: 

\noindent $(\star)$ {\em Are the quantities in \eqref{E:important-quantities-Sbeta-Mgamma} connected to the Leray transform in a meaningful way?}

In \cite{BarEdh17}, the authors gave an affirmative answer to $(\star)$ on $\cs_\beta$ by proving that 
\begin{equation*}
\norm{\bm{L}}_{L^2(\cs_\beta,\sigma)} = \frac{1}{\sqrt[4]{1-\beta^2}} = \norm{\bm{L}}_{L_{\sf ess}^2(\cs_\beta,\sigma)},
\end{equation*}
where $\sigma = dx_1\w dy_1 \w dx_2$ is (a constant multiple of) the Leray-Levi measure corresponding to the defining function $\rho(\zeta) = |\zeta_1|^2 + \beta\re{(\zeta_1^2)}-\im{(\zeta_2)}$.  The norm coincides with the essential norm because the former is attained on an infinite dimensional subspace of $L^2(\cs_\beta,\sigma)$.

In the present paper, $(\star)$ is answered affirmatively on $M_\gamma$, but the interpretation of this quantity is slightly different.  Indeed, Theorem \ref{E:Norm} and Remark \ref{R:Leray-norm-achieved-on-infinite-dimensional-space} show that
\begin{equation}\label{E:norm-and-ess-norm-agree-Mgamma}
\norm{\bm{L}}_{L^2(M_\gamma,\sigma)} = \frac{\gamma}{2\sqrt{\gamma-1}} = \norm{\bm{L}}_{L_{\sf ess}^2(M_\gamma,\sigma)},
\end{equation}
where now $\sigma = \alpha^{\gamma-1}d\alpha\w d\theta\w ds$ is (a constant multiple of) the Leray-Levi measure corresponding to the defining function $\rho(\zeta) = |\zeta_1|^\gamma - \im{(\zeta_2)}$.  Notice that \eqref{E:norm-and-ess-norm-agree-Mgamma} is the {\em square} of the desired quantity in \eqref{E:important-quantities-Sbeta-Mgamma}.  
But this desired quantity does naturally arise (Theorem \ref{T:HighFreqNorm}) as the high-frequency limit norm of $\bm{L}$ in the space $L^2(M_\gamma,\mu_r)$ for {\em every} measure of the form $\mu_r = \alpha^{r}d\alpha\w d\theta\w ds$, $r\in\R$, (which clearly includes $\sigma$):
\begin{equation*}
\limsup_{k\to\infty} \norm{\bm{L}_k}_{L^2(M_\gamma,\mu_r)} = \sqrt{\frac{\gamma}{2\sqrt{\gamma-1}}}.
\end{equation*}

This suggests a connection between the geometric invariant $\beta_\cs$ on similar unbounded hypersurfaces (those admitting both $S^1$ and $\R$ actions) and the behavior of the Leray transform at high frequencies.  This connection is further developed in the coming work of Edholm and Shelah \cite{EdhShe22}, where a much more general class of such hypersurfaces is shown to satisfy an analogue of Conjecture \ref{C:EssentialNormConj}.  In the two dimensional Reinhardt setting, the high frequency limit norm coincides with the essential norm, but $M_\gamma$ shows these two notions don't necessarily agree on unbounded domains.  The high frequency limit norm can, however, be conceptualized as a generalized essential norm as we now shall see.

\subsubsection{The grade-essential-norm} \label{SS:GenEss}

Given a Hilbert space $E$, recall that the norm-closure of the space of finite-rank operators is the two-sided ideal of compact operators, and that the essential norm of a bounded operator $\bm{T}\colon E\to E$ is the distance to the ideal of compact operators; equivalently, the essential norm of $\bm{T}$ is its norm in the quotient {\em Calkin algebra}.  See \cite{CowMacBook95} for more information.

Consider on the other hand a Hilbert space $E$ admitting an orthogonal decomposition
$$E= \displaystyle{\bigoplus_{k=-\infty}^\infty E_k }.$$   (The decomposition is also known as a {\em $\Z$-grading} of $E$.) Call a bounded operator 
$\bm{T}\colon E \to E$
{\em decomposable} if it may be written as 
$$\bm{T}=  \displaystyle{ \bigoplus_{k=-\infty}^\infty \bm{T}_k}$$ with each $\bm{T}_k\colon E_k\to E_k$.
  (One way this could arise is from Fourier series decomposition based on an $S^1$ action on $E$ commuting with $\bm{T}$.)  The decomposable operators form an algebra.

We will call a decomposable $\bm{T}$ {\em grade-compact} if it is the norm-limit of operators taking values in 
$\Span\left\{ E_{-k}, E_{-k+1}, \dots, E_k\right\}$; equivalently, if $\|\bm{T}_k\|\to 0$ as $|k|\to\infty$.

We define the {\em grade-essential-norm}  of a decomposable $\bm{T}$ to be the distance 
($\limsup \|\bm{T}_k\|$) from $\bm{T}$ to the space of 
grade-compact decomposable operators.  This is consistent with Lef\`evre's notion of a {\em generalized essential norm} \cite{Lefevre09}.  It is clear that the grade-essential-norm coincides with the high-frequency limit norm defined in \eqref{E:HighFreqLimSup} above.

%%% MAKE SURE WE USE THIS TERMINOLOGY IN SECTION 5 %%%

The grade-compact decomposable operators form a two-sided ideal in the algebra of decomposable operators; the grade-essential-norm induces a norm on the corresponding quotient algebra.

\begin{remark}\label{R:extended-grade-essential}
In Theorem \ref{T:grade-essential-for-F}, we will have occasion to consider situations in which finitely many of the $\bm{T}_k$ are unbounded; in such a setting we still refer to 
$\limsup \|\bm{T}_k\|$ as the grade-essential norm. $\lozenge$

\end{remark}

\subsection{Notation}\label{SS:notation}

Throughout this paper, a number of different measures occur and precise notation is required to prevent ambiguity.  We include this section to collect this material in one central location.

\subsubsection{Measures on $M_\gamma$}\label{S:meas}  

We begin by gathering formulas for previously discussed measures on $M_\gamma$, along with others that will appear in Sections \ref{S:AdjointsNormalOps} and \ref{S:Duality}.  The measures are given both in $(\zeta_1,\zeta_2)$ coordinates and in terms of the parametrization of $M_\gamma$ given by $(\alpha e^{i\theta},s + i\alpha^\gamma)$, where $\alpha=|\zeta_1|$, $\theta=\arg(\zeta_1)$, and $s=\re(\zeta_2)$.  

The {\em Leray-Levi measure} associated to the defining function $\rho(z) = |z_1|^\gamma - \im{(z_2)}$ is
\begin{align}
\lambda_\rho &= \frac{\gamma^2}{32\pi^2 i} |\zeta_1|^{\gamma-2}d\zeta_2\wedge d\bar{\zeta}_1\wedge d\zeta_1 \notag\\
&= \frac{\gamma^2}{16\pi^2}\, \alpha^{\gamma-1}\,ds \wedge d\alpha \wedge d\theta. \label{D:LambdaRhoReParam}
\end{align}

In \eqref{E:NuDef}, a family $\nu^A$ of {\em pairing measures} parametrized by matrices $A \in GL(n+1,\C)$ is introduced. The matrices are used to induce various affinizations of projective space and in our work each $M_\gamma$ corresponds to a matrix $A_\gamma$, defined in \eqref{A-Spec}.  For a fixed $\gamma>1$, the pairing measure we are interested in is
\begin{align}\label{E:sec2-measure-notation-nu}
\nu^{A_\gamma} = 4\lambda_\rho = \frac{\gamma^2}{4 \pi^2 }\, \alpha^{\gamma-1}ds \w d\alpha\w d\theta.
\end{align}

The {\em preferred measure} $\wt{\mu}_\cs$ defined in \eqref{E:ModifedFefMGamma} will be used in Section \ref{S:Duality} to construct a family of invariant Hardy spaces on $M_\gamma$.  It takes the form
\begin{align}\label{E:sec2-measure-notation-mu-tilde}
\widetilde{\mu}_{M_\gamma} &= c_2\frac{\gamma^{4/3}}{8(\gamma-1)^{1/3} i}\,|\zeta_1|^{\frac{\gamma-2}{3}}d\zeta_2\w d\bar{\zeta}_1\w d\zeta_1 \notag\\
&= c_2\frac{\gamma^{4/3}}{4(\gamma-1)^{1/3}}\, \alpha^{\frac{\gamma+1}{3}} ds \wedge d\alpha \wedge d\theta.
\end{align} 

Also in Section \ref{S:Duality}, we meet the counterpart to $\wt{\mu}_{\cs}$, which we call the {\em preferred dual measure}.  This is obtained by taking the preferred measure on the projective dual hypersurface and pulling it back to $M_\gamma$.  The representation of the dual hypersurface (and hence the measure) in affine space depends on an affinization of projective space, and therefore on a matrix.  These measures are used to construct the duals of the Hardy spaces mentioned above.  With respect to the aforementioned matrix $A_\gamma$, the preferred dual measure is given by
\begin{align}\label{E:sec2-measure-notation-mu-tilde-star}
\wt{\mu}_{M_\gamma}^{*,A_\gamma} &= c_2 \frac{\gamma^{4/3}}{8(\gamma-1)i} |\zeta_1|^{\frac{5(\gamma-2)}{3}} \,d\zeta_2\w d\bar{\zeta}_1\w d\zeta_1 \notag \\
&= c_2 \frac{\gamma^{4/3}}{4(\gamma-1)} \, \alpha^{\frac{5\gamma - 7}{3}} ds \wedge d\alpha \wedge d\theta.
\end{align}

Notice that these measures take a very specific form when written in $(\alpha,\theta,s)$ coordinates.  For $r \in \R$, define the measure
\begin{equation}\label{D:MeasureMu_r}
\mu_r := \alpha^{r}ds \wedge d\alpha \wedge d\theta = \frac{1}{2 i} |\zeta_1|^{r-1}d\zeta_2\wedge d\bar{\zeta}_1\wedge d\zeta_1,
\end{equation} 
together with the family of all constant positive multiples of $\mu_r$:
\begin{align}\label{D:FamilyMeasuresFr}
\sF_r = \{c \mu_r: c > 0 \}.
\end{align}
Thus, 
$$
\lambda_\rho, \nu^{A_\gamma} \in \sF_{\gamma-1}, \qquad \wt{\mu}_{M_\gamma} \in \sF_{\frac{\gamma+1}{3}}, \qquad \wt{\mu}_{M_\gamma}^{*,A_\gamma} \in \sF_{\frac{5\gamma-7}{3}}.$$
Finally, because the measure $\mu_{\gamma-1}$ occurs so frequently throughout the paper, we give it its own symbol:
\begin{align}\label{D:MeasureSigma}
\sigma = \alpha^{\gamma-1}ds \wedge d\alpha \wedge d\theta.
\end{align}

We emphasize the trivial fact that though multiplying a measure by a positive constant will uniformly scale the norms of functions, norms of operators remain unchanged.  Indeed, if $\mu$ is a measure on $M_\gamma$ and $\bm{T}$ is a bounded operator on $L^2(M_\gamma,\mu)$, then for all constants $c>0$ we have
\begin{align}\label{E:OpsUnchangedByMeasureScaling}
\norm{\bm{T}}_{L^2(M_\gamma,\mu)} = \norm{\bm{T}}_{L^2(M_\gamma,c\mu)}.
\end{align}

\subsubsection{Inner products, norms and adjoints}
Let $\mu$ be a measure on $M_\gamma$ and $f,g\in L^2(M_\gamma,\mu)$.  We often write their $L^2(M_\gamma,\mu)$-inner product by 
\begin{align}\label{E:3DimInnerProd}
\Big< f,g \Big>_\mu &:= \int_{M_\gamma} f(\zeta)\overline{g(\zeta)}\,\mu(\zeta).
\end{align}
The $L^2(M_\gamma,\mu)$-norm is similarly defined:
\begin{equation}\label{E:3DimNorm}
\norm{f}_{\mu} = \sqrt{\Big< f,f \Big>_{\mu}}.
\end{equation}

Now let $\mu_r$ be the measure in \eqref{D:MeasureMu_r}.  Throughout this paper, we frequently encounter the following one-dimensional integral related to $\mu_r$.  For $f,g \in L^2\big((0,\infty),\alpha^{r}d\alpha\big)$, define the one-dimensional inner product
\begin{align}\label{E:1DimInnerProd}
\Big< f,g \Big>_{(\mu_r,1)} := \int_0^\infty f(\alpha)\overline{g(\alpha)} \,\alpha^r d\alpha.
\end{align}
Similarly, define the norm
\begin{equation}\label{E:1DimNorm}
\norm{f}_{(\mu_r,1)} = \sqrt{\Big< f,f \Big>_{(\mu_r,1)}}.
\end{equation}

If $\bm{T} : L^2(M_\gamma,\mu) \to L^2(M_\gamma,\mu)$ is a bounded operator, it admits a bounded adjoint $\bm{T}^{(*,\mu)}$, satisfying
\begin{equation}
\Big< \bm{T} f,g \Big>_\mu  = \Big< f, \bm{T}^{(*,\mu)} g \Big>_\mu.
\end{equation}
It is trivial to see that
\begin{equation}
\bm{T}^{(*,\mu)} = \bm{T}^{(*,c \mu)} 
\end{equation}
for all constants $c > 0$, a fact that is repeatedly used in Section \ref{S:Duality}.

%%%%%%%%%%%%%%%%%%%%%%%%%% %%%%%%%%%%%%%%%%%%%%%%%%%% %%%%%%%%%%%%%%%%%%%%%%%%%% %%%%%%%%%%%%%%%%%%%%%%%%%% %%%%%%%%%%%%%%%%%%%%%%%%%% %%%%%%%%%%%%%%%%%%%%%%%%%% 

\section{Geometry and Analysis on $M_\gamma$}\label{S:AnalysisLgamma}

$M_\gamma$ is both strongly $\C$-convex and real analytic away from $\{ \zeta_1 = 0 \}$ for each $\gamma > 1$.  But on this set, these properties simultaneously hold only in the special case of $\gamma=2$.  $M_\gamma$ is weakly $\C$-convex here when $\gamma>2$, and it fails to be $C^2$ smooth when $1< \gamma <2$. (It is only $C^{\gamma}$ smooth).  These two notions are highly intertwined; see \cite{BarLan09} for a detailed study of this in the Reinhardt setting.  The interplay between $\C$-convexity and smoothness is further illuminated from a projective dual point of view; see Section \ref{S:Duality}.  At infinity, the behavior of $M_\gamma$ is even less regular; this is the subject of Section \ref{SS:MgammaAtInfinity}.

\subsection{Projective automorphisms of $M_\gamma$}\label{SS:Symmetries}

Certain affine maps preserving $M_\gamma$ are examined here.  These play an important role in our analysis of the Leray transform.  The three types of maps considered are

\begin{itemize}

\item Rotations in the first coordinate: $r_\theta(z_1,z_2) = (e^{i\theta}z_1,z_2)$, $\theta \in [0,2\pi)$.

\item Real translations in the second coordinate: $t_s(z_1,z_2) = (z_1,z_2+s)$, $s\in \R$.

\item Non-isotropic dilations: $\delta_\alpha(z_1,z_2) = (\alpha z_1,\alpha^\gamma z_2)$, $\alpha > 0$.

\end{itemize}

It is easily verified that the following pairs of maps commute:
\begin{itemize}

\item $r_{\theta_1} \circ r_{\theta_2} = r_{\theta_1+\theta_2} = r_{\theta_2} \circ r_{\theta_1}$

\item $t_{s_1} \circ t_{s_2} = t_{s_1+s_2} = t_{s_2} \circ t_{s_1}$

\item $\delta_{\alpha_1} \circ \delta_{\alpha_2} = \delta_{\alpha_1 \alpha_2} = \delta_{\alpha_2} \circ \delta_{\alpha_1}$

\item $r_\theta \circ t_s = t_s \circ r_\theta$

\item $r_\theta \circ \delta_\alpha = \delta_\alpha \circ r_\theta$.

\end{itemize}
(Note though, that $t_s \circ \delta_\alpha \neq \delta_\alpha \circ t_s$.)  Because $r_\theta$ commutes with both $t_s$ and $\delta_\alpha$, this rotational symmetry is the starting point of our analysis.

Define the set 
\begin{equation}\label{E:ExceptonalSetV}
V = \{(\zeta_1,\zeta_2)\in M_\gamma: \zeta_1 = 0 \}.
\end{equation}
$V$ is a copy of $\R$ sitting inside $M_\gamma$.  Each $r_\theta, t_s, \delta_\alpha$ fixes $V$; collectively, they act transitively on $M_\gamma \backslash V$.  Indeed:
\begin{theorem}
Given any pair of points $z,\zeta \in M_\gamma \backslash V$, there is a unique map of the form $t_s\circ r_\theta \circ \delta_\alpha$ sending $z \mapsto \zeta$.
\end{theorem}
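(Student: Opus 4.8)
The plan is to exploit the fact that the three one-parameter families $\{r_\theta\}$, $\{t_s\}$, $\{\delta_\alpha\}$ together carry exactly three real parameters, matching the real dimension of $M_\gamma \cut V$, and to pin down these parameters explicitly from the coordinates of $z$ and $\zeta$. First I would write an arbitrary point of $M_\gamma\cut V$ in the parametrization already introduced in Section \ref{SS:notation}: every such point has the form $(\alpha e^{i\theta}, s + i\alpha^\gamma)$ with $\alpha>0$, $\theta\in[0,2\pi)$, $s\in\R$, and this representation is unique. So fix $z = (\alpha_1 e^{i\theta_1}, s_1 + i\alpha_1^\gamma)$ and $\zeta = (\alpha_2 e^{i\theta_2}, s_2 + i\alpha_2^\gamma)$.

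Next I would compute the action of a composite $t_s\circ r_\theta\circ \delta_\alpha$ on the base point. Applying $\delta_\alpha$ first sends $(\zeta_1,\zeta_2)\mapsto(\alpha\zeta_1,\alpha^\gamma\zeta_2)$, then $r_\theta$ multiplies the first coordinate by $e^{i\theta}$, then $t_s$ adds $s$ to the second coordinate. Starting from the normalized base point $(\,1,\, i\,)\in M_\gamma$ (which lies in $M_\gamma$ since $\im(i)=1=|1|^\gamma$), one gets
\begin{equation*}
(t_s\circ r_\theta\circ\delta_\alpha)(1,i) = \big(\alpha e^{i\theta},\, s + i\alpha^\gamma\big).
\end{equation*}
This shows the orbit of $(1,i)$ is all of $M_\gamma\cut V$ and that the correspondence $(\alpha,\theta,s)\leftrightarrow$ point is a bijection, hence the group acts simply transitively on $M_\gamma\cut V$. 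To get the statement as phrased (sending an arbitrary $z$ to an arbitrary $\zeta$, not just the base point to $\zeta$), I would either (i) invoke simple transitivity directly: the unique element carrying $z\to\zeta$ is $g_\zeta\circ g_z^{-1}$ where $g_z$ is the unique group element with $g_z(1,i)=z$; or (ii) solve directly, matching moduli, arguments and real parts: $\alpha = \alpha_2/\alpha_1$ from the first-coordinate modulus, $\theta \equiv \theta_2-\theta_1 \pmod{2\pi}$ from its argument, and $s = s_2 - (\alpha_2/\alpha_1)^\gamma s_1$ from the real part of the second coordinate (the imaginary part $\im(\zeta_2)=|\zeta_1|^\gamma$ then takes care of itself, consistent with both points lying on $M_\gamma$). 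Uniqueness follows because each of these three equations determines its parameter uniquely given $z,\zeta\in M_\gamma\cut V$ (note $\alpha_1,\alpha_2>0$, so the ratio and its $\gamma$-th power are well-defined and positive).

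The computation is essentially routine; the only point requiring a little care is the order of composition, since $t_s$ and $\delta_\alpha$ do not commute (as the excerpt explicitly warns), so one must verify that the fixed order $t_s\circ r_\theta\circ\delta_\alpha$ — rather than some other ordering — is the one for which the parameters decouple as above, and that a different admissible point of $z\to\zeta$ would force the same $(\alpha,\theta,s)$. I expect the main (minor) obstacle to be bookkeeping the non-commutativity correctly and confirming that $s$ is uniquely solvable: once $\alpha$ is fixed by the modulus equation, the real-part equation is linear in $s$ with coefficient $1$, so it has a unique solution, and this is where the specific ordering matters. Everything else is direct substitution into the definitions of $r_\theta$, $t_s$, $\delta_\alpha$ from Section \ref{SS:Symmetries}.
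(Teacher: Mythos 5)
Your proposal is correct, and route (ii) — matching moduli, arguments, and real parts to get $\alpha=\alpha_2/\alpha_1$, $\theta\equiv\theta_2-\theta_1$, $s=s_2-\alpha^\gamma s_1$ — is exactly the paper's proof, which sets $\alpha e^{i\theta}=\zeta_1 z_1^{-1}$ and $s=\re(\zeta_2)-\alpha^\gamma\re(z_2)$. The base-point/simple-transitivity framing in route (i) is a harmless repackaging of the same computation (it tacitly uses that maps of the form $t_s\circ r_\theta\circ\delta_\alpha$ form a group, which the commutation relations of Section \ref{SS:Symmetries} readily give), so no essential difference or gap.
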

\begin{proof}
The numbers $\alpha$ and $\theta$ are determined by setting $\alpha e^{i\theta} = \zeta_1 {z_1}^{-1}$.  The second variable is appropriately adjusted by setting $s = \re{(\zeta_2)} - \alpha^{\gamma}\re{(z_2)}$.
\end{proof}

\subsection{Leray reparametrization}\label{SS:LerayReparam}

The symmetries $r_\theta, t_s, \delta_\alpha$ lead to a reparametrization of $M_\gamma$.  Write $z,\zeta \in M_\gamma$ as
\begin{align}
z &= (\alpha_z e^{i\theta_z}, s_z + i\alpha_z^\gamma), \label{E:z-reparam}\\
\zeta &= (\alpha_\zeta e^{i\theta_\zeta}, s_\zeta + i\alpha_\zeta^\gamma), \label{E:zeta-reparam}
\end{align}
where each  $\alpha\ge0$, each $\theta \in [0,2\pi)$, and each $s \in \R$.  Equation \eqref{D:LambdaRhoReParam} says the Leray-Levi measure from \eqref{E:LerayLevi} takes the form
\begin{align}
\lambda_\rho = \frac{\gamma^2}{32\pi^2 i} |\zeta_1|^{\gamma-2}d\zeta_2\wedge d\bar{\zeta}_1\wedge d\zeta_1 &= \frac{\gamma^2}{16\pi^2}\, \alpha_\zeta^{\gamma-1} ds_\zeta \wedge d\alpha_\zeta \wedge d \theta_\zeta \label{D:measureLambda}.
\end{align}

Now re-write the Leray transform.  Expression \eqref{E:LerayMgamma} becomes
\begin{align}
\bm{L} f(z) &= \frac{\gamma^2}{4\pi^2} \int_{M_\gamma} f(\zeta) \frac{\alpha_\zeta^{\gamma-1} ds_\zeta \wedge d\alpha_\zeta \wedge d \theta_\zeta}{\left[\left( (\gamma-1)\alpha_\zeta^\gamma + \alpha_z^\gamma + i(s_{\zeta} - s_z) \right) - \left(\gamma \alpha_z \alpha_\zeta^{\gamma-1} e^{i(\theta_z-\theta_\zeta)}\right) \right]^2} \notag\\
&=  \frac{\gamma^2}{4\pi^2} \int_{M_\gamma}  \,\frac{f(\zeta)}{\big[A-B  e^{i(\theta_z-\theta_\zeta)}\big]^2} \,\sigma(\zeta), \label{E:IntKernelAB}
\end{align}
where $\sigma(\zeta) =  \alpha_\zeta^{\gamma-1} ds_\zeta \wedge d\alpha_\zeta \wedge d \theta_\zeta$ and
\begin{align}\label{E:DefofAandB}
A &:= (\gamma-1)\alpha_\zeta^\gamma + \alpha_z^\gamma + i(s_{\zeta} - s_z), \qquad B := \gamma \alpha_z \alpha_\zeta^{\gamma-1}.
\end{align}

\subsection{Series expansion}\label{SS:SeriesExpn}

The $S^1$ action on $M_\gamma$ yields a decomposition of the $L^2$-space 
\begin{align}\label{E:SubspaceDecomp}
L^2(M_\gamma,\sigma) = \bigoplus_{k=-\infty}^\infty  L_k^2(M_\gamma,\sigma),
\end{align}
where functions in $L_k^2(M_\gamma,\sigma)$ have the form $f_k(s,\alpha)e^{ik\theta}$.  Each $f(\zeta) = f(s_\zeta, \alpha_\zeta,\theta_\zeta) \in L^2(M_\gamma,\sigma)$ decomposes as a partial Fourier series
\begin{equation}\label{E:FourierSeries}
f(s_\zeta, \alpha_\zeta,\theta_\zeta) = \sum_{k=-\infty}^\infty f_k(s_\zeta,\alpha_\zeta)\,e^{ik\theta_\zeta},
\end{equation}
and the following version of Parseval's theorem holds
\begin{align}
\norm{f}^2_{L^2(M_\gamma,\sigma)} &= \int_{M_\gamma}|f(s_\zeta, \alpha_\zeta,\theta_\zeta)|^2\,\alpha_\zeta^{\gamma-1}ds_\zeta \, d\alpha_\zeta \, d \theta_\zeta \notag \\
&= \int_0^{2\pi} \int_0^\infty \int_{-\infty}^{\infty} \left( \sum_{j,k = -\infty}^{\infty} f_j(s_\zeta,\alpha_\zeta) \overline{f_k(s_\zeta,\alpha_\zeta)} e^{i(j-k)\theta_\zeta}\right) \alpha_\zeta^{\gamma-1}ds_\zeta \, d\alpha_\zeta \, d \theta_\zeta \notag \\
&= 2\pi \sum_{k=-\infty}^{\infty} \int_0^\infty \int_{-\infty}^{\infty}  |f_k(s_\zeta,\alpha_\zeta)|^2\, \alpha_\zeta^{\gamma-1}ds_\zeta \, d\alpha_\zeta. \label{E:ParsevalSeries}
\end{align}

Return now to the computation of the Leray transform.  The rational function appearing in the integrand of \eqref{E:IntKernelAB} may be expanded as a series as a consequence of the following:
\begin{lemma}\label{L:GeneralAMGM}
Let $x,y \ge 0$ and $\gamma>1$.  Then
\begin{equation}\label{E:GeneralAMGM}
x^\gamma + (\gamma-1)y^\gamma \ge \gamma x y^{\gamma-1},
\end{equation}
with equality if and only if $x=y$.
\end{lemma}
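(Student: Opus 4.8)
The plan is to recognize \eqref{E:GeneralAMGM} as the two–term weighted arithmetic–geometric mean inequality and to give a short, self-contained one-variable calculus proof that simultaneously settles the equality case. First I would dispose of the degenerate case $y=0$: the asserted inequality then reads $x^\gamma\ge 0$, which holds trivially, and equality forces $x=0=y$. So from now on assume $y>0$.

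Next, fix $y>0$ and consider $\varphi(x):=x^\gamma+(\gamma-1)y^\gamma-\gamma x y^{\gamma-1}$ on $[0,\infty)$. It is continuous on $[0,\infty)$ and differentiable on $(0,\infty)$ with $\varphi'(x)=\gamma\bigl(x^{\gamma-1}-y^{\gamma-1}\bigr)$. Since $\gamma>1$, the map $t\mapsto t^{\gamma-1}$ is strictly increasing on $[0,\infty)$, so $\varphi'<0$ on $(0,y)$ and $\varphi'>0$ on $(y,\infty)$; hence $\varphi$ attains its unique global minimum at $x=y$. A direct substitution gives $\varphi(y)=y^\gamma+(\gamma-1)y^\gamma-\gamma y^\gamma=0$, so $\varphi(x)\ge 0$ for all $x\ge 0$ with equality precisely at $x=y$, which is exactly \eqref{E:GeneralAMGM}.

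I would add the remark that the same conclusion drops out of the weighted AM–GM inequality applied to $a_1=x^\gamma$, $a_2=y^\gamma$ with weights $\tfrac1\gamma$ and $\tfrac{\gamma-1}\gamma$ (whose sum is $1$), the sharp case of which is again $a_1=a_2$, i.e.\ $x=y$; the calculus argument above merely makes the statement independent of that reference and handles the equality clause cleanly.

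The only point that needs a little care — and the reason one cannot simply factor a polynomial and read off the answer — is that $\gamma$ is an arbitrary real exponent exceeding $1$, not necessarily an integer, so the argument must rest on the monotonicity of $t\mapsto t^{\gamma-1}$ rather than on algebraic manipulation. Once that monotonicity is invoked, the location of the minimum and the equality analysis are immediate, and I anticipate no genuine obstacle.
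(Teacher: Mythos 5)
Your proof is correct and follows essentially the same route as the paper, which divides through by $y^\gamma$ and observes that $f(u)=u^\gamma-\gamma u+(\gamma-1)$ is convex with global minimum $0$ at $u=1$; your unnormalized version with the sign analysis of $\varphi'(x)=\gamma\bigl(x^{\gamma-1}-y^{\gamma-1}\bigr)$ is the same one-variable calculus argument, with the minor bonus of treating $y=0$ explicitly.
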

\begin{proof}
Divide both sides of \eqref{E:GeneralAMGM} by $y^\gamma$ and set $u = \frac xy$.  Now let $f(u) = u^\gamma - \gamma u +(\gamma - 1)$.  This function is convex and attains its global minimum of $0$ at $u=1$.  
\end{proof}

Consider the formula for $\bm{L}f$ in \eqref{E:IntKernelAB}.  Lemma \ref{L:GeneralAMGM} shows $\left|\frac{B}{A}\right| < 1$, unless both $s_z = s_\zeta$ and $\alpha_z = \alpha_\zeta$.  Away from this set of $\sigma$-measure $0$,
\begin{align}
\frac{1}{\big[A-B  e^{i(\theta_z-\theta_\zeta)}\big]^2} =  \frac{1}{A^2}\sum_{k=0}^\infty \,(k+1)\left(\frac{B}{A}\right)^k e^{ik(\theta_z-\theta_\zeta)}.\label{E:KernelAB}
\end{align}
Returning now to \eqref{E:IntKernelAB}, 
\begin{align}
\bm{L} f(s_z,\alpha_z,\theta_z) &=  \frac{\gamma^2}{4\pi^2} \int_{M_\gamma}  \,\frac{f(\zeta) \sigma(\zeta)}{\big[A-B  e^{i(\theta_z-\theta_\zeta)}\big]^2}\notag \\
&=  \sum_{k=0}^\infty  \, \frac{\gamma^2(k+1)}{4\pi^2} \int_{M_\gamma} f(\zeta) \frac{B^k}{A^{k+2}} e^{-ik\theta_\zeta} \, \sigma(\zeta) e^{ik\theta_z} \notag \\
&:= \sum_{k=0}^\infty \Lambda_k f(s_z,\alpha_z) e^{ik\theta_z}. \label{E:SumLambda_k}
\end{align}
Now replace $f$ with its Fourier expansion \eqref{E:FourierSeries}
\begin{align}
\Lambda_kf(s_z,\alpha_z) &=  \frac{\gamma^2(k+1)}{4\pi^2}  \int_{M_\gamma} \left(\sum_{j=-\infty}^\infty f_j(s_\zeta,\alpha_\zeta)\,e^{ij\theta_\zeta}\right) \frac{B^k}{A^{k+2}} e^{-ik\theta_\zeta} \, \sigma(\zeta) \notag\\
&= \frac{\gamma^2(k+1)}{4\pi^2} \sum_{j=-\infty}^\infty \int_{M_\gamma} \frac{B^k}{A^{k+2}}  f_j(s_\zeta,\alpha_\zeta) e^{i(j-k)\theta_\zeta} \, \alpha_\zeta^{\gamma-1} ds_\zeta d\alpha_\zeta d\theta_\zeta\notag\\
&= \frac{\gamma^2(k+1)}{2\pi} \int_0^\infty \int_{-\infty}^\infty \frac{B^k}{A^{k+2}}  f_k(s_\zeta,\alpha_\zeta)\,\alpha_\zeta^{\gamma-1} ds_\zeta \,d\alpha_\zeta.  \label{D:Lambda_k} \
\end{align}
This gives the series decomposition of the Leray transform:

\begin{definition}\label{D:SubLerayTransformLk}
For each nonnegative integer $k$, define the sub-Leray operator $\bm{L}_k$ to be the restriction of $\bm{L}$ to the subspace $L_k^2(M_\gamma,\sigma)$.
\end{definition}\label{D:Leray_k}

\begin{remark}
We later consider the same decomposition in the space $L^2(M_\gamma,\mu_r)$. $\lozenge$
\end{remark}

The orthogonal decomposition \eqref{E:SubspaceDecomp} shows $\bm{L} = \bigoplus_{k=0}^\infty \bm{L}_k$, and \eqref{E:SumLambda_k} and \eqref{D:Lambda_k} give that
\begin{align}\label{E:Leray_k}
\bm{L}_kf(s_z,\alpha_z,\theta_z) = \Lambda_k f(s_z,\alpha_z) e^{ik\theta_z}.
\end{align}

\subsection{Fourier transforms}\label{SS:ApplicationOf FourierTransform}
We continue the analysis of $\bm{L}_k$ by taking a closer look at $\Lambda_k f(s_z,\alpha_z)$.  The goal is to use the Fourier transform and understand the unitarily equivalent operator $\cf^{-1}\bm{L}_k\cf$.  

Importing $A$ and $B$ from \eqref{E:DefofAandB}, 
\begin{align}
\eqref{D:Lambda_k} &= \frac{\gamma^{k+2}(k+1)\alpha_z^k}{2\pi} \int_0^\infty  \alpha_\zeta^{(k+1)(\gamma-1)} \int_{-\infty}^\infty \frac{f_k(s_\zeta,\alpha_\zeta)}{[(\gamma-1)\alpha_\zeta^\gamma + \alpha_z^\gamma + i(s_{\zeta} - s_z)]^{k+2}} ds_\zeta \, d\alpha_\zeta  \notag \\
&= \frac{(i\gamma)^{k+2}(k+1)\alpha_z^k}{2\pi} \int_0^\infty \alpha_\zeta^{(k+1)(\gamma-1)}  \int_{-\infty}^\infty \frac{f_k(s_\zeta,\alpha_\zeta)}{[(s_z - s_{\zeta})+i\big((\gamma-1)\alpha_\zeta^\gamma + \alpha_z^\gamma\big)]^{k+2}} ds_\zeta \,d\alpha_\zeta \notag.
\end{align}
This shows that
\begin{align}\label{D:IkConvolution}
\Lambda_{k}f(s_z,\alpha_z) = \frac{(i\gamma)^{k+2}(k+1)\alpha_z^k}{2\pi} \int_0^\infty \alpha_\zeta^{(k+1)(\gamma-1)}  \Big( f_k(\,\cdot\, ,\alpha_\zeta) \,*\, G_k \Big)(s_z) \,d\alpha_\zeta,
\end{align}
where the function $G_k $ is given by 
\begin{align}\label{E:DefG_kANDC}
G_{k}(s) := \frac{1}{(s+i((\gamma-1)\alpha_\zeta^\gamma + \alpha_z^\gamma))^{k+2}}. %\qquad C := (\gamma-1)\alpha_\zeta^\gamma + \alpha_z^\gamma.
\end{align}

To better understand the integral defining $\Lambda_k f$, consider the Fourier transform $\cf$ and its inverse $\cf^{-1}$:  For $g\in L^1(\R)\cap L^2(\R)$,
\begin{align}\label{D:FourierTransformDef}
\cf g(\xi) = \int_{-\infty}^\infty g(s) e^{- 2\pi i s \xi}\,ds, \qquad \cf^{-1}g(\xi) = \int_{-\infty}^\infty g(s) e^{2\pi i s \xi}\,ds.
\end{align}
Under this convention, $\cf$ and $\cf^{-1}$ are well known to transform convolutions to products
\begin{equation}\label{E:FourierConvolutions}
\cf(g * h)(\xi) = \cf{g}(\xi) \cdot \cf{h}(\xi), \qquad \cf^{-1}(g * h)(\xi) = \cf^{-1}{g}(\xi) \cdot \cf^{-1}{h}(\xi).
\end{equation}
These operators also extend to isometries of $L^2(\R)$
\begin{align}\label{E:FourierIsometryL2}
\norm{g}_{L^2(\R)} = \norm{\cf g}_{L^2(\R)} = \norm{\cf^{-1} g}_{L^2(\R)}.
\end{align}

Applying $\cf^{-1}$ in the $s_z$ variable to \eqref{D:IkConvolution} yields 
\begin{align}
\cf^{-1} \Lambda_kf(\xi,\alpha_z) &= \frac{(i\gamma)^{k+2}(k+1)\alpha_z^k}{2\pi} \int_0^\infty \alpha_\zeta^{(k+1)(\gamma-1)}  \cf^{-1} \Big( f_k(\,\cdot\, ,\alpha_\zeta) \,*\, G_k \Big)(\xi) \,d\alpha_\zeta \label{E:InvFtransLambda_k}.
\end{align}
By \eqref{E:FourierConvolutions}, 
\begin{align}\label{E:FourierConvolutions2}
\cf^{-1}\Big( f_k(\,\cdot\, ,\alpha_\zeta) \,*\, G_k \Big)(\xi) = \cf^{-1} f_k(\xi,\alpha_\zeta) \cdot \cf^{-1} G_k(\xi).
\end{align}

Now calculate $\cf^{-1} G_k(\xi)$.  From \eqref{E:DefG_kANDC}, notice the term $C := (\gamma-1)\alpha_\zeta^\gamma + \alpha_z^\gamma > 0$, unless both $\alpha_z=\alpha_\zeta=0$.
\begin{proposition}\label{P:InverseFourierComputation}
Let $C>0$.  The inverse Fourier transform of $G_k(s) = \frac{1}{(s+iC)^{k+2}}$ is
\begin{align}\label{E:InverseFourierGk}
	\cf^{-1}G_k(\xi) = \int_{-\infty}^{\infty} \frac{e^{2\pi i s \xi}}{(s+iC)^{k+2}} \, ds
	= \begin{dcases}
		0, & \xi \ge 0\\ \\
		-\frac{(2\pi i)^{k+2}}{(k+1)!} \xi^{k+1} e^{2\pi\xi C}, & \xi < 0.\\ 
		\end{dcases}
\end{align}
\end{proposition}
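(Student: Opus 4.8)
The plan is to evaluate the integral
$\int_{-\infty}^\infty (s+iC)^{-(k+2)} e^{2\pi i s\xi}\,ds$ by contour integration, closing the contour in the upper or lower half-plane according to the sign of $\xi$. First I would note that the integrand is holomorphic in $s$ except for a single pole of order $k+2$ at $s = -iC$, which lies in the lower half-plane since $C>0$. For $\xi \ge 0$ (strictly $\xi>0$; the boundary case $\xi=0$ follows by continuity or by a direct estimate since $(s+iC)^{-(k+2)}$ is integrable for $k\ge 0$), the factor $e^{2\pi i s\xi}$ decays as $\mathrm{Im}(s)\to+\infty$, so I would close the contour with a large semicircle in the upper half-plane; since no poles are enclosed and the semicircular arc contributes nothing in the limit (the integrand is $O(|s|^{-(k+2)})$ with $k+2\ge 2$, so Jordan's lemma or even a crude bound suffices), the integral is $0$.

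For $\xi < 0$, the factor $e^{2\pi i s\xi}$ decays as $\mathrm{Im}(s)\to -\infty$, so I would close in the lower half-plane, picking up the residue at $s=-iC$ with a factor of $-2\pi i$ from the clockwise orientation. The residue of $(s+iC)^{-(k+2)} e^{2\pi i s\xi}$ at the order-$(k+2)$ pole is computed from the Taylor coefficient:
\begin{equation*}
\mathrm{Res}_{s=-iC}\frac{e^{2\pi i s\xi}}{(s+iC)^{k+2}} = \frac{1}{(k+1)!}\frac{d^{k+1}}{ds^{k+1}}\Big(e^{2\pi i s\xi}\Big)\Big|_{s=-iC} = \frac{(2\pi i\xi)^{k+1}}{(k+1)!}e^{2\pi i(-iC)\xi} = \frac{(2\pi i\xi)^{k+1}}{(k+1)!}e^{2\pi C\xi}.
\end{equation*}
Multiplying by $-2\pi i$ gives $-\dfrac{(2\pi i)^{k+2}}{(k+1)!}\xi^{k+1}e^{2\pi \xi C}$, which matches the claimed formula (note $e^{2\pi C\xi}=e^{2\pi\xi C}$ decays as $\xi\to-\infty$, consistent with $L^2$-membership of the transform).

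There is no serious obstacle here; the only points requiring a modicum of care are the justification that the semicircular arcs vanish in the limit — routine since the denominator has degree $k+2\ge 2$ so absolute convergence gives a bound $O(R^{-(k+2)})\cdot O(R)\to 0$ without even invoking Jordan's lemma — and the bookkeeping of the orientation sign when closing in the lower half-plane. The boundary case $\xi=0$ can be absorbed into either branch (the value $0$ is the limit from $\xi\to 0^-$ of $-\frac{(2\pi i)^{k+2}}{(k+1)!}\xi^{k+1}e^{2\pi\xi C}$ when $k\ge 0$), so the piecewise formula is consistent. I would present the computation compactly, stating the pole location, invoking the residue theorem in each half-plane, and recording the residue via the derivative formula above.
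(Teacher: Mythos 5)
Your proof is correct and follows essentially the same route as the paper: closing a semicircular contour in the upper half-plane for $\xi \ge 0$ (no enclosed pole, so the integral vanishes) and in the lower half-plane for $\xi < 0$, where the clockwise residue at the order-$(k+2)$ pole $s=-iC$, computed via the derivative formula, yields exactly $-\frac{(2\pi i)^{k+2}}{(k+1)!}\xi^{k+1}e^{2\pi\xi C}$. The only differences are cosmetic (your explicit treatment of $\xi=0$ and the arc estimate), so nothing further is needed.
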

\begin{proof}
Since $C>0$, the function $H_k(z) := \frac{e^{2\pi i z \xi}}{(z+iC)^{k+2}}$ has a pole in the lower half plane.  For any $R>0$, the integral
\begin{equation*}
\int_{-R}^R \frac{e^{2\pi i s \xi}}{(s+iC)^{k+2}}\,ds.
\end{equation*}
can be thought of as a piece of a contour integral around a semicircle with base on the $x$-axis.

When $\xi>0$, consider such a semicircle in the upper half plane traversed counterclockwise.  It can be seen that the radial portion of the integral tends to $0$ as $R\to0$.  On the other hand, this function is holomorphic inside this semicircle contour.  Thus Cauchy's theorem implies $\cf^{-1}G_k(\xi) = 0$ for $\xi>0$.

When $\xi<0$, consider a semicircle in the lower half plane traversed clockwise.  For $R$ sufficiently large, the contour encloses the pole of $H_k$.  As above, the radial portion of this integral tends to $0$ as $R\to0$.  Thus, Cauchy's integral formula shows
\begin{align*}
\lim_{R\to\infty} \int_{-R}^R \frac{e^{2\pi i s \xi}}{(s+iC)^{k+2}}\,ds &= -\frac{2\pi i}{(k+1)!} \frac{d^{k+1}}{ds^{k+1}}\Big( e^{2\pi i s \xi} \Big)\Bigg|_{s=-iC} = -\frac{(2\pi i)^{k+2}}{(k+1)!} \xi^{k+1} e^{2\pi\xi C},
\end{align*}
completing the proof.
\end{proof}

Combining \eqref{E:InvFtransLambda_k} and \eqref{E:FourierConvolutions2} with Proposition \ref{P:InverseFourierComputation} shows
\begin{equation}\label{E:Lambda_k}
\cf^{-1} \Lambda_kf(\xi,\alpha_z) = \eta_k(\xi) \tau_k(\xi,\alpha_z) \int_0^\infty \cf^{-1}f_k(\xi,\alpha_\zeta)\kappa_k(\xi,\alpha_\zeta)\,\alpha_\zeta^{\gamma-1}d\alpha_\zeta,
\end{equation}
where 
\begin{align}
\eta_k(\xi) &= \frac{(-2\pi\xi)^{k+1}\gamma^{k+2}}{k!} \cdot\mathds{1}_{\{\xi<0\}}, \label{D:eta_k}\\
\tau_k(\xi,\alpha_z) &= \alpha_z^{k} e^{2\pi \xi \alpha_z^\gamma}\cdot \mathds{1}_{\{\xi<0\}}, \label{D:tau_k}\\
\kappa_k(\xi,\alpha_\zeta) &= \alpha_\zeta^{k(\gamma-1)} e^{2\pi \xi(\gamma-1) \alpha_\zeta^\gamma}\cdot \mathds{1}_{\{\xi<0\}}, \label{D:kappa_k}
\end{align}
and $\mathds{1}_{\{\xi<0\}}$ is the indicator function of the set $\{\xi<0\}$.  These three functions occur frequently throughout the rest of the paper.

Recall the one dimensional inner product $\big<\cdot,\cdot\big>_{(\sigma,1)}$ defined by formula \eqref{E:1DimInnerProd}:
\begin{equation*}
\Big< f,g \Big>_{(\sigma,1)} := \int_0^\infty f(\alpha)\overline{g(\alpha)} \,\alpha^{\gamma-1}d\alpha.
\end{equation*}
We summarize this in the following
\begin{proposition}\label{P:LkMultipliers}
The operator $\cf^{-1}\bm{L}_k\cf$ is given by
\begin{equation}\label{E:LkMultipliers}
\cf^{-1}\bm{L}_k\cf f (\xi,\alpha_z,\theta_z) = \eta_k(\xi) \tau_k(\xi,\alpha_z) \Big< f_k(\xi,\cdot),\kappa_k(\xi,\cdot) \Big>_{(\sigma,1)} \,e^{ik\theta_z},
\end{equation}
where $f (\xi,\alpha_z,\theta_z) = \sum_j f_j(\xi,\alpha_z)e^{ik\theta_z} \in L^2(M_\gamma,\sigma)$.
\end{proposition}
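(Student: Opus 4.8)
The plan is to assemble the statement directly from the reparametrizations already established, since all of the genuine analysis is carried by \eqref{E:Lambda_k} (itself a consequence of \eqref{D:IkConvolution}, the convolution rule \eqref{E:FourierConvolutions}, and Proposition \ref{P:InverseFourierComputation}). First I would recall from \eqref{E:Leray_k} that $\bm{L}_k f(s_z,\alpha_z,\theta_z) = \Lambda_k f(s_z,\alpha_z)\,e^{ik\theta_z}$, so that the only $\theta_z$-dependence of $\bm{L}_k f$ is the single factor $e^{ik\theta_z}$, which is inert under the Fourier transform in the $s_z$ variable. Applying $\cf^{-1}$ in $s_z$ therefore reduces the claim to the identity \eqref{E:Lambda_k} for $\cf^{-1}\Lambda_k$, with the factor $e^{ik\theta_z}$ simply carried along.

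The second step is bookkeeping of which variable has been transformed. In \eqref{E:Lambda_k} the Fourier-side formula for $\Lambda_k$ is written in terms of $\cf^{-1}f_k$, where $f_k$ is a function of $s$; but in the statement of the proposition the input $f$ is already presented as a function of $\xi$. Writing $g := \cf f$ for the corresponding function of $s$ and decomposing $g = \sum_j g_j(s,\alpha)\,e^{ij\theta}$ with $g_j = \cf f_j$ (the Fourier transform in $s$ commuting with the orthogonal Fourier-in-$\theta$ decomposition), I would apply \eqref{E:Lambda_k} to $g$ and then invoke Fourier inversion \eqref{D:FourierTransformDef} componentwise, $\cf^{-1}g_k = \cf^{-1}\cf f_k = f_k$, to rewrite the $\alpha_\zeta$-integral purely in terms of $f_k(\xi,\cdot)$. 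This produces
$$\cf^{-1}\bm{L}_k\cf f(\xi,\alpha_z,\theta_z) = \eta_k(\xi)\,\tau_k(\xi,\alpha_z)\int_0^\infty f_k(\xi,\alpha_\zeta)\,\kappa_k(\xi,\alpha_\zeta)\,\alpha_\zeta^{\gamma-1}\,d\alpha_\zeta\;e^{ik\theta_z}.$$
Finally I would identify the $\alpha_\zeta$-integral with the one-dimensional inner product of \eqref{E:1DimInnerProd}: since $\kappa_k(\xi,\cdot)$ as defined in \eqref{D:kappa_k} is real-valued, complex conjugation is harmless and $\int_0^\infty f_k(\xi,\alpha_\zeta)\,\kappa_k(\xi,\alpha_\zeta)\,\alpha_\zeta^{\gamma-1}\,d\alpha_\zeta = \langle f_k(\xi,\cdot),\kappa_k(\xi,\cdot)\rangle_{(\sigma,1)}$, which is exactly \eqref{E:LkMultipliers}.

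There is no serious obstacle here; the proposition is essentially a repackaging of \eqref{E:Lambda_k}. The only points meriting a word of care are the interchange of the $d\alpha_\zeta$-integration with $\cf^{-1}$ that was already performed in passing from \eqref{D:IkConvolution} to \eqref{E:InvFtransLambda_k} — justified for $f$ in a dense subclass (say $f_k(\cdot,\alpha_\zeta)\in L^1(\R)\cap L^2(\R)$ with enough decay in $\alpha_\zeta$ to apply Fubini) and then, where needed, extended to general $f\in L^2(M_\gamma,\sigma)$ by density using the $L^2$-isometry \eqref{E:FourierIsometryL2} — and the finiteness of $\langle f_k(\xi,\cdot),\kappa_k(\xi,\cdot)\rangle_{(\sigma,1)}$, which is part of the later boundedness analysis and need only be recorded here as a formal identity valid for a.e.\ $\xi$.
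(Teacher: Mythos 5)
Your proposal is correct and follows essentially the same route as the paper: both reduce the claim to the previously derived identity \eqref{E:Lambda_k}, apply it to $\cf f$ so that Fourier inversion replaces $\cf^{-1}\cf f_k$ by $f_k$, and identify the resulting $\alpha_\zeta$-integral with $\langle f_k(\xi,\cdot),\kappa_k(\xi,\cdot)\rangle_{(\sigma,1)}$ using that $\kappa_k$ is real-valued. The extra remarks you add on Fubini/density are fine but not part of the paper's (terser) argument.
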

\begin{proof}
By its definition, $\bm{L}_k$ is only a nonzero operator on $L^2_k(M_\gamma,\sigma)$.  Since $\cf f = \sum_j \cf f_j e^{ij\theta}$, we have
\begin{align}
\cf^{-1}\bm{L}_k(\cf f) (\xi,\alpha_z,\theta_z) &= \cf^{-1} \Lambda_k\cf f_k(\xi,\alpha_z)\,e^{ik\theta_z} \notag\\
&= \eta_k(\xi) \tau_k(\xi,\alpha_z) \left(\int_0^\infty f_k(\xi,\alpha_\zeta)\kappa_k(\xi,\alpha_\zeta)\alpha_\zeta^{\gamma-1}d\alpha_\zeta\right)e^{ik\theta_z}\notag\\
&= \eta_k(\xi) \tau_k(\xi,\alpha_z) \Big< f_k(\xi,\cdot),\kappa_k(\xi,\cdot) \Big>_{(\sigma,1)} \,e^{ik\theta_z}, \notag
\end{align}
where the second equality follows from \eqref{E:Lambda_k}.
\end{proof}

\subsection{Norms of the sub-Leray operators}\label{SS:norm-of-subLeray}

Let us first record a computational lemma that will be used several times throughout the paper.
\begin{proposition}\label{P:IntegralComputation}
Fix $x>-1$, $y>0$, $\gamma\ge1$.  Then
\begin{align*}
\int_0^\infty \alpha^x e^{-y\alpha^\gamma}\,d\alpha = \tfrac{1}{\gamma}\, y^{-\frac{1+x}{\gamma}} \Gamma\big(\tfrac{1+x}{\gamma}\big).
\end{align*}
\end{proposition}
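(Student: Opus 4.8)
The plan is to reduce the integral to the classical Gamma function integral $\int_0^\infty t^{a-1}e^{-t}\,dt = \Gamma(a)$ by the substitution $t = y\alpha^\gamma$. First I would set $t = y\alpha^\gamma$, so that $\alpha = (t/y)^{1/\gamma}$ and $d\alpha = \tfrac{1}{\gamma} y^{-1/\gamma} t^{1/\gamma - 1}\,dt$; as $\alpha$ runs over $(0,\infty)$ so does $t$ (using $y>0$ and $\gamma \ge 1 > 0$). Substituting, the integrand $\alpha^x e^{-y\alpha^\gamma}\,d\alpha$ becomes $(t/y)^{x/\gamma} e^{-t} \cdot \tfrac{1}{\gamma} y^{-1/\gamma} t^{1/\gamma-1}\,dt$, and collecting the powers of $y$ and $t$ gives $\tfrac{1}{\gamma} y^{-(1+x)/\gamma} t^{(1+x)/\gamma - 1} e^{-t}\,dt$.

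Then I would integrate: $\int_0^\infty t^{(1+x)/\gamma - 1} e^{-t}\,dt = \Gamma\big(\tfrac{1+x}{\gamma}\big)$, which converges precisely because the exponent $\tfrac{1+x}{\gamma}$ is positive — this is where the hypothesis $x > -1$ (together with $\gamma > 0$) is used, to ensure $\tfrac{1+x}{\gamma} > 0$ so the integrand is integrable near $t = 0$. Pulling out the constants $\tfrac{1}{\gamma} y^{-(1+x)/\gamma}$ yields exactly the claimed formula.

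There is essentially no obstacle here: this is a routine change of variables, and the only point requiring a word of care is the convergence at the origin, which is exactly what the constraint $x>-1$ guarantees. (The decay $e^{-y\alpha^\gamma}$ with $y>0$ handles convergence at $\infty$ regardless of $x$.) One could alternatively phrase it directly as $\int_0^\infty \alpha^x e^{-y\alpha^\gamma}\,d\alpha = \int_0^\infty \alpha^x e^{-y\alpha^\gamma}\,d\alpha$ and recognize it as a standard entry, but the substitution proof is cleaner and self-contained, so that is the route I would take.
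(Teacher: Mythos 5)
Your proof is correct and follows exactly the paper's argument: the paper also proves this by the substitution $t = y\alpha^\gamma$ and the definition of the $\Gamma$-function, which is precisely your route (with the convergence remarks spelled out).
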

\begin{proof}
This follows from the substitution $t=y\alpha^\gamma$ and the definition of the $\Gamma$-function.
\end{proof}
This immediately implies the following
\begin{corollary}\label{C:(sigma,1)-norms-of-tau-and-kappa}
Let $\tau_k(\xi,\cdot)$ and $\kappa_k(\xi,\cdot)$ be as defined in \eqref{D:tau_k} and \eqref{D:kappa_k}.  Then for $\xi<0$,
\begin{align}
\norm{\tau_k(\xi,\cdot)}_{(\sigma,1)}^2 &= \tfrac{1}{\gamma}\cdot (-4\pi\xi)^{-1-\frac{2k}{\gamma}} \Gamma\big(\tfrac{2k}{\gamma}+1\big), \label{E:TaukNorm1} \\  
\norm{\kappa_k(\xi,\cdot)}_{(\sigma,1)}^2 &= \tfrac{1}{\gamma}\cdot (-4\pi\xi(\gamma-1))^{\frac{2k}{\gamma}-2k-1} \Gamma\big(1+ 2k -\tfrac{2k}{\gamma}\big).  \label{E:KappakNorm1}
\end{align}
\end{corollary}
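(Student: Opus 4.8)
The plan is to apply Proposition~\ref{P:IntegralComputation} directly to each of the two integrals, after a routine algebraic rearrangement. Recall from \eqref{D:tau_k} that for $\xi<0$ we have $\tau_k(\xi,\alpha)=\alpha^k e^{2\pi\xi\alpha^\gamma}$, so by \eqref{E:1DimNorm} and \eqref{E:1DimInnerProd},
\[
\norm{\tau_k(\xi,\cdot)}_{(\sigma,1)}^2 = \int_0^\infty \alpha^{2k}\, e^{4\pi\xi\alpha^\gamma}\,\alpha^{\gamma-1}\,d\alpha
= \int_0^\infty \alpha^{2k+\gamma-1}\, e^{-(-4\pi\xi)\alpha^\gamma}\,d\alpha.
\]
First I would note that since $\xi<0$, the quantity $y:=-4\pi\xi$ is strictly positive, and the exponent $x:=2k+\gamma-1$ satisfies $x>-1$ (as $k\ge0$ and $\gamma\ge1$), so the hypotheses of Proposition~\ref{P:IntegralComputation} hold. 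Applying it gives
\[
\norm{\tau_k(\xi,\cdot)}_{(\sigma,1)}^2 = \tfrac1\gamma\, (-4\pi\xi)^{-\frac{1+x}{\gamma}}\,\Gamma\!\big(\tfrac{1+x}{\gamma}\big)
= \tfrac1\gamma\, (-4\pi\xi)^{-\frac{2k+\gamma}{\gamma}}\,\Gamma\!\big(\tfrac{2k+\gamma}{\gamma}\big),
\]
and simplifying $\frac{2k+\gamma}{\gamma} = \frac{2k}{\gamma}+1$ yields exactly \eqref{E:TaukNorm1}.

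For the second identity, \eqref{D:kappa_k} gives $\kappa_k(\xi,\alpha)=\alpha^{k(\gamma-1)}e^{2\pi\xi(\gamma-1)\alpha^\gamma}$ for $\xi<0$, so
\[
\norm{\kappa_k(\xi,\cdot)}_{(\sigma,1)}^2 = \int_0^\infty \alpha^{2k(\gamma-1)}\, e^{4\pi\xi(\gamma-1)\alpha^\gamma}\,\alpha^{\gamma-1}\,d\alpha
= \int_0^\infty \alpha^{2k(\gamma-1)+\gamma-1}\, e^{-(-4\pi\xi(\gamma-1))\alpha^\gamma}\,d\alpha.
\]
Here the decay rate is $y:=-4\pi\xi(\gamma-1)>0$ (again using $\xi<0$ and $\gamma>1$), and the exponent is $x:=2k(\gamma-1)+\gamma-1=(2k+1)(\gamma-1)$, which is $\ge0>-1$. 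Proposition~\ref{P:IntegralComputation} then gives
\[
\norm{\kappa_k(\xi,\cdot)}_{(\sigma,1)}^2 = \tfrac1\gamma\, \big(-4\pi\xi(\gamma-1)\big)^{-\frac{1+(2k+1)(\gamma-1)}{\gamma}}\,\Gamma\!\big(\tfrac{1+(2k+1)(\gamma-1)}{\gamma}\big).
\]
It remains only to simplify the exponent and the $\Gamma$-argument: one computes $1+(2k+1)(\gamma-1) = (2k+1)\gamma - 2k$, so $\frac{1+(2k+1)(\gamma-1)}{\gamma} = 2k+1-\frac{2k}{\gamma}$, whence $-\frac{1+(2k+1)(\gamma-1)}{\gamma} = \frac{2k}{\gamma}-2k-1$. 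This matches the stated exponent and $\Gamma$-argument in \eqref{E:KappakNorm1}.

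There is essentially no obstacle here — this is a bookkeeping corollary. The only points requiring a word of care are verifying that the integrability hypotheses of Proposition~\ref{P:IntegralComputation} are met (which hold precisely because we restrict to $\xi<0$, making both exponential decay rates positive) and carrying out the two elementary exponent simplifications correctly. I would present the argument compactly, writing out the substitution and the two applications of Proposition~\ref{P:IntegralComputation} as above, and leaving the final algebraic identities $\tfrac{2k+\gamma}{\gamma}=\tfrac{2k}{\gamma}+1$ and $\tfrac{1+(2k+1)(\gamma-1)}{\gamma}=2k+1-\tfrac{2k}{\gamma}$ to the reader.
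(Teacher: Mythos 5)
Your proposal is correct and coincides with the paper's own proof: the paper likewise reduces each squared $(\sigma,1)$-norm to the integral $\int_0^\infty \alpha^{2k+\gamma-1}e^{4\pi\xi\alpha^\gamma}\,d\alpha$ (resp. $\int_0^\infty \alpha^{(2k+1)(\gamma-1)}e^{4\pi\xi(\gamma-1)\alpha^\gamma}\,d\alpha$) and applies Proposition~\ref{P:IntegralComputation} with the same choices of $x$ and $y$. Your exponent simplifications and the verification of the hypotheses ($y>0$ for $\xi<0$, $x>-1$) are all accurate.
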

\begin{proof}
Recall that $\tau_k$ and $\kappa_k$ are nonzero only for $\xi < 0$.  Now by choosing the appropriate values of $x$ and $y$ in Proposition \ref{P:IntegralComputation},
\begin{align*}
\norm{\tau_k(\xi,\cdot)}_{(\sigma,1)}^2 &= \int_0^\infty \alpha_z^{2k+\gamma-1} e^{4\pi\xi\alpha_z^\gamma}d\alpha_z = \tfrac{1}{\gamma}\cdot (-4\pi\xi)^{-1-\frac{2k}{\gamma}} \Gamma\big(\tfrac{2k}{\gamma}+1\big), \\  
\norm{\kappa_k(\xi,\cdot)}_{(\sigma,1)}^2 &= \int_0^\infty \alpha_\zeta^{(2k+1)(\gamma-1)} e^{4\pi\xi(\gamma-1)\alpha_\zeta^\gamma}d\alpha_\zeta \notag\\
&= \tfrac{1}{\gamma}\cdot (-4\pi\xi(\gamma-1))^{\frac{2k}{\gamma}-2k-1} \Gamma\big(1+ 2k -\tfrac{2k}{\gamma}\big). 
\end{align*}
\end{proof}

Now consider a function $f(\xi,\alpha_z,\theta_z) = f_k(\xi,\alpha_z) e^{i \theta_z}\in L_k^2(M_\gamma,\sigma)$.  Equation \eqref{E:LkMultipliers} and Cauchy-Schwarz show
\begin{align}
|\cf^{-1}\bm{L}_k\cf f (\xi,\alpha_z,\theta_z)|^2 &= |\eta_k(\xi)|^2 |\tau_k(\xi,\alpha_z)|^2 \left|\Big< f_k(\xi,\cdot),\kappa_k(\xi,\cdot) \Big>_{(\sigma,1)}\right|^2 \notag\\
&\le |\eta_k(\xi)|^2 |\tau_k(\xi,\alpha_z)|^2 \norm{f_k(\xi,\cdot)}_{(\sigma,1)}^2 \norm{\kappa_k(\xi,\cdot)}_{(\sigma,1)}^2, \label{E:Bound1F^-1LF} 
\end{align}
with equality if and only if $f_k(\xi, \cdot)$ is a multiple of $\kappa_k(\xi, \cdot)$, i.e.,  
\begin{align}\label{E:C-SEquality}
f_k(\xi,\alpha_z) = m_k(\xi) \kappa_k(\xi,\alpha_z),
\end{align}
where, recalling the definition of $\kappa_k$ in \eqref{D:kappa_k}, the multiplier function $m_k$ must satisfy
\begin{align}\label{E:KappaKMultiplierCondition-sigma}
\int_{-\infty}^0 | m_k(\xi)|^2 |\xi|^{\frac{2k}{\gamma}-2k-1}\,d\xi < \infty.
\end{align}

Estimate \eqref{E:Bound1F^-1LF} implies
\begin{align*}
\int_0^\infty |\cf^{-1}\bm{L}_k\cf f (\xi,\alpha_z,\theta_z)|^2 \alpha_z^{\gamma-1}d\alpha_z &\le |\eta_k(\xi)|^2 \norm{\tau_k(\xi,\cdot)}_{(\sigma,1)}^2 \norm{f_k(\xi,\cdot)}_{(\sigma,1)}^2 \norm{\kappa_k(\xi,\cdot)}_{(\sigma,1)}^2\notag\\
&:=C_\sigma(\gamma,k) \norm{f_k(\xi,\cdot)}_{(\sigma,1)}^2,
\end{align*}
where the ($\xi$ independent) quantity $C_\sigma(\gamma,k) = |\eta_k(\xi)|^2 \norm{\tau_k(\xi,\cdot)}_{(\sigma,1)}^2 \norm{\kappa_k(\xi,\cdot)}_{(\sigma,1)}^2$.  We will call this the $\sigma$-{\em symbol function}.  Combining equations \eqref{D:eta_k}, \eqref{E:TaukNorm1} and \eqref{E:KappakNorm1}, we see
\begin{align}\label{E:first-encounter-sigma-symbol-function}
C_\sigma(\gamma,k) =\frac{\Gamma\big(\frac{2k}{\gamma}+1\big) \Gamma\big(2k-\frac{2k}{\gamma}+1\big)}{\Gamma(k+1)^2} \left( \tfrac{\gamma}{2}\right)^{2k+2} (\gamma-1)^{\frac{2k}{\gamma}-2k-1}.
\end{align}
Detailed analysis of the $\sigma$-symbol function is carried out in Section \ref{S:SymbolFunction}.

\begin{remark}\label{R:XiIndependenceOfSymbolFunction}
It is remarkable that $C_\sigma(\gamma,k)$ is independent of $\xi$.  The more general symbol function $C_{\mu_r}(\gamma,k)$ calculated in Section \ref{SS:Leray-boundedness-m_r} also shares this property. $\lozenge$
\end{remark}

We now compute the norm of the operator $\bm{L}_k$.

\begin{theorem}\label{T:OpNormLk}
The operator $\bm{L}_k: L^2(M_\gamma,\sigma) \to L_k^2(M_\gamma,\sigma)$ is bounded with norm given by
\begin{equation*}
\norm{\bm{L}_k}_{L^2(M_\gamma,\sigma)} = \sqrt{C_\sigma(\gamma,k)}.
\end{equation*}
\end{theorem}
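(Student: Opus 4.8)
The plan is to show that $\sqrt{C_\sigma(\gamma,k)}$ is both an upper bound and a lower bound for $\norm{\bm{L}_k}_{L^2(M_\gamma,\sigma)}$, working through the unitarily equivalent operator $\cf^{-1}\bm{L}_k\cf$ and exploiting the fact (Remark \ref{R:XiIndependenceOfSymbolFunction}) that the symbol function is independent of $\xi$. First I would establish the upper bound: for arbitrary $f \in L^2(M_\gamma,\sigma)$, only the component $f_k$ contributes since $\bm{L}_k$ annihilates $L^2_j(M_\gamma,\sigma)$ for $j \ne k$; applying \eqref{E:Bound1F^-1LF}, integrating in $\alpha_z$ against $\alpha_z^{\gamma-1}\,d\alpha_z$, and using the computations in Corollary \ref{C:(sigma,1)-norms-of-tau-and-kappa} gives
\[
\int_0^\infty |\cf^{-1}\bm{L}_k\cf f(\xi,\alpha_z,\theta_z)|^2 \,\alpha_z^{\gamma-1}\,d\alpha_z \le C_\sigma(\gamma,k)\, \norm{f_k(\xi,\cdot)}_{(\sigma,1)}^2
\]
for each fixed $\xi < 0$ (the integrand vanishing for $\xi \ge 0$). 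Then I would integrate in $\xi$ over $(-\infty,0)$, multiply by $2\pi$ to account for the $\theta_z$-integration as in Parseval's identity \eqref{E:ParsevalSeries}, and invoke the $L^2(\R)$-isometry property \eqref{E:FourierIsometryL2} of $\cf^{-1}$ applied in the $s$-variable; this yields $\norm{\bm{L}_k f}_{L^2(M_\gamma,\sigma)}^2 \le C_\sigma(\gamma,k)\, \norm{f}_{L^2(M_\gamma,\sigma)}^2$, hence the upper bound on the norm.

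For the lower bound I would exhibit a sequence (or family) of functions on which the Cauchy-Schwarz estimate in \eqref{E:Bound1F^-1LF} is nearly saturated. The equality condition is precisely \eqref{E:C-SEquality}: one takes $f_k(\xi,\alpha_z) = m_k(\xi)\,\kappa_k(\xi,\alpha_z)$ for a multiplier $m_k$ supported in $\{\xi < 0\}$ satisfying the finiteness condition \eqref{E:KappaKMultiplierCondition-sigma}. The subtlety is that $\kappa_k(\xi,\cdot)$ is not globally square-integrable in $\xi$ with the weight it naturally carries, so I would choose $m_k$ to be (a normalized multiple of) the indicator of a bounded interval $[-N, -\delta] \subset (-\infty,0)$, producing a genuine element of $L^2_k(M_\gamma,\sigma)$. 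For such $f$, the $\xi$-independence of $C_\sigma(\gamma,k)$ means the pointwise Cauchy-Schwarz equality holds for every $\xi$ in the support, so after integrating one obtains $\norm{\bm{L}_k f}^2 = C_\sigma(\gamma,k)\,\norm{f}^2$ exactly — not merely in the limit. This simultaneously proves the lower bound and shows the norm is attained.

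The main obstacle — though it is more bookkeeping than conceptual — is handling the integrability issues carefully: verifying that the test functions actually lie in $L^2$, that Fubini applies when interchanging the $\xi$, $\alpha_z$, and $s_z$ integrations, and that the passage between $\bm{L}_k$ on $L^2(M_\gamma,\sigma)$ and the multiplier representation of Proposition \ref{P:LkMultipliers} is valid on a dense subspace and then extends by continuity. One should also confirm that $C_\sigma(\gamma,k)$, as given by \eqref{E:first-encounter-sigma-symbol-function}, is finite for all $\gamma > 1$ and $k \ge 0$ — which is immediate since the $\Gamma$-function arguments $\frac{2k}{\gamma}+1$ and $2k - \frac{2k}{\gamma} + 1$ are both positive — so that the operator is genuinely bounded, consistent with the theorem statement. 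Once these technical points are dispatched, the equality $\norm{\bm{L}_k}_{L^2(M_\gamma,\sigma)} = \sqrt{C_\sigma(\gamma,k)}$ follows by combining the two bounds.
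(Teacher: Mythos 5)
Your proposal is correct and follows essentially the same route as the paper: pass to the unitarily equivalent operator $\cf^{-1}\bm{L}_k\cf$, get the upper bound from the Cauchy--Schwarz estimate \eqref{E:Bound1F^-1LF} together with the $\xi$-independence of $C_\sigma(\gamma,k)$, and obtain attainment from the equality case $f_k(\xi,\cdot)=m_k(\xi)\kappa_k(\xi,\cdot)$ subject to \eqref{E:KappaKMultiplierCondition-sigma}. Your explicit choice of $m_k$ as an indicator of a compact interval in $(-\infty,0)$ is just a concrete instance of the admissible multipliers the paper invokes, so the two arguments coincide in substance.
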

\begin{proof}
First note that $\bm{L}_k$ is unitarily equivalent to $\cf^{-1}\bm{L}_k\cf$.  Now
\begin{align}
\norm{\cf^{-1}\bm{L}_k\cf f}_\sigma^2 &= \int_0^{2\pi}\int_{-\infty}^\infty\int_0^\infty |\cf^{-1}\bm{L}_k\cf f (\xi,\alpha_z,\theta_z)|^2 \alpha_z^{\gamma-1}\,d\xi\,d\alpha_z d\theta_z \notag\\
&\le C_\sigma(\gamma,k) \int_0^{2\pi}\int_{-\infty}^\infty \norm{f_k(\xi,\cdot)}_{(\sigma,1)}^2 \,d\xi\,d\theta_z \label{E:LkC-SIneq}\\
&= C_\sigma(\gamma,k) \int_0^{2\pi}\int_{-\infty}^\infty \int_0^\infty |f_k(\xi,\alpha_z)|^2 \alpha_z^{\gamma-1}d\alpha_z \,d\xi\,d\theta_z \notag\\
&= C_\sigma(\gamma,k) \norm{f}_\sigma^2.\notag
\end{align}
Equality in \eqref{E:LkC-SIneq} holds if and only if $f_k$ is of the form given by \eqref{E:C-SEquality} and \eqref{E:KappaKMultiplierCondition-sigma}.
\end{proof}

\begin{theorem}\label{T:L_k-is-a-projection-sigma}
The operator $\bm{L}_k:L^2(M_\gamma,\sigma) \to L_k^2(M_\gamma,\sigma)$ is a projection.
\end{theorem}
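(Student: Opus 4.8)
The plan is to show that $\bm{L}_k$ is idempotent, $\bm{L}_k^2=\bm{L}_k$. Since $\bm{L}_k$ is bounded (Theorem \ref{T:OpNormLk}) and vanishes identically on $L^2_j(M_\gamma,\sigma)$ for every $j\ne k$, it is enough to check $\bm{L}_k^2 f=\bm{L}_k f$ for $f\in L^2_k(M_\gamma,\sigma)$; and since $\bm{L}_k$ is unitarily equivalent to $\cf^{-1}\bm{L}_k\cf$, it is enough to do this on the Fourier side, where Proposition \ref{P:LkMultipliers} gives an explicit formula. So the first step is: writing $f\in L^2_k(M_\gamma,\sigma)$, after $\cf^{-1}$ in the $s$-variable, as $f_k(\xi,\alpha_z)e^{ik\theta_z}$, Proposition \ref{P:LkMultipliers} gives
\[
\cf^{-1}\bm{L}_k\cf f(\xi,\alpha_z,\theta_z)=\langle f_k(\xi,\cdot),\kappa_k(\xi,\cdot)\rangle_{(\sigma,1)}\,\eta_k(\xi)\,\tau_k(\xi,\alpha_z)\,e^{ik\theta_z}.
\]
In particular the output again lies in $L^2_k$, so the composition is meaningful, and for each fixed $\xi$ its $\alpha_z$-dependence is a scalar multiple of $\tau_k(\xi,\cdot)$. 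Setting $v_\xi:=\eta_k(\xi)\tau_k(\xi,\cdot)$ and $P_\xi g:=\langle g,\kappa_k(\xi,\cdot)\rangle_{(\sigma,1)}v_\xi$, the rank-one operator $P_\xi$ satisfies $P_\xi^2=\langle v_\xi,\kappa_k(\xi,\cdot)\rangle_{(\sigma,1)}P_\xi=\eta_k(\xi)\,\langle\tau_k(\xi,\cdot),\kappa_k(\xi,\cdot)\rangle_{(\sigma,1)}\,P_\xi$ (the scalar $\eta_k(\xi)$ passing freely through the first slot of $\langle\cdot,\cdot\rangle_{(\sigma,1)}$). Thus the whole assertion reduces to the pointwise identity $\eta_k(\xi)\,\langle\tau_k(\xi,\cdot),\kappa_k(\xi,\cdot)\rangle_{(\sigma,1)}=1$ for a.e. $\xi<0$ (both sides being $0$ for $\xi\ge0$).

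Next I would verify that identity by direct computation. For $\xi<0$ both $\tau_k(\xi,\cdot)$ and $\kappa_k(\xi,\cdot)$ are real-valued, and from \eqref{D:tau_k}, \eqref{D:kappa_k} one has $\tau_k(\xi,\alpha)\kappa_k(\xi,\alpha)\alpha^{\gamma-1}=\alpha^{(k+1)\gamma-1}e^{2\pi\xi\gamma\alpha^\gamma}$; Proposition \ref{P:IntegralComputation} with $x=(k+1)\gamma-1$ and $y=-2\pi\xi\gamma>0$ then gives $\langle\tau_k(\xi,\cdot),\kappa_k(\xi,\cdot)\rangle_{(\sigma,1)}=\tfrac{1}{\gamma}(-2\pi\xi\gamma)^{-(k+1)}k!$. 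Multiplying by $\eta_k(\xi)=(-2\pi\xi)^{k+1}\gamma^{k+2}/k!$ from \eqref{D:eta_k}, every power of $-2\pi\xi$, every power of $\gamma$, and the factor $k!$ cancel, leaving exactly $1$. Hence $\cf^{-1}\bm{L}_k\cf(\cf^{-1}\bm{L}_k\cf f)=\cf^{-1}\bm{L}_k\cf f$ for all $f$, i.e. $\bm{L}_k^2=\bm{L}_k$, as claimed.

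I do not expect a genuine obstacle: the argument is bookkeeping on top of Propositions \ref{P:LkMultipliers} and \ref{P:IntegralComputation}, the one mildly striking feature being the clean cancellation in the last step. The only points needing a moment's care are that $\bm{L}_k$ sends $L^2_k(M_\gamma,\sigma)$ into itself (immediate from the form of the output in Proposition \ref{P:LkMultipliers}), that $\eta_k(\xi)$ is a scalar in $\alpha$ (so it may be pulled out of $\langle\cdot,\cdot\rangle_{(\sigma,1)}$), and that for a.e. $\xi<0$ the relevant inner products are finite, which follows from $\tau_k(\xi,\cdot),\kappa_k(\xi,\cdot)\in L^2((0,\infty),\alpha^{\gamma-1}d\alpha)$ (Corollary \ref{C:(sigma,1)-norms-of-tau-and-kappa}) together with the boundedness of $\bm{L}_k$. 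One may also note in passing that this description is consistent with Theorem \ref{T:OpNormLk}: $P_\xi$ is the (in general oblique) projection onto $\C\,\tau_k(\xi,\cdot)$, so $\|P_\xi\|=\|\tau_k(\xi,\cdot)\|_{(\sigma,1)}\,\|\eta_k(\xi)\kappa_k(\xi,\cdot)\|_{(\sigma,1)}=\sqrt{C_\sigma(\gamma,k)}$.
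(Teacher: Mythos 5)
Your proof is correct and follows essentially the same route as the paper: both reduce to the Fourier-side formula of Proposition \ref{P:LkMultipliers} and establish idempotence via the key identity $\eta_k(\xi)\,\langle\tau_k(\xi,\cdot),\kappa_k(\xi,\cdot)\rangle_{(\sigma,1)}=1$ for $\xi<0$, computed with Proposition \ref{P:IntegralComputation} (the paper records this as $\langle\tau_k(\xi,\cdot),\kappa_k(\xi,\cdot)\rangle_{(\sigma,1)}=1/\eta_k(\xi)$ in \eqref{E:TauKappaInt=Eta}). Your rank-one-operator framing $P_\xi$ is just a cosmetic repackaging of the same computation, and your closing consistency check with Theorem \ref{T:OpNormLk} is a nice sanity remark but not needed.
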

\begin{proof}
Let $f \in L^2(M_\gamma,\sigma)$.  It must be shown that $\bm{L}_k \circ \bm{L}_k = \bm{L}_k$.  Proceed by conjugating by the Fourier transform.  Proposition \ref{P:LkMultipliers} says
\begin{align}
\cf^{-1}\bm{L}_k \circ \bm{L}_k \cf (f) &= \cf^{-1}\bm{L}_k \cf \circ \cf^{-1} \bm{L}_k \cf (f) \notag\\
&= \cf^{-1}\bm{L}_k \cf\Big( \eta_k(\xi) \tau_k(\xi,\alpha_z) \Big< f_k(\xi,\cdot),\kappa_k(\xi,\cdot) \Big>_{(\sigma,1)} \,e^{ik\theta_z} \Big) \notag\\
&= \eta_k(\xi) \Big< f_k(\xi,\cdot),\kappa_k(\xi,\cdot) \Big>_{(\sigma,1)} \cf^{-1}\bm{L}_k \cf\Big(\tau_k(\xi,\alpha_z) e^{ik\theta_z} \Big) \notag\\
&= \eta_k(\xi)^2 \Big< f_k(\xi,\cdot),\kappa_k(\xi,\cdot) \Big>_{(\sigma,1)} \tau_k(\xi,\alpha'_{z}) \Big< \tau_k(\xi,\cdot),\kappa_k(\xi,\cdot) \Big>_{(\sigma,1)},\label{E:L_k^2Comp}
\end{align}
where $\alpha_z'$ denotes the radial part of the $\zeta_1$ variable after two applications of $\cf^{-1} \bm{L}_k \cf$.

From the definitions of $\tau_k$ and $\kappa_k$ in \eqref{D:tau_k} and \eqref{D:kappa_k},
\begin{align}
\Big< \tau_k(\xi,\cdot),\kappa_k(\xi,\cdot) \Big>_{(\sigma,1)} &= \int_0^\infty \tau_k(\xi,\alpha) \overline{\kappa_k(\xi,\alpha)}\alpha^{\gamma-1}\,d\alpha \notag\\
&= \mathds{1}_{\{\xi<0\}}\int_0^\infty \alpha^{\gamma(k+1)-1} e^{2\pi\xi\gamma\alpha^\gamma} d\alpha \notag\\
&= \mathds{1}_{\{\xi<0\}}\frac{\Gamma(k+1)}{(-2\pi\xi)^{k+1}\gamma^{k+2}} \label{E:TauKappaInt}\\
&= \frac{1}{\eta_k(\xi)}, \label{E:TauKappaInt=Eta}
\end{align}
where \eqref{E:TauKappaInt} follows from Proposition \ref{P:IntegralComputation}.  Returning to \eqref{E:L_k^2Comp}, we now see that
\begin{align*}
\cf^{-1}\bm{L}_k \circ \bm{L}_k \cf (f) &= \eta_k(\xi)^2 \Big< f_k(\xi,\cdot),\kappa_k(\xi,\cdot) \Big>_{(\sigma,1)} \tau_k(\xi,\alpha'_{z}) \Big< \tau_k(\xi,\cdot),\kappa_k(\xi,\cdot) \Big>_{(\sigma,1)}\\
&= \eta_k(\xi) \tau_k(\xi,\alpha'_{z}) \Big< f_k(\xi,\cdot),\kappa_k(\xi,\cdot) \Big>_{(\sigma,1)}\\
&= \cf^{-1}\bm{L}_k \cf (f).
\end{align*}
This shows that $\cf^{-1} \bm{L}_k \circ \bm{L}_k \cf = \cf^{-1} \bm{L}_k \cf$ and thus $\bm{L}_k \circ \bm{L}_k = \bm{L}_k$.
\end{proof}

\subsection{$M_\gamma$ at infinity}\label{SS:MgammaAtInfinity}

To understand the behavior of $M_\gamma$ at infinity, apply the projective automorphism $\Phi$ mapping $(z_1,z_2) \mapsto (\frac{z_1}{z_2},\frac{1}{z_2})$.  This transformation swaps the line $\{z_2=0\}$ with the line at $\infty$.  Setting $\wt{M}_\gamma := \Phi({M_\gamma})$, the transformed hypersurface can be represented as follows:
\begin{align}
\wt{M}_\gamma &= \left\{ (z_1,z_2): -|z_2|^{\gamma-2}\,\im{(z_2)} = |z_1|^\gamma\right\} \notag \\
&= \left\{ (x_1,y_1,x_2,y_2): -y_2 (x_2^2 + y_2^2)^{\frac{\gamma}{2} -1} = (x_1^2 + y_1^2)^{\frac{\gamma}{2}}  \right\} \label{E:Mgamma-at-inf-xy-coords} \\
&=  \left\{ (r_1 e^{i\theta_1} , r_2 e^{i\theta_2}): -r_2^{\gamma-1} \sin{\theta_2} = r_1^\gamma\right\}. \label{E:Mgamma-at-inf-polar-coords}
\end{align}

The behavior of $\wt{M}_\gamma$ at $z_2 = 0$ gives the behavior of $M_\gamma$ at $\infty$.  For any $\gamma>1$, \eqref{E:Mgamma-at-inf-polar-coords} shows that sending $r_2 \to 0$ forces $r_1 \to 0$.  This implies that the closure of $M_\gamma$ in $\C\mathbb{P}^2$ contains a single point at infinity with homogeneous coordinates $[0:0:1]$.  We now consider the regularity here:

\begin{theorem}
Let $\gamma>1$.  The closure of $M_\gamma$ in $\C\mathbb{P}^2$ fails to be a $C^1$ submanifold near $[0:0:1]$  except in the case  $\gamma=2$.  However, $M_\gamma$ is globally Lipschitz.
\end{theorem}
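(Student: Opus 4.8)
The plan is to push everything through the projective map $\Phi$ into the chart of \eqref{E:Mgamma-at-inf-xy-coords}, where — since $\overline{M_\gamma}$ meets the line at infinity only in $[0:0:1]$ (as recorded just before the theorem) — the closure $\overline{M_\gamma}$ near $[0:0:1]$ is identified with $\wt{M}_\gamma$ near the origin, the zero set of $-y_2(x_2^2+y_2^2)^{\gamma/2-1}=(x_1^2+y_1^2)^{\gamma/2}$. First I would record that on $\wt{M}_\gamma$ one has $y_2\le0$, with $y_2=0$ precisely when $z_1=0$, and that for fixed $x_2$ the function $v\mapsto v(x_2^2+v^2)^{\gamma/2-1}$ is strictly increasing on $[0,\infty)$ — its derivative equals $(x_2^2+v^2)^{\gamma/2-2}\big(x_2^2+(\gamma-1)v^2\big)>0$ because $\gamma>1$ — and sweeps out $[0,\infty)$. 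Hence near $(0,0)$ the equation determines a unique continuous $y_2=g(z_1,x_2)\le0$, so $\wt{M}_\gamma=\{y_2=g(z_1,x_2)\}$ is a continuous graph with $g(0,\cdot)\equiv0$ and $g$ real-analytic on $\{z_1\ne0\}$ by the implicit function theorem. Everything now reduces to the behaviour of $g$ at $0$.

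For the failure of $C^1$-smoothness when $\gamma\ne2$, differentiating the defining relation on $\{z_1\ne0\}$ gives
\[
g_{x_2}=-\frac{(\gamma-2)x_2y_2}{x_2^2+(\gamma-1)y_2^2},\qquad
g_{x_1}=-\frac{\gamma x_1 r_1^{\gamma-2}r_2^{4-\gamma}}{x_2^2+(\gamma-1)y_2^2}
\]
(with $r_j=|z_j|$, $y_2=g$, and an analogous formula for $g_{y_1}$). I would then note that $\wt{M}_\gamma$ contains the $x_2$-axis $\{z_1=0\}$ and the slice $\{x_2=0,\ y_2=-|z_1|^{\gamma/(\gamma-1)}\}$, both $C^1$ near $0$ (the exponent $\gamma/(\gamma-1)>1$) and tangent there to $\R\partial_{x_2}$ and to the $z_1$-plane $\R\partial_{x_1}+\R\partial_{y_1}$ respectively; so if $\wt{M}_\gamma$ were a $C^1$ submanifold near $0$ its tangent space would satisfy $T_0\wt{M}_\gamma\supseteq\R\partial_{x_1}+\R\partial_{y_1}+\R\partial_{x_2}$. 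On the other hand, along the genuinely smooth points $p_t=(z_1(t),t,-t)\in\wt{M}_\gamma$ with $t>0$ (where $|z_1(t)|^\gamma=2^{\gamma/2-1}t^{\gamma-1}\to0$, so $p_t\to0$) the formula above yields $g_{x_2}(z_1(t),t)=\tfrac{\gamma-2}{\gamma}$, hence $\partial_{x_2}+\tfrac{\gamma-2}{\gamma}\partial_{y_2}\in T_{p_t}\wt{M}_\gamma$; continuity of the unit normal of a $C^1$ hypersurface would force this vector into $T_0\wt{M}_\gamma$ as well, so $\tfrac{\gamma-2}{\gamma}\partial_{y_2}\in T_0\wt{M}_\gamma$ and therefore (as $\gamma\ne2$) $T_0\wt{M}_\gamma=\R^4$ — impossible for a real hypersurface. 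When $\gamma=2$ the relation degenerates to $y_2=-|z_1|^2$ and $g$ is real-analytic, so this case is genuinely exceptional.

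For the Lipschitz claim I would bound $\nabla g$ on $\{z_1\ne0\}$ near $0$ using the same formulas. The estimates $2|x_2y_2|\le x_2^2+y_2^2$ and $x_2^2+(\gamma-1)y_2^2\ge\min(1,\gamma-1)(x_2^2+y_2^2)$ give $|g_{x_2}|\le\frac{|\gamma-2|}{2\min(1,\gamma-1)}$; and from $|x_1|r_1^{\gamma-2}\le r_1^{\gamma-1}$, the surface identity $r_1^\gamma=|y_2|r_2^{\gamma-2}\le r_2^{\gamma-1}$ (whence $r_1^{\gamma-1}\le r_2^{(\gamma-1)^2/\gamma}$), and the exponent identity $\tfrac{(\gamma-1)^2}{\gamma}+(2-\gamma)=\tfrac{1}{\gamma}>0$, one gets $|g_{x_1}|,|g_{y_1}|\le C\,r_2^{1/\gamma}$, which is bounded (indeed vanishes) near $0$. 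So $g$ is continuous near $0$ and has uniformly bounded gradient off the line $\{z_1=0\}$; integrating $\nabla g$ along segments that miss that line and passing to the limit by continuity of $g$ shows $g$ is Lipschitz near $0$, i.e. $\wt{M}_\gamma$ is a Lipschitz graph there. Since $M_\gamma=\{y_2=(x_1^2+y_1^2)^{\gamma/2}\}$ is a $C^1$ — hence locally Lipschitz — hypersurface of $\C^2$ for $\gamma>1$ (the gradient of $(x_1^2+y_1^2)^{\gamma/2}$ is continuous), combining the two statements shows $\overline{M_\gamma}$ is a Lipschitz submanifold of $\C\p^2$.

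The main obstacle is not a single computation but keeping the low-regularity geometry honest. Since $g$ is not a priori differentiable at the origin (only continuous, with $g\equiv0$ on $\{z_1=0\}$), the $C^1$-failure argument must run entirely through tangent planes at \emph{smooth} points $p_t$ and through submanifolds of $\wt{M}_\gamma$ that are visibly $C^1$ near $0$, together with continuity of the Gauss map — one cannot merely assert that $g_{x_2}$ is discontinuous at $0$. The remaining work (the implicit-differentiation formulas, the exponent bookkeeping for the Lipschitz bounds, and the standard fact that a continuous function with bounded gradient off a line is Lipschitz) is routine.
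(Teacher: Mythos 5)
Your proposal is correct, and its computational core coincides with the paper's: the same chart via $\Phi$, the same implicit-differentiation formulas for $\partial y_2/\partial x_2$, $\partial y_2/\partial x_1$, and the same exponent bookkeeping $r_1^{\gamma-1}r_2^{2-\gamma}\lesssim r_2^{1/\gamma}$ together with $x_2^2+(\gamma-1)y_2^2\ge \min\{1,\gamma-1\}\,r_2^2$ for the Lipschitz bound. Where you diverge is in how the failure of $C^1$ regularity is extracted. The paper first uses the constraint $y_2\le 0$ to argue that any hypothetical $C^1$ graph near the origin must present $y_2$ as a function of $(x_1,y_1,x_2)$, and then observes that $\partial y_2/\partial x_2=\frac{(2-\gamma)\cos\theta_2\sin\theta_2}{1+(\gamma-2)\sin^2\theta_2}$ depends only on $\theta_2$, so it has different limits along $\theta_2=\pi$ and $\theta_2=5\pi/4$ unless $\gamma=2$; a discontinuous partial derivative rules out a $C^1$ graph. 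You instead pin down what $T_0\wt{M}_\gamma$ would have to be using the two $C^1$ submanifolds contained in $\wt{M}_\gamma$ (the $x_2$-axis and the slice $\{x_2=0,\,y_2=-|z_1|^{\gamma/(\gamma-1)}\}$) and then contradict continuity of the tangent plane along the smooth points $p_t$, where $g_{x_2}\equiv\frac{\gamma-2}{\gamma}$ (your $\theta_2=7\pi/4$ value, consistent with the paper's $\theta_2=5\pi/4$ computation). This buys you independence from the "which variable is the graph over" discussion, at the cost of a little extra setup; your stated worry that one "cannot merely assert $g_{x_2}$ is discontinuous" is somewhat overcautious, since once the graph direction is fixed the paper's direct discontinuity argument is already complete. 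On the Lipschitz side you are, if anything, more careful than the paper: you make explicit the step from "continuous with uniformly bounded gradient off the line $\{z_1=0\}$" to a genuine Lipschitz estimate (integration along segments avoiding a codimension-two set), and you note separately that $M_\gamma$ is $C^1$, hence locally Lipschitz, at finite points, which is what "globally Lipschitz" requires in the projective closure.
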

\begin{proof}
The discussion above lets us transfer the problem to $\wt{M}_\gamma$.  If $\wt{M}_\gamma$ were a $C^1$ manifold at the origin, then one of the four real variables in \eqref{E:Mgamma-at-inf-xy-coords} would be expressable as a $C^1$ function of the other three in a neighborhood of the origin.  Now, given the constraint in \eqref{E:Mgamma-at-inf-xy-coords} that $y_2 \le 0$, we see that the only possibility is for $y_2$ to be a function of $x_1, y_1$ and $x_2$.  

Starting from the equation $(x_1^2 + y_1^2)^{\frac{\gamma}{2}} + y_2 (x_2^2 + y_2^2)^{\frac{\gamma}{2} -1} = 0$, implicit differentiation shows
\begin{align}\label{E:dy2/dx2}
\frac{\partial y_2}{\partial x_2} = \frac{(2-\gamma)x_2 y_2}{x_2^2 + (\gamma-1)y_2^2} &= \frac{(2-\gamma)\cos{\theta_2} \sin{\theta_2}}{1 + (\gamma-2)\sin^2{\theta_2}}.
\end{align}
Now set, for instance, $\theta_2 = \pi$ and $\theta_2 = \frac{5\pi}{4}$ in \eqref{E:dy2/dx2} and see that different values are obtained, {\em except in the case that $\gamma=2$}. (Any two distinct choices of $\theta_2 \in [\pi,2\pi]$ can be used.)  This shows that $\frac{\partial y_2}{\partial x_2}$ fails to be continuous at the origin, which in turn proves $M_\gamma$ is not $C^1$ at infinity.

Now see that $M_\gamma$ is Lipschitz.  It will again suffice to consider $\wt{M}_\gamma$ in the form of \eqref{E:Mgamma-at-inf-xy-coords} and show that the derivatives $\frac{\partial y_2}{\partial x_1}$, $\frac{\partial y_2}{\partial y_1}$, and $\frac{\partial y_2}{\partial x_2}$ are $L^\infty$ functions near the origin.

First, see that the quantity $|1 + (\gamma-2)\sin^2{\theta}| \ge \min\{1, {\gamma-1} \}$ for any choice of $\theta$.  Then implicit differentiation shows
\begin{align*}
\left| \frac{\partial y_2}{\partial x_1} \right| = \left| \frac{-\gamma x_1 (x_1^2 + y_1^2)^{\frac{\gamma}{2}-1} (x_2^2 + y_2^2)^{2-\frac{\gamma}{2}} }{x_2^2 + (\gamma-1)y_2^2} \right| \lesssim r_1^{\gamma-1} r_2^{2-\gamma} \lesssim r_2^{\frac{\gamma^2-2\gamma+1}{\gamma}+{(2-\gamma)}} = r_2^{\frac{1}{\gamma}},
\end{align*}
which is clearly bounded near the origin.  By symmetry, $\frac{\partial y_2}{\partial y_1}$ satisfies an analogous bound.  Similarly, it quickly follows from \eqref{E:dy2/dx2} that $\left| \frac{\partial y_2}{\partial x_2} \right| \lesssim 1$, finishing the proof.
\end{proof}

%\blue{It is worth talking about the convexity of $M_\gamma$ at infinity?}. YES, IT IS.

\begin{remark}
It is easy to check that the line at infinity $\{[0:z_1:z_2]\}$ is the only complex line in $\C\mathbb{P}^2\setminus \Omega_{\gamma}$ passing through the point $[0:0:1]$ and  thus may be viewed as the tangent line at $[0:0:1]$ (in a weak sense). $\lozenge$
 %all vertical affine lines intersect \Omega_{\gamma}
\end{remark}

\begin{remark}
In the case of $\gamma = 1$, \eqref{E:Mgamma-at-inf-polar-coords} shows that the completion of the hypersuface $M_1$ in $\C\mathbb{P}^2$ consists of a closed disc at infinity, rather than just a single point. $\lozenge$
\end{remark}

%%%%%%%%%%%%%%%%%%%%%%%%%%%%%%%%%%%%%%%%%%%%%%%%%%%%%%%%%%%%%%%%%%%%%%%%%%%%%%%%%%%%%%%%%%%%%%%%%%%%%%%%%%%%%%%%%%%%%%%%%%%%%%%%%%%%%%%%%%%%%%%%%%%%%%%%%%%%%%%%%%%%%%%%%%%%%%%%%%%%%%%%%%%%%%%%%%%%%%%%%%%%%%%%%%%%%%%%%%%%%%%%%

\section{Analysis of the $\sigma$-symbol function}\label{S:SymbolFunction}

\subsection{Properties of $C_{\sigma}(\gamma,k)$}\label{SS:props-of-C_sigma(gamma,k)}
Analysis of $C_{\sigma}(\gamma,k)$ for each $k$ value yields precise information on $\bm{L}$.  Recall that
\begin{align}\label{E:DefCgamma(k)}
C_{\sigma}(\gamma,k) &:= \frac{\Gamma\big(\frac{2k}{\gamma}+1\big) \Gamma\big(2k-\frac{2k}{\gamma}+1\big)}{\Gamma(k+1)^2} \left( \tfrac{\gamma}{2}\right)^{2k+2} (\gamma-1)^{\frac{2k}{\gamma}-2k-1}.
\end{align}

\begin{theorem}\label{T:CgammaProps}
Let $\gamma>1$ and $k$ be a non-negative integer.  The symbol function $C_\sigma(\gamma,k)$ has the following properties:
\begin{itemize}
\item[$(a)$] $C_\sigma(\gamma,0) = \frac{\gamma^2}{4(\gamma-1)}$.

\item[$(b)$]  $C_\sigma(2,k) = 1$ for all positive integers $k$.

\item[$(c)$] $C_\sigma(\gamma,k)$ is H\"{o}lder symmetric in $\gamma$, i.e., $C_\sigma(\gamma,k) = C_\sigma\big(\frac{\gamma}{\gamma-1},k\big)$.

\item[$(d)$] For each $\gamma \neq 2$, $C_\sigma(\gamma,k)$ strictly decreases as a function of $k$.

\item[$(e)$] $\lim_{k\to\infty} C_\sigma(\gamma,k) = \frac{\gamma}{2\sqrt{\gamma-1}}$.
\end{itemize}
\end{theorem}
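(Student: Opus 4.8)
The plan is to treat (a)--(c) as one-line substitutions into the closed form \eqref{E:DefCgamma(k)}, to extract (e) from Stirling's formula, and to isolate (d) as the genuinely delicate point, which I would carry out separately in Section~\ref{SS:ProofOfC(gamma,k)DecreasingWithk}.

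For (a) I would simply put $k=0$: every Gamma factor becomes $\Gamma(1)=1$, leaving $(\gamma/2)^2(\gamma-1)^{-1}$. For (b) I would put $\gamma=2$, so that $\tfrac{2k}{\gamma}=k$; the numerator is then $\Gamma(k+1)^2$, which cancels the denominator, and the powers of $\gamma/2$ and $\gamma-1$ are both $1$. For (c) I would set $\gamma^*=\gamma/(\gamma-1)$ and record the bookkeeping identities $\tfrac{2k}{\gamma^*}=2k-\tfrac{2k}{\gamma}$, $\gamma^*-1=\tfrac{1}{\gamma-1}$ and $\tfrac{2k}{\gamma^*}-2k-1=-\tfrac{2k}{\gamma}-1$: the first shows the two numerator Gamma factors trade places under $\gamma\mapsto\gamma^*$, and the other two show $\bigl(\tfrac{\gamma^*}{2}\bigr)^{2k+2}(\gamma^*-1)^{\frac{2k}{\gamma^*}-2k-1}=\bigl(\tfrac{\gamma}{2}\bigr)^{2k+2}(\gamma-1)^{\frac{2k}{\gamma}-2k-1}$, whence $C_\sigma(\gamma^*,k)=C_\sigma(\gamma,k)$.

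For (e) it is cleanest to set $a=2/\gamma\in(0,2)$, so that
\begin{equation*}
C_\sigma(\gamma,k)=\frac{\Gamma(ak+1)\,\Gamma((2-a)k+1)}{\Gamma(k+1)^2}\,a^{-(2k+2)}\,(\gamma-1)^{ak-2k-1},
\end{equation*}
and then apply $\log\Gamma(x+1)=x\log x-x+\tfrac12\log(2\pi x)+o(1)$ to the three Gamma factors. I would point out three cancellations: the $k\log k$ terms cancel because $a+(2-a)=2$; the terms linear in $k$ cancel because of the identity $a^a(2-a)^{2-a}(\gamma-1)^{a-2}=a^2$, which is nothing but the relation $2-a=2(\gamma-1)/\gamma$; and the $\log k$ terms cancel because $\tfrac12+\tfrac12-1=0$. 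What is left is a constant, which collapses to $a^{-3/2}(2-a)^{1/2}(\gamma-1)^{-1}$, and substituting back $a=2/\gamma$, $2-a=2(\gamma-1)/\gamma$ gives $\tfrac{\gamma}{2\sqrt{\gamma-1}}$. (Retaining the next term yields $C_\sigma(\gamma,k)=\tfrac{\gamma}{2\sqrt{\gamma-1}}\bigl(1+\tfrac{(1-a)^2}{6a(2-a)k}+O(k^{-3})\bigr)$, which already hints at the direction of the monotonicity in (d).)

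The hard part is (d). Here the natural strategy is to analyze the consecutive ratio
\begin{equation*}
R_k:=\frac{C_\sigma(\gamma,k+1)}{C_\sigma(\gamma,k)}=\frac{\Gamma(a(k+1)+1)}{\Gamma(ak+1)}\cdot\frac{\Gamma((2-a)(k+1)+1)}{\Gamma((2-a)k+1)}\cdot\frac{a^{-2}(\gamma-1)^{a-2}}{(k+1)^2}
\end{equation*}
and to show $R_k<1$ whenever $a\neq1$ (the case $a=1$ being (b)). The cleanest packaging is to prove that $\log C_\sigma(\gamma,\cdot)$ is strictly convex on $[0,\infty)$ --- its second derivative is $a^2\psi'(ak+1)+(2-a)^2\psi'((2-a)k+1)-2\psi'(k+1)$, which equals $\tfrac{\pi^2}{3}(1-a)^2>0$ at $k=0$ --- and then combine this with the finite limit from (e): a convex function on $[0,\infty)$ that is bounded above is non-increasing, and strict convexity upgrades this to strictly decreasing, hence strictly decreasing at the integers. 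The obstacle, and the reason this deserves its own section, is the global positivity of that trigamma difference for all $k\ge0$ (not merely for $k$ small or $k$ large): it does not follow from a single elementary convexity inequality, and one is forced into quantitative estimates on ratios of Gamma functions --- trigamma/tetragamma sign patterns, or Wendel/Gautschi-type bounds exploiting the constraint $p+q=2$ in the symmetric form $C_\sigma(\gamma,k)=\tfrac{1}{pq}\cdot\tfrac{\Gamma(pk+1)\Gamma(qk+1)}{\Gamma(k+1)^2}(p^p q^q)^{-k}$ with $p=a$, $q=2-a$.
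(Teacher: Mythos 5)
Your treatment of (a), (b), (c) and (e) is correct and coincides with the paper's: (a)--(c) are the same direct substitutions, and your Stirling computation for (e) (including the identity $a^a(2-a)^{2-a}(\gamma-1)^{a-2}=a^2$ and the collapse of the constant to $\tfrac{\gamma}{2\sqrt{\gamma-1}}$) is the same argument the paper runs through asymptotic equivalences \eqref{E:AsEquiv1}--\eqref{E:AsEquiv3}.

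For (d), however, what you have written is a strategy with the decisive step missing, and you say so yourself. The reduction ``strictly convex in $k$ on $[0,\infty)$ $+$ bounded above by (e) $\Rightarrow$ strictly decreasing'' is sound, but it rests entirely on the global inequality $a^2\psi'(ak+1)+(2-a)^2\psi'((2-a)k+1)>2\psi'(k+1)$ for all $k\ge 0$ and $a\in(0,2)\setminus\{1\}$, which you verify only at $k=0$ (where it equals $\tfrac{\pi^2}{3}(1-a)^2$) and infer asymptotically from the $\tfrac{(1-a)^2}{6a(2-a)k}$ correction term. This inequality is genuinely borderline: writing it as the statement that $a\mapsto a^2\psi'(ak+1)$ is convex, its second derivative in $a$ is $\sum_{m\ge 1}\tfrac{2m(m-2u)}{(m+u)^4}$ with $u=ak$, a series whose comparison integral $\int_0^\infty \tfrac{2t(t-2u)}{(t+u)^4}\,dt$ is exactly $0$, so its positivity cannot be read off from a crude sum-versus-integral bound and requires a quantitative argument of precisely the kind you defer. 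In other words, the hard content of (d) is not reduced, only relocated; as it stands the proof of (d) is incomplete.

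The paper closes this gap by a different maneuver that you may find worth comparing with your plan. Instead of convexity in $k$, it fixes $k$, uses the H\"older symmetry (c) to restrict to $1<\gamma<2$, sets $x=2/\gamma$, and studies $A(k,x)=\log\tfrac{C_\sigma(\gamma,k+1)}{C_\sigma(\gamma,k)}$ as a function of $x\in(1,2)$: since $A(k,1)=0$, it suffices to show $\tfrac{\dee A}{\dee x}<0$ there, which via the decomposition $A(k,x)=B(k,x)+B(k,2-x)$ in \eqref{E:B(k,x)} reduces to the concavity $\tfrac{\dee^2 B}{\dee x^2}<0$ for all $x>0$. That in turn reduces, through the trigamma series \eqref{E:TrigammaSum} and the function $D(k,x)$ of \eqref{E:DefD(k,x)}, to the single inequality $\sum_{j=1}^\infty \tfrac{2aj}{(a+j)^3}<1$ of Proposition \ref{P:Wijit} --- also a borderline sum-versus-integral statement (the integral equals $1$), but one that admits the short telescoping proof \eqref{E:Wijit}. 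So either adopt that route, or supply an honest proof of your trigamma positivity (e.g.\ by establishing $\sum_{m\ge 1}\tfrac{2m(m-2u)}{(m+u)^4}>0$ for all $u>0$ by a telescoping device analogous to Proposition \ref{P:Wijit}); without one of these, part (d) is not proved. A very minor aside: in your parenthetical expansion the error term should be $O(k^{-2})$ rather than $O(k^{-3})$, since exponentiating the $k^{-1}$ correction already produces a $k^{-2}$ term.
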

\begin{proof}
Parts $(a)$ and $(b)$ are immediate from the formula.  We will also quickly verify $(c)$ and $(e)$, leaving $(d)$ for the next section.

For part $(c)$, compute
\begin{align*}
C_\sigma\big(\tfrac{\gamma}{\gamma-1},k\big) &= \frac{\Gamma\big(\frac{2k(\gamma-1)}{\gamma}+1\big) \Gamma\big(2k-\frac{2k(\gamma-1)}{\gamma}+1\big)}{\Gamma(k+1)^2}\big( \tfrac{\gamma}{2\gamma-2}\big)^{2k+2} \big(\tfrac{1}{\gamma-1}\big)^{\frac{2k(\gamma-1)}{\gamma}-2k-1} \\
&= \frac{\Gamma\big(2k-\frac{2k}{\gamma}+1\big) \Gamma\big(\frac{2k}{\gamma}+1\big)}{\Gamma(k+1)^2} \big( \tfrac{\gamma}{2}\big)^{2k+2} (\gamma-1)^{\frac{2k}{\gamma}-2k-1} \\
&= C_\sigma(\gamma,k).
\end{align*}

Part $(e)$ follows from Stirling's formula.  Recall that $f$ and $g$ are said to be {\em asymptotically equivalent} when
\begin{equation}\label{E:AsymptoticEquivalence}
\lim_{x\to\infty} \frac{f(x)}{g(x)} = 1.
\end{equation}
When \eqref{E:AsymptoticEquivalence} holds, we write
\begin{align*}
f(x) \sim g(x).
\end{align*}
Stirling's formula says $\Gamma(x+1)\sim {\sqrt {2\pi x}}\left({\tfrac {x}{e}}\right)^{x}$, which implies the following asymptotic equivalences:
\begin{align}
\Gamma({k+1})^2 &\sim 2\pi e^{-2k} k^{2k+1}, \label{E:AsEquiv1}\\
\Gamma\big(\tfrac{2k}{\gamma}+1\big) &\sim \sqrt{2\pi}\cdot e^{-\frac{2k}{\gamma}} \big( \tfrac{2k}{\gamma}\big)^{\frac{2k}{\gamma}+\frac12}, \label{E:AsEquiv2}\\
\Gamma\big(2k-\tfrac{2k}{\gamma}+1\big) &\sim \sqrt{2\pi}\cdot e^{\frac{2k}{\gamma}-2k} \big( \tfrac{2k}{\gamma}\big)^{2k-\frac{2k}{\gamma}+\frac12}(\gamma-1)^{2k-\frac{2k}{\gamma}+\frac12}. \label{E:AsEquiv3}
\end{align}
Combining \eqref{E:AsEquiv1}, \eqref{E:AsEquiv2} and \eqref{E:AsEquiv3} shows
\begin{align*}
\frac{\Gamma\big(\frac{2k}{\gamma}+1\big) \Gamma\big(2k-\frac{2k}{\gamma}+1\big)}{\Gamma(k+1)^2} \sim \big( \tfrac{2}{\gamma}\big)^{2k+1}(\gamma-1)^{2k-\frac{2k}{\gamma}+\frac12}.
\end{align*}
Consequently,
\begin{align*}
C_\sigma(\gamma,k) &\sim \big( \tfrac{2}{\gamma}\big)^{2k+1}(\gamma-1)^{2k-\frac{2k}{\gamma}+\frac12} \big( \tfrac{\gamma}{2}\big)^{2k+2} (\gamma-1)^{\frac{2k}{\gamma}-2k-1}  \\
&= \frac{\gamma}{2\sqrt{\gamma-1}}.
\end{align*}
This completes the proof of item $(e)$.
\end{proof}

\subsection{Proof of item $(d)$ in Theorem \ref{T:CgammaProps}}\label{SS:ProofOfC(gamma,k)DecreasingWithk}
We will prove that the function $C_\sigma(\gamma,k)$ decreases in the integer variable $k$ by showing that  
\begin{align}\label{E:C(gamma,k)ratio<1}
\frac{C_\sigma(\gamma,k+1)}{C_\sigma(\gamma,k)} \le 1,
\end{align}
with equality holding only in the case of $\gamma=2$.  (That equality \eqref{E:C(gamma,k)ratio<1} holds when $\gamma=2$ is item $(b)$ in Theorem \ref{T:CgammaProps}.)  Note the H\"older symmetry in the variable $\gamma$ (item $(c)$ in Theorem \ref{T:CgammaProps}) lets us reduce our investigation to $1<\gamma<2$.  After cancellation, the above ratio may be written as
\begin{align}\label{E:C(gamma,k)Ratio}
\frac{C_\sigma(\gamma,k+1)}{C_\sigma(\gamma,k)} &= \frac{\Gamma\big(\frac{2}{\gamma}(k+1) \big) \Gamma\big((2-\frac{2}{\gamma})(k+1) \big)}{\Gamma\big(\frac{2k}{\gamma} +1 \big) \Gamma\big((2-\frac{2}{\gamma} )k+1 \big)} (\gamma-1)^{\frac{2}{\gamma}-1}.
\end{align}

Now observe further symmetries that simplify the situation.  Letting $x=\frac{2}{\gamma}$ (so $1<x<2$) and taking the logarithm of \eqref{E:C(gamma,k)Ratio} leads to the definition of the function
\begin{align}\label{E:A(k,x)}
A(k,x) := \log{\frac{\Gammaf{(k+1)x}}{\Gammaf{kx+1}}} +  \log\frac{\Gammaf{(k+1)(2-x)}}{\Gammaf{k(2-x)+1}} +(x-1)\log\left(\frac{2}{x}-1\right).
\end{align}
Establishing inequality \eqref{E:C(gamma,k)ratio<1} amounts to showing that $A(k,x)<0$ for each integer $k \ge 0$ and $1<x<2$.  

Notice that $A(k,1) = 0$, so the required negativity of $A(k,x)$ will follow after it is shown that
\begin{equation}\label{E:dA/dx<0}
\frac{\dee A}{\dee x}(k,x)<0, \qquad 1<x<2.
\end{equation}
With this in mind, observe that
\begin{align*}
A(k,x) &= \log\Gamma((k+1)x) - \log\Gamma(kx+1) - (x-1)\log x \notag\\
&\qquad  + \log\Gamma((k+1)(2-x)) - \log\Gamma(k(2-x)+1) - ((2-x)-1)\log (2-x) \notag \\
&:= B(k,x) + B(k,2-x),
\end{align*}
where
\begin{align}\label{E:B(k,x)}
B(k,x) = \log{\Gamma((k+1)x)} - \log{\Gamma((kx+1))} - (x-1)\log x.
\end{align}
Therefore,
\begin{align}\label{E:dA/dx_IN_TERMS_OF_dB/dx}
\frac{\dee A}{\dee x}(k,x) &= \frac{\dee B}{\dee x}(k,x) - \frac{\dee B}{\dee x}(k,2-x).
\end{align}

Note that for $1<x<2$, the inequality $0<2-x<x$ holds.  The validity of \eqref{E:dA/dx<0} will follow from a stronger claim: that for each fixed integer $k \ge 0$, $\frac{\dee B}{\dee x}(k,x)$ is a decreasing function for all $x>0$.  This of course is equivalent to  showing
\begin{align}\label{E:d^2B/dx^2<0}
\frac{\dee^2 B}{\dee x^2}(k,x) < 0,
\end{align}
for integers $k \ge 0$ and all $x>0$.  The establishment of \eqref{E:d^2B/dx^2<0} will imply \eqref{E:dA/dx<0}, which in turn will imply \eqref{E:C(gamma,k)ratio<1} and complete the proof.  In order to proceed, we need to better understand the logarithmic derivative of the gamma function.

\subsubsection{The digamma function}
Define the digamma function $\psi$ to be the logarithmic derivative of $\Gamma(x)$, i.e.,
\begin{align*}\label{E:Digamma}
\psi(x) := \frac{\Gamma'(x)}{\Gammaf{x}},
\end{align*}
and its derivative, the trigamma function $\psi'(x)$.  Properties of digamma, trigamma and further polygamma functions (obtained by taking further derivatives) have been extensively studied in special function theory; see, for instance, \cite{AndAskRoyBOOK99}.

From \eqref{E:B(k,x)}, it is seen that
\begin{equation*}
\frac{\dee B}{\dee x}(k,x) = (k+1)\psi((k+1)x) - k\psi(kx+1) -\log{x} +\frac{1}{x} -1,
\end{equation*}
and 
\begin{equation}\label{E:d^2B/dx^2_FORMULA}
\frac{\dee^2 B}{\dee x^2}(k,x) = (k+1)^2\psi'((k+1)x) - k^2\psi'(kx+1) -\frac{1}{x} -\frac{1}{x^2}.
\end{equation}

We make use of the well known formula for $\psi'(x)$, valid for all $x$ outside of the non-positive integers:
\begin{align}\label{E:TrigammaSum}
\psi'(x) = \sum_{j=1}^\infty \frac{1}{(j+x-1)^2}.
\end{align}
Substituting \eqref{E:TrigammaSum} into \eqref{E:d^2B/dx^2_FORMULA},
\begin{align*}
\frac{\dee^2 B}{\dee x^2}(k,x) &= \sum_{j=1}^\infty \frac{(k+1)^2}{(j+(k+1)x-1)^2} - \sum_{j=1}^\infty \frac{k^2}{(j+kx)^2} - \frac{1}{x} - \frac{1}{x^2} \notag\\
&= \sum_{j=1}^\infty \left[\frac{(k+1)^2}{(j+(k+1)x)^2} - \frac{k^2}{(j+kx)^2} \right] - \frac{1}{x}. \\
&:=  D(k+1,x) - D(k,x),
\end{align*}
where
\begin{equation}\label{E:DefD(k,x)}
D(k,x) = \sum_{j=1}^\infty \frac{k^2}{(j+kx)^2} - \frac{k}{x}.
\end{equation}
Thus to establish $\frac{\dee^2 B}{\dee x^2}(k,x)<0$, it is sufficient to show that for any $x>0$, $D(k,x)$ decreases as a function of $k\ge0$.  Treating $k$ as a continuous variable and differentiating, we claim
\begin{align}\label{E:partialD/partialk<0}
\frac{\dee D}{\dee k}(k,x) = \sum_{j=1}^\infty \frac{2kj}{(kx+j)^3} - \frac{1}{x} < 0.
\end{align}

Setting $a := kx$, inequality \eqref{E:partialD/partialk<0} is a consequence of the following.

\begin{proposition}\label{P:Wijit}
For all $a>0$
\begin{equation}\label{E:EulerMacSum}
\sum_{j=0}^\infty \frac{2aj}{(a+j)^3} < 1.
\end{equation}
%Moreover, the sum on the lefthand side of (\ref{E:EulerMacSum}) tends to $1$ as $a\to\infty$.
\end{proposition}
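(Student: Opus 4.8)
The plan is to turn the sum into a closed form in polygamma functions and then apply sign‑definite estimates for those functions. The $j=0$ term is zero, so using the splitting $\frac{2aj}{(a+j)^3}=\frac{2a}{(a+j)^2}-\frac{2a^2}{(a+j)^3}$ together with $\psi'(x)=\sum_{n\ge0}(x+n)^{-2}$ (equation \eqref{E:TrigammaSum}, reindexed) and the analogous identity $\psi''(x)=-2\sum_{n\ge0}(x+n)^{-3}$, one obtains
\[
\sum_{j=0}^{\infty}\frac{2aj}{(a+j)^3}=2a\,\psi'(a+1)+a^2\,\psi''(a+1);
\]
equivalently this equals $\frac{d}{da}\big(a^2\psi'(a+1)\big)$, so Proposition~\ref{P:Wijit} amounts to saying $a\mapsto a^2\psi'(a+1)-a$ is strictly decreasing. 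It is the displayed closed form that I would estimate directly.

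Next I would invoke the classical enveloping inequalities for the trigamma and tetragamma functions, valid for every $x>0$:
\[
\psi'(x)<\frac1x+\frac1{2x^2}+\frac1{6x^3},\qquad
\psi''(x)<-\frac1{x^2}-\frac1{x^3}-\frac1{2x^4}+\frac1{6x^6}.
\]
A self-contained derivation uses $\psi^{(m)}(x)=(-1)^{m+1}\int_0^{\infty}\frac{t^m e^{-xt}}{1-e^{-t}}\,dt$ together with the elementary sandwich $-\tfrac{t^4}{720}\le \frac{t}{1-e^{-t}}-1-\tfrac t2-\tfrac{t^2}{12}\le 0$ for $t\ge0$: integrating this against $e^{-xt}$, resp. against $t\,e^{-xt}$, produces exactly the two displayed bounds. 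Writing $b=a+1>1$ and substituting $a=b-1$, I would multiply the first inequality by $2a>0$ and the second by $a^2>0$, add them, and expand in powers of $b$; the bookkeeping is routine and collapses to
\[
\sum_{j=0}^{\infty}\frac{2aj}{(a+j)^3}<1-\frac1{6b^2}-\frac1{3b^3}-\frac1{3b^4}-\frac1{3b^5}+\frac1{6b^6}.
\]
Since $b>1$ forces $\frac1{6b^6}<\frac1{6b^2}$, the right-hand side is strictly less than $1$, which is the claim.

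The substantive ingredient here is the enveloping bounds; everything else is algebra. The reason a softer argument will not suffice — and the main obstacle — is that the inequality is essentially sharp, since $2a\psi'(a+1)+a^2\psi''(a+1)\to 1$ as $a\to\infty$. In particular the naive integral comparison (note $g(t)=\frac{2at}{(a+t)^3}$ has the clean antiderivative $G(t)=\big(\tfrac{t}{a+t}\big)^2$, so $\int_0^{\infty}g(t)\,dt=1$) loses too much mass near the maximum of $g$ at $t=a/2$ to give a valid term‑by‑term bound. One can instead produce the exact Euler--Maclaurin identity $\sum_{j\ge1}g(j)=1+\int_0^{\infty}g''(s)\cdot\tfrac12\{s\}(1-\{s\})\,ds$, but the two halves of this remainder — over $(0,a)$, where $g''<0$, and over $(a,\infty)$, where $g''>0$ — nearly cancel for small $a$, so pinning down its sign cleanly is delicate; this is why I would take the polygamma route, in which the needed cancellation is already absorbed into the sharp constants.
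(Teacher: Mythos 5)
Your argument is correct, but it is a genuinely different route from the paper's. The paper proves the estimate in four lines of elementary algebra: it bounds $\frac{2j}{(a+j)^3}<\frac{2j}{(a+j-1)(a+j)(a+j+1)}$, splits by partial fractions, and telescopes to get the stronger statement $\sum_{j\ge1}\frac{2j}{(a+j)^3}<\frac1a$, from which \eqref{E:EulerMacSum} is immediate. You instead write the sum in closed form as $2a\,\psi'(a+1)+a^2\,\psi''(a+1)$ (this identity, the substitution $a=b-1$, and the final collapse to $1-\frac1{6b^2}-\frac1{3b^3}-\frac1{3b^4}-\frac1{3b^5}+\frac1{6b^6}$ all check out) and then invoke the enveloping bounds $\psi'(x)<\frac1x+\frac1{2x^2}+\frac1{6x^3}$ and $\psi''(x)<-\frac1{x^2}-\frac1{x^3}-\frac1{2x^4}+\frac1{6x^6}$. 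What your approach buys is a quantitatively sharper conclusion — an explicit deficit of order $\frac{1}{6(a+1)^2}$ below $1$ — and it sits naturally next to the trigamma identity \eqref{E:TrigammaSum} already used in Section \ref{SS:ProofOfC(gamma,k)DecreasingWithk}; your closing remarks about sharpness and the Euler--Maclaurin remainder also echo the paper's own remark that its original argument went through Euler--Maclaurin. What it costs is that the entire weight of the proof rests on the sandwich $-\frac{t^4}{720}\le\frac{t}{1-e^{-t}}-1-\frac t2-\frac{t^2}{12}\le 0$, which you assert as elementary but do not prove; it is a true classical fact (the upper half is $\coth u\le\frac1u+\frac u3$, and the lower half follows, e.g., from the Mittag--Leffler expansion $u\coth u=1+2\sum_{n\ge1}\frac{u^2}{u^2+\pi^2n^2}$ together with $\frac{1}{1+x}\ge1-x$), but supplying that argument makes your proof noticeably heavier than the paper's telescoping trick, which needs no special-function input at all.
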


\begin{proof}
Observe the following estimate:
\begin{align}  
\sum_{j=1}^\infty \frac{2j}{(a+j)^3} &< \sum_{j=1}^\infty \frac{2j}{(a+j-1)(a+j)(a+j+1)}\notag\\
&=\sum_{j=1}^\infty \left[\frac{j}{(a+j-1)(a+j)} - \frac{j}{(a+j)(a+j+1)}\right]\notag \\
&=\sum_{j=1}^\infty \left[\frac{1}{a+j-1} - \frac{1}{a+j}\right] + \sum_{j=1}^\infty \left[\frac{j-1}{(a+j-1)(a+j)} - \frac{j}{(a+j)(a+j+1)}\right]\notag\\
&=\frac{1}{a}.\label{E:Wijit}
\end{align}
\end{proof}

Proposition \ref{P:Wijit} shows that \eqref{E:partialD/partialk<0} holds, implying \eqref{E:d^2B/dx^2<0}.  This gives \eqref{E:C(gamma,k)ratio<1}, completing the proof of item $(d)$ in Theorem \ref{T:CgammaProps}. \qed

\begin{remark}
Many thanks to Wijit Yangjit for showing the elegant proof to Proposition \ref{P:Wijit} the second author.  The authors' original proof of this used the Euler-Maclaurin formula (see \cite{GKPBook}) to relate the Riemann sum $\sum_{j=1}^\infty \frac{2aj}{(a+j)^3}$ to the integral $\int_{0}^\infty \frac{2aj}{(a+j)^3}\,dj = 1$. $\lozenge$
\end{remark}

\subsection{Proof of Theorem \ref{T:Norm}}\label{SS:proof-of-norm-formula}
Following Section \ref{SS:ApplicationOf FourierTransform}, we denote the Fourier transform in the $s$ variable along with its inverse by $\cf$ and $\cf^{-1}$, respectively.  These operators respect the decomposition of $L^2(M_\gamma,\sigma)$ into the orthogonal sum of the spaces $L_k^2(M_\gamma,\sigma)$.  Since each $\bm{L}_k$ is a projection operator (Theorem \ref{T:L_k-is-a-projection-sigma}), the full $\bm{L}$ is as well.  We now confirm that the full operator is bounded and calculate its norm.

Since $\cf$ and $\cf^{-1}$ are isometries,
\begin{equation*}
\norm{\bm{L}}_\sigma = \norm{\cf^{-1}\bm{L} \cf}_\sigma \qquad \textrm{and} \qquad \norm{\bm{L}_k}_\sigma = \norm{\cf^{-1}\bm{L}_k \cf}_\sigma.
\end{equation*}
In what follows, let $f(s,\alpha,\theta) = \sum_{j} f_j(s,\alpha) e^{i j \theta} \in L^2(M_\gamma,\sigma).$ 
Since $\cf^{-1}\bm{L}_k \cf$ (and thus $\bm{L}_k$) is only non-zero when acting on $L^2_k(M_\gamma,\sigma)$,
\begin{align}
\norm{\cf^{-1}\bm{L}\cf f}_\sigma^2 = \sum_{k=0}^\infty \norm{\cf^{-1}\bm{L}_k \cf f}^2_\sigma &= \sum_{k=0}^\infty \norm{\cf^{-1} \bm{L}_k \cf (f_k e^{i k \theta})}^2_\sigma \notag \\ 
&\le \sum_{k=0}^\infty \norm{\cf^{-1} \bm{L}_k \cf}^2_\sigma \norm{f_k}^2_\sigma \notag\\
&= \sum_{k=0}^\infty C_\sigma(\gamma,k) \norm{f_k}^2_\sigma \label{E:NormProof1} \\
&\le  \frac{\gamma^2}{4\gamma-4} \sum_{k=0}^\infty  \norm{f_k}^2_\sigma = \frac{\gamma^2}{4\gamma-4} \norm{f}^2_\sigma, \label{E:NormProof2}
\end{align}
where \eqref{E:NormProof1} follows from Theorem \ref{T:OpNormLk} and \eqref{E:NormProof2} follows from Theorem \ref{T:CgammaProps}.
This tells us that 
\begin{equation}\label{E:NormLerayLessThanOrEqual}
\norm{\bm{L}}_\sigma \le \frac{\gamma}{2\sqrt{\gamma-1}}.
\end{equation}

To see that equality holds in \eqref{E:NormLerayLessThanOrEqual}, let $f$ be of a multiple of $\kappa_0$, i.e.,
\begin{align}\label{E:FunctionMaxing F^-1 LF norm}
f(\xi,\alpha,\theta) &= f_0(\xi,\alpha)  \notag\\
&:=  m_0(\xi) e^{2\pi \xi (\gamma-1)\alpha^{\gamma}} \mathds{1}_{\{\xi<0\}} \in L^2_0(M_\gamma,\sigma),
\end{align}
where, by \eqref{E:KappaKMultiplierCondition-sigma}, the multiplier function $m_0$ satisfies
\begin{equation}\label{m_0-integrability-condition}
\int_{-\infty}^0 | m_0(\xi)|^2 |\xi |^{-1}\,d\xi < \infty.
\end{equation}
With this choice of $f = f_0$, equations \eqref{E:Bound1F^-1LF} and \eqref{E:C-SEquality} show that
\begin{align}
\norm{\cf^{-1}\bm{L} \cf f}_\sigma^2 &= \int_{M_\gamma}|\cf^{-1}\bm{L}_0\cf f (\xi,\alpha,\theta)|^2 \,\alpha^{\gamma-1}d\xi \wedge d\alpha \w d\theta \notag\\
&=2\pi \int_{-\infty}^\infty \int_0^\infty |\cf^{-1}\bm{L}_0\cf f (\xi,\alpha,\theta)|^2 \alpha^{\gamma-1}d\alpha\,d\xi \notag \\
&= 2\pi \int_{-\infty}^\infty |\eta_0(\xi)|^2  \norm{f_0(\xi,\cdot)}_{(\sigma,1)}^2 \norm{\kappa_0(\xi,\cdot)}_{(\sigma,1)}^2 \left(\int_0^\infty |\tau_0(\xi,\alpha)|^2 \alpha^{\gamma-1} d\alpha \right) d\xi \notag \\
&= 2\pi \int_{-\infty}^\infty |\eta_0(\xi)|^2  \norm{\kappa_0(\xi,\cdot)}_{(\sigma,1)}^2 \norm{\tau_0(\xi,\alpha)}_{(\sigma,1)}^2   \norm{f_0(\xi,\cdot)}_{(\sigma,1)}^2 d\xi \notag \\
&=2\pi \,C_\sigma(\gamma,0) \int_{-\infty}^\infty \norm{f_0(\xi,\cdot)}_{(\sigma,1)}^2 d\xi \notag\\
&= \frac{\gamma^2}{4\gamma-4} \norm{f}^2_\sigma. \notag
\end{align}
This shows the upper bound in \eqref{E:NormLerayLessThanOrEqual} is actually attained.  \qed

\begin{remark}\label{R:Leray-norm-achieved-on-infinite-dimensional-space}
The space of functions $m_0(\cdot)$ satisfying \eqref{m_0-integrability-condition} is infinite dimensional, which implies the $\sigma$-norm of the $\bm{L}$ is achieved on an infinite dimensional space; cf. Section \ref{SS:related-operators}.
$\lozenge$
\end{remark}

%%%%%%%%%%%%%%%%%%%%%%%%%%%  SECTION 5   %%%%%%%%%%%%%%%%%%%%%%%%%%%%%%%%%%%%%%%%%

\section{Modified measures and adjoints}\label{S:AdjointsNormalOps}

We have seen that $\bm{L}: L^2(M_\gamma,\sigma) \to L^2(M_\gamma,\sigma)$ is bounded, where $\sigma$ is a constant multiple of the Leray-Levi measure $\lambda_\rho$.  More flexibility on measures will now be allowed.  For $r\in\R$, recall the rotationally invariant measure
\begin{equation*}
\mu_r := \alpha^{r}ds \wedge d\alpha \wedge d\theta.
\end{equation*}

\subsection{Leray boundedness}\label{SS:Leray-boundedness-m_r}

Just as in \eqref{E:SubspaceDecomp}, $S^1$-invariance leads to the decomposition
\begin{equation}
L^2(M_\gamma,\mu_r) = \bigoplus_{k=-\infty}^\infty L^2_k(M_\gamma,\mu_r),
\end{equation}
where the functions in $L_k^2(M_\gamma,\mu_r)$ take the form $f_k(s,\alpha)e^{ik\theta}$.  In the spirit of Definition \ref{D:SubLerayTransformLk}, the restriction of $\bm{L}$ to each $L_k^2(M_\gamma,\mu_r)$ yields the decomposition $\bm{L} = \bigoplus_{k=0}^\infty \bm{L}_k$.

We once again work with the unitarily equivalent $\cf^{-1}\bm{L}_k\cf$.  From Section \ref{SS:ApplicationOf FourierTransform}, recall
\begin{align}
\eta_k(\xi) &= \frac{(-2\pi\xi)^{k+1}\gamma^{k+2}}{k!} \cdot\mathds{1}_{\{\xi<0\}}, \label{D:eta_k2}\\
\tau_k(\xi,\alpha) &= \alpha^{k} e^{2\pi \xi \alpha^\gamma}\cdot \mathds{1}_{\{\xi<0\}}, \label{D:tau_k2}\\
\kappa_k(\xi,\alpha) &= \alpha^{k(\gamma-1)} e^{2\pi \xi(\gamma-1) \alpha^\gamma}\cdot \mathds{1}_{\{\xi<0\}}, \label{D:kappa_k2}
\end{align}
where $\mathds{1}_{\{\xi<0\}}$ is the indicator function of the set $\{\xi<0\}$.

Let $f (s,\alpha,\theta) =  \sum_{j} f_j(s,\alpha)e^{ij\theta} \in L^2(M_\gamma,\mu_r) $.  The form of Proposition \ref{P:LkMultipliers} remains valid, i.e.,
\begin{equation}\label{E:LkMultipliers2}
\cf^{-1}\bm{L}_k\cf f (\xi,\alpha,\theta) = \eta_k(\xi) \tau_k(\xi,\alpha) \Big< f_k(\xi,\cdot),\kappa_k(\xi,\cdot) \Big>_{(\sigma,1)} e^{ik\theta},
\end{equation}
{\em as long as the inner product makes sense.}  Recall that $\left<\cdot,\cdot\right>_{(\sigma,1)}$ is a special case of (with $r = \gamma-1$):
\begin{equation}\label{E:one-dim-inner-prod-mu_r}
\left< g,h \right>_{(\mu_r,1)} := \int_0^\infty g(\alpha)\overline{h(\alpha)} \,\alpha^{r}d\alpha.
\end{equation}

When working with $\mu_r$, we will frequently encounter its counterpart measure $\mu_{r'}$, where $r$ and $r'$ are related by
\begin{equation}\label{E:relating-r-and-r'}
\frac{r+r'}{2} = \gamma-1.
\end{equation}

The following version of Cauchy-Schwarz is crucial to the investigation of the boundedness of $\bm{L}_k$ in $L^2(M_\gamma,\mu_r)$.
\begin{lemma}\label{L:GeneralizedCauchySchwarz}
Fix $\gamma >1$ and let $r$ and $r'$ be real numbers related by $\frac{r+r'}{2} = \gamma-1$.  Then
\begin{equation*}
\left| \left< g,h \right>_{(\sigma,1)}\right| \le \norm{g}_{(\mu_r,1)} \norm{h}_{(\mu_{r'},1)},
\end{equation*}
where the inner product and norms are defined by \eqref{E:one-dim-inner-prod-mu_r}.  Equality holds if and only if there exists a constant $c$ such that $g(\alpha)\alpha^{r/2} = c\cdot h(\alpha)\alpha^{r'/2}$.  When this happens, 
\begin{equation}\label{E:GenCauchySchwarzNormEquality}
\norm{g}_{(\mu_r,1)} = |c| \cdot \norm{h}_{(\mu_{r'},1)}.
\end{equation}
\end{lemma}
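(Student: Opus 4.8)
The plan is to deduce the statement directly from the ordinary Cauchy--Schwarz inequality on $L^2\big((0,\infty),d\alpha\big)$ after redistributing the weight. The key algebraic observation is that the hypothesis $\frac{r+r'}{2}=\gamma-1$ is equivalent to $\alpha^{r/2}\cdot\alpha^{r'/2}=\alpha^{\gamma-1}$ for all $\alpha>0$. Hence, since $\alpha^{r'/2}$ is real and positive on $(0,\infty)$, the integrand defining $\langle g,h\rangle_{(\sigma,1)}$ factors as
\[
g(\alpha)\overline{h(\alpha)}\,\alpha^{\gamma-1}=\big(g(\alpha)\alpha^{r/2}\big)\cdot\overline{\big(h(\alpha)\alpha^{r'/2}\big)}.
\]
Setting $F(\alpha):=g(\alpha)\alpha^{r/2}$ and $K(\alpha):=h(\alpha)\alpha^{r'/2}$, the assumptions $g\in L^2\big((0,\infty),\alpha^r d\alpha\big)$ and $h\in L^2\big((0,\infty),\alpha^{r'}d\alpha\big)$ say exactly that $F,K\in L^2\big((0,\infty),d\alpha\big)$, with $\norm{F}_{L^2(d\alpha)}=\norm{g}_{(\mu_r,1)}$ and $\norm{K}_{L^2(d\alpha)}=\norm{h}_{(\mu_{r'},1)}$; in particular $\langle g,h\rangle_{(\sigma,1)}$ converges absolutely and coincides with the honest Hilbert-space pairing $\langle F,K\rangle_{L^2(d\alpha)}$.

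Once this identification is in place, the inequality is immediate: $\big|\langle g,h\rangle_{(\sigma,1)}\big|=\big|\langle F,K\rangle_{L^2(d\alpha)}\big|\le\norm{F}_{L^2(d\alpha)}\norm{K}_{L^2(d\alpha)}=\norm{g}_{(\mu_r,1)}\norm{h}_{(\mu_{r'},1)}$ by Cauchy--Schwarz. For the equality clause I would invoke the standard characterization of equality in the complex Cauchy--Schwarz inequality: it forces $F$ and $K$ to be linearly dependent in $L^2(d\alpha)$, so that (away from the trivial case where one of them vanishes identically) $F=c\,K$ almost everywhere for some scalar $c$, which is precisely $g(\alpha)\alpha^{r/2}=c\cdot h(\alpha)\alpha^{r'/2}$; the converse implication is obvious. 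Taking $L^2(d\alpha)$-norms in $F=cK$ then yields $\norm{g}_{(\mu_r,1)}=\norm{F}_{L^2(d\alpha)}=|c|\,\norm{K}_{L^2(d\alpha)}=|c|\,\norm{h}_{(\mu_{r'},1)}$, which is \eqref{E:GenCauchySchwarzNormEquality}.

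I do not expect any genuine difficulty here; the proof is a one-line application of Cauchy--Schwarz once the weight is split symmetrically. The only points deserving a moment's attention are purely bookkeeping: that $\alpha^{r/2}$ and $\alpha^{r'/2}$ are well-defined positive functions on $(0,\infty)$ (the endpoint $\alpha=0$ being harmless since it is excluded from the domain), and the precise statement of the equality case of Cauchy--Schwarz, including the degenerate situations in which $g$ or $h$ is the zero function.
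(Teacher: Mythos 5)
Your proposal is correct and follows essentially the same route as the paper: the paper's proof also splits the weight $\alpha^{\gamma-1}=\alpha^{r/2}\cdot\alpha^{r'/2}$, sets $g_1(\alpha)=g(\alpha)\alpha^{r/2}$ and $h_1(\alpha)=h(\alpha)\alpha^{r'/2}$, and applies the ordinary Cauchy--Schwarz inequality on $L^2\big((0,\infty),d\alpha\big)$, with the equality clause coming from the standard characterization of equality. Your bookkeeping remarks on the degenerate cases are a harmless (and slightly more careful) addition to the paper's one-line argument.
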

\begin{proof}
Start by setting $r=\gamma-1$ in \eqref{E:one-dim-inner-prod-mu_r}.  Split the integrand into $g_1(\alpha) = g(\alpha)\alpha^{r/2}$ and $h_1(\alpha) = h(\alpha)\alpha^{r'/2}$ and apply the usual Cauchy-Schwarz inequality.
\end{proof}

To make use of Lemma \ref{L:GeneralizedCauchySchwarz}, let us generalize Corollary \ref{C:(sigma,1)-norms-of-tau-and-kappa}.  The definitions of $\tau_k$ and $\kappa_k$ in \eqref{D:tau_k2} and \eqref{D:kappa_k2} ensure that we are only interested in negative $\xi$.  Combining \eqref{E:relating-r-and-r'} and Proposition \ref{P:IntegralComputation} shows that for $\xi<0$:
\begin{align}
&\left\Vert\tau_k(\xi,\cdot)\right\Vert_{(\mu_r,1)}^2 = \int_0^\infty \alpha^{2k+r} e^{4\pi\xi\alpha^\gamma} d\alpha 
= \begin{cases}
\tfrac{1}{\gamma} (-4\pi\xi)^{-\left( \frac{2k+r+1}{\gamma}\right)} \Gamma\big(\tfrac{2k+r+1}{\gamma}\big)
&\text{ for }r > -2k-1 \\
\infty &\text{ otherwise. }
\end{cases}
&\label{E:TauKNormMu_r} \\
&\left\Vert\kappa_k(\xi,\cdot)\right\Vert_{(\mu_{r'},1)}^2 = \int_0^\infty \alpha^{(2k+2)(\gamma-1)-r} e^{4\pi\xi(\gamma-1)\alpha^\gamma} d\alpha \label{E:KappaKNormMu_r'} \\
&= \begin{cases}
\tfrac{1}{\gamma} (-4\pi\xi(\gamma-1))^{-\left( \frac{(2k+2)(\gamma-1)-r+1}{\gamma}\right)} \Gamma\left(\tfrac{(2k+2)(\gamma-1)-r+1}{\gamma}\right)
&\text{ for }r < (2k+2)(\gamma-1)+1 \\
\infty &\text{ otherwise. }
\end{cases}
\notag
\end{align}
These computations suggest the definition of the interval
\begin{equation}\label{D:def-interval-Ik}
\ci_k = (-2k-1 , (2k+2)(\gamma-1)+1).
\end{equation}

We are now able to prove the general boundedness result stated in Theorem \ref{T:Lk-boundedness}.
\begin{theorem}\label{T:leray-boundedness-mu_r}
Let $k \ge 0$ be an integer and $r\in \R$.  Then $\bm{L}_k$ is bounded from $L^2(M_\gamma,\mu_r) \to L^2_k(M_\gamma,\mu_r)$ if and only if $r \in \ci_k$.  Furthermore, when $r \in \ci_k$,
\begin{equation}\label{E:leray-norm-mu_r}
\norm{\bm{L}_k}_{L^2(M_\gamma,\mu_r)} = \sqrt{C_{\mu_r}(\gamma,k)},
\end{equation}
where the $\mu_r$-symbol function is
\begin{equation}\label{E:symbol-function-mu_r}
C_{\mu_r}(\gamma,k) = \frac{\Gamma\big(\frac{2k+1+r}{\gamma}\big) \Gamma\big(2k+2 - \frac{2k+1+r}{\gamma}\big)}{\Gamma(k+1)^2} \left( \tfrac{\gamma}{2}\right)^{2k+2} (\gamma-1)^{-\left( 2k+2 - \frac{2k+1+r}{\gamma} \right)}.
\end{equation}
\end{theorem}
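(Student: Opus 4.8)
The plan is to run the argument of Section~\ref{SS:norm-of-subLeray} almost verbatim, replacing the radial weight $\alpha^{\gamma-1}$ by $\alpha^{r}$ and pushing the estimates through the weighted Cauchy--Schwarz inequality of Lemma~\ref{L:GeneralizedCauchySchwarz}. First I would set up the bookkeeping: conjugating $\bm L_k$ by $\cf$ in the $s$-variable (Plancherel sends $ds$ to $d\xi$) keeps formula \eqref{E:LkMultipliers2} valid on $L^2(M_\gamma,\mu_r)$, and for $f\in L^2_k(M_\gamma,\mu_r)$ Parseval gives $\|f\|_{\mu_r}^2=2\pi\int_{-\infty}^0\|f_k(\xi,\cdot)\|_{(\mu_r,1)}^2\,d\xi$, where $f_k(\xi,\cdot)$ denotes the Fourier-in-$s$ transform of the $k$-th angular mode, as in \eqref{E:LkMultipliers2}. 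The pairing $\langle f_k(\xi,\cdot),\kappa_k(\xi,\cdot)\rangle_{(\sigma,1)}$ in \eqref{E:LkMultipliers2} makes sense for every $f_k(\xi,\cdot)\in L^2\big((0,\infty),\alpha^r d\alpha\big)$ precisely when $\kappa_k(\xi,\cdot)\in L^2\big((0,\infty),\alpha^{r'}d\alpha\big)$, i.e. by \eqref{E:KappaKNormMu_r'} when $r<(2k+2)(\gamma-1)+1$.

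For sufficiency, assume $r\in\ci_k$. Apply Lemma~\ref{L:GeneralizedCauchySchwarz} with $g=f_k(\xi,\cdot)$, $h=\kappa_k(\xi,\cdot)$ inside \eqref{E:LkMultipliers2}, square, and integrate against $\alpha^r\,d\alpha\,d\xi\,d\theta$; since $r\in\ci_k$ forces both $\|\tau_k(\xi,\cdot)\|_{(\mu_r,1)}<\infty$ (as $r>-2k-1$) and $\|\kappa_k(\xi,\cdot)\|_{(\mu_{r'},1)}<\infty$ (as $r<(2k+2)(\gamma-1)+1$), the $\alpha$-integration produces the factor $|\eta_k(\xi)|^2\|\tau_k(\xi,\cdot)\|_{(\mu_r,1)}^2\|\kappa_k(\xi,\cdot)\|_{(\mu_{r'},1)}^2$ multiplying $\|f_k(\xi,\cdot)\|_{(\mu_r,1)}^2$. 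Substituting \eqref{D:eta_k2}, \eqref{E:TauKNormMu_r}, \eqref{E:KappaKNormMu_r'} and using the identity $\tfrac{(2k+2)(\gamma-1)-r+1}{\gamma}=2k+2-\tfrac{2k+1+r}{\gamma}$ makes every power of $|\xi|$ cancel (cf. Remark~\ref{R:XiIndependenceOfSymbolFunction}), leaving exactly the constant $C_{\mu_r}(\gamma,k)$ of \eqref{E:symbol-function-mu_r}; integrating the surviving $\|f_k(\xi,\cdot)\|_{(\mu_r,1)}^2$ in $\xi$ and invoking Parseval gives $\|\bm L_k\|_{\mu_r}\le\sqrt{C_{\mu_r}(\gamma,k)}$. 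Sharpness comes from the equality clause of Lemma~\ref{L:GeneralizedCauchySchwarz}: take $f=f_k(\xi,\alpha)e^{ik\theta}$ with $f_k(\xi,\alpha)=m_k(\xi)\,\alpha^{(\gamma-1)-r}\kappa_k(\xi,\alpha)$ for $m_k$ supported in $\{\xi<0\}$ and satisfying the weighted $L^2$-condition $\int_{-\infty}^0|m_k(\xi)|^2|\xi|^{-(2k+2-\frac{2k+1+r}{\gamma})}\,d\xi<\infty$ (an infinite-dimensional family, cf. Remark~\ref{R:Leray-norm-achieved-on-infinite-dimensional-space}); since $\|\alpha^{(\gamma-1)-r}\kappa_k(\xi,\cdot)\|_{(\mu_r,1)}=\|\kappa_k(\xi,\cdot)\|_{(\mu_{r'},1)}<\infty$ this $f$ lies in $L^2_k(M_\gamma,\mu_r)$, and every inequality above becomes equality.

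For necessity I would exhibit norm-controlled trial functions whose images leave $L^2(M_\gamma,\mu_r)$; in each case take $\cf^{-1}f$ of the form $\phi(\xi)g(\alpha)e^{ik\theta}$ with $\phi,g\ge 0$ smooth and compactly supported in $(-\infty,0)$ and $(0,\infty)$ respectively, so that \eqref{E:LkMultipliers2} applies and all integrals converge absolutely. If $r\le-2k-1$, fix any such $\phi,g$: then $\langle g,\kappa_k(\xi,\cdot)\rangle_{(\sigma,1)}>0$ on $\operatorname{supp}\phi$, while $\|\tau_k(\xi,\cdot)\|_{(\mu_r,1)}^2=\int_0^\infty\alpha^{2k+r}e^{4\pi\xi\alpha^\gamma}\,d\alpha=\infty$ since $2k+r\le-1$, so $\cf^{-1}\bm L_k\cf f\notin L^2(M_\gamma,\mu_r)$. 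If $r\ge(2k+2)(\gamma-1)+1$, keep $\phi$ fixed with $\operatorname{supp}\phi\subset(-1,-\tfrac12)$ and let $g=g_n(\alpha)=\alpha^{(\gamma-1)-r}\kappa_k(-1,\alpha)\,\mathds{1}_{(1/n,1)}(\alpha)$; monotonicity of $\xi\mapsto\kappa_k(\xi,\alpha)$ on $[-1,0]$ gives $\langle g_n,\kappa_k(\xi,\cdot)\rangle_{(\sigma,1)}\ge\langle g_n,\kappa_k(-1,\cdot)\rangle_{(\sigma,1)}=J_n$ on $\operatorname{supp}\phi$, while $\|g_n\|_{(\mu_r,1)}^2=J_n$, where $J_n=\int_{1/n}^1\alpha^{(2k+2)(\gamma-1)-r}e^{-4\pi(\gamma-1)\alpha^\gamma}\,d\alpha\to\infty$ precisely because the exponent $(2k+2)(\gamma-1)-r\le-1$ makes the integrand non-integrable at $0$. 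Since $|\eta_k(\xi)|^2\|\tau_k(\xi,\cdot)\|_{(\mu_r,1)}^2$ is bounded below by a positive constant on $\operatorname{supp}\phi$, this yields $\|\bm L_k f_n\|_{\mu_r}^2/\|f_n\|_{\mu_r}^2\gtrsim J_n\to\infty$.

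I expect the necessity direction to be the main obstacle: one must be careful that \eqref{E:LkMultipliers2} genuinely applies to the trial functions (it does, because their $\cf^{-1}$-transforms are supported away from $\{\xi=0\}\cup\{\alpha=0\}$, so every integral in the derivation of \eqref{E:LkMultipliers2} converges absolutely) and that ``unbounded'' is read as unboundedness on this natural dense domain. By contrast, the sufficiency bound and the explicit evaluation of $C_{\mu_r}(\gamma,k)$ are a routine rerun of Section~\ref{SS:norm-of-subLeray} with the radial weight changed from $\gamma-1$ to $r$.
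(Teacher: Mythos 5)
Your proposal is correct and follows essentially the same route as the paper: conjugate $\bm{L}_k$ by $\cf$, apply the weighted Cauchy--Schwarz of Lemma~\ref{L:GeneralizedCauchySchwarz} splitting the weight between $\mu_r$ and $\mu_{r'}$ so the $\xi$-powers cancel and yield $C_{\mu_r}(\gamma,k)$, attain the norm with $f_k(\xi,\alpha)=m_k(\xi)\alpha^{\gamma-1-r}\kappa_k(\xi,\alpha)$, and defeat boundedness outside $\ci_k$ via the divergence of $\norm{\tau_k(\xi,\cdot)}_{(\mu_r,1)}$ (for $r\le -2k-1$) and truncated near-extremizers built from $\kappa_k$ (for $r\ge(2k+2)(\gamma-1)+1$). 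Your only deviation is cosmetic: in the last case you freeze the kernel at $\xi=-1$ and use monotonicity in $\xi$ with a fixed bump $\phi$, where the paper uses $\xi$-wise normalized truncations $\kappa_k^j$ and monotone convergence; both arguments are valid and equivalent in substance.
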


\begin{proof} Consider three cases: $(a)$ $r \in \ci_k$, $(b)$ $r \le -2k-1$ and $(c)$ $r \ge (2k+2)(\gamma-1)+1$.

{$(a)$:  Let $r \in \ci_k$} and, without loss of generality, assume $f =  f_k(\xi,\alpha)e^{ik\theta} \in L_k^2(M_\gamma,\mu_r)$. From \eqref{E:LkMultipliers2},
\begin{align}
\norm{\cf^{-1}\bm{L}_k\cf f}^2_{L^2(M_\gamma,\mu_r)} &= \int_{M_\gamma} \left| \eta_k(\xi) \tau_k(\xi,\alpha) \Big< f_k(\xi,\cdot),\kappa_k(\xi,\cdot) \Big>_{(\sigma,1)} \,e^{ik\theta} \right|^2 \mu_r \notag \\
&= 2\pi \int_{-\infty}^0 \int_0^\infty \left| \eta_k(\xi) \tau_k(\xi,\alpha) \Big< f_k(\xi,\cdot),\kappa_k(\xi,\cdot) \Big>_{(\sigma,1)}\right|^2 \alpha^r\, d\alpha\, d\xi \notag \\
&= 2\pi \int_{-\infty}^0 \left| \eta_k(\xi) \right|^2 \left| \Big< f_k(\xi,\cdot),\kappa_k(\xi,\cdot) \Big>_{(\sigma,1)}\right|^2 \int_0^\infty  \left| \tau_k(\xi,\alpha) \right|^2  \alpha^r\, d\alpha\, d\xi \label{E:rearrangement-mu_r-norm-calculation} \\
&\le 2\pi \int_{-\infty}^0 \left| \eta_k(\xi) \right|^2 \norm{\tau_k(\xi,\cdot)}_{(\mu_r,1)}^2 \norm{\kappa_k(\xi,\cdot)}_{(\mu_{r'},1)}^2  \norm{f_k(\xi,\cdot)}_{(\mu_r,1)}^2 \,d\xi, \label{E:AppOfGenCS1}
\end{align}
where \eqref{E:AppOfGenCS1} follows from Lemma \ref{L:GeneralizedCauchySchwarz}.  The norms appearing in the last integral are finite since $r \in \ci_k$.  By combining \eqref{D:eta_k2}, \eqref{E:TauKNormMu_r} and \eqref{E:KappaKNormMu_r'} it is easily verified that the quantity 
$$
\left| \eta_k(\xi) \right|^2 \norm{\tau_k(\xi,\cdot)}_{(\mu_r,1)}^2  \norm{\kappa_k(\xi,\cdot)}_{(\mu_{r'},1)}^2
$$
appearing in the integrand of \eqref{E:AppOfGenCS1} equals $C_{\mu_r}(\gamma,k)$.  Note that this more general symbol function is once again independent of $\xi$.  Returning now to \eqref{E:AppOfGenCS1},
\begin{align}
\norm{\cf^{-1}\bm{L}_k\cf f}^2_{L^2(M_\gamma,\mu_r)} &\le 2\pi \,C_{\mu_r}(\gamma,k) \int_{-\infty}^0 \norm{f_k(\xi,\cdot)}_{(\mu_r,1)}^2\,d\xi \label{E:UpperBoundOnLkMu_r} \\
&=\,C_{\mu_r}(\gamma,k) \int_0^{2\pi} \int_{-\infty}^0  \int_0^\infty |f_k(\xi,\alpha)|^2 \alpha^r d\alpha \,d\xi\,  d\theta\notag\\
&= C_{\mu_r}(\gamma,k) \norm{f}_{L^2(M_\gamma,\mu_r)}^2. \notag
\end{align}

Thus, $\sqrt{C_{\mu_r}(\gamma,k)}$ is an upper bound on $\norm{\bm{L}_k}_{L^2(M_\gamma,\mu_r)}$.  We now show this to be sharp.  Lemma \ref{L:GeneralizedCauchySchwarz} says that equality in \eqref{E:AppOfGenCS1} holds if and only if 
\begin{equation}\label{E:mu_r-norm-acieving-condition1}
f_k(\xi,\alpha)\alpha^{r/2} = m_k(\xi) \kappa_k(\xi,\alpha) \alpha^{r'/2},
\end{equation}
where $m_k(\xi)$ must be chosen so that $f(\xi,\alpha,\theta) = f_k(\xi,\alpha)e^{i k \theta} \in L^2(M_\gamma,\mu_r)$.  This happens if and only if
\begin{equation}\label{E:mu_r-norm-acieving-condition2}
\int_{-\infty}^0 |m_k(\xi)|^2 |\xi|^{\left(\frac{2k+1+r}{\gamma} -2k -2 \right)}d\xi < \infty.
\end{equation}
These conditions generalize \eqref{E:C-SEquality} and \eqref{E:KappaKMultiplierCondition-sigma}.  When $f$ takes this form equality in \eqref{E:UpperBoundOnLkMu_r} must hold.  This establishes \eqref{E:leray-norm-mu_r} and concludes case $(a)$.
\smallskip

{$(b)$:  Let $r \le -2k -1$}.  Combining \eqref{E:LkMultipliers2} and \eqref{E:rearrangement-mu_r-norm-calculation}, we see that for any $f(\xi,\alpha,\theta) = f_k(\xi,\alpha) e^{i k \theta} \in L^2_k(M_\gamma,\mu_r)$ with $\langle f_k(\xi,\cdot),\kappa_k(\xi,\cdot) \rangle_{(\sigma,1)} \neq 0$, $\cf^{-1}\bm{L}_k\cf f \notin L^2(M_\gamma,\mu_r)$, since $\norm{\tau_k(\xi,\cdot)}_{(\mu_r,1)}^2 = \infty$ by \eqref{E:TauKNormMu_r}.
\smallskip

{$(c)$:  Let $r \ge (2k+2)(\gamma-1)+1$}.  For $j\in \Z^+$ set 
\begin{equation*}
\kappa_k^j(\xi,\alpha) := \kappa_k(\xi,\alpha)  \cdot \mathds{1}_{\{\frac{1}{j}<\alpha \}},
\end{equation*}
where $\mathds{1}_{\{\frac{1}{j}<\alpha \}}$ is the indicator function for the set $\{(\xi,\alpha,\theta):\frac{1}{j}<\alpha \}$.  Now define
\begin{equation*}
f_k^j(\xi,\alpha) := \tfrac{\kappa_k^j(\xi,\alpha)\alpha^{\gamma-1-r}}{\norm{\kappa_k^j(\xi,\cdot)}_{(\mu_{r'},1)}} \cdot \mathds{1}_{\{-\frac{1}{2\pi} < \xi < 0 \}} \qquad \textrm{and} \qquad f^j(\xi,\alpha,\theta) := f_k^j(\xi,\alpha) e^{i k \theta}.
\end{equation*}
It is easily shown that for any $j$,
\begin{equation*}
\norm{f_k^j(\xi, \cdot) }_{(\mu_r,1)} = \mathds{1}_{\{-\frac{1}{2\pi} < \xi < 0 \}}.
\end{equation*}
Therefore
\begin{equation}\label{E:the-norm-of-f^j-is-equal-to-1}
\norm{f^j}_{L^2(M_\gamma,\mu_r)} = \left( \int_0^{2\pi} \int_{-\infty}^\infty \norm{f_k^j(\xi, \cdot)}^2_{(\mu_r,1)} \,d\xi\,d\theta \right)^{\frac12} = 1.
\end{equation}
From \eqref{E:LkMultipliers2},
\begin{align*}
\cf^{-1}\bm{L}_k\cf f^j(\xi,\alpha,\theta) &= \eta_k(\xi) \tau_k(\xi,\alpha) \Big< f^j_k(\xi,\cdot),\kappa_k(\xi,\cdot) \Big>_{(\sigma,1)} e^{i k \theta} \\
&= \mathds{1}_{\{-\frac{1}{2\pi} < \xi < 0 \}}  \cdot \eta_k(\xi) \tau_k(\xi,\alpha) \norm{\kappa_k^j(\xi,\cdot)}_{(\mu_{r'},1)} e^{i k \theta},
\end{align*}
and consequently
\begin{align*}
\norm{\cf^{-1}\bm{L}_k\cf f^j}_{L^2(M_\gamma,\mu_r)}^2 &= 2\pi \int_{-\frac{1}{2\pi}}^0 |\eta_k(\xi)|^2 \norm{\tau_k(\xi,\cdot)}^2_{(\mu_r,1)} \norm{\kappa_k^j(\xi,\cdot)}^2_{(\mu_{r'},1)} \,d\xi.
\end{align*}
But the monotone convergence theorem says
\begin{align*}
\lim_{j\to\infty} \norm{\kappa_k^j(\xi,\cdot)}^2_{(\mu_{r'},1)} &= \lim_{j\to\infty} \int_{\frac{1}{j}}^\infty |\kappa_k(\xi,\alpha)|^2 \alpha^{r'}\,d\alpha \\
&= \lim_{j\to\infty} \int_{\frac{1}{j}}^\infty  \alpha^{(2k+2)(\gamma-1)-r} e^{4\pi\xi(\gamma-1)\alpha^\gamma}  \,d\alpha \\
&= \infty
\end{align*}
for any $-\frac{1}{2\pi}<\xi<0$.  This means
\begin{equation*}
\lim_{j\to\infty} \norm{\cf^{-1}\bm{L}_k\cf f^j}_{L^2(M_\gamma,\mu_r)} = \infty,
\end{equation*}
which together with \eqref{E:the-norm-of-f^j-is-equal-to-1} shows that $\bm{L}_k$ fails to be bounded.  This completes $(c)$.
\end{proof}

Fix any $r \in \R$.  Despite the limited boundedness range in Theorem \ref{T:leray-boundedness-mu_r},  the interval $\ci_k$ tends to $(-\infty,\infty)$ as $k\to\infty$.  This means that the tail of the sequence of operators $\{\bm{L}_k\}$ is always bounded in the $L^2(M_\gamma,\mu_r)$ norm.  In fact, a much stronger result holds:

\begin{theorem} \label{T:grade-essential-for-F}
Fix $r\in\R$.  The high frequency limit norm of $\bm{L}$ in $L^2(M_\gamma,\mu_r)$ is given by the ($r$-independent) quantity
\begin{equation}\label{E:grade-essential-norm-M_gamma}
\norm{\bm{L}}_{L_{HF}^2(M_\gamma,\mu_r)} = \lim_{k \to \infty} \norm{\bm{L}_k}_{L^2(M_\gamma,\mu_r)} = \sqrt{\frac{\gamma}{2\sqrt{\gamma-1}}}.
\end{equation}
\end{theorem}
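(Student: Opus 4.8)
The plan is to reduce \eqref{E:grade-essential-norm-M_gamma} to a single limit computation for the symbol function and then settle that limit using the Stirling asymptotics already developed for $C_\sigma(\gamma,k)$ in item $(e)$ of Theorem \ref{T:CgammaProps}.

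First I would dispose of the boundedness issue. The interval $\ci_k = (-2k-1,\,(2k+2)(\gamma-1)+1)$ of \eqref{D:def-interval-Ik} has left endpoint tending to $-\infty$ and right endpoint tending to $+\infty$, so for any fixed $r$ there is a $k_0 = k_0(r,\gamma)$ with $r \in \ci_k$ for all $k \ge k_0$. By Theorem \ref{T:leray-boundedness-mu_r}, each such $\bm{L}_k$ is then bounded with $\|\bm{L}_k\|_{L^2(M_\gamma,\mu_r)} = \sqrt{C_{\mu_r}(\gamma,k)}$; the finitely many earlier operators (possibly unbounded) do not affect $\limsup_{k\to\infty}\|\bm{L}_k\|$, consistent with Remark \ref{R:extended-grade-essential}. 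Hence it suffices to prove $\lim_{k\to\infty}C_{\mu_r}(\gamma,k) = \frac{\gamma}{2\sqrt{\gamma-1}}$, which will simultaneously show the $\limsup$ in \eqref{E:grade-essential-norm-M_gamma} is a genuine limit.

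Second, I would evaluate this limit by comparison with $C_\sigma(\gamma,k)$. Since $\sigma$ is a constant multiple of $\mu_{\gamma-1}$, one checks directly from \eqref{E:symbol-function-mu_r} that $C_\sigma(\gamma,k) = C_{\mu_{\gamma-1}}(\gamma,k)$, which amounts to replacing $a := \frac{2k+1+r}{\gamma}$ by $b := \frac{2k+\gamma}{\gamma}$ in the formula. Writing $\delta := a - b = \frac{1+r-\gamma}{\gamma}$ (a constant independent of $k$), one obtains
\[
\frac{C_{\mu_r}(\gamma,k)}{C_\sigma(\gamma,k)} = \frac{\Gamma(a)}{\Gamma(b)}\cdot\frac{\Gamma(2k+2-a)}{\Gamma(2k+2-b)}\cdot(\gamma-1)^{\delta}.
\]
The classical ratio asymptotic $\Gamma(x+\delta)/\Gamma(x)\sim x^\delta$ as $x\to\infty$ gives $\Gamma(a)/\Gamma(b)\sim b^\delta$ and $\Gamma(2k+2-a)/\Gamma(2k+2-b)\sim(2k+2-b)^{-\delta}$; since $b/(2k+2-b)\to 1/(\gamma-1)$, the product of the two Gamma ratios tends to $(\gamma-1)^{-\delta}$, so the whole ratio tends to $1$. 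Combining with Theorem \ref{T:CgammaProps}$(e)$ yields $\lim_{k\to\infty}C_{\mu_r}(\gamma,k)=\lim_{k\to\infty}C_\sigma(\gamma,k)=\frac{\gamma}{2\sqrt{\gamma-1}}$, which is \eqref{E:grade-essential-norm-M_gamma}. A self-contained alternative is to rerun the Stirling computation of Theorem \ref{T:CgammaProps}$(e)$ verbatim with $2k$ replaced by $2k+1+r$ in the appropriate Gamma arguments; the lower-order shift is invisible to the leading term.

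The only mildly delicate point is ensuring that the constant shift $\delta$ inside the Gamma arguments produces exactly the factor needed to cancel $(\gamma-1)^{\delta}$, rather than merely an $O(1)$ quantity; invoking the precise asymptotic $\Gamma(x+\delta)/\Gamma(x)\sim x^\delta$ avoids that bookkeeping cleanly. I do not anticipate a real obstacle — the substance of the result was already carried by Theorems \ref{T:leray-boundedness-mu_r} and \ref{T:CgammaProps}, and what remains is precisely the observation that the high-frequency asymptotics of the symbol function are insensitive to $r$.
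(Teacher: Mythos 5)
Your proposal is correct and follows essentially the same route as the paper: both reduce the statement to the norm formula $\norm{\bm{L}_k}_{L^2(M_\gamma,\mu_r)}=\sqrt{C_{\mu_r}(\gamma,k)}$ of Theorem \ref{T:leray-boundedness-mu_r} (with finitely many unbounded $\bm{L}_k$ harmless by Remark \ref{R:extended-grade-essential}) and then show $C_{\mu_r}(\gamma,k)\to\frac{\gamma}{2\sqrt{\gamma-1}}$ by Gamma-function asymptotics. The only difference is cosmetic: the paper reruns Stirling directly on the quotient appearing in $C_{\mu_r}(\gamma,k)$, while you reduce to the already-proved $\sigma$-case via $\Gamma(x+\delta)/\Gamma(x)\sim x^\delta$, and your cancellation of the factor $(\gamma-1)^{\delta}$ checks out, so both computations are sound.
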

\begin{proof}
For $k$ large enough, Theorem \ref{T:leray-boundedness-mu_r} says
\begin{equation}\label{E:norm-squared-of-Lk-wrt-mu_r}
\norm{\bm{L}_k}_{\mu_r}^2 = \frac{\Gamma\big(\frac{2k+1+r}{\gamma}\big) \Gamma\big(2k+2 - \frac{2k+1+r}{\gamma}\big)}{\Gamma(k+1)^2} \left( \tfrac{\gamma}{2}\right)^{2k+2} (\gamma-1)^{-\left( 2k+2 - \frac{2k+1+r}{\gamma} \right)}.
\end{equation}
First focus on the following quotient:
\begin{equation}\label{E:GammaFunctionQuotientC_mu_r}
\frac{\Gamma\big(\frac{2k+1+r}{\gamma}\big) \Gamma\big(2k+2 - \frac{2k+1+r}{\gamma}\big)}{\Gamma(k+1)^2}.
\end{equation}
Sterling's formula says
\begin{align*}
\Gamma({k+1})^2 &\sim 2\pi e^{-2k} k^{2k+1}, \\
\Gamma\big(\tfrac{2k+1+r}{\gamma}\big) &\sim \sqrt{2\pi}\cdot e^{\left(1-\frac{2k +1 + r}{\gamma}\right)} \left( \tfrac{2k+1+r}{\gamma} -1 \right)^{\left(\frac{2k+1+r}{\gamma} - \frac{1}{2} \right)}, \\
\Gamma\big(2k+2 -\tfrac{2k+1+r}{\gamma}\big) &\sim \sqrt{2\pi}\cdot e^{\left( -2k -1+\frac{2k+1+r}{\gamma}\right)} \left( 2k+1 -\tfrac{2k+1+r}{\gamma}\right)^{\left(2k+\frac{3}{2} -\frac{2k+1+r}{\gamma}\right)}.
\end{align*}
Combining these asymptotic equivalences yields 
\begin{align}
\eqref{E:GammaFunctionQuotientC_mu_r} &\sim \frac{1}{k^{2k+1}}\left( \tfrac{2k+1+r}{\gamma} - 1\right)^{\left(\frac{2k+1+r}{\gamma} - \frac{1}{2}\right)}  \left( 2k+1 -\tfrac{2k+1+r}{\gamma}\right)^{\left(2k+\frac{3}{2} -\frac{2k+1+r}{\gamma}\right)} \notag \\
&= \frac{1}{\gamma^{2k+1}} \cdot \left(2 + \tfrac{1+r-\gamma}{k}\right)^{\left(\frac{2k+1+r}{\gamma} - \frac{1}{2}\right)} \left( 2\gamma-2 + \tfrac{\gamma-1-r}{k}  \right)^{\left(2k+\frac{3}{2} -\frac{2k+1+r}{\gamma}\right)} \notag \\
&\sim \frac{1}{\gamma^{2k+1}} \cdot 2^{\left(\frac{2k+1+r}{\gamma} - \frac{1}{2}\right)} \left( 2\gamma-2 \right)^{\left(2k+\frac{3}{2} -\frac{2k+1+r}{\gamma}\right)} \notag \\
&=  \left(\tfrac{2}{\gamma}\right)^{2k+1} \left(\gamma-1 \right)^{\left(2k+\frac{3}{2} -\frac{2k+1+r}{\gamma}\right)}. \label{E:GammaQuotientAsympEqiv}
\end{align}
Now combine \eqref{E:norm-squared-of-Lk-wrt-mu_r} and \eqref{E:GammaQuotientAsympEqiv} to see that
\begin{equation}
\norm{\bm{L}_k}_{\mu_r}^2 \sim \frac{\gamma}{2\sqrt{\gamma-1}},
\end{equation}
which completes the proof.
\end{proof}

We now establish the sharp interval of Leray boundedness as stated in Theorem 1.4.

\begin{corollary}\label{C:r-cond}
The Leray transform $\bm{L}$ is bounded from $L^2(M_\gamma,\mu_r) \to L^2(M_\gamma,\mu_r)$ if and only if $r \in \ci_0 = (-1,2\gamma-1)$.
\end{corollary}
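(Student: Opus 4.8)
The plan is to deduce this from the orthogonal decomposition $\bm{L} = \bigoplus_{k=0}^\infty \bm{L}_k$ together with the per-component boundedness statement of Theorem \ref{T:leray-boundedness-mu_r} and the high-frequency estimate of Theorem \ref{T:grade-essential-for-F}. For the forward implication, suppose $r \in \ci_0 = (-1,2\gamma-1)$. First I would record the elementary observation that the intervals $\ci_k = (-2k-1,\,(2k+2)(\gamma-1)+1)$ are nested increasingly: the left endpoint $-2k-1$ is decreasing in $k$, and since $\gamma>1$ the right endpoint $(2k+2)(\gamma-1)+1$ is increasing in $k$, so $\ci_0 \subseteq \ci_k$ for every $k\ge 0$. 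By Theorem \ref{T:leray-boundedness-mu_r} this means each $\bm{L}_k$ is bounded on $L^2(M_\gamma,\mu_r)$ with $\norm{\bm{L}_k}_{\mu_r}^2 = C_{\mu_r}(\gamma,k)$.

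Next I would establish the uniform bound $M := \sup_{k\ge 0}\norm{\bm{L}_k}_{\mu_r} < \infty$. Each individual norm is finite by the previous step, and Theorem \ref{T:grade-essential-for-F} gives $\lim_{k\to\infty}\norm{\bm{L}_k}_{\mu_r}^2 = \tfrac{\gamma}{2\sqrt{\gamma-1}} < \infty$; a sequence of finite numbers converging to a finite limit is bounded, so $M<\infty$. Writing $f = \sum_k f_k$ with $f_k \in L^2_k(M_\gamma,\mu_r)$, the images $\bm{L}_k f_k$ lie in the mutually orthogonal subspaces $L^2_k(M_\gamma,\mu_r)$, whence
\[
\norm{\bm{L}f}_{\mu_r}^2 = \sum_{k=0}^\infty \norm{\bm{L}_k f_k}_{\mu_r}^2 \le \sum_{k=0}^\infty \norm{\bm{L}_k}_{\mu_r}^2\,\norm{f_k}_{\mu_r}^2 \le M^2 \norm{f}_{\mu_r}^2,
\]
which proves that $\bm{L}$ is bounded.

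For the converse, suppose $r\notin\ci_0$, i.e.\ $r\le -1$ or $r\ge 2\gamma-1$; these are exactly the hypotheses of cases $(b)$ and $(c)$ in the proof of Theorem \ref{T:leray-boundedness-mu_r} with $k=0$. In either case that theorem shows $\bm{L}_0$ is not a bounded operator on $L^2_0(M_\gamma,\mu_r)$ --- either some $f\in L^2_0$ has $\bm{L}_0 f \notin L^2$, or there is a sequence of unit vectors $f^j \in L^2_0$ with $\norm{\bm{L}_0 f^j}_{\mu_r}\to\infty$. Since $L^2_0(M_\gamma,\mu_r)$ is a closed subspace of $L^2(M_\gamma,\mu_r)$ on which $\bm{L}$ restricts to $\bm{L}_0$, the full operator $\bm{L}$ cannot be bounded on $L^2(M_\gamma,\mu_r)$. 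Combining the two implications yields the claimed equivalence.

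The only step requiring genuine care is the uniform bound $\sup_k\norm{\bm{L}_k}_{\mu_r}<\infty$ in the forward direction: finiteness for each fixed $k$ is immediate from the nesting $\ci_0\subseteq\ci_k$, but controlling the tail of the sequence is precisely what Theorem \ref{T:grade-essential-for-F} provides; without that asymptotic computation of $\norm{\bm{L}_k}_{\mu_r}$ one could not a priori exclude the operator norms growing without bound as $k\to\infty$.
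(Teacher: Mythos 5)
Your proof is correct and follows essentially the same route as the paper's: the nesting $\ci_0 \subseteq \ci_k$ gives boundedness of each $\bm{L}_k$ via Theorem \ref{T:leray-boundedness-mu_r}, the finite high-frequency limit from Theorem \ref{T:grade-essential-for-F} yields a uniform bound on $\norm{\bm{L}_k}_{\mu_r}$, the orthogonal decomposition then bounds $\bm{L}$, and unboundedness of $\bm{L}_0$ when $r\notin\ci_0$ gives the converse. The only difference is that you spell out the interval nesting and the restriction-to-$L^2_0$ argument explicitly, which the paper leaves implicit.
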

\begin{proof}
If $r \in \ci_0$, then each $\bm{L}_k$ is bounded on $L^2(M_\gamma,\mu_r)$ since $\ci_0 \subset \ci_k$ for all $k\in\Z^+$.  And since $\lim_{k\to\infty} \norm{\bm{L}_k}_{\mu_r}$ is finite, we conclude that there is some $C>0$ with $\norm{\bm{L}_k}_{\mu_r} \le C$ for all $k$.  Let $f = \sum_k f_k e^{i k \theta}$.  Then,
\begin{equation*}
\norm{\bm{L}f}_{\mu_r}^2 = \sum_{k=0}^\infty \norm{\bm{L}_k f}_{\mu_r}^2 \le \sum_{k=0}^\infty \norm{\bm{L}_k}_{\mu_r}^2 \norm{f_k}_{\mu_r}^2 \le C^2 \sum_{k=0}^\infty \norm{f_k}_{\mu_r}^2 = C^2 \norm{f}_{\mu_r}^2.
\end{equation*}
On the other hand if $r \notin \ci_0$, then $\bm{L}_0$ is unbounded, implying the same for $\bm{L}$.
\end{proof}

\begin{remark}\label{R:symmetry-of-interval-I_0}
Notice that $r_0 = \gamma-1$ (the $r$-value for the measure $\sigma$) lies exactly at the midpoint the interval $\ci_0$.  This says that if $\frac{r+r'}{2} = \gamma -1$, then $r \in \ci_0$ if and only if $r' \in \ci_0$. In other words, $\bm{L}$ is bounded on $L^2(M_\gamma,\mu_r)$ if and only if it is bounded on $L^2(M_\gamma,\mu_{r'})$.  $\lozenge$
\end{remark}

\subsection{Adjoints}\label{SS:Leray-adjoints}
Theorem \ref{T:leray-boundedness-mu_r} says $\bm{L}_k$ is bounded on $L^2_k(M_\gamma,\mu_r)$ when $r \in \ci_k$.  It therefore admits a bounded adjoint $\bm{L}_k^{(*,\mu_r)}$ which satisfies
\begin{align}
\Big<\bm{L}_k f,g \Big>_{\mu_r} = \Big< f,\bm{L}_k^{(*,\mu_r)} g \Big>_{\mu_r}.
\end{align}
We wish to describe this adjoint more explicitly.

The following lemma justifies later applications of Fubini's theorem.

\begin{lemma}\label{L:justification-of-fubini:adjoint-comp}
Choose $r$ and $k$ so that $r \in \ci_k$, and take $\eta_k, \tau_k,\kappa_k$ as in \eqref{D:eta_k2}, \eqref{D:tau_k2}, \eqref{D:kappa_k2}.  

\noindent(a) Let $f_k(\xi,\alpha) e^{i k \theta}, g_k(\xi,\alpha) e^{i k \theta} \in L^2_k(M_\gamma,\mu_r)$.  Then the integral
\begin{align*}%\label{E:justification-of-fubini:adjoint-comp}
I_1 &:= \int_0^{2\pi} \int_{-\infty}^\infty \int_0^\infty \int_0^\infty \Big| \eta_k(\xi)\tau_k(\xi,\alpha_z) \kappa_k(\xi,\alpha_\zeta) f_k(\xi,\alpha_\zeta) g_k(\xi,\alpha_z) \Big|  \alpha_\zeta^{\gamma-1} \alpha_z^{r} \, d\alpha_z \,d\alpha_\zeta\, d\xi\, d\theta \\
&\le \sqrt{C_{\mu_r}(\gamma,k)} \norm{f_k}_{\mu_r} \norm{g_k}_{\mu_r}.
\end{align*}

\noindent(b) Let $\phi_k(\xi,\alpha) e^{i k \theta}  \in L^2_k(M_\gamma,\mu_{r})$ and  $\psi_k(\xi,\alpha) e^{i k \theta} \in L^2_k(M_\gamma,\mu_{r'})$.  Then the integral
\begin{align*}
I_2 &:= \int_0^{2\pi} \int_{-\infty}^\infty \int_0^\infty \int_0^\infty \Big| \eta_k(\xi)\tau_k(\xi,\alpha_z) \kappa_k(\xi,\alpha_\zeta) \phi_k(\xi,\alpha_\zeta) \psi_k(\xi,\alpha_z) \Big|  \alpha_\zeta^{\gamma-1} \alpha_z^{\gamma-1} \, d\alpha_z \,d\alpha_\zeta\, d\xi\, d\theta \\
&\le \sqrt{C_{\mu_r}(\gamma,k)} \norm{\phi_k}_{\mu_r} \norm{\psi_k}_{\mu_{r'}}.
\end{align*}

\end{lemma}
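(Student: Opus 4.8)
The plan is to reduce both estimates to nothing more than a two-fold application of Cauchy--Schwarz, with Lemma~\ref{L:GeneralizedCauchySchwarz} handling the inner integrals that carry the $\sigma$-weight $\alpha^{\gamma-1}$. First I would observe that $\eta_k$, $\tau_k$, $\kappa_k$ all contain the factor $\mathds{1}_{\{\xi<0\}}$, so the integrand vanishes for $\xi\ge0$ and the $\xi$-integral runs over $(-\infty,0)$, while the $\theta$-integral merely contributes a factor $2\pi$. For each fixed $\xi<0$ every factor in the integrand is nonnegative, and the integrand \emph{separates} into a product of a function of $\alpha_z$ and a function of $\alpha_\zeta$; Tonelli's theorem then lets me write the double $\alpha$-integral as the product of two one-dimensional integrals.

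For part (a), the $\alpha_\zeta$-integral is $\langle|f_k(\xi,\cdot)|,|\kappa_k(\xi,\cdot)|\rangle_{(\sigma,1)}$, which Lemma~\ref{L:GeneralizedCauchySchwarz} (with the weight pair $(\mu_r,\mu_{r'})$, whose average is $\gamma-1$) bounds by $\norm{f_k(\xi,\cdot)}_{(\mu_r,1)}\norm{\kappa_k(\xi,\cdot)}_{(\mu_{r'},1)}$, while the $\alpha_z$-integral is $\langle|g_k(\xi,\cdot)|,|\tau_k(\xi,\cdot)|\rangle_{(\mu_r,1)}$, bounded by ordinary Cauchy--Schwarz by $\norm{g_k(\xi,\cdot)}_{(\mu_r,1)}\norm{\tau_k(\xi,\cdot)}_{(\mu_r,1)}$. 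Since $r\in\ci_k$, formulas~\eqref{E:TauKNormMu_r} and~\eqref{E:KappaKNormMu_r'} make both $\norm{\tau_k(\xi,\cdot)}_{(\mu_r,1)}$ and $\norm{\kappa_k(\xi,\cdot)}_{(\mu_{r'},1)}$ finite, and, exactly as in the proof of Theorem~\ref{T:leray-boundedness-mu_r}, the ($\xi$-independent) product $|\eta_k(\xi)|\,\norm{\tau_k(\xi,\cdot)}_{(\mu_r,1)}\,\norm{\kappa_k(\xi,\cdot)}_{(\mu_{r'},1)}$ equals $\sqrt{C_{\mu_r}(\gamma,k)}$. Pulling this constant out leaves $2\pi\sqrt{C_{\mu_r}(\gamma,k)}\int_{-\infty}^{0}\norm{f_k(\xi,\cdot)}_{(\mu_r,1)}\norm{g_k(\xi,\cdot)}_{(\mu_r,1)}\,d\xi$; a Cauchy--Schwarz in the $\xi$-variable together with the Plancherel identity $\int_{-\infty}^{\infty}\norm{f_k(\xi,\cdot)}_{(\mu_r,1)}^{2}\,d\xi=\tfrac1{2\pi}\norm{f_k}_{\mu_r}^{2}$ then gives $I_1\le\sqrt{C_{\mu_r}(\gamma,k)}\,\norm{f_k}_{\mu_r}\norm{g_k}_{\mu_r}$.

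Part (b) proceeds identically, the only change being that now the $\alpha_z$-integral also carries the $\sigma$-weight, so it equals $\langle|\psi_k(\xi,\cdot)|,|\tau_k(\xi,\cdot)|\rangle_{(\sigma,1)}$; here I again invoke Lemma~\ref{L:GeneralizedCauchySchwarz}, this time as $\le\norm{\psi_k(\xi,\cdot)}_{(\mu_{r'},1)}\norm{\tau_k(\xi,\cdot)}_{(\mu_r,1)}$ (legitimate because $\tfrac{r'+r}{2}=\gamma-1$ and $\norm{\tau_k(\xi,\cdot)}_{(\mu_r,1)}<\infty$ when $r\in\ci_k$), while the $\alpha_\zeta$-integral is bounded as before by $\norm{\phi_k(\xi,\cdot)}_{(\mu_r,1)}\norm{\kappa_k(\xi,\cdot)}_{(\mu_{r'},1)}$. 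The constant $\sqrt{C_{\mu_r}(\gamma,k)}$ again factors out, and Cauchy--Schwarz in $\xi$ plus Plancherel produce $I_2\le\sqrt{C_{\mu_r}(\gamma,k)}\,\norm{\phi_k}_{\mu_r}\norm{\psi_k}_{\mu_{r'}}$. I expect no genuine obstacle; the only point demanding care is choosing, in each of the inner Cauchy--Schwarz applications, the pair of weights $(\mu_r,\mu_{r'})$ so that their average is $\gamma-1$ \emph{and} so that the slot containing $\tau_k$ or $\kappa_k$ gets the weight for which finiteness in~\eqref{E:TauKNormMu_r}--\eqref{E:KappaKNormMu_r'} is available — which is precisely where the hypothesis $r\in\ci_k$ enters.
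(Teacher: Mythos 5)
Your argument is correct and is essentially identical to the paper's proof: both separate the double $\alpha$-integral for fixed $\xi<0$, bound the $\sigma$-weighted inner products via Lemma~\ref{L:GeneralizedCauchySchwarz} and the remaining one by ordinary Cauchy--Schwarz, recognize $\eta_k(\xi)\norm{\tau_k(\xi,\cdot)}_{(\mu_r,1)}\norm{\kappa_k(\xi,\cdot)}_{(\mu_{r'},1)}$ as the $\xi$-independent constant $\sqrt{C_{\mu_r}(\gamma,k)}$, and finish with Cauchy--Schwarz in $\xi$. The only cosmetic remark is that your ``Plancherel identity'' is really just the definition of $\norm{\cdot}_{\mu_r}$ in the $(\xi,\alpha,\theta)$ variables, with the $\theta$-integral supplying the factor $2\pi$.
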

\begin{proof}
Recall that $\eta_k, \tau_k,\kappa_k$ are everywhere non-negative and identically $0$ for $\xi \ge 0$.  Write
\begin{align*}
I_1 &= 2\pi \int_{-\infty}^0 \eta_k(\xi)  \left[ \int_0^\infty  |g_k(\xi,\alpha_z)| \tau_k(\xi,\alpha_z) \alpha_z^{r} d\alpha_z \right]\left[ \int_0^\infty \kappa_k(\xi,\alpha_\zeta) |f_k(\xi,\alpha_\zeta) |  \alpha_\zeta^{\gamma-1} d\alpha_\zeta \right] d\xi \notag \\
&\le 2\pi \int_{-\infty}^0 \eta_k(\xi)   \norm{g_k(\xi,\cdot)}_{(\mu_r,1)} \norm{\tau_k(\xi,\cdot)}_{(\mu_{r},1)} \norm{f_k(\xi,\cdot)}_{(\mu_r,1)}  \norm{\kappa_k(\xi,\cdot)}_{(\mu_{r'},1)} d\xi \notag \\
&= 2\pi \sqrt{C_{\mu_r}(\gamma,k)} \int_{-\infty}^0  \norm{g_k(\xi,\cdot)}_{(\mu_r,1)} \norm{f_k(\xi,\cdot)}_{(\mu_r,1)} d\xi  \notag \\
& \le \sqrt{C_{\mu_r}(\gamma,k)} \norm{f_k}_{\mu_r} \norm{g_k}_{\mu_r}. \notag
\end{align*}
In the first inequality we used both the standard version of Cauchy-Schwarz and the variant from Lemma \ref{L:GeneralizedCauchySchwarz}.  Also, recall that $ \eta_k(\xi)  \norm{\tau_k(\xi,\cdot)}_{(\mu_r,1)}  \norm{\kappa_k(\xi,\cdot)}_{(\mu_{r'},1)}$ is equal to the ($\xi$-independent) term $\sqrt{C_{\mu_r}(\gamma,k)}$ encountered in Theorem \ref{T:leray-boundedness-mu_r}.  

Similarly,
\begin{align*}
I_2 &= 2\pi \int_{-\infty}^0 \eta_k(\xi)  \left[ \int_0^\infty  |\psi_k(\xi,\alpha_z)| \tau_k(\xi,\alpha_z) \alpha_z^{\gamma-1} d\alpha_z \right]\left[ \int_0^\infty \kappa_k(\xi,\alpha_\zeta) |\phi_k(\xi,\alpha_\zeta) |  \alpha_\zeta^{\gamma-1} d\alpha_\zeta \right] d\xi \notag \\
&\le 2\pi \int_{-\infty}^0 \eta_k(\xi)   \norm{\psi_k(\xi,\cdot)}_{(\mu_{r'},1)} \norm{\tau_k(\xi,\cdot)}_{(\mu_{r},1)} \norm{\phi_k(\xi,\cdot)}_{(\mu_r,1)}  \norm{\kappa_k(\xi,\cdot)}_{(\mu_{r'},1)} d\xi \notag \\
&= 2\pi \sqrt{C_{\mu_r}(\gamma,k)} \int_{-\infty}^0  \norm{\psi_k(\xi,\cdot)}_{(\mu_{r'},1)} \norm{\phi_k(\xi,\cdot)}_{(\mu_r,1)} d\xi  \notag \\
& \le \sqrt{C_{\mu_r}(\gamma,k)} \norm{\phi_k}_{\mu_r} \norm{\psi_k}_{\mu_{r'}}, \notag
\end{align*}
completing the proof.
\end{proof}

We now compute a formula for $(\cf^{-1} \bm{L}_k \cf)^{(*,\mu_r)} = \cf^{-1} \bm{L}_k^{(*,\mu_r)} \cf$ when $r \in \ci_k$:

\begin{proposition}\label{P:computation-of-adjoint-by-soft-analysis}
Choose $k$ and $r$ so that $r \in \ci_k$.  Then if $g(\xi,\alpha,\theta) = \sum_j g_j(\xi,\alpha) e^{ij\theta} \in L^2(M_\gamma,\mu_r),$
\begin{equation}\label{E:fourier-transformed-adjoint-formula}
\cf^{-1}\bm{L}^{(*,\mu_r)}_k\cf g (\xi,\alpha,\theta) = \alpha^{\gamma-1-r}\eta_k(\xi) \kappa_k(\xi,\alpha)  \Big< g_k(\xi,\cdot),\tau_k(\xi,\cdot) \Big>_{(\mu_r,1)}  e^{ik\theta},
\end{equation}
where $\eta_k, \tau_k,\kappa_k$ are the real valued functions given by \eqref{D:eta_k2}, \eqref{D:tau_k2}, \eqref{D:kappa_k2}, respectively.
\end{proposition}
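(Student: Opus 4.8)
The plan is to verify the formula by checking the defining adjoint relation against the claimed right-hand side. Since $\cf$ is unitary, $\cf^{-1}\bm{L}_k^{(*,\mu_r)}\cf$ is precisely the Hilbert-space adjoint of $\cf^{-1}\bm{L}_k\cf$, which exists and is bounded for $r\in\ci_k$ by Theorem \ref{T:leray-boundedness-mu_r}. Denoting by $\Phi g$ the function appearing on the right of \eqref{E:fourier-transformed-adjoint-formula}, it therefore suffices to show that $\langle \cf^{-1}\bm{L}_k\cf f, g\rangle_{\mu_r} = \langle f,\Phi g\rangle_{\mu_r}$ for all $f,g\in L^2(M_\gamma,\mu_r)$; uniqueness of the adjoint then gives the result (and in particular guarantees $\Phi g\in L^2$). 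Since, by \eqref{E:LkMultipliers2}, the operator $\cf^{-1}\bm{L}_k\cf$ depends only on the $k$-th Fourier mode of its argument and outputs a function in the $k$-th mode, and the same is true of $\Phi$, I may reduce to $f=f_k(\xi,\alpha)e^{ik\theta}$ and $g=g_k(\xi,\alpha)e^{ik\theta}$.

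Next I would expand the left side. By the Fourier isometry \eqref{E:FourierIsometryL2}, applying $\cf^{-1}$ in the $s$-variable turns $\mu_r$ into $\alpha^r\,d\xi\,d\alpha\,d\theta$, so using \eqref{E:LkMultipliers2} and carrying out the trivial $\theta$-integration,
\[
\langle \cf^{-1}\bm{L}_k\cf f, g\rangle_{\mu_r}
= 2\pi\int_{-\infty}^{0}\!\!\int_0^\infty \eta_k(\xi)\,\tau_k(\xi,\alpha_z)\,\big\langle f_k(\xi,\cdot),\kappa_k(\xi,\cdot)\big\rangle_{(\sigma,1)}\,\overline{g_k(\xi,\alpha_z)}\,\alpha_z^{r}\,d\alpha_z\,d\xi.
\]
Writing out $\langle f_k(\xi,\cdot),\kappa_k(\xi,\cdot)\rangle_{(\sigma,1)} = \int_0^\infty f_k(\xi,\alpha_\zeta)\kappa_k(\xi,\alpha_\zeta)\alpha_\zeta^{\gamma-1}\,d\alpha_\zeta$ (using that $\kappa_k$ is real) produces the triple integral with integrand $\eta_k(\xi)\tau_k(\xi,\alpha_z)\kappa_k(\xi,\alpha_\zeta)f_k(\xi,\alpha_\zeta)\overline{g_k(\xi,\alpha_z)}\,\alpha_\zeta^{\gamma-1}\alpha_z^{r}$. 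Lemma \ref{L:justification-of-fubini:adjoint-comp}(a) gives absolute convergence of this integral, so Fubini permits interchanging the $\alpha_z$ and $\alpha_\zeta$ integrations; pulling the $\alpha_z$-integral inside and recognizing it as the conjugate of $\langle g_k(\xi,\cdot),\tau_k(\xi,\cdot)\rangle_{(\mu_r,1)}$ (again using that $\tau_k$ is real, and inserting a compensating factor $\alpha_\zeta^{\gamma-1-r}\cdot\alpha_\zeta^{r}=\alpha_\zeta^{\gamma-1}$) yields
\[
\langle \cf^{-1}\bm{L}_k\cf f, g\rangle_{\mu_r}
= 2\pi\int_{-\infty}^{0}\!\!\int_0^\infty f_k(\xi,\alpha)\,\overline{\alpha^{\gamma-1-r}\eta_k(\xi)\kappa_k(\xi,\alpha)\big\langle g_k(\xi,\cdot),\tau_k(\xi,\cdot)\big\rangle_{(\mu_r,1)}}\;\alpha^{r}\,d\alpha\,d\xi,
\]
which, after reinstating the $e^{\pm ik\theta}$ factors, is exactly $\langle f,\Phi g\rangle_{\mu_r}$.

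The only genuinely delicate point is the interchange of integration order, and it has been arranged in advance: Lemma \ref{L:justification-of-fubini:adjoint-comp}(a) is precisely the absolute-convergence bound required (it also confirms that $\langle g_k(\xi,\cdot),\tau_k(\xi,\cdot)\rangle_{(\mu_r,1)}$ is finite for a.e. $\xi$, which uses $r\in\ci_k$ through \eqref{E:TauKNormMu_r}). Everything else is bookkeeping: keeping the complex conjugations on $f$ and $g$ only, which is legitimate because $\eta_k,\tau_k,\kappa_k$ are real-valued, and tracking the weight $\alpha^{\gamma-1}$ in $\langle\cdot,\cdot\rangle_{(\sigma,1)}$ against the weight $\alpha^{r}$ in $\langle\cdot,\cdot\rangle_{(\mu_r,1)}$ and in $\mu_r$, the mismatch being absorbed into the prefactor $\alpha^{\gamma-1-r}$ in \eqref{E:fourier-transformed-adjoint-formula}. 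I expect the formal write-up to be short.
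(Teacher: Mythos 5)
Your argument is correct and follows essentially the same route as the paper's proof: verify the defining adjoint identity for $\cf^{-1}\bm{L}_k\cf$ after reducing to the $k$-th Fourier mode, expand the $(\sigma,1)$ inner product, and invoke Lemma \ref{L:justification-of-fubini:adjoint-comp}(a) to justify the Fubini interchange before reading off the claimed formula. The bookkeeping of the weights $\alpha^{\gamma-1}$ versus $\alpha^{r}$ and the reality of $\eta_k,\tau_k,\kappa_k$ is handled exactly as in the paper, so nothing further is needed.
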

\begin{proof}
For $r \in \ci_k$, the boundedness of $\cf^{-1} \bm{L}_k \cf$ shows there exists an adjoint operator $\cf^{-1 }\bm{L}_k^{(*,\mu_r)} \cf$ satisfying 
\begin{equation}
\Big< \cf^{-1 }\bm{L}_k \cf f,g \Big>_{\mu_r} = \Big< f, \cf^{-1 }\bm{L}_k^{(*,\mu_r)} \cf g \Big>_{\mu_r}
\end{equation}
 for all $f,g \in L^2(M_\gamma,\mu_r)$.  Without loss of generality, assume that $f(\xi,\alpha,\theta) =f_k(\xi,\alpha) e^{ik\theta}$.  
 
 Lemma \ref{L:justification-of-fubini:adjoint-comp} part (a) justifies the use of Fubini's theorem in what follows:
\begin{align}
\Big<\cf^{-1} \bm{L}_k \cf f,g \Big>_{\mu_r} &= \int_{M_\gamma} \eta_k(\xi) \tau_k(\xi,\alpha_z) \Big< f_k(\xi,\cdot),\kappa_k(\xi,\cdot) \Big>_{(\sigma,1)} \,e^{ik\theta} \overline{g_k(\xi,\alpha_z)e^{i k \theta}}\, \mu_r(z) \notag \\
&= \int_{M_\gamma} \eta_k(\xi) \tau_k(\xi,\alpha_z) \overline{g_k(\xi,\alpha_z)} \int_0^\infty f_k(\xi,\alpha_\zeta)\kappa_k(\xi,\alpha_\zeta)\alpha_\zeta^{\gamma-1}d\alpha_\zeta \alpha_z^{r} d\xi\,d\alpha_z\,d\theta \notag\\
&= \int_{M_\gamma}  f_k(\xi,\alpha_\zeta) \alpha_\zeta^{\gamma-1} \eta_k(\xi) \kappa_k(\xi,\alpha_\zeta) \int_0^\infty  \overline{g_k(\xi,\alpha_z)} \tau_k(\xi,\alpha_z)   \alpha_z^{r} d\alpha_z  d\xi\,d\alpha_\zeta\,d\theta \notag \\
&= \int_{M_\gamma}  f_k(\xi,\alpha_\zeta) e^{i k \theta} \overline{\alpha_\zeta^{\gamma-1-r}  \eta_k(\xi) \kappa_k(\xi,\alpha_\zeta)  \Big< g_k(\xi,\cdot),\tau_k(\xi,\cdot) \Big>_{(\mu_r,1)} e^{i k \theta}}  \mu_r(\zeta) \notag \\
&:= \Big<f, \cf^{-1} \bm{L}_k^{(*,\mu_r)} \cf g \Big>_{\mu_r}, \label{E:fourier-transformed-adjoint-comp}
\end{align}
completing the proof.
\end{proof}

We now obtain a formula for the adjoint of the full operator $\bm{L}$ in the space $L^2(M_\gamma,\sigma)$.

\begin{proposition}\label{P:leray-adjoint-wrt-sigma}
The adjoint of the Leray transform with respect to the inner product $\langle \cdot,\cdot \rangle_\sigma$ is given by the following integral:
\begin{equation}\label{E:leray-adjoint-wrt-sigma}
\bm{L}^{(*,\sigma)} g(z) = \frac{\gamma^2}{8\pi^2 i} \int_{M_\gamma}   g(\zeta)\frac{|\zeta_1|^{\gamma-2}d\zeta_2\wedge d\bar{\zeta}_1\wedge d\zeta_1}{\left[\gamma {z}_1|z_1|^{\gamma-2}(\bar{z}_1-\bar{\zeta}_1)-i(\bar{z}_2-\bar{\zeta}_2)\right]^2}.
\end{equation}
\end{proposition}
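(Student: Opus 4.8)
The plan is to read $\bm{L}^{(*,\sigma)}$ off directly from the defining relation $\langle\bm{L}f,g\rangle_\sigma=\langle f,\bm{L}^{(*,\sigma)}g\rangle_\sigma$ by interchanging the order of integration and taking a complex conjugate. First I would rewrite the Leray transform so that $\sigma$ itself is the measure of integration: since $\sigma=\frac{1}{2i}|\zeta_1|^{\gamma-2}\,d\zeta_2\w d\bar\zeta_1\w d\zeta_1$ by \eqref{D:MeasureMu_r} (with $r=\gamma-1$), formula \eqref{E:LerayMgamma} takes the reparametrized form \eqref{E:IntKernelAB}, namely
\[
\bm{L}f(z)=\frac{\gamma^2}{4\pi^2}\int_{M_\gamma}K(z,\zeta)\,f(\zeta)\,\sigma(\zeta),\qquad K(z,\zeta):=\big[\gamma\bar\zeta_1|\zeta_1|^{\gamma-2}(\zeta_1-z_1)+i(\zeta_2-z_2)\big]^{-2}.
\]
Granting Fubini's theorem for the moment, $\langle\bm{L}f,g\rangle_\sigma=\frac{\gamma^2}{4\pi^2}\int_{M_\gamma}\int_{M_\gamma}K(z,\zeta)\,f(\zeta)\,\overline{g(z)}\,\sigma(\zeta)\,\sigma(z)$ equals
\[
\int_{M_\gamma}f(\zeta)\;\overline{\frac{\gamma^2}{4\pi^2}\int_{M_\gamma}\overline{K(z,\zeta)}\,g(z)\,\sigma(z)}\;\sigma(\zeta),
\]
so $\bm{L}^{(*,\sigma)}$ is the integral operator with kernel $\frac{\gamma^2}{4\pi^2}\overline{K(\zeta,z)}$ (renaming the output variable $z$ and the dummy variable $\zeta$).

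The remainder is bookkeeping. Since $|z_1|^{\gamma-2}$ is real,
\[
\overline{K(\zeta,z)}=\overline{\big[\gamma\bar z_1|z_1|^{\gamma-2}(z_1-\zeta_1)+i(z_2-\zeta_2)\big]^{-2}}=\big[\gamma z_1|z_1|^{\gamma-2}(\bar z_1-\bar\zeta_1)-i(\bar z_2-\bar\zeta_2)\big]^{-2},
\]
and rewriting $\sigma(\zeta)$ as $\frac{1}{2i}|\zeta_1|^{\gamma-2}\,d\zeta_2\w d\bar\zeta_1\w d\zeta_1$ converts the constant $\frac{\gamma^2}{4\pi^2}\cdot\frac{1}{2i}$ into $\frac{\gamma^2}{8\pi^2 i}$; this is precisely \eqref{E:leray-adjoint-wrt-sigma}. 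As a consistency check one can instead set $r=\gamma-1$ in \eqref{E:fourier-transformed-adjoint-formula} — which simply interchanges the roles of $\tau_k$ and $\kappa_k$ in \eqref{E:LkMultipliers2} — and invert the Fourier transform and the series expansion to recover the same kernel.

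The one genuine obstacle is the Fubini interchange: the kernel $K$ is singular along the diagonal $z=\zeta$ and $M_\gamma\times M_\gamma$ is unbounded, so the double integral of $|K|\,|f|\,|g|$ need not be finite, even for compactly supported $f,g$. I would handle this as in the original derivation of $\bm{L}$: first expand $1/[A-Be^{i(\theta_z-\theta_\zeta)}]^2$ in the geometric series \eqref{E:KernelAB}, so that on a single Fourier mode the kernel becomes $B^k/A^{k+2}$, and here $|A|$ is bounded below away from $\{\zeta_1=0\}$ (by Lemma \ref{L:GeneralAMGM}, $\re A\ge B\ge 0$), making the term-by-term interchange elementary; this is what Lemma \ref{L:justification-of-fubini:adjoint-comp}(a) encodes, with $r=\gamma-1\in\ci_k$ for every $k$. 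Since both $\bm{L}$ and the operator of \eqref{E:leray-adjoint-wrt-sigma} are bounded on $L^2(M_\gamma,\sigma)$ by Theorem \ref{T:Norm}, it suffices to verify the adjoint identity on the dense subspace of finite Fourier sums with coefficients compactly supported in $\{\alpha>0\}$; there the sum over $k$ is finite and all interchanges are legitimate, re-summing the series (valid since $|B/\bar A|=|B/A|<1$ a.e.) returns the kernel $\overline{K(\zeta,z)}$, and the identity then extends to all of $L^2(M_\gamma,\sigma)$ by continuity.
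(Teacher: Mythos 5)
Your proposal is correct and is essentially the paper's own argument: the paper obtains the Fourier-side adjoint formula \eqref{E:fourier-transformed-adjoint-formula} by exactly your pairing-plus-Fubini computation (with the interchange justified mode-by-mode via Lemma \ref{L:justification-of-fubini:adjoint-comp}(a)), and then proves Proposition \ref{P:leray-adjoint-wrt-sigma} by running the reparametrization/series/Fourier analysis of Sections \ref{SS:LerayReparam}--\ref{SS:SeriesExpn} on the candidate integral operator and matching it with the $r=\gamma-1$ case of \eqref{E:fourier-transformed-adjoint-formula}, which is the same identification you use as a consistency check. One small correction: the boundedness of the operator defined by \eqref{E:leray-adjoint-wrt-sigma} is not literally given by Theorem \ref{T:Norm} (which concerns $\bm{L}$ itself); it follows instead from the same mode-by-mode computation (its symbol is again $C_\sigma(\gamma,k)$), or a posteriori from $\Vert\bm{L}^{(*,\sigma)}\Vert=\Vert\bm{L}\Vert$ once the identification on your dense subspace is made, so this is a citation slip rather than a gap.
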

\begin{proof}
Define an integral operator $\bm{T}$, where $\bm{T}g(z)$ is equal to the right hand side of \eqref{E:leray-adjoint-wrt-sigma} for all choices of $g$ for which this integral converges.  Referring back to \eqref{E:ell_M(z,zeta)} and \eqref{E:LerayLevi}, we see that $\bm{T}$ can be expressed as 
\begin{equation*}
\bm{T}g(z) = \int_{M_\gamma} \bar{\ell_\rho(\zeta,z)}\lambda_\rho(\zeta).
\end{equation*}

Revisit Section \ref{SS:LerayReparam}, but this time start with $\bm{T}$ and carry out the reparametrization \eqref{E:z-reparam} and \eqref{E:zeta-reparam} together with all subsequent steps through Section \ref{SS:SeriesExpn}.  In particular, $\bm{T}$ admits an orthogonal decomposition into $\bigoplus_{k=0}^\infty \bm{T}_k$.

Now for each $\bm{T}_k$ (following the outline in Section \ref{SS:ApplicationOf FourierTransform}), conjugate by the Fourier transform to obtain the operator $\cf^{-1} \bm{T}_k \cf$.  This leads to a result in the spirit of Proposition \ref{P:LkMultipliers}:
\begin{equation}\label{E:operator-T_k-conj-by-Fourier-trans}
\cf^{-1}\bm{T}_k\cf g (\xi,\alpha,\theta) = \eta_k(\xi) \kappa_k(\xi,\alpha)  \Big< g_k(\xi,\cdot),\tau_k(\xi,\cdot) \Big>_{(\sigma,1)} e^{ik\theta}.
\end{equation}

Since $\sigma = \mu_{\gamma-1}$, we see that \eqref{E:operator-T_k-conj-by-Fourier-trans} is exactly \eqref{E:fourier-transformed-adjoint-formula} when $r=\gamma-1$.  This says that for each $k$, $\cf^{-1} \bm{T}_k \cf = \cf^{-1} \bm{L}_k^{(*,\sigma)}\cf$, implying each $\bm{T}_k = \bm{L}_k^{(*,\sigma)}$ and therefore $\bm{T} = \bm{L}^{(*,\sigma)}$.
\end{proof}

\begin{lemma}\label{L:correspondence-between-r-and-r'-norms}
Let $\frac{r+r'}{2} = \gamma-1$.  The map $\bm{R}_r: L^2(M_\gamma,\mu_{r'}) \to L^2(M_\gamma,\mu_r)$ given by
\begin{equation}\label{D:def-of-R_r}
g(\xi,\alpha,\theta) \longmapsto \alpha^{\gamma-1-r}g(\xi,\alpha,\theta)
\end{equation}
is an isometry.
\end{lemma}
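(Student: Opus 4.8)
The plan is to observe that this is purely a statement about weighted $L^2$-spaces over the half-line (tensored with the trivial factors in $\xi$ and $\theta$), so nothing about $M_\gamma$, holomorphicity, or the Leray structure is needed — only bookkeeping of exponents. First I would write out the $\mu_r$-norm of $\bm{R}_r g$ directly from the coordinate expression $\mu_r = \alpha^r\,d\xi\wedge d\alpha\wedge d\theta$ (the $s$-variable having been replaced by $\xi$ via the Fourier transform, itself an isometry on $L^2(\R)$ by \eqref{E:FourierIsometryL2}):
\begin{align*}
\norm{\bm{R}_r g}_{\mu_r}^2 &= \int_0^{2\pi}\int_{-\infty}^\infty\int_0^\infty \left|\alpha^{\gamma-1-r}g(\xi,\alpha,\theta)\right|^2\,\alpha^{r}\,d\alpha\,d\xi\,d\theta\\
&= \int_0^{2\pi}\int_{-\infty}^\infty\int_0^\infty |g(\xi,\alpha,\theta)|^2\,\alpha^{\,2(\gamma-1-r)+r}\,d\alpha\,d\xi\,d\theta.
\end{align*}

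Next I would simplify the exponent $2(\gamma-1-r)+r = 2\gamma-2-r$ and invoke the hypothesis $\tfrac{r+r'}{2}=\gamma-1$, i.e.\ $r' = 2\gamma-2-r$, to conclude that this exponent is exactly $r'$. Hence $\norm{\bm{R}_r g}_{\mu_r}^2 = \norm{g}_{\mu_{r'}}^2$ for every $g$, which proves $\bm{R}_r$ is a well-defined isometry; in particular $\bm{R}_r g$ genuinely lies in $L^2(M_\gamma,\mu_r)$ even when $\gamma-1-r<0$, since the finiteness of the right-hand side guarantees it. Finally I would note that $\bm{R}_r$ is unitary: the relation $r+r'=2\gamma-2$ is symmetric in $r$ and $r'$, so the same computation shows $\bm{R}_{r'}\colon L^2(M_\gamma,\mu_r)\to L^2(M_\gamma,\mu_{r'})$ is an isometry, and $(\gamma-1-r)+(\gamma-1-r')=0$ gives $\bm{R}_{r'}\circ\bm{R}_r=\mathrm{id}$, so $\bm{R}_r$ is onto.

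There is no real obstacle here — the entire content is the exponent identity $2(\gamma-1-r)+r = r'$, which is immediate from the defining relation between $r$ and $r'$. The only thing worth being careful about is keeping the coordinate picture consistent (working on $M_\gamma$, or equivalently on its Fourier image) and recording that the conclusion automatically includes well-definedness of the map, so no separate integrability argument is required.
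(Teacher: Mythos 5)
Your proof is correct and follows essentially the same route as the paper: both arguments consist of writing out $\norm{\bm{R}_r g}_{\mu_r}^2$ in the $(\xi,\alpha,\theta)$ coordinates, observing that the exponent $2(\gamma-1-r)+r$ equals $r'$ by the relation $\tfrac{r+r'}{2}=\gamma-1$, and noting that the map is invertible with inverse $h\mapsto\alpha^{r+1-\gamma}h$. The paper merely adds the remark that this is a reformulation of the equality condition in Lemma \ref{L:GeneralizedCauchySchwarz}, which is motivational rather than logically necessary.
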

\begin{proof}
This is essentially a reformulation of the condition which tells when equality holds in Lemma \ref{L:GeneralizedCauchySchwarz}.  Indeed,
\begin{align*}
\norm{\bm{R}_r g}_{\mu_{r}}^2 = \int_{M_\gamma} |\alpha^{\gamma-1-r}g(\xi,\alpha,\theta)|^2 \alpha^r\,d\xi\,d\alpha\,d\theta = \int_{M_\gamma} |g(\xi,\alpha,\theta)|^2 \alpha^{r'}\,d\xi\,d\alpha\,d\theta  = \norm{g}_{\mu_{r'}}^2.
\end{align*}
It is clear this map is bijective with inverse $\bm{R}_r^{-1}$ given by $h(\xi,\alpha,\theta) \mapsto \alpha^{r+1-\gamma} h(\xi,\alpha,\theta)$.
\end{proof}

For $r \in \ci_0$, we now give an explicit formula for the more general adjoint $\bm{L}^{(*,\mu_r)}$:

\begin{theorem}\label{T:leray-adjoint-wrt-mu_r}
Let $r \in \ci_0 = (-1,2\gamma-1)$.  The adjoint of the Leray transform in the inner product $\langle \cdot,\cdot\rangle_{\mu_r}$ is found by computing the following integral
\begin{equation}\label{E:leray-adjoint-wrt-mu_r}
\bm{L}^{(*,\mu_r)} g(z) = \frac{\gamma^2 |z_1|^{\gamma-1-r}}{8\pi^2 i} \int_{M_\gamma}   g(\zeta)\frac{|\zeta_1|^{r-1}d\zeta_2\wedge d\bar{\zeta}_1\wedge d\zeta_1}{\left[\gamma {z}_1|z_1|^{\gamma-2}(\bar{z}_1-\bar{\zeta}_1)-i(\bar{z}_2-\bar{\zeta}_2)\right]^2}.
\end{equation}
\end{theorem}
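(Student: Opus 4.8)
The plan is to recognize this theorem as the natural generalization of Proposition \ref{P:leray-adjoint-wrt-sigma} — putting $r=\gamma-1$ in \eqref{E:leray-adjoint-wrt-mu_r} gives $|z_1|^{\gamma-1-r}=1$ and $|\zeta_1|^{r-1}=|\zeta_1|^{\gamma-2}$, recovering \eqref{E:leray-adjoint-wrt-sigma} — and to run the same Fourier-analytic argument. Write $\bm{T}_r$ for the integral operator on the right-hand side of \eqref{E:leray-adjoint-wrt-mu_r}, defined wherever the integral converges. Since $r\in\ci_0\subset\ci_k$ for every $k$, Proposition \ref{P:computation-of-adjoint-by-soft-analysis} already gives $\cf^{-1}\bm{L}_k^{(*,\mu_r)}\cf$ explicitly by \eqref{E:fourier-transformed-adjoint-formula}, and $\bm{L}^{(*,\mu_r)}=\bigoplus_{k=0}^\infty \bm{L}_k^{(*,\mu_r)}$ because the orthogonal $S^1$-decomposition is preserved by $\bm{L}$ (hence by $\bm{L}^{(*,\mu_r)}$, as $\mu_r$ is rotation-invariant). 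So it suffices to show that $\bm{T}_r$ likewise decomposes block-diagonally and that $\cf^{-1}(\bm{T}_r)_k\cf$ agrees with \eqref{E:fourier-transformed-adjoint-formula}.

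First I would reparametrize $\bm{T}_r$ via \eqref{E:z-reparam}--\eqref{E:zeta-reparam}. Using $z_1|z_1|^{\gamma-2}=\alpha_z^{\gamma-1}e^{i\theta_z}$, a short computation shows the bracketed denominator equals $\overline{\hat A}-\hat B\,e^{i(\theta_z-\theta_\zeta)}$, where $\hat A:=(\gamma-1)\alpha_z^\gamma+\alpha_\zeta^\gamma+i(s_z-s_\zeta)$ and $\hat B:=\gamma\,\alpha_z^{\gamma-1}\alpha_\zeta$ — that is, the quantities $A,B$ of \eqref{E:DefofAandB} with $z$ and $\zeta$ interchanged. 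Lemma \ref{L:GeneralAMGM} (applied with $x=\alpha_\zeta$, $y=\alpha_z$) gives $|\hat B/\hat A|=|\hat B/\overline{\hat A}|<1$ off a set of $\mu_r$-measure zero, so one may expand $1/(\overline{\hat A}-\hat B e^{i(\theta_z-\theta_\zeta)})^2$ exactly as in \eqref{E:KernelAB}. Inserting this together with the partial Fourier series of $g$ and the identity $|\zeta_1|^{r-1}\,d\zeta_2\w d\bar\zeta_1\w d\zeta_1=2i\,\alpha_\zeta^r\,ds_\zeta\w d\alpha_\zeta\w d\theta_\zeta$, then integrating in $\theta_\zeta$, produces the orthogonal decomposition $\bm{T}_r=\bigoplus_{k=0}^\infty(\bm{T}_r)_k$; the interchanges of sum and integral are justified exactly as in Sections \ref{SS:SeriesExpn}--\ref{SS:ApplicationOf FourierTransform} and Lemma \ref{L:justification-of-fubini:adjoint-comp}.

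Next I would conjugate each $(\bm{T}_r)_k$ by the Fourier transform $\cf$ in $s$. For fixed $\alpha_z,\alpha_\zeta$ the inner $s$-integral is a convolution against $G_k(s)=1/(s+i\hat C)^{k+2}$ with $\hat C:=(\gamma-1)\alpha_z^\gamma+\alpha_\zeta^\gamma>0$, so Proposition \ref{P:InverseFourierComputation} and \eqref{E:FourierConvolutions} contribute the factor proportional to $\xi^{k+1}e^{2\pi\xi\hat C}\mathds{1}_{\{\xi<0\}}=\xi^{k+1}e^{2\pi\xi(\gamma-1)\alpha_z^\gamma}\,e^{2\pi\xi\alpha_\zeta^\gamma}\mathds{1}_{\{\xi<0\}}$. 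Now collect the pieces: the powers of $\alpha_z$ coming from $\hat B^k$, the exponential $e^{2\pi\xi(\gamma-1)\alpha_z^\gamma}$, and the prefactor $|z_1|^{\gamma-1-r}$ assemble into $\alpha_z^{\gamma-1-r}\kappa_k(\xi,\alpha_z)$; the powers of $\alpha_\zeta$ from $\hat B^k$ and the exponential $e^{2\pi\xi\alpha_\zeta^\gamma}$ assemble into $\tau_k(\xi,\alpha_\zeta)$, which being real makes the remaining $\zeta$-integral (carried out against the weight $\alpha_\zeta^r$ from the measure) equal to $\langle g_k(\xi,\cdot),\tau_k(\xi,\cdot)\rangle_{(\mu_r,1)}$; and the numerical constants (the $(k+1)$ from \eqref{E:KernelAB}, the $\gamma^k$ from $\hat B^k$, the factor from Proposition \ref{P:InverseFourierComputation}, the $2i$ from the measure, and $\tfrac{\gamma^2}{8\pi^2 i}$) collect to $\eta_k(\xi)$. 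This is precisely \eqref{E:fourier-transformed-adjoint-formula}, so $(\bm{T}_r)_k=\bm{L}_k^{(*,\mu_r)}$ for every $k$, and therefore $\bm{T}_r=\bm{L}^{(*,\mu_r)}$.

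The conceptual content is light; the one place to be careful — and the main obstacle — is the constant-and-exponent bookkeeping in the last step: verifying that the accumulated powers of $\alpha_z$, $\alpha_\zeta$, $\gamma$, $2\pi$, $i$ and $\xi$ really collapse to $\eta_k(\xi)=(-2\pi\xi)^{k+1}\gamma^{k+2}/k!\cdot\mathds{1}_{\{\xi<0\}}$, that the surviving weight on the $\zeta$-integral is $\alpha_\zeta^r$ (not $\alpha_\zeta^{\gamma-1}$) so the one-dimensional pairing is genuinely the $(\mu_r,1)$ one, and that the $z$-side power is exactly $\gamma-1-r$. A convenient sanity check at the end is to set $r=\gamma-1$ and confirm one recovers Proposition \ref{P:leray-adjoint-wrt-sigma}.
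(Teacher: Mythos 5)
Your proposal is correct, but it takes a genuinely different route from the paper's own proof of this theorem. The paper does not redo the kernel expansion at general $r$: it first proves the $\sigma$-case (Proposition \ref{P:leray-adjoint-wrt-sigma}) by exactly the reparametrize--expand--Fourier argument you describe, and then obtains Theorem \ref{T:leray-adjoint-wrt-mu_r} abstractly, by computing $\langle \bm{L}f,g\rangle_\sigma$ two ways for $f\in L^2(M_\gamma,\mu_r)$, $g\in L^2(M_\gamma,\mu_{r'})$ (Fubini via Lemma \ref{L:justification-of-fubini:adjoint-comp}(b)) to get the conjugation identity $\bm{L}^{(*,\mu_r)}=\bm{R}_r\,\bm{L}^{(*,\sigma)}\,\bm{R}_r^{-1}$, where $\bm{R}_r$ is the multiplication isometry $g\mapsto \alpha^{\gamma-1-r}g$ of Lemma \ref{L:correspondence-between-r-and-r'-norms}; writing that identity as an integral gives \eqref{E:leray-adjoint-wrt-mu_r}. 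You instead verify \eqref{E:leray-adjoint-wrt-mu_r} directly at general $r$, matching the Fourier blocks of the candidate integral operator against \eqref{E:fourier-transformed-adjoint-formula} from Proposition \ref{P:computation-of-adjoint-by-soft-analysis}. Your key identifications check out: the bracket is indeed $\overline{\hat A}-\hat B e^{i(\theta_z-\theta_\zeta)}$ with $z,\zeta$ swapped in \eqref{E:DefofAandB}; Lemma \ref{L:GeneralAMGM} gives $|\hat B|<|\hat A|$ off a null set; and since $\overline{\hat A}=-i\bigl((s_z-s_\zeta)+i\hat C\bigr)$ with $\hat C=(\gamma-1)\alpha_z^\gamma+\alpha_\zeta^\gamma$, the factor $i^{k+2}$ from this rewriting combines with $-\tfrac{(2\pi i)^{k+2}}{(k+1)!}\xi^{k+1}$ from Proposition \ref{P:InverseFourierComputation} and with $\tfrac{\gamma^{k+2}(k+1)}{2\pi}$ (from $\tfrac{\gamma^2}{8\pi^2 i}\cdot 2i$, the $2\pi$ of the $\theta_\zeta$-integral, the $(k+1)$ of \eqref{E:KernelAB}, and $\gamma^k$ from $\hat B^k$) to give exactly $\eta_k(\xi)$, while the $\alpha_z$- and $\alpha_\zeta$-factors assemble into $\alpha_z^{\gamma-1-r}\kappa_k(\xi,\alpha_z)$ and $\langle g_k(\xi,\cdot),\tau_k(\xi,\cdot)\rangle_{(\mu_r,1)}$ as you claim. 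The trade-off: your route avoids introducing $r'$ and $\bm{R}_r$ but repeats the reparametrization/expansion bookkeeping with the extra weights (and must justify the same termwise manipulations the paper already performed for $\bm{L}$ and for the $\sigma$-adjoint), whereas the paper's route isolates the kernel computation once and explains structurally why the prefactor $|z_1|^{\gamma-1-r}$ and weight $|\zeta_1|^{r-1}$ appear — they are precisely $\bm{R}_r$ and $\bm{R}_r^{-1}$ wrapped around the $\sigma$-adjoint — at the cost of the auxiliary convergence lemma needed to pair functions from the two different $L^2$ spaces.
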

\begin{proof}
Let $r \in \ci_0$, $f\in L^2(M_\gamma,\mu_r)$ and $g \in L^2(M_\gamma,\mu_{r'})$. Remark \ref{R:symmetry-of-interval-I_0} guarantees $r' \in \ci_0$, so $\bm{L}$ is bounded in both of these function spaces.  

The Fourier transform $\cf$ (acting in the variable $s = \re(\zeta_2)$) is obiviously an isometry of both $L^2(M_\gamma,\mu_r)$ and $L^2(M_\gamma,\mu_{r'})$.  Now write $f = \cf(\phi), g = \cf(\psi)$ for $\phi = \sum_j \phi_j e^{ij\theta} \in L^2(M_\gamma,\mu_{r})$, $\psi = \sum_j \psi_k e^{ij\theta}  \in L^2(M_\gamma,\mu_{r'})$.  On one hand,
\begin{equation}\label{E:mu_r-adoint-comp-1}
\langle \bm{L} f, g \rangle_{\sigma} = \big\langle \bm{L} \cf(\phi), \cf(\psi) \big\rangle_{\sigma} = \big\langle \cf^{-1 }\bm{L} \cf(\phi), \psi \big\rangle_{\sigma} = \sum_{k=0}^\infty \big\langle \cf^{-1 }\bm{L}_k \cf(\phi), \psi \big\rangle_{\sigma},
\end{equation}
where the summand can be re-written as
\begin{align}
\big\langle \cf^{-1 }\bm{L}_k \cf(\phi), \psi \big\rangle_{\sigma} &= \int_{M_\gamma} \eta_k(\xi) \tau_k(\xi,\alpha_z) \Big< \phi_k(\xi,\cdot),\kappa_k(\xi,\cdot) \Big>_{(\sigma,1)} \bar{\psi_k(\xi,\alpha_z)} \, \alpha_z^{\gamma-1}d\xi\,d\alpha_z\,d\theta \notag \\
&= \int_{M_\gamma} \phi_k(\xi,\alpha_z) \bar{\eta_k(\xi) \kappa_k(\xi,\alpha_\zeta) \Big< \psi_k(\xi,\cdot),\tau_k(\xi,\cdot) \Big>_{(\sigma,1)}}  \, \alpha_\zeta^{\gamma-1}d\xi\,d\alpha_\zeta\,d\theta \label{E:mu_r-adoint-comp-2}\\
&= \big\langle \phi, \cf^{-1 }\bm{L}_k^{(*,\sigma)} \cf(\psi) \big\rangle_{\sigma}. \notag
\end{align}
The rearrangement in \eqref{E:mu_r-adoint-comp-2} is justified by Lemma \ref{L:justification-of-fubini:adjoint-comp} part (b) together with the fact that $\eta_k,\tau_k$ and $\kappa_k$ are real valued.  Thus,
\begin{align}
\eqref{E:mu_r-adoint-comp-1} = \sum_{k=0}^\infty \big\langle \phi, \cf^{-1 }\bm{L}_k^{(*,\sigma)} \cf(\psi) \big\rangle_{\sigma} = \big\langle \phi, \cf^{-1 }\bm{L}^{(*,\sigma)} \cf(\psi) \big\rangle_{\sigma} &= \langle  f, \bm{L}^{(*,\sigma)}g \rangle_{\sigma} \notag \\
&= \langle  f, \bm{R}_r \bm{L}^{(*,\sigma)}g \rangle_{\mu_r}. \label{E:mu_r-adoint-comp-3}
\end{align}
Notice that Lemma \ref{L:correspondence-between-r-and-r'-norms} guarantees that $\bm{R}_r \bm{L}^{(*,\sigma)}g \in L^2(M_\gamma,\mu_r)$.  

On the other hand,
\begin{equation}\label{E:mu_r-adoint-comp-4}
\langle \bm{L} f, g \rangle_{\sigma} = \langle \bm{L} f, \bm{R}_r g \rangle_{\mu_r} = \langle f, \bm{L}^{(*,\mu_r)} \bm{R}_r g\rangle_{\mu_r}.
\end{equation}
Equating \eqref{E:mu_r-adoint-comp-3} and \eqref{E:mu_r-adoint-comp-4} shows $\bm{R}_r\bm{L}^{(*,\sigma)}g = \bm{L}^{(*,\mu_r)} \bm{R}_r g$ for all $g \in L^2(M_\gamma,r')$.  This is equivalent to saying that, as an operator on $L^2(M_\gamma,\mu_r)$,
\begin{equation}
\bm{L}^{(*,\mu_r)}  = \bm{R}_r\bm{L}^{(*,\sigma)} \bm{R}_r^{-1}.
\end{equation}
Writing this out as an integral equation yields \eqref{E:leray-adjoint-wrt-mu_r}.
\end{proof}

\subsection{Related operators}\label{SS:related-operators}
In the discussion below, we often simplify notation and write $\bm{L}^{(*,\mu_r)} = \bm{L}^*$.  This convention is also extended to other operators.

\begin{proposition}\label{P:L_k-is-a-projection-mu_r}
$\bm{L}:L^2(M_\gamma,\mu_r) \to L^2(M_\gamma,\mu_r)$ is a projection for $r \in \ci_0 = (-1,2\gamma-1)$.
\end{proposition}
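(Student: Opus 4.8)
The plan is to reduce everything to the case $r = \gamma - 1$ already settled in Theorem \ref{T:L_k-is-a-projection-sigma}. First I would recall that for $r \in \ci_0$ the operator $\bm{L}$ is bounded on $L^2(M_\gamma,\mu_r)$ by Corollary \ref{C:r-cond}, and that the $S^1$-action furnishes the orthogonal decomposition $\bm{L} = \bigoplus_{k=0}^\infty \bm{L}_k$ on $L^2(M_\gamma,\mu_r)$, exactly as for $\sigma$, with $\bm{L}_k$ extended by zero off $L^2_k(M_\gamma,\mu_r)$. Since $\bm{L} \circ \bm{L} = \bigoplus_{k=0}^\infty \bm{L}_k \circ \bm{L}_k$, it suffices to prove $\bm{L}_k \circ \bm{L}_k = \bm{L}_k$ on $L^2_k(M_\gamma,\mu_r)$ for every $k \ge 0$; each such $\bm{L}_k$ is bounded because $\ci_0 \subset \ci_k$ (Theorem \ref{T:leray-boundedness-mu_r}), so in particular $\bm{L}_k f \in L^2(M_\gamma,\mu_r)$ for $f \in L^2(M_\gamma,\mu_r)$ and the composition is well defined.

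The key observation is that, conjugating by the Fourier transform $\cf$, the operator $\cf^{-1}\bm{L}_k\cf$ is given by the \emph{same} multiplier formula \eqref{E:LkMultipliers2} — carrying the pairing $\langle\cdot,\cdot\rangle_{(\sigma,1)}$, not $\langle\cdot,\cdot\rangle_{(\mu_r,1)}$ — regardless of which measure $\mu_r$ sits on $M_\gamma$. Consequently the idempotency computation performed in the proof of Theorem \ref{T:L_k-is-a-projection-sigma} transfers verbatim: one applies $\cf^{-1}\bm{L}_k\cf$ twice, pulls the scalar $\eta_k(\xi)\langle f_k(\xi,\cdot),\kappa_k(\xi,\cdot)\rangle_{(\sigma,1)}$ out of the second application, and invokes the identity $\langle \tau_k(\xi,\cdot),\kappa_k(\xi,\cdot)\rangle_{(\sigma,1)} = 1/\eta_k(\xi)$ established in \eqref{E:TauKappaInt}--\eqref{E:TauKappaInt=Eta}. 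The point I would stress is that this last identity — the engine of the whole argument — rests only on the integral $\int_0^\infty \alpha^{\gamma(k+1)-1}e^{2\pi\xi\gamma\alpha^\gamma}\,d\alpha$ and has nothing whatsoever to do with $r$, so no fresh computation is required; this gives $\cf^{-1}(\bm{L}_k \circ \bm{L}_k)\cf = \cf^{-1}\bm{L}_k\cf$, hence $\bm{L}_k \circ \bm{L}_k = \bm{L}_k$.

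The only step needing genuine care — and the place I would expect a referee to push — is checking that all the pairings entering \eqref{E:LkMultipliers2} and its iteration actually converge in the $\mu_r$ setting. For this I would use the generalized Cauchy--Schwarz inequality of Lemma \ref{L:GeneralizedCauchySchwarz}: for $f_k(\xi,\cdot) \in L^2\big((0,\infty),\alpha^{r}d\alpha\big)$ one has $|\langle f_k(\xi,\cdot),\kappa_k(\xi,\cdot)\rangle_{(\sigma,1)}| \le \norm{f_k(\xi,\cdot)}_{(\mu_r,1)}\,\norm{\kappa_k(\xi,\cdot)}_{(\mu_{r'},1)}$, and $\norm{\kappa_k(\xi,\cdot)}_{(\mu_{r'},1)} < \infty$ precisely when $r < (2k+2)(\gamma-1)+1$, which holds since $r \in \ci_0 \subset \ci_k$; the pairing $\langle\tau_k(\xi,\cdot),\kappa_k(\xi,\cdot)\rangle_{(\sigma,1)}$ is finite unconditionally. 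This convergence check is the only obstacle, and it is a minor one — the substance is the remark that both the multiplier formula and the key integral identity are $r$-free. Summing over $k$ then yields $\bm{L}\circ\bm{L} = \bm{L}$, completing the argument.
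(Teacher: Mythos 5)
Your proposal is correct and follows essentially the same route as the paper: the paper's proof simply cites Corollary \ref{C:r-cond} for boundedness, observes that formula \eqref{E:LkMultipliers2} remains valid for $f\in L^2(M_\gamma,\mu_r)$, and transfers the idempotency argument of Theorem \ref{T:L_k-is-a-projection-sigma}, whose engine is the $r$-independent identity $\langle \tau_k(\xi,\cdot),\kappa_k(\xi,\cdot)\rangle_{(\sigma,1)}=1/\eta_k(\xi)$. Your convergence check via Lemma \ref{L:GeneralizedCauchySchwarz} is exactly the detail the paper leaves implicit, so nothing is missing.
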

\begin{proof}
If $r \in \ci_0$, Corollary \ref{C:r-cond} says $\bm{L}$ is bounded on $L^2(M_\gamma,\mu_r)$.  Formula \eqref{E:LkMultipliers2} is valid for $f \in L^2(M_\gamma,\mu_r)$ and the same argument given in Theorem \ref{T:L_k-is-a-projection-sigma} shows that each $\bm{L}_k$, and consequently the full operator $\bm{L}$, is a projection.  
\end{proof}
 
Let $r \in \ci_0$ so that $\bm{L} :L^2(M_\gamma,\mu_r) \to L^2(M_\gamma,\mu_r)$ is bounded.  Define a Hardy space in the following way: 
\begin{equation}\label{E:def-of-hardy-space-H_r}
H^2(M_\gamma,\mu_r) := \bm{L}\left( L^2(M_\gamma,\mu_r) \right) = \ker\left({{\bm L}}-I\right).
\end{equation}
It is verified in the Appendix that $H^2(M_\gamma,\mu_r)$ consists entirely of boundary values of holomorphic functions.  In what follows, write $H^2(M_\gamma,\mu_r) = H$.

Let $\bm{K}\colon H^\perp\to H$ be the restriction of $\bm{L}$ to $H^\perp$; thus, with respect to the decomposition $L^2(M_\gamma,\mu_r)=H\oplus H^\perp$, $\bm{L}$ is given by the operator matrix
$\begin{pmatrix}
I  &  \bm{K} \\ 0 & 0
\end{pmatrix}.$
Similarly, $\bm{L}^*$ is given by 
$\begin{pmatrix}
I  &  0 \\ \bm{K}^* & 0
\end{pmatrix},$  
$\bm{L}^*\bm{L}$ is given by 
$\begin{pmatrix}
I  &  \bm{K}\\ \bm{K}^* & 0
\end{pmatrix},$
$\bm{L}\bm{L}^*$ is given by
$\begin{pmatrix}
I  +  \bm{K}\bm{K}^* & 0 \\ 0 & 0
\end{pmatrix},$
and $\bm{A} := \bm{L}^*-\bm{L}$ is given by 
$\begin{pmatrix}
0  &  -\bm{K} \\ \bm{K}^* & 0
\end{pmatrix}.$
These representations are standard operator theory facts (see, e.g., (2.1.6) and (2.1.34) in \cite{SimonBookOpTheory}) and it is easy to verify that
\begin{subequations}\label{E:AK}
\begin{align}
\|\bm{A}\|&=\|\bm{K}\|=\|\bm{K}^*\|\\
\|\bm{L}\|&=\|\bm{L}^*\|=\sqrt{1+\|\bm{K}\|^2}\\
\|\bm{L}\bm{L}^*\|&=\|\bm{L}^*\bm{L}\|=\|\bm{L}\|^2=1+\|\bm{K}\|^2.
\end{align}
\end{subequations}

The operator $\bm{A}$ stems from work of Kerzman and Stein \cite{KerSte78a,KerSte78b} examining the relation between certain Cauchy-Fantappi\`e projections and the self-adjoint Szeg\H{o} projection
${\bm S}$ (corresponding to $\bm{K}=0$).  See \cite{Bol06,Bol07,BarBol07,BolRai15,Bell16} for results on $\bm{A}$ in the complex plane and \cite{BarLan09} for results on Reinhardt domains in $\C^2$.

The formulas in \eqref{E:AK} respect the decomposition $L^2(M_\gamma,\mu_r) = \bigoplus_{k=0}^\infty L_k^2(M_\gamma,\mu_r)$, i.e., identical statements hold with $\bm{L}$, $\bm{A}$ and $\bm{K}$ replaced by $\bm{L}_k$, $\bm{A}_k$ and $\bm{K}_k$, respectively.  To be more specific, write 
$$
\bm{L}^{(*,\mu_r)}\bm{L} = \bigoplus_{k=0}^\infty \bm{L}_k^{(*,\mu_r)}\bm{L}_k, \qquad \bm{L} \bm{L}^{(*,\mu_r)} = \bigoplus_{k=0}^\infty \bm{L}_k \bm{L}_k^{(*,\mu_r)}
$$ 
and 
$$
\bm{A}^{\mu_r} = \bigoplus_{k=0}^\infty \bm{A}_k^{\mu_r}, \qquad \mathrm{where} \qquad \bm{A}_k^{\mu_r} = \bm{L}^{(*,\mu_r)}_k-\bm{L}_k.
$$  
We devote the rest of the section to the study of these operators.  Computations are omitted, but can be easily reconstructed by an interested reader.

\subsubsection{Spectra of $\bm{L}^*\bm{L}$ and $\bm{L}\bm{L}^*$}\label{SSS:spectra-L*L-and-LL*}
Let $k \ge 0$, $r \in \ci_k$ and $f = \sum_{k=-\infty}^\infty f_k e^{ik\theta} \in L^2(M_\gamma,\mu_r)$.  

We analyze the $k^{th}$ piece of $\bm{L}^{(*,\mu_r)}\bm{L}$ by considering the action of the following unitarily equivalent operator on $f$.  Equations \eqref{E:LkMultipliers2} and \eqref{E:fourier-transformed-adjoint-formula} show
$$
\cf^{-1}\bm{L}_k^{(*,\mu_r)}\bm{L}_k\cf(f)(\xi,\alpha,\theta) = C_{\mu_r}(\gamma,k) \left< f_k(\xi,\cdot), \tfrac{\kappa_k(\xi,\cdot)}{\norm{\kappa_k(\xi,\cdot)}^2_{(\mu_{r'},1)}} \right>_{(\sigma,1)} \alpha^{\gamma-1-r} \kappa_k(\xi,\alpha) e^{i k \theta}.
$$
It is not hard to verify that the related operator $\frac{1}{C_{\mu_r}(\gamma,k)} \cdot \cf^{-1}\bm{L}_k^{(*,\mu_r)}\bm{L}_k\cf$ represents the following orthogonal projection (in the $\langle \cdot,\cdot \rangle_{\mu_r}$ inner product):
$$
L^2_k(M_\gamma,\mu_r) \xrightarrow{\perp_{\mu_r}} X_k,
$$
where
\begin{equation*}\label{E:def-of-space-X_k}
X_k = \left\{ m(\xi) \alpha^{\gamma-1-r} \kappa_k(\xi,\alpha)e^{i k \theta} : \int_{-\infty}^0 |m(\xi)|^2 |\xi|^{\left(\frac{2k+1+r}{\gamma}-2k-2\right)}d\xi \right\}.
\end{equation*}
We emphasize the following observations:
\begin{itemize}
\item Infinite dimensional $X_k$ is constructed around the single function $\alpha^{\gamma-1-r} \kappa_k(\xi,\alpha)e^{i k \theta}$.
\item $X_k$ coincides with the eigenspace $E_{C_{\mu_r}(\gamma,k)}$ of the operator $\cf^{-1}\bm{L}_k^{(*,\mu_r)}\bm{L}_k\cf$.
\item $X_k$ is precisely the space of functions which achieve the norm of the operator $\cf^{-1}\bm{L}_k\cf$, as described by \eqref{E:mu_r-norm-acieving-condition1} and \eqref{E:mu_r-norm-acieving-condition2}.
\end{itemize}

Now analyze the $k^{th}$ piece of $\bm{L}\bm{L}^{(*,\mu_r)}$ by considering the action of the following unitarily equivalent operator on $f$.  Equations \eqref{E:LkMultipliers2} and \eqref{E:fourier-transformed-adjoint-formula} show
$$
\cf^{-1}\bm{L}_k\bm{L}_k^{(*,\mu_r)}\cf(f)(\xi,\alpha,\theta) = C_{\mu_r}(\gamma,k) \left< f_k(\xi,\cdot), \tfrac{\tau_k(\xi,\cdot)}{\norm{\tau_k(\xi,\cdot)}^2_{(\mu_{r},1)}} \right>_{(\mu_r,1)} \tau_k(\xi,\alpha) e^{i k \theta}.
$$
It is clear that the related operator $\frac{1}{C_{\mu_r}(\gamma,k)} \cdot \cf^{-1}\bm{L}_k \bm{L}^{(*,\mu_r)}_k\cf$ represents the following orthogonal projection (in the $\langle \cdot,\cdot \rangle_{\mu_r}$ inner product):
$$
L^2_k(M_\gamma,\mu_r) \xrightarrow{\perp_{\mu_r}} Y_k,
$$
where
\begin{equation*}\label{E:def-of-space-Y_k}
Y_k = \left\{ m(\xi) \tau_k(\xi,\alpha)e^{i k \theta} : \int_{-\infty}^0 |m(\xi)|^2 |\xi|^{-\left(\frac{2k+r+1}{\gamma}\right)}d\xi \right\}.
\end{equation*}

We emphasize the following observations:
\begin{itemize}
\item Infinite dimensional $Y_k$ is constructed around the single function $\tau_k(\xi,\alpha)e^{i k \theta}$.
\item $Y_k$ coincides with the eigenspace $E_{C_{\mu_r}(\gamma,k)}$ of the operator $\cf^{-1}\bm{L}_k\bm{L}_k^{(*,\mu_r)}\cf$.
\item $Y_k$ is precisely the space of functions in the image of the operator $\cf^{-1}\bm{L}_k$.
\end{itemize}
In summary,
\begin{theorem}\label{T:spectrum-of-Lk*Lk}
Fix an integer $k \ge 0$, $r \in \ci_k$ and let $\bm{T}_k$ be either $\bm{L}_k^{(*,\mu_r)}\bm{L}_k$ or $\bm{L}_k\bm{L}_k^{(*,\mu_r)}$.  Then $\bm{T}_k$ admits an orthogonal basis of eigenfunctions and its spectrum is given by
\begin{equation*}
\left\{0, C_{\mu_r}(\gamma,k) \right\}.
\end{equation*}
\end{theorem}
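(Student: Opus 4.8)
The plan is to read off the theorem from the two observations made just before its statement: that $C_{\mu_r}(\gamma,k)^{-1}\cf^{-1}\bm{L}_k^{(*,\mu_r)}\bm{L}_k\cf$ is the $\mu_r$-orthogonal projection onto $X_k$, and $C_{\mu_r}(\gamma,k)^{-1}\cf^{-1}\bm{L}_k\bm{L}_k^{(*,\mu_r)}\cf$ the $\mu_r$-orthogonal projection onto $Y_k$. First I would note that $\bm{T}_k$ annihilates $L^2_j(M_\gamma,\mu_r)$ for every $j\neq k$, since both $\bm{L}_k$ and $\bm{L}_k^{(*,\mu_r)}$ vanish off $L^2_k(M_\gamma,\mu_r)$ by Definition \ref{D:SubLerayTransformLk} and formula \eqref{E:fourier-transformed-adjoint-formula}; together with the unitarity of the $s$-variable Fourier transform $\cf$, this reduces everything to analyzing $\cf^{-1}\bm{T}_k\cf$ on $L^2_k(M_\gamma,\mu_r)$.

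Next I would make the reduction to an orthogonal projection precise. Self-adjointness of $\bm{T}_k$ in $\langle\cdot,\cdot\rangle_{\mu_r}$ is automatic, so the only thing to verify is that $C_{\mu_r}(\gamma,k)^{-1}\bm{T}_k$ is idempotent. Combining the multiplier formulas \eqref{E:LkMultipliers2} and \eqref{E:fourier-transformed-adjoint-formula}, one finds for $f=f_k(\xi,\alpha)e^{ik\theta}$ that
\begin{equation*}
\cf^{-1}\bm{L}_k\bm{L}_k^{(*,\mu_r)}\cf f = C_{\mu_r}(\gamma,k)\,\frac{\langle f_k(\xi,\cdot),\tau_k(\xi,\cdot)\rangle_{(\mu_r,1)}}{\norm{\tau_k(\xi,\cdot)}_{(\mu_r,1)}^2}\,\tau_k(\xi,\alpha)\,e^{ik\theta},
\end{equation*}
which is visibly $C_{\mu_r}(\gamma,k)$ times a fiberwise rank-one orthogonal projection (onto $Y_k$), so idempotency is immediate there. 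For the companion operator the corresponding formula is built around $\alpha^{\gamma-1-r}\kappa_k(\xi,\cdot)$ and the one small identity needed is $\langle \alpha^{\gamma-1-r}\kappa_k(\xi,\cdot),\kappa_k(\xi,\cdot)\rangle_{(\sigma,1)}=\norm{\kappa_k(\xi,\cdot)}_{(\mu_{r'},1)}^2$ with $\tfrac{r+r'}{2}=\gamma-1$; with it, the scalar produced by composing $\cf^{-1}\bm{L}_k^{(*,\mu_r)}\bm{L}_k\cf$ with itself is exactly $|\eta_k(\xi)|^2\norm{\tau_k(\xi,\cdot)}_{(\mu_r,1)}^2\norm{\kappa_k(\xi,\cdot)}_{(\mu_{r'},1)}^2=C_{\mu_r}(\gamma,k)$, the $\xi$-independent quantity of Theorem \ref{T:leray-boundedness-mu_r}, establishing idempotency of $C_{\mu_r}(\gamma,k)^{-1}\bm{T}_k$ in this case too.

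Finally I would conclude. Because $r\in\ci_k$, the Gamma-arguments $\tfrac{2k+1+r}{\gamma}$ and $2k+2-\tfrac{2k+1+r}{\gamma}$ are positive, hence $C_{\mu_r}(\gamma,k)>0$; and both $X_k$ (resp. $Y_k$) and its orthogonal complement are nonzero, in fact infinite-dimensional, since they are parametrized by multiplier functions $m(\xi)$ as in \eqref{E:mu_r-norm-acieving-condition2}. A nonzero orthogonal projection with nonzero kernel has spectrum $\{0,1\}$ and a complete orthonormal system of eigenvectors, namely the union of orthonormal bases of its range and its kernel; multiplying by $C_{\mu_r}(\gamma,k)$, transporting back through the unitary $\cf$, and adjoining the eigenvalue-$0$ eigenfunctions coming from the $L^2_j(M_\gamma,\mu_r)$ with $j\neq k$, yields the asserted orthogonal eigenbasis and the spectrum $\{0,C_{\mu_r}(\gamma,k)\}$. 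The only non-bookkeeping step is the idempotency verification for $\bm{L}_k^{(*,\mu_r)}\bm{L}_k$, and even that is routine once one has the $(\mu_r,1)$- and $(\sigma,1)$-norm evaluations extending Corollary \ref{C:(sigma,1)-norms-of-tau-and-kappa}; the genuine obstacle was already cleared in establishing the explicit adjoint formula \eqref{E:fourier-transformed-adjoint-formula} and the $\xi$-independence of $C_{\mu_r}(\gamma,k)$.
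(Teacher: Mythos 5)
Your proposal is correct and follows essentially the same route as the paper: compose the multiplier formulas \eqref{E:LkMultipliers2} and \eqref{E:fourier-transformed-adjoint-formula}, recognize $C_{\mu_r}(\gamma,k)^{-1}\cf^{-1}\bm{T}_k\cf$ as the fiberwise rank-one $\mu_r$-orthogonal projection onto $X_k$ (resp.\ $Y_k$), and read off the spectrum $\{0,C_{\mu_r}(\gamma,k)\}$ together with the orthogonal eigenbasis from the range/kernel decomposition. The verification you flag as the one non-trivial step, including the identity $\langle \alpha^{\gamma-1-r}\kappa_k(\xi,\cdot),\kappa_k(\xi,\cdot)\rangle_{(\sigma,1)}=\norm{\kappa_k(\xi,\cdot)}_{(\mu_{r'},1)}^2$, is exactly the computation the paper leaves to the reader, so nothing is missing.
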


If $r \in \ci_0$, then $r \in \ci_k$ for all $k\ge0$.  Consequently, for this range of $r$ values, the conclusion of Theorem \ref{T:spectrum-of-Lk*Lk} holds for each non-negative $k$.  This immediately implies Theorem \ref{T:spectrum-of-L*L}.

\subsubsection{The spectrum of $\bm{A}$}\label{SSS:spectrum-of-A}
Again let $k \ge 0$, $r \in \ci_k$ and $f = \sum_{k=-\infty}^\infty f_k e^{ik\theta} \in L^2(M_\gamma,\mu_r)$.
We analyze the $k^{th}$ piece of the anti-self-adjoint $\bm{A}^{\mu_r}$ by considering the action of the following unitarily equivalent operator on $f$.  Equations \eqref{E:LkMultipliers2} and \eqref{E:fourier-transformed-adjoint-formula} show
\begin{align}
\cf^{-1}\bm{A}^{\mu_r}_k\cf(f)(\xi,\alpha,\theta) &= \cf^{-1} \left(\bm{L}^{(*,\mu_r)}_k - \bm{L}_k \right) \cf(f)(\xi,\alpha,\theta) \notag \\ 
&= \eta_k(\xi) \Big( \alpha^{\gamma-1-r}\kappa_k(\xi,\alpha) \langle f_k(\xi,\cdot), \tau_k(\xi,\cdot) \rangle_{(\mu_r,1)} \label{E:kerzman-stein-op-1}  \\
& \qquad \qquad \qquad \qquad \qquad - \tau_k(\xi,\alpha) \left< f_k(\xi,\cdot), \kappa_k(\xi,\cdot) \right>_{(\sigma,1)} \Big)  e^{ik\theta}. \notag
\end{align}

For our purposes, it is convenient to introduce the following function
\begin{equation}\label{E:def-of-function-lambda_k}
\lambda_k(\xi,\alpha) := \alpha^{\gamma-1-r} \kappa_k(\xi,\alpha) - \tfrac{\tau_k(\xi,\alpha)}{\eta_k(\xi) \norm{\tau_k(\xi,\alpha)}^2_{(\mu_r,1)}}.
\end{equation}
(The notation $\lambda_k(\xi,\alpha)$ is used only in this section and should not be confused with our notation for the Leray-Levi measure used elsewhere in the paper.)  It is easily checked that for each fixed $\xi < 0$, $\lambda_k(\xi,\cdot) \perp_{(\mu_r,1)} \tau_k(\xi,\cdot)$, and that
\begin{equation*}
\norm{\lambda_k(\xi,\cdot)}^2_{(\mu_r,1)} = \norm{\kappa_k(\xi,\cdot)}^2_{(\mu_{r'},1)} - \tfrac{1}{\eta_k(\xi)^2 \norm{\tau_k(\xi,\cdot)}^2_{(\mu_r,1)}}.
\end{equation*}
It follows that we can re-express \eqref{E:kerzman-stein-op-1} in a more symmetric fashion:
\begin{align}
\cf^{-1}\bm{A}^{\mu_r}_k\cf(f)(\xi,\alpha,\theta) &= \cf^{-1} \left(\bm{L}^{(*,\mu_r)}_k - \bm{L}_k \right) \cf(f)(\xi,\alpha,\theta) \notag \\ 
&= \eta_k(\xi) \Big( \lambda_k(\xi,\alpha) \left< f_k(\xi,\cdot), \tau_k(\xi,\cdot) \right>_{(\mu_r,1)} \label{E:kerzman-stein-op-2}  \\
& \qquad \qquad \qquad \qquad \qquad - \tau_k(\xi,\alpha) \left< f_k(\xi,\cdot), \lambda_k(\xi,\cdot) \right>_{(\mu_r,1)} \Big)  e^{ik\theta}. \notag
\end{align}
A computation now shows that
\begin{align}
\norm{\cf^{-1}\bm{A}^{\mu_r}_k\cf(f)}^2_{(\mu_r,1)} &= \left(C_{\mu_r}(\gamma,k)-1\right) \Bigg( \Big| \Big\langle f_k(\xi,\cdot),  \tfrac{\tau_k(\xi,\cdot)}{\norm{\tau_k(\xi,\cdot)}_{(\mu_r,1)}} \Big\rangle_{(\mu_r,1)} \Big|^2 + \notag \\
& \qquad \qquad \qquad \qquad \qquad \qquad \qquad  \Big| \Big\langle f_k(\xi,\cdot),  \tfrac{\lambda_k(\xi,\cdot)}{\norm{\lambda_k(\xi,\cdot)}_{(\mu_r,1)}} \Big\rangle_{(\mu_r,1)} \Big|^2 \Bigg),
\end{align}
meaning $\cf^{-1}\bm{A}^{\mu_r}_k\cf$ annihilates $L^2(M_\gamma,\mu_r) \backslash Z_k$, where
$$
Z_k := \text{span}\{ \tau_k(\xi,\alpha)e^{ik\theta}, \lambda_k(\xi,\alpha)e^{ik\theta} \}.
$$

Now decompose $Z_k = Z^1_k \bigoplus Z^2_k$, where
\begin{align*}
Z^1_k &= \left\{ m(\xi)\left( \lambda_k(\xi,\alpha) + i \tfrac{\norm{\lambda_k(\xi,\cdot)}_{(\mu_r,1)}}{\norm{\tau_k(\xi,\cdot)}_{(\mu_r,1)}} \tau_k(\xi,\alpha) \right)e^{i k \theta} : \int_{-\infty}^0 |m(\xi)|^2 |\xi|^{\left(\frac{2k+r+1}{\gamma} -2k-2\right)}d\xi \right\}, \\
Z^2_k &= \left\{ m(\xi)\left( \lambda_k(\xi,\alpha) - i \tfrac{\norm{\lambda_k(\xi,\cdot)}_{(\mu_r,1)}}{\norm{\tau_k(\xi,\cdot)}_{(\mu_r,1)}} \tau_k(\xi,\alpha) \right)e^{i k \theta} : \int_{-\infty}^0 |m(\xi)|^2 |\xi|^{\left(\frac{2k+r+1}{\gamma} -2k-2\right)}d\xi \right\}. 
\end{align*}
It is easily verified that
\begin{itemize}
\item $Z^1_k$ coincides with the eigenspace $E_{i \sqrt{C_{\mu_r}(\gamma,k)-1}}$ of $\cf^{-1}\bm{A}_k^{\mu_r}\cf$.
\item $Z^2_k$ coincides with the eigenspace $E_{-i \sqrt{C_{\mu_r}(\gamma,k)-1}}$ of $\cf^{-1}\bm{A}_k^{\mu_r}\cf$.
\item These two spaces are infinite dimensional, but both are constructed around its own particular linear combination of $\tau_k(\xi,\alpha)e^{ik\theta}$ and $\lambda_k(\xi,\alpha)e^{ik\theta}$.
\end{itemize}
In summary,
\begin{theorem}\label{T:spectrum-of-Ak}
Fix an integer $k \ge 0$ and let $r \in \ci_k$.  The operator $\bm{A}_k^{\mu_r}$ admits an orthogonal basis of eigenfunctions and its spectrum is given by
\begin{equation*}
\left\{0, \pm i \sqrt{C_{\mu_r}(\gamma,k) - 1} \right\}.
\end{equation*}
\end{theorem}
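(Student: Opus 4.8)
The plan is to diagonalize the unitarily equivalent operator $\cf^{-1}\bm{A}_k^{\mu_r}\cf$, exploiting that after Fourier conjugation it acts fiberwise over $\xi<0$ as a rank-at-most-two operator on $L^2\big((0,\infty),\alpha^r\,d\alpha\big)$. Starting from the symmetric formula \eqref{E:kerzman-stein-op-2}, one reads off directly that for each fixed $\xi$ the image of the fiber operator lies in the span of $\tau_k(\xi,\cdot)$ and $\lambda_k(\xi,\cdot)$; consequently $\cf^{-1}\bm{A}_k^{\mu_r}\cf$ annihilates the orthogonal complement $Z_k^{\perp}$ of $Z_k=\text{span}\{\tau_k(\xi,\alpha)e^{ik\theta},\lambda_k(\xi,\alpha)e^{ik\theta}\}$. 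This produces the eigenvalue $0$, whose eigenspace $Z_k^{\perp}$ is infinite dimensional and contributes an orthonormal family to the eventual eigenbasis.

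It then remains to understand the restriction to $Z_k$. Because $\bm{A}^{\mu_r}=\bm{L}^{(*,\mu_r)}-\bm{L}$ is anti-self-adjoint and the functions $\eta_k,\tau_k,\kappa_k$ (hence $\lambda_k$) are real valued, in the orthonormal fiber basis $\big\{\tau_k/\|\tau_k\|_{(\mu_r,1)},\ \lambda_k/\|\lambda_k\|_{(\mu_r,1)}\big\}$ the fiber operator is represented by a real skew-symmetric $2\times2$ matrix, necessarily $\left(\begin{smallmatrix}0&-c\\c&0\end{smallmatrix}\right)$ for some real constant $c$ (which one should check is independent of $\xi$, just as the symbol function is). Reading $c$ off from the norm identity already recorded in Section \ref{SSS:spectrum-of-A} — namely $\|\cf^{-1}\bm{A}_k^{\mu_r}\cf f\|^2_{(\mu_r,1)}=(C_{\mu_r}(\gamma,k)-1)\big(|\langle f_k,\tau_k/\|\tau_k\|\rangle_{(\mu_r,1)}|^2+|\langle f_k,\lambda_k/\|\lambda_k\|\rangle_{(\mu_r,1)}|^2\big)$ — gives $c^2=C_{\mu_r}(\gamma,k)-1$; this is nonnegative since $\bm{L}_k$ is a nonzero idempotent (the argument of Theorem \ref{T:L_k-is-a-projection-sigma} applies for any $r\in\ci_k$), so $\|\bm{L}_k\|^2=C_{\mu_r}(\gamma,k)\ge1$. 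The eigenvalues of the fiber matrix are thus $\pm i\sqrt{C_{\mu_r}(\gamma,k)-1}$, with eigenvectors $\lambda_k\pm i\frac{\|\lambda_k\|_{(\mu_r,1)}}{\|\tau_k\|_{(\mu_r,1)}}\tau_k$; assembling these over $\xi$ yields exactly the spaces $Z_k^1$ and $Z_k^2$.

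Combining the three pieces, $L^2_k(M_\gamma,\mu_r)=Z_k^{\perp}\oplus Z_k^1\oplus Z_k^2$ is an orthogonal decomposition into eigenspaces of $\cf^{-1}\bm{A}_k^{\mu_r}\cf$; since $\cf$ is unitary (preserving orthogonality, eigenvalues, and eigenspace dimensions), transporting back gives an orthogonal eigenbasis of $\bm{A}_k^{\mu_r}$ with spectrum $\{0\}\cup\{\pm i\sqrt{C_{\mu_r}(\gamma,k)-1}\}$, as claimed. (When $C_{\mu_r}(\gamma,k)=1$ the nonzero values coincide with $0$ and $\lambda_k$ vanishes, consistent with the stated spectrum.)

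The only real care needed is routine Hilbert-space bookkeeping: verifying the fiberwise orthogonality $\lambda_k(\xi,\cdot)\perp_{(\mu_r,1)}\tau_k(\xi,\cdot)$, the stated value of $\|\lambda_k(\xi,\cdot)\|_{(\mu_r,1)}$, and the $\xi$-independence of the normalized fiber matrix — all of which reduce, via Proposition \ref{P:IntegralComputation} and the norm formulas \eqref{E:TauKNormMu_r}--\eqref{E:KappaKNormMu_r'}, to the same $\Gamma$-function computation that produced the $\xi$-independence of $C_{\mu_r}(\gamma,k)$. I do not expect any new ideas beyond those already assembled in Section \ref{SSS:spectrum-of-A}; the ``main obstacle'' is purely organizational — packaging these fiberwise linear-algebra facts into a clean statement about an orthogonal eigenbasis of the global operator.
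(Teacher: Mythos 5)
Your proposal is correct and follows essentially the same route as the paper: it uses the Fourier-conjugated formula \eqref{E:kerzman-stein-op-2}, the auxiliary function $\lambda_k$ with its orthogonality to $\tau_k$ and norm identity, annihilation of $Z_k^\perp$, and the same eigenvectors $\lambda_k \pm i\tfrac{\norm{\lambda_k}_{(\mu_r,1)}}{\norm{\tau_k}_{(\mu_r,1)}}\tau_k$ spanning $Z_k^1$ and $Z_k^2$. Repackaging the restriction to $Z_k$ as a fiberwise real skew-symmetric $2\times 2$ matrix with $c^2=C_{\mu_r}(\gamma,k)-1$ is only a cosmetic reorganization of the computation already carried out in Section \ref{SSS:spectrum-of-A}.
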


If $r \in \ci_0$, then $r \in \ci_k$ for all $k\ge0$.  Consequently, for this range of $r$ values the conclusion of Theorem \ref{T:spectrum-of-Ak} holds for each non-negative $k$.  This immediately implies Theorem \ref{T:spectrum-of-L*-L}.

%%%%%%%%%%%%%%%%%%%%%%%%%%%%%%%%%%%%%%%%%%%%%%%%%%%%%%%%%%%%%%%%%%%%%%%%%%%%%%%%%%%%%%%%%%%%%

\section{Projective Duality}\label{S:Duality}

We now introduce the notion of {\em projective duality}.  Define $\C\mathbb{P}^{n*}$ to be the set of hyperplanes in $\C\p^n$.  It is easily seen that this set can be isomorphically identified with $\C\mathbb{P}^{n}$.  Indeed, each hyperplane in $\C\p^n$ corresponds to a unique point $\zeta = [\zeta_0:\dots:\zeta_n] \in \C\mathbb{P}^{n}$:
\begin{align*}
\mathfrak{h}_{\zeta} = \{\zeta^* \in \C\p^n: \zeta_0 \zeta_0^* + \dots + \zeta_n \zeta_n^* = 0 \},
\end{align*}
and it is clear that this identification is reflexive.

Given a domain $\Omega \subset \C\p^n$ with smooth boundary $\cs$, we define the dual $\cs^* \subset \C\p^{n*}$ to be the set of complex hyperplanes tangent to $\cs$.  See \cite{ScandBook04} for a detailed treatment of this topic; also see \cite{BarGru18,BarGru20} for related recent developments.  When $\cs$ is a smooth and strongly $\C$-convex hypersurface, $\cs^*$ is also smooth and strongly $\C$-convex and $\cs^{**}=\cs$. However, such regularity properties for $\cs^*$ often fail under weaker hypotheses on $\cs$; see Remark \ref{R:M1}.

From the denominator of $\mathscr{L}_{\cs}$ in \eqref{E:LerayKernel}, we see that the complex tangent hyperplanes to a smooth hypersurface $\cs$ play a critical role in the Leray transform.  Strong $\C$-convexity ensures that each supporting hyperplane intersects $\overline{\Omega}$ at the point of tangency and nowhere else (see Section 2.5 in \cite{ScandBook04}).    In \cite{Bar16}, the first author develops the connection between the Leray transforms $\bm{L}_{\cs}$ and $\bm{L}_{\cs^*}$ and shows that these operators respect a natural bilinear pairing of two ``dual" Hardy spaces when $\cs$ and $\cs^*$ are smoothly bounded, strongly $\C$-convex hypersurfaces.  The Leray transform plays a role in higher dimensions analogous to the role played by the Cauchy transform of a planar domain in pairing two Hardy spaces of holomorphic boundary values associated to the interior and exterior of a planar curve, respectively; see \cite{Meyer82}.  We extend this work in Section \ref{SS:bilinear-pairing-of-hardy-spaces}.

%% DO WE WANT TO MENTION THIS??? %%
%The authors establish the factorization of the Leray transform through orthogonal projection onto the dual Hardy space in \cite{BarEdh17}.

\subsection{The dual hypersurface of $M_\gamma$}\label{SS:dual-hypersurface-of-M_gamma}
 If $\cs$ is a smooth strongly $\C$-convex hypersurface, we can choose an appropriate matrix $A \in GL(n+1,\C)$ to induce an affinization of $\C\p^{n*}$ and thereby represent $\cs^*$ as a hypersurface in $\C^{n}$.  The details below were first introduced by the authors in Section 4 of \cite{BarEdh17}.

Define the map $\Phi_A:\C^n \times \C^n \to \C$ by calculating the following matrix product:
\begin{equation}\label{E:Phi_A-map}
\Phi_A((z_1,\dots,z_n),(w_1,\dots,w_n)) = [1\,\,\,\, w_1\,\,\dots\,\, w_n]\,A\,[1\,\,\,\, z_1\,\,\dots\,\, z_n]^T.
\end{equation}

The following proposition is stated for two dimensional $\cs$, but it admits a straightforward generalization to higher dimensions.

\begin{proposition}\label{P:Admissible}
Given a smooth real hypersurface $\cs\subset\C^2$ with defining function $\rho$
and a matrix 
\begin{equation}\label{E:affinization-matrix}
A = \begin{pmatrix}
c_1  & a_1  &  a_2 \\
b_1  &  m_{11} &  m_{12} \\
b_2  & m_{21}  &  m_{22} 
\end{pmatrix} \in GL(3,\C)
\end{equation}
the following conditions are equivalent:
\refstepcounter{equation}\label{N:Admissible}
\begin{enum}
\item There are functions $w_1^A$ and $w_2^A$ on $\cs$ with the property that the complex tangent line to $\cs$ at $\zeta$ is given by
\begin{equation*}\label{E:TgtLineEq}
\left\{(z_1,z_2)\colon\Phi_A\big( (z_1,z_2), (w_{1}^A(\zeta),w_{2}^A(\zeta)) \big)=0\right\}.
\end{equation*} \label{I:w-cond}
\item Same as \itemref{N:Admissible}{E:TgtLineEq} with the additional conditions that 
$w_1^A$ and $w_2^A$ are uniquely determined and smooth. \label{I:w-cond+}
\item The matrix $\begin{pmatrix}
   m_{11}\zeta_1+m_{12}\zeta_2+b_1  & m_{21}\zeta_1+m_{22}\zeta_2+b_2 \\
  m_{11}\frac{\dee\rho}{\dee\zeta_2}-m_{12}\frac{\dee\rho}{\dee\zeta_1} &     m_{21}\frac{\dee\rho}{\dee\zeta_2}-m_{22}\frac{\dee\rho}{\dee\zeta_1}    
\end{pmatrix}$ is invertible for all $\zeta\in\cs$. \label{I:matrix-cond}
\end{enum}
\end{proposition}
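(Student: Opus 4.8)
The plan is to show that each of \itemref{N:Admissible}{I:w-cond}, \itemref{N:Admissible}{I:w-cond+}, and \itemref{N:Admissible}{I:matrix-cond} is equivalent to one and the same non-vanishing condition on a cofactor of $A$ paired against the gradient of $\rho$. First I would write down the complex tangent line to $\cs$ at $\zeta\in\cs$. Since $\rho$ is real-valued, $d\rho=\partial\rho+\overline{\partial\rho}$, so $\partial\rho(\zeta)\neq0$ on $\cs$, and the complex tangent line at $\zeta$ is the zero set in $\C^2$ of the affine-linear functional with coefficient vector $v(\zeta):=\bigl(-(\rho_{\zeta_1}\zeta_1+\rho_{\zeta_2}\zeta_2),\ \rho_{\zeta_1},\ \rho_{\zeta_2}\bigr)^T$. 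On the other side, rewriting \eqref{E:Phi_A-map} as $\Phi_A(z,w)=\bigl\langle A^T[1\ w_1\ w_2]^T,\ [1\ z_1\ z_2]^T\bigr\rangle$ exhibits $\{\Phi_A(\cdot,w)=0\}$ as the zero set of the affine-linear functional with coefficient vector $u(w):=A^T[1\ w_1\ w_2]^T$, which is nonzero whenever $A$ is invertible; this zero set is a genuine affine line (rather than empty, i.e.\ the line at infinity) exactly when the last two entries of $u(w)$ are not both zero, and it coincides with the tangent line at $\zeta$ precisely when $u(w)\parallel v(\zeta)$.

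The point is then that, because $A$ is invertible and the first entry of $[1\ w_1\ w_2]^T$ is $1$, the relation $u(w)\parallel v(\zeta)$ admits a solution $w$ if and only if the first entry of $(A^{-1})^Tv(\zeta)$ — equivalently, the dot product of the first column of $A^{-1}$ with $v(\zeta)$ — is nonzero, and in that case $[1\ w_1^A(\zeta)\ w_2^A(\zeta)]^T=(A^{-1})^Tv(\zeta)\big/\bigl\langle e_1,(A^{-1})^Tv(\zeta)\bigr\rangle$ is forced. This immediately gives \itemref{N:Admissible}{I:w-cond}$\Leftrightarrow$\itemref{N:Admissible}{I:w-cond+}: $(b)\Rightarrow(a)$ is trivial, while $(a)$ forces $u(w^A(\zeta))\parallel v(\zeta)$ for every $\zeta$ (using that the tangent line is a nonempty affine set, so $u(w^A(\zeta))$ has nonzero last-two-entry part), hence forces the displayed formula, which supplies both uniqueness and — since $\rho$, and thus $v$, is smooth and the denominator is nowhere zero — smoothness. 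So $(a)\Leftrightarrow(b)\Leftrightarrow\bigl[\,(\text{first column of }A^{-1})\cdot v(\zeta)\neq0\text{ for all }\zeta\in\cs\,\bigr]$.

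It remains to match this against \itemref{N:Admissible}{I:matrix-cond}, and here I would simply compute: expanding the $2\times2$ determinant of the matrix in \itemref{N:Admissible}{I:matrix-cond} and collecting terms in $\zeta_1,\zeta_2,\rho_{\zeta_1},\rho_{\zeta_2}$, one finds it equals $\det A$ times $(\text{first column of }A^{-1})\cdot v(\zeta)$ — both sides reduce to the polynomial $-(m_{11}m_{22}-m_{12}m_{21})(\rho_{\zeta_1}\zeta_1+\rho_{\zeta_2}\zeta_2)+(m_{12}b_2-b_1m_{22})\rho_{\zeta_1}+(b_1m_{21}-m_{11}b_2)\rho_{\zeta_2}$, the right-hand side via Cramer's rule applied to the cofactors $C_{11},C_{12},C_{13}$ of the first row of $A$. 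Since $\det A\neq0$, invertibility of the matrix in \itemref{N:Admissible}{I:matrix-cond} at $\zeta$ is equivalent to non-vanishing of that pairing at $\zeta$, closing the loop. I expect the only genuinely delicate points to be (i) the careful bookkeeping distinguishing a genuine affine tangent line from the degenerate line-at-infinity case when deducing the non-vanishing condition from \itemref{N:Admissible}{I:w-cond}, and (ii) the sign and index bookkeeping in the cofactor identity; both are routine once the problem is organized around the coefficient vectors $u(w)$ and $v(\zeta)$ as above.
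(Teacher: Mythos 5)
Your argument is correct --- I checked both the proportionality/solvability step and the cofactor identity --- but it follows a genuinely different route from the paper's. The paper never passes to $A^{-1}$: it describes the complex tangent line at $\zeta$ as the unique affine line through the two points $\zeta$ and $\zeta+\big(\tfrac{\dee\rho}{\dee\zeta_2},-\tfrac{\dee\rho}{\dee\zeta_1}\big)$ and imposes $\Phi_A\big((z_1,z_2),(w_1,w_2)\big)=0$ at those two points; this yields a $2\times 2$ inhomogeneous linear system in $(w_1,w_2)$ whose coefficient matrix, after subtracting one row from the other, is exactly the matrix in \itemref{N:Admissible}{I:matrix-cond}, so the equivalence of \itemref{N:Admissible}{I:w-cond}, \itemref{N:Admissible}{I:w-cond+} and \itemref{N:Admissible}{I:matrix-cond} is just the existence/uniqueness dichotomy for square linear systems (at most one solution because the line through two distinct points is unique and distinct $w$'s give distinct lines), with smoothness coming from Cramer's rule and no determinant identity to verify. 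You instead work with the coefficient covector of a line: you reduce \itemref{N:Admissible}{I:w-cond} and \itemref{N:Admissible}{I:w-cond+} to the nonvanishing of the pairing of the first column of $A^{-1}$ with your $v(\zeta)$, and then check by expansion that the determinant of the matrix in \itemref{N:Admissible}{I:matrix-cond} equals $\det A$ times that pairing; both sides do equal $-(m_{11}m_{22}-m_{12}m_{21})(\zeta_1\rho_{\zeta_1}+\zeta_2\rho_{\zeta_2})+(b_2m_{12}-b_1m_{22})\rho_{\zeta_1}+(b_1m_{21}-b_2m_{11})\rho_{\zeta_2}$, and your attention to the degenerate ``line at infinity'' case is exactly what makes the implication from \itemref{N:Admissible}{I:w-cond} to the nonvanishing condition airtight. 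The trade-off: your route produces, inside the proof, the closed formula $[1,\,w_1^A(\zeta),\,w_2^A(\zeta)]^T\propto (A^{-1})^{T}v(\zeta)$ for the dualization map and identifies the determinant in \itemref{N:Admissible}{I:matrix-cond} as a first-row cofactor pairing (the same structure as the quantity $\Delta$ computed in Remark \ref{R:adm}), whereas the paper's two-point interpolation explains conceptually why that particular matrix appears at all --- it is literally the coefficient matrix of the interpolation system --- at the cost of not exhibiting $w^A$ explicitly within the proof.
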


 \begin{proof}
The complex tangent line to $\cs$ at $\zeta$ is the unique (affine) line joining $\zeta$ to the point $\zeta+\big(\frac{\dee\rho}{\dee\zeta_2},-\frac{\dee\rho}{\dee\zeta_1}\big)$.  This follows from the fact that the vector field $\frac{\dee\rho}{\dee\zeta_2}\frac{\dee}{\dee\zeta_1}-\frac{\dee\rho}{\dee\zeta_1}\frac{\dee}{\dee\zeta_2}$ is tangent to $\cs$ at $\zeta$.  To try to represent the tangent line in the desired form we seek solutions $(w_1^A,w_2^A)$ of the inhomogeneous linear system
\begin{align*}%\label{E:w-Sys}
(1,w_1^A,w_2^A)\,A\,(1, \zeta_1,\zeta_2)^T &=0\notag\\
(1, w_1^A,w_2^A)\,A\,\left(1,\zeta_1+\tfrac{\dee\rho}{\dee\zeta_2},\zeta_2-\tfrac{\dee\rho}{\dee\zeta_1}\right)^T&=0.
\end{align*}
Uniqueness of the line implies that the system has at most one solution, so if there is any solution the associated homogeneous system is non-singular; conversely, non-singularity of the associated homogeneous system implies the existence of a unique smooth solution of the inhomogeneous system.  A computation reveals that the matrix representing the associated homogeneous system is \[\begin{pmatrix}
   m_{11}\zeta_1+m_{12}\zeta_2+b_1  & m_{21}\zeta_1+m_{22}\zeta_2+b_2 \\
  m_{11}\frac{\dee\rho}{\dee\zeta_2}-m_{12}\frac{\dee\rho}{\dee\zeta_1} &     m_{21}\frac{\dee\rho}{\dee\zeta_2}-m_{22}\frac{\dee\rho}{\dee\zeta_1}    
\end{pmatrix}.\]
\end{proof}

\begin{remark}
A corresponding result holds in higher dimensions, in which condition \itemref{N:Admissible}{I:matrix-cond} is replaced by
a maximal rank condition on a certain $n \times \left(\binom{n}{2}+1\right)$ matrix.
$\lozenge$\end{remark}

\begin{definition}
When the equivalent conditions in Proposition \ref{P:Admissible} hold, we say that $\cs$ is {\em $A$-admissible} and denote the image of $\cs$ under the map $w^A=(w_1^A,w_2^A)$ by $\cs^{*,A}$.  When $A$ is clear from context, we may drop the superscripts and simply write $w=(w_1,w_2)$ and $\cs^*$.
\end{definition}

It is easily checked that any hypersurface that can written as a smooth graph over the variables $z_1$ and $\re(z_2)$ is $A$-admissible for any invertible matrix $A$ whose only non-zero entries occur on the anti-diagonal.  For $M_\gamma$, $\gamma>1$, we'll use the matrix
\begin{equation}\label{A-Spec}
A_\gamma=\begin{pmatrix}
0  & 0  &  i \\
0  &  \gamma &  0 \\
i(\gamma-1)  & 0  &  0 
\end{pmatrix},
\end{equation}
which yields
\begin{align}
\Phi_{A_\gamma}((z_1,z_2),(w_1(\zeta),w_2(\zeta))) = i z_2 + \gamma w_1(\zeta) z_1 + i(\gamma-1) w_2(\zeta) = 0.
\end{align}
Rewriting the tangent line 
\begin{equation}\label{E:TgtLineEqFromRho}
\left\{(z_1,z_2)\colon \frac{\dee\rho}{\dee \zeta_1}(\zeta)\cdot(z_1-\zeta_1)+\frac{\dee\rho}{\dee \zeta_2}(\zeta)\cdot(z_2-\zeta_2)=0\right\}
\end{equation}
to any $A_\gamma$-admissible hypersurface $\cs$ at $\zeta$ in the form
\begin{equation}\label{E:TangentLineWithWs}
\left\{(z_1,z_2): \gamma w_1^{A_\gamma}(\zeta)z_1 + i z_2 = i(1-\gamma) w_2^{A_\gamma}(\zeta) \right\},
\end{equation} 
we find  that 
\begin{align*}
w_1^{A_\gamma}(\zeta)&=\frac{i\rho_{\zeta_1}}{\gamma \rho_{\zeta_2}}, \qquad w_2^{A_\gamma}(\zeta)=\frac{\zeta_1 \rho_{\zeta_1} + \zeta_2 \rho_{\zeta_2}}{(1-\gamma) \rho_{\zeta_2}}.
\end{align*}

Specializing now to $M_\gamma$ by setting $\rho(\zeta) = |\zeta_1|^\gamma - \im(\zeta_2)$,
we obtain
\begin{align*}
\frac{\dee\rho}{\dee\zeta_1}(\zeta) = \frac{\gamma}{2}\bar{\zeta}_1|\zeta_1|^{\gamma-2}, \qquad \frac{\dee\rho}{\dee\zeta_2}(\zeta) = \frac{i}{2},
\end{align*}
and thus
\begin{subequations}\label{E:wj}
\begin{align}
w_1^{A_\gamma}(\zeta) &= \bar{\zeta}_1|\zeta_1|^{\gamma-2}\\
 \qquad w_2^{A_\gamma}(\zeta) &= \frac{\zeta_2 -i\gamma|\zeta_1|^\gamma}{1-\gamma}\\
 &= \frac{\left(1-\frac{\gamma}{2}\right)\zeta_2 +\frac{\gamma}{2}\bar \zeta_2}{1-\gamma}.
 \notag
\end{align}
\end{subequations}
This shows that the defining equation for the dual hypersurface $M_\gamma^{*,A_\gamma}$ is
\begin{equation}\label{E:Mgamma-dual-hypersurface-w-eqns}
\im\left(w_2^{A_\gamma}(\zeta)\right) = \left|w_1^{A_\gamma}(\zeta)\right|^{\tfrac{\gamma}{\gamma-1}}.
\end{equation}
Thus via the matrix $A_\gamma$,  the dual hypersurface $M_\gamma^* = M_{\gamma^*}$, where  $\gamma^*=\tfrac{\gamma}{\gamma-1}$ is the H\"older conjugate of the exponent $\gamma$.

%%%%% INSERT REMARK ON THE FAMILY OF ADMISSIBLE MATRICES, AND ON HOW ALL THESE REPRESENTATIVES OF THE DUAL ARE RELATED BY PROJECTIVE (AFFINE) TRANSFORMATIONS %%%%

%\blue{
%\begin{remark}
%This is the remark where admissible matrices and/or a discussion of the invariance of the dual under different affinizations will go.

%If the matrix $A_\gamma$ is replaced by another $M_\gamma$-admissible matrix, then the affinization of the dual will be represented by a different hypersurface in $\C^2$.  All representations of $(M_\gamma)^*$ in complex affine space can be obtained via projective automorphisms.  These transformations respect the Leray transform \blue{(CITATION)}, making the subsequent work in this section independent of $A_\gamma$.

%In this remark, we should say the following: Given $M_\gamma$ say something about the possible admissible matrices, show that all possible representations of the dual of $M_\gamma$ can be related by an affine map, describe how the corresponding duality maps relate.  Cite Bolt and Barrett TAMS about the invariance of Leray under affine maps.  Do we also want to discuss what changes if we start with a projective image of $M_\gamma$ where the non-smooth point at $\infty$ is now somewhere in $\C^2$?  What implications does this have for the dual?
%$\lozenge$
%\end{remark}
%}

Note for future reference that
\begin{subequations}\label{E:reverse}
\begin{align}
\zeta_1&=\overline{w_1^{A_\gamma}} \left|w_1^{A_\gamma}\right|^{\frac{2-\gamma}{\gamma-1}}\\
|\zeta_1|&= \left|w_1^{A_\gamma}\right|^{\frac{1}{\gamma-1}}\\
\zeta_2 &= \left(1-\frac{\gamma}{2}\right) w_2^{A_\gamma} - \frac{\gamma}{2}\overline{w_2^{A_\gamma}}= (1-\gamma)w_2^{A_\gamma} + i\gamma\left|w_1^{A_\gamma}\right|^{\frac{\gamma}{\gamma-1}} \\
\re \zeta_2 &= (1-\gamma)\re w_2^{A_\gamma}.
\end{align}
\end{subequations}

\begin{remark}\label{R:adm}
Using the $(s,\alpha,\theta)$ parametrization of $M_\gamma$ from Section \ref{SS:LerayReparam}, the matrix in \itemref{N:Admissible}{I:matrix-cond} has determinant
\begin{equation*}
\Delta:=P+Q e^{-i\theta}\alpha^{\gamma-1} + R\left(\alpha^\gamma+i\frac{s}{\gamma-1}\right)
\end{equation*}
where
\begin{align*}
P&=\frac{i}{2}\left( b_1m_{21}-b_2m_{11}\right)\\
Q&=\frac{\gamma}{2}\left( b_2m_{12}-b_1m_{22}\right)\\
R&=\frac{\gamma-1}{2}\left( m_{12}m_{21}-m_{11}m_{22}\right).
\end{align*}
Thus, the matrix $A$ is admissible if and only if $\Delta$ is non-zero for all $s,\theta\in\R, \alpha>0$.  %Without trying to identify precisely when this occurs
We make the following observations.
\begin{itemize}
\item Invertibility of $A$ implies that $P,Q,R$ are not all zero.
\item If $R=0$ then it is easy to see that $M_\gamma$ is $A$-admissible if and only if one of $P,Q$ vanishes.
\item For $R\ne0$ we can apply Young's inequality in the form $\alpha^{\gamma-1}\le \frac{1}{\gamma}+\frac{\gamma-1}{\gamma}\alpha^\gamma$ to obtain  %need intermediate step?
\begin{align*}
\re \Delta\bar R &= \re P\bar R + R\bar R\alpha^\gamma +  \re(Q\bar R e^{-i\theta})\cdot \alpha^{\gamma-1}\\
&\ge \left( \re P\bar R-\frac{|QR|}{\gamma}\right)+ \left( |R|^2 - \frac{(\gamma-1)|QR|}{\gamma} \right) \alpha^\gamma;
\end{align*}
so if $|Q|\le \gamma \min\left\{  \re\frac{P\bar R}{|R|},
\frac{|R|}{\gamma-1} \right\}$ (with strict inequality if $\re\frac{P\bar R}{|R|}=\frac{|R|}{\gamma-1}$) then $\re \Delta\bar R$ is strictly positive and thus $M_\gamma$ is $A$-admissible.
\item On the other hand, if $Q=0$ and $P\bar R$ is negative then it is easy to see that $M_\gamma$ is not $A$-admissible.  $\lozenge$
\end{itemize}

% $P,Q,R$ is close to being a column at $A^{-1}$

\end{remark}

\begin{remark}\label{R:adm-compete}
In general, when a hypersurface $\cs$ is both $A$-admissible and $A'$-admissible then the dualization maps $w^A$ and $w^{A'}$ are related by $w^{A'} = w^{A}\circ \reallywidehat{\left({A(A')^{-1}}\right)^T}$, where $\reallywidehat{\left({A(A')^{-1}}\right)^T}$ is the projective automorphism corresponding to the matrix $\left({A(A')^{-1}}\right)^T$ as in \eqref{E:Xhat}.  (See Lemma 4.28 in \cite{BarEdh17}.)
\end{remark}

\begin{remark}\label{R:M1}
As $\gamma\searrow 1$ the $M_\gamma$ tend to the hypersurface $M_1$ which is $\C$-convex but not strongly $\C$-convex.  The matrix $A_1$ is not invertible, but using the matrix
$$
A_*:= \begin{pmatrix}
0  & 0  &  i \\
0  &  \gamma &  0 \\
i  & 0  &  0 
\end{pmatrix}
$$ 
instead we obtain $w_1^{A_*}(\zeta) = \bar{\zeta}_1/|\zeta_1|$ and $w_2^{A_*}(\zeta) = i|\zeta_1|-\zeta_2 = -\re \zeta_2$.  Thus $M_1$ is mapped onto the 2-manifold $S^1\times \R$.
$\lozenge$
\end{remark}

%\blue{
%***mention $\left(1, w_1^{B^{-1} A}, w_2^{B^{-1} A} \right) = 
%\frac{ \left(1, w_1^{B^{-1} A}, w_2^{B^{-1} A} \right) B} 
%{ \left(1, w_1^{B^{-1} A}, w_2^{B^{-1} A} \right) B_0}$ [Adv. Lemma 4.28] ?\qquad***
%}

\subsection{Distinguished measures on $M_\gamma$} \label{SS:distinguished-measures}

In Section \ref{S:meas} we previewed several measures now to be discussed in detail.  Each of these belongs to one of the families
\begin{equation*}
\sF_r = \{c \mu_r : c > 0 \},
\end{equation*}
where $\mu_r = \alpha^r \,ds \w d\alpha\w d\theta$, so Leray boundedness results obtained in previous sections are immediately applicable.

\subsubsection{The pairing measure}\label{SSS:pairing-measure}

Let $A \in GL(3,\C)$ be as in \eqref{E:affinization-matrix} and $\cs$ an $A$-admissible, strongly $\C$-convex hypersurface in $\C^2$.  Define the pairing measure (with respect to $A$) on $\cs$ by the following formula:
\begin{multline}\label{E:NuDef}
\nu^A:= \frac{1}{(2\pi i)^2}\Bigg(\left(  d w_{1,A}, d w_{2,A}\right) 
\begin{pmatrix} m_{11} & m_{12} \\ m_{21} & m_{22}\end{pmatrix}
\begin{pmatrix} a_2 \\ -a_1\end{pmatrix}\\
+
\begin{vmatrix} m_{11} & m_{12} \\ m_{21} & m_{22}\end{vmatrix}
\left( w_{2,A}\,d w_{1,A} - w_{1,A}\,d w_{2,A}\right)\Bigg)\w d\zeta_1\w d\zeta_2.
\end{multline}

The authors previously introduced this measure in \cite{BarEdh17}.   (Related objects also appear in Section 7 of  \cite{Bar16} and Section 3.2 of \cite{ScandBook04}.)  This measure allows for the Leray transform to be written in terms of the dual variables.  We restate Proposition 4.30 from \cite{BarEdh17}, which gives a universal formula for the Leray transform of any $A$-admissible $\cs$:

\begin{proposition}\label{T:GenMLeray}
Let $A \in GL(3,\C)$ be as in \eqref{E:affinization-matrix} and $\cs$ an $A$-admissible, strongly $\C$-convex hypersurface in $\C^2$.  Then the Leray integral from \eqref{E:LerayIntegralFormula} may be written as
\begin{equation}\label{E:GenMLeray}
\bm{L}_{\cs}f(z) =  \int_{\zeta\in \cs} \frac{f(\zeta)}{\big[ \Phi_A(z,w^A(\zeta)) \big]^2} \nu^A(\zeta),
\end{equation}
where $\Phi_A$ is defined in \eqref{E:Phi_A-map}.
\end{proposition}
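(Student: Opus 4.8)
The plan is to show that the right-hand side of \eqref{E:GenMLeray} agrees with the Leray integral \eqref{E:LerayIntegralFormula} by matching the two ingredients of the kernel separately: the denominator $\big[\Phi_A(z,w^A(\zeta))\big]^2$ against the Leray function piece $\big<\partial\rho(\zeta),\zeta-z\big>^2$ (up to a nonvanishing factor), and the form $\nu^A(\zeta)$ against the Leray-Levi measure $\lambda_\rho(\zeta)$ times that same factor. Since the statement is quoted verbatim from \cite[Proposition 4.30]{BarEdh17}, I would keep the argument brief and refer to that source for the full computation, sketching only the structure here.

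First I would unwind the definition. By construction of the dualization map $w^A$ via Proposition \ref{P:Admissible}, the complex tangent line to $\cs$ at $\zeta$ is exactly the zero set of $z\mapsto \Phi_A(z,w^A(\zeta))$, and this line is also described by \eqref{E:TgtLineEqFromRho} as the zero set of $z\mapsto \big<\partial\rho(\zeta),z-\zeta\big>$. Two affine-linear functions of $z$ with the same zero hyperplane differ by a nowhere-zero scalar $g(\zeta)$, so $\Phi_A(z,w^A(\zeta)) = g(\zeta)\cdot\big<\partial\rho(\zeta),z-\zeta\big>$ for all $z$; comparing coefficients of $z_1,z_2$ (using the explicit entries of $A$ and the formulas for $w_1^A,w_2^A$ in terms of $\rho_{\zeta_1},\rho_{\zeta_2}$) gives an explicit formula for $g(\zeta)$. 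Hence
\begin{equation*}
\frac{1}{\big[\Phi_A(z,w^A(\zeta))\big]^2} = \frac{1}{g(\zeta)^2}\,\ell_\rho(z,\zeta),
\end{equation*}
with $\ell_\rho$ as in \eqref{E:LerayFunction} (here $n=2$).

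Second, I would verify the measure identity $\nu^A(\zeta) = g(\zeta)^2\,\lambda_\rho(\zeta)$, which is the substantive computation. Expanding the determinant and the two-form $w_{2,A}\,dw_{1,A}-w_{1,A}\,dw_{2,A}$ in \eqref{E:NuDef} using $w_j^A$ expressed through $\rho$ and its first derivatives, then wedging with $d\zeta_1\w d\zeta_2$, one uses that $\zeta$ ranges over $\cs$ so $d\rho=0$ there; after simplification the result is a nonzero multiple of $\partial\rho\w\bar\partial\partial\rho$, and tracking constants shows the multiple is precisely $g(\zeta)^2/(2\pi i)^2$ times $(2\pi i)^2$, i.e. matches $g(\zeta)^2\lambda_\rho(\zeta)$. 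Combining the two identities, the $g(\zeta)^2$ factors cancel and the integrand on the right of \eqref{E:GenMLeray} becomes $f(\zeta)\,\ell_\rho(z,\zeta)\,\lambda_\rho(\zeta) = f(\zeta)\,\mathscr{L}_\cs(z,\zeta)$, which is \eqref{E:LerayIntegralFormula}. The main obstacle is the bookkeeping in this second step: correctly differentiating the quotient expressions for $w_1^A,w_2^A$, handling the restriction to $\cs$ (where relations among $d\zeta_1,d\bar\zeta_1,d\zeta_2,d\bar\zeta_2$ hold), and confirming the constants align so that no spurious factor survives. Because this is exactly the content of the cited proposition, I would present the reduction above and defer the detailed determinant expansion to \cite{BarEdh17}.
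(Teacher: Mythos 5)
Your proposal is fine: the paper itself offers no proof of this proposition, simply restating it as Proposition 4.30 of \cite{BarEdh17}, so sketching the reduction and deferring the determinant computation to that source matches the paper's own treatment. The reduction you give is the right one --- since $\Phi_A(\cdot,w^A(\zeta))$ and $\langle\partial\rho(\zeta),\cdot-\zeta\rangle$ cut out the same tangent line they differ by a nowhere-vanishing factor $g(\zeta)$, and the definition of $\nu^A$ is arranged exactly so that $\nu^A=g(\zeta)^2\lambda_\rho$; as a sanity check, for $\cs=M_\gamma$ and $A=A_\gamma$ one finds $g\equiv-2$ and indeed $\nu^{A_\gamma}=4\lambda_\rho$, consistent with \eqref{E:sec2-measure-notation-nu}.
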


In the special case of  $\cs = M_\gamma$, $A=A_\gamma$ we have
\begin{align}\label{E:NuLam}
\nu^{A_\gamma} = - \frac{i\gamma}{4 \pi^2} \,dw_{1}^{A_\gamma}\w d\zeta_1\w d\zeta_2 &= \frac{\gamma^2}{8 \pi^2 i} |\zeta_1|^{\gamma-2}\,d\zeta_2\w d\bar \zeta_1\w d\zeta_1 \notag \\ 
&= \frac{\gamma^2}{4 \pi^2} \alpha^{\gamma-1}ds \wedge d\alpha \wedge d\theta.
\end{align}

\subsubsection{The preferred measure and its dual}\label{SS:mod-Fefferman-and-its-dual}

In Section \ref{SSS:mod-Fefferman}, we defined (up to a constant) the preferred measure $\wt{\mu}_{\cs}$.  This measure was introduced by the first author in \cite{Bar16} to define a pair of dual Hardy spaces with desirable projective transformation laws.  Employing the Fefferman measure \eqref{E:fefdef} to construct such spaces still leads to such transformation laws, but the preferred measure $\wt{\mu}_\cs$ dovetails with a pairing of these spaces in the spirit of \eqref{E:GenMLeray} in a way that $\mu_\cs^{\sf Fef}$ does not.  %This pairing is the subject of Section \ref{SS:bilinear-pairing-of-hardy-spaces}.

One Hardy space is defined using $\wt{\mu}_{\cs}$.  To define the other Hardy space, introduce the {\em preferred dual measure} by pulling back the preferred measure on the dual $\cs^{*,A}$ via the map $w^A$: 
\begin{equation}\label{E:def-of-dual-fefferman-measure}
\wt{\mu}_{\cs}^{*,A} := \left(w^A\right)^*\left( \wt{\mu}_{\cs^{*,A}} \right).
\end{equation}
When the hypersurface and matrix are clear from context, we may simply write $\wt{\mu}$ and $\wt{\mu}^*$ to denote the preferred measure and its dual.  %We will see that the function theoretic and projective geometric notions of duality are closely related.

The preferred measure on $M_\gamma$ was already recorded in \eqref{E:ModifedFefMGamma}:
\begin{equation*}
\widetilde{\mu}_{M_\gamma} = c_2 \frac{\gamma^{4/3}}{8(\gamma-1)^{1/3} i}\, |\zeta_1|^{\frac{\gamma-2}{3}} \,d\zeta_2\w d\bar{\zeta}_1\w d\zeta_1.
\end{equation*}
We will chose a convenient normalization constant $c_2$ shortly.  But first let us compute the dual measure.  From \eqref{E:wj} we obtain
\begin{equation*}
d\overline{w}_1^{A_\gamma}\w dw_1^{A_\gamma} = (1-\gamma)|\zeta_1|^{2\gamma-4}\,d\bar{\zeta}_1\w d\zeta_1,
\end{equation*}
so from \eqref{E:def-of-dual-fefferman-measure}, we see
\begin{align}\label{E:mu*}
\wt{\mu}_{M_\gamma}^{*,A_\gamma}
&= \frac{c_2\cdot \left(\gamma^*\right)^{4/3}
|w_1^{A_\gamma}|^{\frac{\gamma^*-2}{3}}}{8(\gamma^*-1)^{1/3}i} 
\,dw_2^{A_\gamma}\w d\bar{w}_1^{A_\gamma}\w dw_1^{A_\gamma}\notag\\
&= \frac{c_2\cdot \gamma^{4/3}}{8(\gamma-1)i}
|\zeta_1|^{\frac{5(\gamma-2)}{3}}
\,d\zeta_2\w d\bar{\zeta}_1\w d\zeta_1.
\end{align}
%The minus sign here reflects the fact that $w^{A_\gamma}$ is orientation-reversing; thus $\mu_{M_\gamma}^{*,A_\gamma}$ should be viewed as a positive measure on the orientation-reversed version of $M_\gamma$.
Note the Radon-Nikodym derivative
\begin{equation}\label{E:RadNik}
\frac{ {\rm d} \wt{\mu}_{M_\gamma}^{*,A_\gamma}}{ {\rm d}\wt{\mu}_{M_\gamma}}
= \frac{|\zeta_1|^{\frac43(\gamma-2)}}{(\gamma-1)^{2/3}}.
\end{equation}
%*** make another effort to tie this to material commented out below (and in ``translaws db3") ?  (only affects previous two paragraphs)***
%******************************************************************************
%Given $A \in GL(n+1,\C)$ with columns $a_0,a_1,a_2$ we consider the following 1-forms on $\cs$
%\begin{align*}
%\eta_\zeta &= \zeta_2\,d\zeta_1-\zeta_1\,d\zeta_2\\
%\eta_w^A &= -(1, w_1^A, w_2^A)a_1\cdot (0,dw_1^A,dw_2^A)a_2
%+(1, w_1^A, w_2^A)a_2\cdot (0,dw_1^A,dw_2^A)a_1\\
%\theta^A&=(0,dw_1^A,dw_2^A)A(1,\zeta_1,\zeta_2)^T\\
%&= -(1,w_1^A,w_2^A)A(0,d\zeta_1,d\zeta_2)^T
%\intertext{and the 3-form}
%\nu^A&=\frac{1}{4\pi^2}\eta_w^A\w \theta^A\w \eta_\zeta\\
%&=-\frac{\det A}{4\pi^2} \eta_\zeta\w dw_1^A \w dw_2^A.
%\end{align*}
%
%***see {\tt theta-eta-check.nb}; put some details into text? ***
%
%***conflict between $\eta$ in \eqref{D:eta_k} etc. and these 1-forms***
%
%\begin{remark}\label{R:forms}
%
%\begin{itemize}
%\item The equivalence of the two expressions for $\theta^A$ follows from differentiation of
%the relation $(1,w_1^A,w_2^A)\,A\,(1, \zeta_1,\zeta_2)^T \equiv 0$.
%\item The form  $\nu^A$ matches the form $\nu_M$ from Proposition 4.30 in \cite{BarEdh17}. (Note also the related objects appearing in Section 7 of  \cite{Bar16} and Section 3.2 of \cite{ScandBook04}.
% $\lozenge$
%\end{itemize}
%\end{remark}
%******************************************************************************
Now define the geometric mean $\sqrt{ \wt{\mu}_{M_\gamma} \, \wt{\mu}_{M_\gamma}^{*,A_\gamma} }$ of measures $\wt{\mu}_{M_\gamma}$ and $\wt{\mu}_{M_\gamma}^{*,A_\gamma}$ by 
\begin{equation*}
{\rm d} \sqrt{ \wt{\mu}_{M_\gamma} \, \wt{\mu}_{M_\gamma}^{*,A_\gamma} }
:=  \sqrt{\frac{ {\rm d} \wt{\mu}_{M_\gamma}^{*,A_\gamma}}{ {\rm d}\wt{\mu}_{M_\gamma}}}\:
{\rm d} \wt{\mu}_{M_\gamma}.
\end{equation*}
Therefore
\[
\sqrt{\wt{\mu}_{M_\gamma} \, \wt{\mu}_{M_\gamma}^{*,A_\gamma} }=\dfrac{c_2\gamma^{4/3}
|\zeta_1|^{\gamma-2}}{8(\gamma-1)^{2/3} i}\,d\zeta_2\w d\bar \zeta_1\w d\zeta_1
\]
and thus,
\begin{align*}
\left| \int_{M_\gamma} f g \,{\rm d}\nu^{A_\gamma} \right|
&\le \sqrt{ \int_{M_\gamma} |f|^2\,{\rm d}\wt{\mu}_{M_\gamma} }
\sqrt{ \int_{M_\gamma} |g|^2\,{\rm d}\wt{\mu}_{M_\gamma}^{*,A_\gamma}  }
\frac{{\rm d}\nu^{A_\gamma }}{{\rm d} \sqrt{\wt{\mu}_{M_\gamma} \, \wt{\mu}_{M_\gamma}^{*,A_\gamma} }}\\
&= \frac{\left(\gamma(\gamma-1)\right)^{2/3}}{c_2 \pi^2}
\|f\|^2_{L^2\left(M_\gamma, \wt{\mu}\right)}
\|g\|^2_{L^2\left(M_\gamma, \wt{\mu}^*  \right)}.
\end{align*}
(Here we identify the positive three-form $\nu^{A_\gamma}$ with a measure so that $\int f g \,{\rm d}\nu^{A_\gamma}$ and $\int f g \,\nu^{A_\gamma}$ have the same meaning.)

In order to obtain a sharp  Cauchy-Schwarz inequality
\begin{equation}\label{E:cleanCS}
\left| \int_{M_\gamma} f g \,\nu^{A_\gamma} \right| \le 
\|f\|^2_{L^2\left(M_\gamma,\wt{\mu} \right)}
\|g\|^2_{L^2\left(M_\gamma,\wt{\mu}^{*} \right)},
\end{equation}
we set 
$$
c_2=\frac{\left(\gamma(\gamma-1)\right)^{2/3}}{\pi^2}
$$
so that
\begin{equation}\label{E:spec-meas}
\sqrt{\wt{\mu}_{M_\gamma} \, \wt{\mu}_{M_\gamma}^{*,A_\gamma} }
= \dfrac{\gamma^{2}
|\zeta_1|^{\gamma-2}}{8 \pi^2 i}\,d\zeta_2\w d\bar \zeta_1\w d\zeta_1
= \nu^{A_\gamma}.
\end{equation}
For each $f$, there is some non-zero $g$ such that equality holds in \eqref{E:cleanCS}.  (The same is true with the roles of $f$ and $g$ reversed.)  Here, we've allowed $c_2$ to depend not only on the dimension but also on the choice of matrix $A_\gamma$.

This choice of $c_2$ will be fixed throughout the rest of the paper.  We therefore settle on the convention that
\begin{align}\label{E:modified-fefferman-with-constant-settled}
\wt{\mu}_{M_\gamma} &= \frac{\gamma^2 (\gamma-1)^{1/3}}{8\pi^2 i} |\zeta_1|^{\frac{\gamma-2}{3}}\,d\zeta_2\w d\bar \zeta_1\w d\zeta_1 \notag \\
&=  \frac{\gamma^2 (\gamma-1)^{1/3}}{4 \pi^2} \alpha^{\frac{\gamma+1}{3}}ds \wedge d\alpha \wedge d\theta
\end{align}
and
\begin{align}\label{E:modified-dual-fefferman-with-constant-settled}
\wt{\mu}_{M_\gamma}^{*,A_\gamma} &= \frac{\gamma^2 }{8\pi^2 (\gamma-1)^{1/3} i} |\zeta_1|^{\frac{5\gamma-10}{3}}\,d\zeta_2\w d\bar \zeta_1\w d\zeta_1 \notag \\
&=  \frac{\gamma^2 }{4\pi^2 (\gamma-1)^{1/3}} \alpha^{\frac{5\gamma-7}{3}}ds \wedge d\alpha \wedge d\theta.
\end{align}

\subsection{Dual Hardy spaces and the dual Leray transform} \label{SS:dual-Hardy}

Formulas \eqref{E:NuLam}, \eqref{E:modified-fefferman-with-constant-settled} and \eqref{E:modified-dual-fefferman-with-constant-settled} show that each of the distinguished measures from the previous section are contained in some $\sF_r$.  In particular,
\begin{equation}
\nu^{A_\gamma} \in \sF_{\gamma-1}, \qquad \widetilde{\mu}_{M_\gamma} \in \sF_{\frac{\gamma+1}{3}}, \qquad \widetilde{\mu}_{M_\gamma}^{*,A_\gamma} \in \sF_{\frac{5\gamma-7}{3}}.
\end{equation}
Notice that for any $\gamma > 1$, the numbers $\gamma-1, \tfrac{\gamma+1}{3}, \tfrac{5\gamma-7}{3} \in \ci_0 = (-1,2\gamma-1)$.  Therefore, Corollary \ref{C:r-cond} implies that the Leray transform is a bounded operator on all three spaces $L^2(M_\gamma,\nu^{A_\gamma})$, $L^2(M_\gamma, \widetilde{\mu}_{M_\gamma})$ and $L^2(M_\gamma, \widetilde{\mu}_{M_\gamma}^{*,A_\gamma})$.  
 
In what follows, $\left(w^{A_\gamma}\right)^*$ and $\left(w^{A_\gamma}\right)_*$ refer to the pullback and pushforward of $w^{A_\gamma}$, respectively.  In the spirit of \eqref{E:def-of-hardy-space-H_r}, define the Hardy space
\begin{equation*}
H^2(M_\gamma,\wt{\mu}_{M_\gamma}):=\bm{L}_\gamma\big( L^2(M_\gamma,\wt{\mu}_{M_\gamma}) \big) = \ker({{\bm L}_\gamma}-I)
\end{equation*}
and the {\em dual Hardy space}
\begin{equation}\label{E:def-of-dual-Hardy-space}
H^2_{\sf dual} \big(M_\gamma,\wt{\mu}_{M_\gamma}^{*,A_\gamma}\big):=\left(w^{A_\gamma}\right)^*\Big( H^2 \big(M_{\gamma^*}, \wt{\mu}_{{M_{\gamma^*}}} \big)\Big).
\end{equation}
Also define the {\em dual Leray transform}
 \begin{equation}\label{E:def-of-dual-Leray-transform}
 \widetilde{\bm L}_{\gamma}:= \left(w^{A_\gamma}\right)^*\circ {\bm L}_{\gamma^*}\circ \left(w^{A_\gamma}\right)_*.
 \end{equation}
%which is easily seen to be a projection operator on $ L^2\left(M_\gamma,\wt{\mu}_{M_\gamma}^{*,A_\gamma}\right)$ by Proposition \ref{P:L_k-is-a-projection-mu_r}.  The image of this projection is now determined.

\begin{proposition}\label{P:dual-Hardy-space-alt-rep}
The dual Leray transform $\wt{\bm{L}}_\gamma$ is a bounded projection operator from $L^2\big(M_\gamma,\wt{\mu}_{M_\gamma}^{*,A_\gamma}\big)$ onto $H^2_{\sf dual} \big(M_\gamma,\wt{\mu}_{M_\gamma}^{*,A_\gamma}\big)$.
\end{proposition}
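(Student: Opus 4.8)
The plan is to obtain \(\wt{\bm L}_\gamma\) as the conjugate of the Leray transform \(\bm L_{\gamma^*}\) — whose behavior we already control — by an isometry, so that boundedness, idempotency, and the identification of the range all follow formally. Throughout write \(w=w^{A_\gamma}\) and \(\gamma^*=\tfrac{\gamma}{\gamma-1}\), and recall from Section~\ref{SS:dual-hypersurface-of-M_gamma} that \(w\) is a diffeomorphism of \(M_\gamma\) onto \(M_{\gamma^*}\). Consequently the pullback \(w^*\) (acting on functions by \(w^*g=g\circ w\)) and the pushforward \(w_*=(w^*)^{-1}\) are mutually inverse linear isomorphisms between the function spaces attached to \(M_\gamma\) and \(M_{\gamma^*}\).

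First I would verify that \(w^*\colon L^2\big(M_{\gamma^*},\wt{\mu}_{M_{\gamma^*}}\big)\to L^2\big(M_\gamma,\wt{\mu}_{M_\gamma}^{*,A_\gamma}\big)\) is an isometric isomorphism. This is immediate from the definition \eqref{E:def-of-dual-fefferman-measure} of the preferred dual measure: \(\wt{\mu}_{M_\gamma}^{*,A_\gamma}=w^*\big(\wt{\mu}_{M_{\gamma^*}}\big)\), so the change-of-variables formula gives
\begin{equation*}
\int_{M_\gamma}|g\circ w|^2\,\wt{\mu}_{M_\gamma}^{*,A_\gamma}=\int_{M_{\gamma^*}}|g|^2\,\wt{\mu}_{M_{\gamma^*}},
\end{equation*}
i.e. \(\norm{w^*g}_{\wt{\mu}_{M_\gamma}^{*,A_\gamma}}=\norm{g}_{\wt{\mu}_{M_{\gamma^*}}}\), with inverse \(w_*\). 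Next I would record that \(\bm L_{\gamma^*}\) is a bounded projection on \(L^2\big(M_{\gamma^*},\wt{\mu}_{M_{\gamma^*}}\big)\) with range \(H^2\big(M_{\gamma^*},\wt{\mu}_{M_{\gamma^*}}\big)\). Indeed \(\gamma>1\) forces \(\gamma^*>1\), and \(\wt{\mu}_{M_{\gamma^*}}\in\sF_{(\gamma^*+1)/3}\) with \(\tfrac{\gamma^*+1}{3}\in(-1,2\gamma^*-1)=\ci_0\) (the boundedness interval for exponent \(\gamma^*\)); hence Corollary~\ref{C:r-cond} and Proposition~\ref{P:L_k-is-a-projection-mu_r}, applied with \(M_{\gamma^*}\) in place of \(M_\gamma\), give exactly this, and the range identity is \eqref{E:def-of-hardy-space-H_r}.

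Finally I would assemble the pieces. By \eqref{E:def-of-dual-Leray-transform}, \(\wt{\bm L}_\gamma=w^*\circ\bm L_{\gamma^*}\circ w_*=w^*\,\bm L_{\gamma^*}\,(w^*)^{-1}\) is the conjugate of a bounded projection by an isometric isomorphism, hence bounded, and \(\wt{\bm L}_\gamma^{\,2}=w^*\,\bm L_{\gamma^*}^{\,2}\,(w^*)^{-1}=\wt{\bm L}_\gamma\); its range equals \(w^*\) applied to the range of \(\bm L_{\gamma^*}\), which is \(w^*\big(H^2(M_{\gamma^*},\wt{\mu}_{M_{\gamma^*}})\big)=H^2_{\sf dual}\big(M_\gamma,\wt{\mu}_{M_\gamma}^{*,A_\gamma}\big)\) by definition \eqref{E:def-of-dual-Hardy-space}. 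The only point that needs genuine care is the first step — that \(w\) really is a diffeomorphism onto \(M_{\gamma^*}\) and that the measure it pulls \(\wt{\mu}_{M_{\gamma^*}}\) back to is exactly the normalized \(\wt{\mu}_{M_\gamma}^{*,A_\gamma}\) — but this has already been established in Sections~\ref{SS:dual-hypersurface-of-M_gamma} and \ref{SS:distinguished-measures}, so the proposition reduces to the soft conjugation argument above.
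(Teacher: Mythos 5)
Your argument is correct and follows essentially the same route as the paper's proof: both transfer the problem to \(M_{\gamma^*}\) via \(w^{A_\gamma}\), observe that the transported measure lies in \(\sF_{(\gamma^*+1)/3}\) with \(\tfrac{\gamma^*+1}{3}=\tfrac{2\gamma-1}{3\gamma-3}\in(-1,2\gamma^*-1)=\ci_0\) for the exponent \(\gamma^*\), invoke the boundedness and projection results for \(\bm{L}_{\gamma^*}\) there, and pull back, identifying the range directly from \eqref{E:def-of-dual-Hardy-space}. The only cosmetic caveat is that, since the paper allows the normalization constant \(c_2\) to depend on the hypersurface, \(w^*\) need only be an isometry up to a harmless positive constant factor; this affects none of the three conclusions (boundedness, idempotency, range), exactly as guaranteed by \eqref{E:OpsUnchangedByMeasureScaling}.
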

\begin{proof}
We occasionally write $\wt{\mu}^*$ for $\wt{\mu}_{M_\gamma}^{*,A_\gamma}$ when context is clear. Letting $g \in L^2(M_\gamma,\wt{\mu}^*)$ and writing the purely imaginary constant in \eqref{E:modified-dual-fefferman-with-constant-settled} as $\frac{\gamma^2 }{8\pi^2 (\gamma-1)^{1/3} i} := -Ci$, we have
\begin{align}
\norm{g}^2_{\wt{\mu}^*} &= -C i\int_{M_\gamma} |g(\zeta)|^2 |\zeta_1|^{\frac{5\gamma-10}{3}}\,d\zeta_2\w d \bar{\zeta}_1 \wedge d \zeta_1 \notag \\ 
&=  -C i\int_{M_{\gamma^*}} |g \circ w^{-1}|^2 |w_1|^{\frac{2-\gamma}{3\gamma-3}}\,dw_2\w d \bar{w}_1 \wedge d w_1 < \infty.
\end{align}
Since $-C i\, |w_1|^{\frac{2-\gamma}{3\gamma-3}}\,dw_2\w d \bar{w}_1 \wedge d w_1 \in \cf_{\frac{2\gamma-1}{3\gamma-3}}$, we see $g \circ w^{-1} \in L^2\Big(M_{\gamma^*}, \mu_{\frac{2\gamma-1}{3\gamma-3}}\Big)$ and that each function in this space corresponds to a unique such $g$.  Now considering that
$$
\tfrac{2\gamma-1}{3\gamma-3} \in (-1,2\gamma^*-1) = \ci_0
$$ 
(the interval of $r$ values with Leray boundedness on $L^2(M_{\gamma^*},\mu_r)$), Theorem \ref{T:boundedness-mu_r} says that $\bm{L}_{\gamma^*}$ is bounded on $L^2\Big(M_{\gamma^*}, \mu_{\frac{2\gamma-1}{3\gamma-3}}\Big)$. Using $w^{A_\gamma} = w$ to pull the computation back to $M_\gamma$, (since $\wt{\bm{L}}_\gamma g= w^* \circ \bm{L}_{\gamma^*} \circ w_* \circ g$) we conclude that $\norm{ \wt{\bm{L}}_\gamma g }_{\wt{\mu}^*} \lesssim \norm{g}_{\wt{\mu}^*}$, where the implied constant is independent of the particular choice of $g$.  We are also now able to invoke Proposition \ref{P:L_k-is-a-projection-mu_r} to conclude that the dual Leray transform is a projection.

Since $\wt{\mu}_{M_{\gamma^*}} \in \cf_{\frac{2\gamma-1}{3\gamma-3}}$, $H^2 \big(M_{\gamma^*}, \wt{\mu}_{{M_{\gamma^*}}} \big)$ is the image of $L^2\Big(M_{\gamma^*}, \mu_{\frac{2\gamma-1}{3\gamma-3}}\Big)$ under $\bm{L}_{\gamma^*}$.  Definition \eqref{E:def-of-dual-Hardy-space} now lets us conclude
$$
\wt{\bm{L}}_\gamma\Big( L^2\big(M_\gamma,\wt{\mu}_{M_\gamma}^{*,A_\gamma}\big) \Big) = H^2_{\sf dual} \big(M_\gamma,\wt{\mu}_{M_\gamma}^{*,A_\gamma}\big).
$$
\end{proof}

\begin{remark}
It is shown in the Appendix that functions in the Hardy space $H^2(M_\gamma,\wt{\mu}_{M_\gamma})$ are boundary values of holomorphic functions on $\Omega_\gamma$.  Functions in $H^2_{\sf dual} \big(M_\gamma,\wt{\mu}_{M_\gamma}^{*,A_\gamma}\big)$ are pullbacks via $w^{A_\gamma}$ of holomorphic boundary values on $\Omega_{\gamma^*}$.  $\lozenge$
\end{remark}

\begin{remark}
If $Y$ is a non-vanishing vector field annihilating CR functions on $M_{\gamma^*}$, then $H^2_{\sf dual} \big(M_\gamma,\wt{\mu}_{M_\gamma}^{*,A_\gamma}\big)$ can be equivalently be thought of as the set of functions in $L^2\big(M_\gamma, \wt{\mu}_{M_\gamma}^{*,A_\gamma}\big)$ annihilated by $\left(w^{A_\gamma}\right)^* Y$.  (So $Y$ induces the ``projective dual CR structure on $M_\gamma$".  See Section 3 of \cite{BarGru18} for a general discussion.)  Alternatively, Lemma 4.40 in \cite{BarEdh17} leads to the characterizing vector field of the dual Hardy space.  It can be shown that
\begin{equation}
\bar{L}_{\sf dual} := \frac{i}{2} \frac{\dee}{\dee \zeta_1} + \frac{(2-\gamma)i}{2\gamma} \frac{\bar{\zeta}_1}{\zeta_1} \frac{\dee}{\dee \bar{\zeta}_1} - \frac{\gamma}{2}\, \bar{\zeta}_1 |\zeta_1|^{\gamma-2} \frac{\dee}{\dee \zeta_2} + \left( 1- \frac{\gamma}{2} \right) \bar{\zeta}_1 |\zeta_1|^{\gamma-2} \frac{\dee}{\dee \bar{\zeta}_2}
\end{equation}
annihilates all functions in $H^2_{\sf dual} \big(M_\gamma,\wt{\mu}_{M_\gamma}^{*,A_\gamma}\big)$. 
$\lozenge$
\end{remark}

$\widetilde{\bm L}_{\gamma}$ is now re-expressed as a single integral:

\begin{proposition}\label{P:dual-Leray-as-an-integral}
Let $g \in L^2\big(M_\gamma,\wt{\mu}_{M_\gamma}^{*,A_\gamma}\big)$.  The dual Leray transform is obtained by calculating the following integral
\begin{equation}\label{E:dual-Leray-as-an-integral}
\widetilde{\bm L}_{\gamma}(g)(z) =  \frac{\gamma^2}{8\pi^2 i} \int_{M_\gamma} g(\zeta) \frac{|\zeta_1|^{\gamma-2} d\zeta_2\wedge d\bar{\zeta}_1\wedge d\zeta_1 }{\left[\gamma\bar{z}_1|z_1|^{\gamma-2}(z_1-\zeta_1)+i(z_2-\zeta_2)\right]^2}.
\end{equation}
\end{proposition}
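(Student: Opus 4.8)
The plan is to unwind the definition \eqref{E:def-of-dual-Leray-transform} of $\widetilde{\bm L}_\gamma$ and carry out an explicit change of variables in the Leray integral on $M_{\gamma^*}$. Writing $w=w^{A_\gamma}$ for the dualization map, we have $\widetilde{\bm L}_\gamma(g)(z) = \big(\bm{L}_{\gamma^*}(w_* g)\big)(w(z))$, and since $\gamma^*>1$, formula \eqref{E:LerayMgamma} with $\gamma$ replaced by $\gamma^*$ expresses $\bm{L}_{\gamma^*}h$ as an integral over $M_{\gamma^*}$. First I would substitute the evaluation point $w(z)$ and then change the variable of integration from $\omega\in M_{\gamma^*}$ to $\zeta\in M_\gamma$ via the diffeomorphism $\omega=w(\zeta)$ (whose inverse is recorded in \eqref{E:reverse}), using that $(w_* g)(w(\zeta)) = g(\zeta)$. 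What remains is to identify the transformed kernel, which amounts to three short computations, all built from \eqref{E:wj}, \eqref{E:reverse}, and the Jacobian computation preceding \eqref{E:mu*}.

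The first computation is the pullback of the differential form appearing in the kernel. From \eqref{E:reverse} one has $|\omega_1| = |\zeta_1|^{\gamma-1}$, so $|\omega_1|^{\gamma^*-2} = |\zeta_1|^{(\gamma-1)(\gamma^*-2)} = |\zeta_1|^{2-\gamma}$; and since the term involving $d(|\zeta_1|^\gamma)$ in $dw_2^{A_\gamma}$ is annihilated upon wedging with $d\bar\zeta_1\w d\zeta_1$, the identity before \eqref{E:mu*} gives $d\omega_2\w d\bar\omega_1\w d\omega_1 = |\zeta_1|^{2\gamma-4}\,d\zeta_2\w d\bar\zeta_1\w d\zeta_1$. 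Hence $|\omega_1|^{\gamma^*-2}\,d\omega_2\w d\bar\omega_1\w d\omega_1$ pulls back to $|\zeta_1|^{\gamma-2}\,d\zeta_2\w d\bar\zeta_1\w d\zeta_1$. The second computation is the denominator: using $\bar\omega_1 = \zeta_1|\zeta_1|^{\gamma-2}$ one gets $\gamma^*\bar\omega_1|\omega_1|^{\gamma^*-2} = \tfrac{\gamma}{\gamma-1}\zeta_1$, and substituting \eqref{E:wj} for $\omega_j,w_j$ and collecting terms (using $|z_1|^\gamma = \bar z_1|z_1|^{\gamma-2}z_1$) shows
\begin{equation*}
\gamma^*\bar\omega_1|\omega_1|^{\gamma^*-2}(\omega_1 - w_1) + i(\omega_2 - w_2) = \frac{1}{\gamma-1}\Big(\gamma\bar z_1|z_1|^{\gamma-2}(z_1 - \zeta_1) + i(z_2 - \zeta_2)\Big).
\end{equation*}
The third is the constant: squaring the denominator moves a factor $(\gamma-1)^2$ into the numerator, which combines with $(\gamma^*)^2 = \gamma^2/(\gamma-1)^2$ to give $\gamma^2$, turning the prefactor $\tfrac{(\gamma^*)^2}{8\pi^2 i}$ into $\tfrac{\gamma^2}{8\pi^2 i}$. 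Assembling the three pieces produces exactly the integral in \eqref{E:dual-Leray-as-an-integral}.

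I expect the main obstacle to be purely organizational: keeping the change of variables honest. In particular one must check that the substitution $\omega=w(\zeta)$ introduces no spurious sign — it does not, since the computation above yields the positive coefficient $|\zeta_1|^{\gamma-2}$, the two occurrences of $(1-\gamma)$ coming from $dw_1^{A_\gamma}$ and $dw_2^{A_\gamma}$ cancelling — and that $w^{A_\gamma}$ is genuinely a diffeomorphism $M_\gamma\to M_{\gamma^*}$, which follows from Proposition \ref{P:Admissible} together with \eqref{E:reverse}. One could instead derive \eqref{E:dual-Leray-as-an-integral} from the universal representation of Theorem \ref{T:GenMLeray} applied to $M_{\gamma^*}$, combined with biduality and Remark \ref{R:adm-compete}; but the direct substitution sketched here is self-contained and avoids having to identify the relevant affinization matrix for $M_{\gamma^*}$.
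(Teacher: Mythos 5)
Your proposal is correct and follows essentially the same route as the paper's proof: unwind the definition $\widetilde{\bm L}_{\gamma} = (w^{A_\gamma})^*\circ \bm{L}_{\gamma^*}\circ (w^{A_\gamma})_*$, write out the Leray integral on $M_{\gamma^*}$, and pull back via \eqref{E:wj}--\eqref{E:reverse}, computing exactly the same three ingredients (the form $d\zeta_2^*\w d\bar\zeta_1^*\w d\zeta_1^* = |\zeta_1|^{2\gamma-4}d\zeta_2\w d\bar\zeta_1\w d\zeta_1$, the denominator identity, and the constant $(\gamma^*)^2(\gamma-1)^2=\gamma^2$). The only cosmetic difference is that the paper also cites the proof of Proposition \ref{P:dual-Hardy-space-alt-rep} to record that $w_*g$ lies in an $L^2(M_{\gamma^*},\mu_r)$ space on which $\bm{L}_{\gamma^*}$ is bounded, a point worth stating explicitly before manipulating the integral.
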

\begin{proof}
The map $w^{A_\gamma} = w$ given by \eqref{E:Mgamma-dual-hypersurface-w-eqns} is a diffeomorphism from $M_\gamma$ onto $M_{\gamma^*}$.  To help keep track of notation, write
\begin{align*}
\zeta &= (\zeta_1,\zeta_2) \in M_\gamma, \qquad w(\zeta) := \zeta^* = (\zeta_1^*,\zeta_2^*) \in M_{\gamma^*}, \\
z &= (z_1,z_2) \in M_\gamma, \qquad w(z) := z^* = (z_1^*,z_2^*) \in M_{\gamma^*}.
\end{align*}

Now transfer the computation from $M_{\gamma}$ to $M_{\gamma^*}$ by considering the pushforward of $g$, denoted $G := w_*(g)$.  The proof of Proposition \ref{P:dual-Hardy-space-alt-rep} shows both that $G \in L^2\Big(M_{\gamma^*}, \mu_{\frac{2\gamma-1}{3\gamma-3}}\Big)$ and that $\bm{L}_{\gamma^*}$ is a bounded operator on this space.
\begin{align}\label{E:Leray-with-everything-starred}
\wt{\bm{L}}_\gamma(g)(z) = (w^* \circ \bm{L}_{\gamma^*} \circ w_* \circ g)(z) &=\left( \bm{L}_{\gamma^*}(g \circ w^{-1})\right)(w(z)) = \bm{L}_{\gamma^*}(G)(z^*).
\end{align}
Now,
\begin{align}
\bm{L}_{\gamma^*}(G)(z^*) &= \frac{(\gamma^*)^2}{8\pi^2 i} \int_{M_{\gamma^*}} G(\zeta^*) \frac{|\zeta_1^*|^{{\gamma^*}-2} d\zeta^*_2\wedge d\bar{\zeta^*_1} \wedge d\zeta^*_1 }{\left[\gamma^*\bar{\zeta_1^*}|\zeta_1^*|^{\gamma^*-2}(\zeta_1^*-z_1^*)+i(\zeta_2^*-z_2^*)\right]^2} \notag \\
&= \frac{\gamma^2}{8\pi^2(\gamma-1)^2 i} \int_{M_{\gamma^*}} G(\zeta^*) \frac{|\zeta_1^*|^{\frac{2-\gamma}{\gamma-1}} d\zeta^*_2\wedge d\bar{\zeta^*_1}\wedge d\zeta^*_1 }{\left[(\frac{\gamma}{\gamma-1})\bar{\zeta^*_1}|\zeta_1^*|^{\frac{2-\gamma}{\gamma-1}}(\zeta_1^*-z_1^*)+i(\zeta_2^*-z_2^*)\right]^2} \notag \\
&= \frac{\gamma^2}{8\pi^2 i} \int_{M_{\gamma^*}} G(\zeta^*) \frac{|\zeta_1^*|^{\frac{2-\gamma}{\gamma-1}} d\zeta^*_2\wedge d\bar{\zeta^*_1}\wedge d\zeta^*_1 }{\left[\gamma |\zeta_1^*|^{\frac{\gamma}{\gamma-1}} - \gamma z_1^* \bar{\zeta^*_1} |\zeta_1^*|^{\frac{2-\gamma}{\gamma-1}}+i(\gamma-1)(\zeta_2^*-z_2^*)\right]^2}. \label{E:integral-in-*variables}
\end{align}

Now we wish to move the computation back to $M_\gamma$.  Writing \eqref{E:wj} and \eqref{E:reverse} in the notational convention adopted above we see that
\begin{align}
\zeta_1^* &= \bar{\zeta_1} |\zeta_1|^{\gamma-2}, \qquad \zeta_2^* = \frac{\zeta_2-i \gamma |\zeta_1|^\gamma}{1-\gamma}, \label{E:zeta*-in-terms-of-zeta} \\
\zeta_1 &= \bar{\zeta_1^*} |\zeta_1^*|^{\frac{2-\gamma}{\gamma-1}}, \qquad \zeta_2 = (1-\gamma) \zeta_2^* + i \gamma |\zeta_1|^{\frac{\gamma}{\gamma-1}}.\label{E:zeta-in-terms-of-zeta*}
\end{align}
The same relations also hold between the $z$ and $z^*$ variables. From here, it follows that
\begin{equation}\label{E:zeta*-differential}
d\zeta^*_2\wedge d\bar{\zeta^*_1}\wedge d\zeta^*_1 = |\zeta_1|^{2\gamma-4} d\zeta_2\wedge d\bar{\zeta_1}\wedge d\zeta_1.
\end{equation}

Using \eqref{E:zeta*-in-terms-of-zeta} and \eqref{E:zeta-in-terms-of-zeta*}, rewrite the term inside the brackets appearing in the denominator in \eqref{E:integral-in-*variables}: 
\begin{align}
& \gamma |\zeta_1^*|^{\frac{\gamma}{\gamma-1}} - \gamma z_1^* \bar{\zeta^*_1} |\zeta_1^*|^{\frac{2-\gamma}{\gamma-1}}+i(\gamma-1)(\zeta_2^*-z_2^*) \notag\\
= \,\,& \gamma |\zeta_1|^\gamma - \gamma \zeta_1 \bar{z_1}|z_1|^{\gamma-2} + \gamma|z_1|^\gamma - \gamma |\zeta_1|^\gamma + i (z_2 -\zeta_2) \notag \\
= \,\,& \gamma \bar{z_1} |z_1|^{\gamma-2} (z_1 - \zeta_1) + i (z_2 - \zeta_2) \label{E:denom-rewritten}
\end{align}

Inserting equations \eqref{E:zeta*-differential} and \eqref{E:denom-rewritten} into \eqref{E:integral-in-*variables} shows
\begin{equation*}
\eqref{E:integral-in-*variables} =  \frac{\gamma^2}{8\pi^2 i} \int_{M_\gamma} g(\zeta) \frac{|\zeta_1|^{\gamma-2} d\zeta_2\wedge d\bar{\zeta}_1\wedge d\zeta_1 }{\left[\gamma\bar{z}_1|z_1|^{\gamma-2}(z_1-\zeta_1)+i(z_2-\zeta_2)\right]^2},
\end{equation*}
which is equal to $\widetilde{\bm L}_{\gamma}(g)(z)$ by \eqref{E:Leray-with-everything-starred}.
\end{proof}

The preceding proposition demonstrates the close connection between the dual Leray transform and the adjoint of the Leray transform with respect to the measure $\sigma$:

\begin{corollary}\label{C:dual-Leray-in-terms-of-adjoint}
The dual Leray transform on $M_\gamma$ can be expressed as
\begin{align*}
\wt{\bm L}_{\gamma} = \mathfrak{c} \circ \bm{L}^{(*,\sigma)}_{\gamma} \circ \mathfrak{c},
\end{align*}
where $\mathfrak{c}$ is the conjugation operator and $\bm{L}^{(*,\sigma)}$ is given by the integral in \eqref{E:leray-adjoint-wrt-sigma}.
\end{corollary}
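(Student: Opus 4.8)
The plan is to establish the identity by direct comparison of the two explicit integral formulas already at hand: formula \eqref{E:leray-adjoint-wrt-sigma} for $\bm{L}^{(*,\sigma)}_{\gamma}$ and formula \eqref{E:dual-Leray-as-an-integral} for $\wt{\bm L}_{\gamma}$. The first step is to rewrite both operators as integrals against the Leray--Levi measure $\lambda_\rho$ of \eqref{E:LerayLevi}. Swapping the roles of $z$ and $\zeta$ in \eqref{E:ell_M(z,zeta)} shows that the bracketed denominator in \eqref{E:dual-Leray-as-an-integral} is exactly $4/\ell_\rho(\zeta,z)$, so that
\[
\wt{\bm L}_{\gamma}(g)(z) = \int_{M_\gamma} g(\zeta)\,\ell_\rho(\zeta,z)\,\lambda_\rho(\zeta);
\]
likewise the denominator in \eqref{E:leray-adjoint-wrt-sigma} is $4/\overline{\ell_\rho(\zeta,z)}$, so that $\bm{L}^{(*,\sigma)}_{\gamma} g(z) = \int_{M_\gamma} g(\zeta)\,\overline{\ell_\rho(\zeta,z)}\,\lambda_\rho(\zeta)$ --- consistent with the identification of the kernel of $\bm{L}^{(*,\sigma)}_{\gamma}$ as $\overline{\ell_\rho(\zeta,z)}\,\lambda_\rho(\zeta)$ used in the proof of Proposition \ref{P:leray-adjoint-wrt-sigma}.

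The second ingredient is that $\lambda_\rho$ is a real (indeed positive) measure: by \eqref{D:LambdaRhoReParam} it equals $\tfrac{\gamma^2}{16\pi^2}\,\alpha^{\gamma-1}\,ds\w d\alpha\w d\theta$ in the $(s,\alpha,\theta)$ parametrization, so conjugation commutes with integration against it. Hence, for $g$ in the relevant $L^2$ space, one computes
\begin{align*}
(\mathfrak{c}\circ\bm{L}^{(*,\sigma)}_{\gamma}\circ\mathfrak{c})(g)(z) &= \overline{\bm{L}^{(*,\sigma)}_{\gamma}(\overline{g})(z)} = \overline{\int_{M_\gamma}\overline{g(\zeta)}\,\overline{\ell_\rho(\zeta,z)}\,\lambda_\rho(\zeta)} \\
&= \int_{M_\gamma} g(\zeta)\,\ell_\rho(\zeta,z)\,\lambda_\rho(\zeta) = \wt{\bm L}_{\gamma}(g)(z),
\end{align*}
the two conjugations on $\ell_\rho(\zeta,z)$ cancelling. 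To make this rigorous about convergence, I would run the computation first on a dense subspace --- say functions with compact support in $M_\gamma\setminus V$ --- and then pass to the limit using continuity: $\bm{L}^{(*,\sigma)}_{\gamma}$, and hence $\mathfrak{c}\circ\bm{L}^{(*,\sigma)}_{\gamma}\circ\mathfrak{c}$, is bounded on $L^2(M_\gamma,\sigma)$ by Theorem \ref{T:Norm}, while $\wt{\bm L}_{\gamma}$ is bounded on $L^2(M_\gamma,\wt{\mu}_{M_\gamma}^{*,A_\gamma})$ by Proposition \ref{P:dual-Hardy-space-alt-rep}.

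There is no deep obstacle here; the entire content is the bookkeeping of complex conjugates in the two kernels together with the reality of $\lambda_\rho$. The only point that genuinely requires care is checking that conjugating the denominator of \eqref{E:leray-adjoint-wrt-sigma}, namely $\gamma z_1|z_1|^{\gamma-2}(\overline{z}_1-\overline{\zeta}_1)-i(\overline{z}_2-\overline{\zeta}_2)$, reproduces the denominator of \eqref{E:dual-Leray-as-an-integral}, namely $\gamma\overline{z}_1|z_1|^{\gamma-2}(z_1-\zeta_1)+i(z_2-\zeta_2)$ --- the sign change on the $i$-term being precisely what conjugation supplies --- and that conjugating the three-form $\tfrac{1}{i}|\zeta_1|^{\gamma-2}\,d\zeta_2\w d\overline{\zeta}_1\w d\zeta_1$ produces no stray sign; passing to the $(s,\alpha,\theta)$ coordinates, in which this three-form is a positive multiple of $ds\w d\alpha\w d\theta$, removes any ambiguity.
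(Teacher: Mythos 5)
Your proposal is correct and matches the paper's approach: the paper's proof simply declares the identity immediate from the two integral formulas \eqref{E:leray-adjoint-wrt-sigma} and \eqref{E:dual-Leray-as-an-integral}, and your argument supplies exactly that verification — the kernels are complex conjugates of one another (with the roles of $z$ and $\zeta$ in $\ell_\rho$ swapped) and the measure is real and positive in the $(s,\alpha,\theta)$ parametrization, so the two conjugations cancel. The extra density/continuity step is harmless but unnecessary, since the identity holds at the level of kernels for every $g$ for which either integral converges.
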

\begin{proof}
This is immediate from formulas \eqref{E:leray-adjoint-wrt-sigma} and \eqref{E:dual-Leray-as-an-integral}.
\end{proof}

\begin{remark}\label{R:boundedness-of-the-dual-Leray-transform}
We've seen that $\bm{L}_\gamma$ is bounded on $L^2(M_\gamma,\mu_r)$ if and only if $r \in \ci_0 = (-1,2\gamma-1)$. For a fixed $r$ in this range, choose $r'$ with $\frac{r+r'}{2} = \gamma-1$, and let $f \in L^2(M_\gamma,\mu_r)$, $g \in L^2(M_\gamma,\mu_{r'})$.  By symmetry (Remark \ref{R:symmetry-of-interval-I_0}), $r'$ is also contained in $\ci_0$.  General theory (or, alternatively, the proof of Theorem \ref{T:leray-adjoint-wrt-mu_r}) shows 
$$
\langle \bm{L}_\gamma f, g \rangle_\sigma = \langle  f, \bm{L}_\gamma^{(*,\sigma)} g \rangle_\sigma,
$$ 
implying that
\begin{equation*}
\norm{\bm{L}_\gamma}_{\mu_r} = \normm{\bm{L}_\gamma^{(*,\sigma)}}_{\mu_{r'}} = \normm{\wt{\bm{L}}_\gamma}_{\mu_{r'}}.
\end{equation*}
This shows that the dual Leray transform $\wt{\bm L}_{\gamma}$ is bounded on $L^2(M_\gamma,\mu_{r'})$ for all $r' \in \ci_0$. In particular, 
\begin{equation*}
\norm{\bm{L}_\gamma}_{\wt{\mu}}  = \normm{\wt{\bm{L}}_\gamma}_{\wt{\mu}^*}.
\end{equation*}
where $\wt{\mu}$ and $\wt{\mu}^*$ are the preferred measure and preferred dual measure, respectively.
$\lozenge$
\end{remark}

\subsection{Pairing Hardy spaces}\label{SS:bilinear-pairing-of-hardy-spaces}
For $f\in L^2\left(M_\gamma,\wt{\mu}_{M_\gamma} \right)$, $g\in  L^2\big(M_\gamma,\wt{\mu}_{M_\gamma}^{*,A_\gamma} \big)$, define the {\em bilinear} pairing
\begin{equation}\label{E:def-of-bilinear-pairing}
\langle\!\langle f, g  \rangle\!\rangle = \int_{M_\gamma} f(\zeta) g(\zeta)\, \nu^{A_\gamma}(\zeta).
\end{equation}
Continue to write $\wt{\mu}$  and $\wt{\mu}^*$  for $\wt{\mu}_{M_\gamma}$ and $\wt{\mu}_{M_\gamma}^{*,A_\gamma}$, respectively.  This pairing facilitates the representation of linear functionals on $H^2(M_\gamma,\wt{\mu})$ by functions in $H^2_{\sf dual} \left(M_\gamma,\wt{\mu}^*\right)$.  In Theorem \ref{T:InvNorm} below we prove that the efficiency of this representation is closely connected to the Leray transform.  We point out that:
\begin{itemize}
\item[(1)]  The preferred measure $\wt{\mu}$ and preferred dual measure $\wt{\mu}^*$ are projectively invariant, so the corresponding Hardy spaces are too.  Compare this to both Theorem 2 and Section 8 in \cite{Bar16}.
\item[(2)]  Pairing $H^2(M_\gamma,\wt{\mu})$ and $H^2_{\sf dual} \left(M_\gamma,\wt{\mu}^*\right)$ via \eqref{E:def-of-bilinear-pairing} is closely tied to the universal expression of the Leray transform given in \eqref{E:GenMLeray}.
\end{itemize}

\begin{lemma}\label{L:PairingAdjoint}
For $f\in L^2\left(M_\gamma,\wt{\mu} \right)$, $g\in  L^2\big(M_\gamma,\wt{\mu}^* \big)$ we have
\begin{equation}\label{E:PairingAdjoint}
\langle\!\langle \bm{L}_\gamma f, g  \rangle\!\rangle = \langle\!\langle f, \widetilde{\bm L}_{\gamma} g  \rangle\!\rangle  = \langle\!\langle \bm{L}_\gamma f, \widetilde{\bm L}_{\gamma} g  \rangle\!\rangle.
\end{equation}
\end{lemma}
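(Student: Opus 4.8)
The plan is to realize $\wt{\bm L}_\gamma$ as the transpose of $\bm{L}_\gamma$ with respect to the bilinear form $\langle\!\langle\cdot,\cdot\rangle\!\rangle$, after which both equalities become formal manipulations. First I would put the two operators in kernel form relative to the single kernel $K(z,\zeta) := \bigl[\Phi_{A_\gamma}\bigl(z,w^{A_\gamma}(\zeta)\bigr)\bigr]^{-2}$. By Proposition \ref{T:GenMLeray} applied with $\cs = M_\gamma$ and $A = A_\gamma$,
\[
\bm{L}_\gamma f(z) = \int_{M_\gamma} K(z,\zeta)\,f(\zeta)\,\nu^{A_\gamma}(\zeta),
\]
and substituting the formulas \eqref{E:wj} for $w^{A_\gamma}$ into \eqref{E:Phi_A-map} gives $\Phi_{A_\gamma}(z,w^{A_\gamma}(\zeta)) = \gamma\bar\zeta_1|\zeta_1|^{\gamma-2}(z_1-\zeta_1)+i(z_2-\zeta_2)$. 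Comparing this with \eqref{E:dual-Leray-as-an-integral} — the two bracketed expressions there agree after squaring, and $\tfrac{\gamma^2}{8\pi^2 i}|\zeta_1|^{\gamma-2}d\zeta_2\w d\bar\zeta_1\w d\zeta_1 = \nu^{A_\gamma}(\zeta)$ by \eqref{E:NuLam} — shows that, with the roles of the output and integration variables interchanged,
\[
\wt{\bm L}_\gamma g(\zeta) = \int_{M_\gamma} K(z,\zeta)\,g(z)\,\nu^{A_\gamma}(z).
\]

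With these representations in hand, the first equality in \eqref{E:PairingAdjoint} amounts to interchanging the order of integration in
\[
\langle\!\langle \bm{L}_\gamma f,g\rangle\!\rangle = \int_{M_\gamma}\!\!\int_{M_\gamma} K(z,\zeta)\,f(\zeta)\,g(z)\,\nu^{A_\gamma}(\zeta)\,\nu^{A_\gamma}(z) = \langle\!\langle f,\wt{\bm L}_\gamma g\rangle\!\rangle .
\]
To license the Fubini step I would argue that both outer expressions are continuous bilinear forms on $L^2(M_\gamma,\wt\mu)\times L^2(M_\gamma,\wt\mu^*)$: the sharp Cauchy–Schwarz inequality \eqref{E:cleanCS} gives $|\langle\!\langle h,g\rangle\!\rangle|\le\|h\|_{\wt\mu}\|g\|_{\wt\mu^*}$, while $\bm{L}_\gamma$ is bounded on $L^2(M_\gamma,\wt\mu)$ and $\wt{\bm L}_\gamma$ on $L^2(M_\gamma,\wt\mu^*)$ since the exponents $\tfrac{\gamma+1}{3},\tfrac{5\gamma-7}{3}$ lie in $\ci_0$ (Corollary \ref{C:r-cond}, Remark \ref{R:boundedness-of-the-dual-Leray-transform}). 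It therefore suffices to verify the identity for $f$, $g$ ranging over a dense class of test functions — e.g. continuous functions with compact support disjoint from the exceptional set $V$ of \eqref{E:ExceptonalSetV} — for which $K(z,\zeta)$ is bounded off the diagonal on the product of the supports and has only an integrable singularity along it (strong $\C$-convexity of $M_\gamma\cut V$), so the double integral converges absolutely and Fubini applies verbatim. (Alternatively, one may run the $S^1$-Fourier decomposition $f=\sum_k f_k$, $g=\sum_k g_k$, bound each component's double integral by the argument of Lemma \ref{L:justification-of-fubini:adjoint-comp}(b) — valid since $\tfrac12\bigl(\tfrac{\gamma+1}{3}+\tfrac{5\gamma-7}{3}\bigr)=\gamma-1$ — and sum over $k$ using $\sup_k\sqrt{C_{\mu_r}(\gamma,k)}<\infty$.)

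Finally, the second equality follows from the first together with the projection property. Since $\wt{\bm L}_\gamma$ is a projection (Proposition \ref{P:dual-Hardy-space-alt-rep}) and $\wt{\bm L}_\gamma g\in L^2(M_\gamma,\wt\mu^*)$, applying the first equality with $g$ replaced by $\wt{\bm L}_\gamma g$ yields
\[
\langle\!\langle \bm{L}_\gamma f,\wt{\bm L}_\gamma g\rangle\!\rangle = \langle\!\langle f,\wt{\bm L}_\gamma(\wt{\bm L}_\gamma g)\rangle\!\rangle = \langle\!\langle f,\wt{\bm L}_\gamma g\rangle\!\rangle = \langle\!\langle \bm{L}_\gamma f,g\rangle\!\rangle ,
\]
and equivalently one could instead use that $\bm{L}_\gamma$ is a projection (Proposition \ref{P:L_k-is-a-projection-mu_r}) and apply the first equality to $\bm{L}_\gamma f$. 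The main obstacle is precisely the Fubini step: $K$ is only borderline integrable near the diagonal and $f$, $g$ naturally live in different weighted $L^2$-spaces, so the density reduction (or the componentwise estimate) is genuinely needed to justify interchanging the integrals; once that is in place, the rest is bookkeeping with the formulas already established.
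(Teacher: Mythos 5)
Your structural skeleton is the right one, and in fact your parenthetical fallback is essentially the paper's own argument: the paper reduces the second equality to the first via the projection property of $\wt{\bm L}_\gamma$ (exactly as you do), and it proves the first equality by rewriting $\langle\!\langle \bm{L}_\gamma f, g\rangle\!\rangle = \langle \bm{L}_\gamma f,\bar g\rangle_\nu$, invoking the adjoint identity $\langle \bm{L}_\gamma f,\bar g\rangle_\sigma = \langle f,\bm{L}_\gamma^{(*,\sigma)}\bar g\rangle_\sigma$ for $f\in L^2(\mu_r)$, $\bar g\in L^2(\mu_{r'})$ with $\tfrac{r+r'}{2}=\gamma-1$ (established in the proof of Theorem \ref{T:leray-adjoint-wrt-mu_r}, where the interchange is done mode-by-mode after conjugation by $\cf$ and justified by Lemma \ref{L:justification-of-fubini:adjoint-comp}(b)), and then identifying $\mathfrak{c}\circ\bm{L}_\gamma^{(*,\sigma)}\circ\mathfrak{c}=\wt{\bm L}_\gamma$ via Corollary \ref{C:dual-Leray-in-terms-of-adjoint} — which is the same "transpose with respect to $\langle\!\langle\cdot,\cdot\rangle\!\rangle$" identification you extract from \eqref{E:dual-Leray-as-an-integral}.

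The genuine gap is in your primary justification of the Fubini step. The claim that for continuous $f,g$ compactly supported off $V$ the kernel "has only an integrable singularity along the diagonal, so the double integral converges absolutely and Fubini applies verbatim" is false: on the boundary the Leray kernel is a singular-integral kernel of exactly critical order. Near a diagonal point with $\alpha>0$ one has $|\Phi_{A_\gamma}(z,w^{A_\gamma}(\zeta))|\approx |(s_z-s_\zeta)+\gamma\alpha^\gamma(\theta_z-\theta_\zeta)| + |z_1-\zeta_1|^2$, so $|K(z,\zeta)|$ behaves like the $(-2)$ power of this, and its integral in $\zeta$ over any neighborhood of $z$ diverges logarithmically (as for the Cauchy--Szeg\H{o} kernel on the Heisenberg group, homogeneous dimension $4$ versus kernel decay of order $4$). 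Consequently the double integral is not absolutely convergent whenever the supports of $f$ and $g$ overlap — indeed even the pointwise boundary integral defining $\bm{L}_\gamma f(z)$ for $z\in M_\gamma$ is not absolutely convergent, and the boundary operator in this paper is defined through the $S^1$-Fourier decomposition rather than as a convergent kernel integral. So your density reduction (which is otherwise sound, given the continuity of both forms via \eqref{E:cleanCS} and the boundedness of $\bm{L}_\gamma$ and $\wt{\bm L}_\gamma$) cannot be closed by "Fubini verbatim" on that test class; the interchange has to be performed after Fourier transforming in $s$, where the mode-$k$ kernel factors as the nonnegative product $\eta_k\tau_k\kappa_k$ and Lemma \ref{L:justification-of-fubini:adjoint-comp}(b) gives the needed absolute bounds, followed by summation in $k$ — i.e., precisely the route you relegate to the parenthesis and which the paper carries out through Theorem \ref{T:leray-adjoint-wrt-mu_r} and Corollary \ref{C:dual-Leray-in-terms-of-adjoint}.
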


\begin{proof} Since $\bm{L}_\gamma$ and $\wt{\bm{L}}_\gamma$ are projections, it suffices to prove the first equality.  Notice that 
\begin{align*}\label{E:dual-Leray-pairing-step-one}
\langle\!\langle \bm{L}_\gamma f, g  \rangle\!\rangle  = \langle \bm{L}_\gamma f, \overline{g}  \rangle_{\nu} = \langle f, \bm{L}_\gamma^{(*\sigma)} \overline{g}  \rangle_{\nu} = \langle\!\langle  f, \mathfrak{c} \circ \bm{L}^{(*,\sigma)}_{\gamma} \circ \mathfrak{c}(g) \rangle\!\rangle = \langle\!\langle f, \widetilde{\bm L}_{\gamma} g  \rangle\!\rangle. 
\end{align*}
The second equality is justified by the proof of Theorem \ref{T:leray-adjoint-wrt-mu_r} and the final equality is just Corollary \ref{C:dual-Leray-in-terms-of-adjoint}.
\end{proof}

\begin{remark}
Formula \eqref{E:PairingAdjoint} could also be proved by adapting the Plemelj-formula-based argument found in Theorem 25 of \cite{Bar16} to the current unbounded setting. $\lozenge$
\end{remark}

Now consider the map
\begin{align*}
\chi_\gamma\colon L^2\big(M_\gamma, \wt{\mu}^* \big) \to \left(L^2\left(M_\gamma, \wt{\mu} \right)\right)^*
\end{align*}
given by
\begin{align*}
\chi_\gamma(g)\colon f \to \langle\!\langle f, g  \rangle\!\rangle,
\end{align*}
along with the companion map
\begin{align*}
\chih_\gamma\colon H^2_{\sf dual} \big(M_\gamma, \wt{\mu}^*\big) \to \left(H^2\big(M_\gamma, \wt{\mu}\big)\right)^*
\end{align*}
given by
\begin{align*}
\chih_\gamma(g)= \chi_\gamma(g)\Big{|}_{H^2(M_\gamma,\wt{\mu})}.
\end{align*}
Note that $\chi_\gamma$ and $\chih_\gamma$ are linear ({\em not} conjugate-linear).  From \eqref{E:cleanCS} we see that $\chi_\gamma$ is an isometry and that 
\begin{align*}
\left\|\chih_\gamma\right\|&\le 1.
\end{align*}

Define a map $\wt{\bm{R}} : L^2\big(M_\gamma,\wt{\mu}^* \big) \to L^2\left(M_\gamma,\wt{\mu} \right)$ by
\begin{equation}
f(\zeta) \longmapsto \sqrt{ \frac{{\rm d} \wt{\mu}_{M_\gamma}^{*,A_\gamma} }{ {\rm d}\wt{\mu}_{M_\gamma}} } f(\zeta) = \frac{|\zeta_1|^{\frac23(\gamma-2)}}{(\gamma-1)^{1/3}} f(\zeta) := \wt{\bm{R}}f,
\end{equation}
where the Radon-Nikodym derivative is calculated in \eqref{E:RadNik}.
\begin{lemma}
The map $\wt{\bm{R}} : L^2\big(M_\gamma,\wt{\mu}_{M_\gamma}^{*,A_\gamma}  \big) \to L^2\left(M_\gamma,\wt{\mu}_{M_\gamma} \right)$ is an isometry.
\end{lemma}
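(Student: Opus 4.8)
The plan is to show that $\wt{\bm R}$ is a surjective linear map preserving norms, which is essentially a restatement of the Radon-Nikodym change-of-variables formula for $L^2$-spaces. The key point is that $\wt{\mu}_{M_\gamma}^{*,A_\gamma}$ and $\wt{\mu}_{M_\gamma}$ are mutually absolutely continuous measures whose Radon-Nikodym derivative was already computed in \eqref{E:RadNik} to be $|\zeta_1|^{\frac43(\gamma-2)}(\gamma-1)^{-2/3}$, a strictly positive, finite, measurable function on $M_\gamma \setminus V$ (and $V$ has measure zero for both measures). Its square root is exactly the multiplier $|\zeta_1|^{\frac23(\gamma-2)}(\gamma-1)^{-1/3}$ defining $\wt{\bm R}$.

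First I would verify that $\wt{\bm R}$ is norm-preserving by direct computation: for $f \in L^2\big(M_\gamma,\wt{\mu}^{*,A_\gamma}_{M_\gamma}\big)$,
\begin{align*}
\norm{\wt{\bm R}f}^2_{\wt{\mu}_{M_\gamma}} &= \int_{M_\gamma} \left| \sqrt{ \tfrac{{\rm d}\wt{\mu}^{*,A_\gamma}_{M_\gamma}}{{\rm d}\wt{\mu}_{M_\gamma}} }\, f(\zeta) \right|^2 \wt{\mu}_{M_\gamma}(\zeta) \\
&= \int_{M_\gamma} |f(\zeta)|^2\, \tfrac{{\rm d}\wt{\mu}^{*,A_\gamma}_{M_\gamma}}{{\rm d}\wt{\mu}_{M_\gamma}}\, \wt{\mu}_{M_\gamma}(\zeta) = \int_{M_\gamma} |f(\zeta)|^2\, \wt{\mu}^{*,A_\gamma}_{M_\gamma}(\zeta) = \norm{f}^2_{\wt{\mu}^{*,A_\gamma}_{M_\gamma}},
\end{align*}
where the middle step is the defining property of the Radon-Nikodym derivative. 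In particular $\wt{\bm R}f \in L^2\left(M_\gamma,\wt{\mu}_{M_\gamma}\right)$, so $\wt{\bm R}$ is well-defined. Linearity is obvious since $\wt{\bm R}$ is multiplication by a fixed function. Alternatively, and perhaps more transparently, one may pass to the $(\alpha,\theta,s)$ parametrization: by \eqref{E:modified-fefferman-with-constant-settled} and \eqref{E:modified-dual-fefferman-with-constant-settled}, up to positive constants $\wt{\mu}_{M_\gamma} \in \sF_{\frac{\gamma+1}{3}}$ and $\wt{\mu}^{*,A_\gamma}_{M_\gamma} \in \sF_{\frac{5\gamma-7}{3}}$, and the multiplier $|\zeta_1|^{\frac23(\gamma-2)} = \alpha^{\frac23(\gamma-2)}$ is precisely $\alpha^{(\frac{5\gamma-7}{3} - \frac{\gamma+1}{3})/2}$, so the computation reduces to the elementary identity underlying Lemma \ref{L:GeneralizedCauchySchwarz}.

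Finally I would note surjectivity: the inverse map is multiplication by $|\zeta_1|^{-\frac23(\gamma-2)}(\gamma-1)^{1/3}$, which by the symmetric argument (with the reciprocal Radon-Nikodym derivative) carries $L^2\left(M_\gamma,\wt{\mu}_{M_\gamma}\right)$ isometrically back to $L^2\big(M_\gamma,\wt{\mu}^{*,A_\gamma}_{M_\gamma}\big)$. Hence $\wt{\bm R}$ is a bijective isometry, i.e.\ an isometry of Hilbert spaces. There is no real obstacle here — the only thing to be careful about is that the exceptional set $V = \{\zeta_1 = 0\}$ is null for both measures (it is a single copy of $\R$ inside a $3$-manifold, and both measures are absolutely continuous with respect to Lebesgue measure in the $(\alpha,\theta,s)$ coordinates), so the multiplier, though singular or vanishing on $V$ when $\gamma \neq 2$, causes no difficulty.
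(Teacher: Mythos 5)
Your proof is correct and follows exactly the route the paper intends: the paper's own proof is the one-line observation that the multiplier is the square root of the Radon--Nikodym derivative \eqref{E:RadNik}, and your computation simply writes out that verification (the added remarks on surjectivity and the null set $V$ are harmless extras).
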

\begin{proof}
The appearance of the Radon-Nikodym derivative makes this clear.
\end{proof}

\begin{theorem}\label{T:InvNorm}
The operator $\chih_\gamma\colon H^2_{\sf dual} \big(M_\gamma, \wt{\mu}^* \big) \to \left(H^2\big(M_\gamma, \wt{\mu}\big)\right)^*$ is invertible with norm
\begin{equation}\label{E:InvNorm}
\left\| {\chih}_{\gamma}^{\,-1} \right\| = \norm{\bm{L}_\gamma}_{\wt{\mu}}.
\end{equation}
\end{theorem}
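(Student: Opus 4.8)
The plan is to reduce the statement to a computation about the $L^2(M_\gamma,\wt{\mu})$-adjoint of $\bm{L}_\gamma$ restricted to the Hardy space, and then to read off the norm from the operator-matrix identities \eqref{E:AK}. The first thing I would record is that $\chi_\gamma$ is in fact a \emph{linear isometric isomorphism} of $L^2(M_\gamma,\wt{\mu}^*)$ onto $\big(L^2(M_\gamma,\wt{\mu})\big)^*$, not merely an isometry. This is because $(\wt{\bm{R}}g)\,\wt{\mu}_{M_\gamma} = g\,\sqrt{\wt{\mu}_{M_\gamma}\,\wt{\mu}_{M_\gamma}^{*,A_\gamma}} = g\,\nu^{A_\gamma}$ by \eqref{E:spec-meas}, so $\chi_\gamma(g)(f) = \langle\!\langle f,g\rangle\!\rangle = \langle f,\mathfrak{c}(\wt{\bm{R}}g)\rangle_{\wt\mu}$; thus $\chi_\gamma$ is the composition of the conjugate-linear bijective isometry $\mathfrak{c}\circ\wt{\bm{R}}$ with the (conjugate-linear) Riesz isomorphism of $L^2(M_\gamma,\wt{\mu})$, hence a bijective linear isometry. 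In particular every $\Lambda\in\big(L^2(M_\gamma,\wt{\mu})\big)^*$ equals $\chi_\gamma(g)$ for a unique $g$ with $\norm{g}_{\wt\mu^*}=\norm{\Lambda}$.

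Next I would show $\chih_\gamma$ is bijective, the key inputs being Lemma~\ref{L:PairingAdjoint} together with the fact that $\bm{L}_\gamma$ (Proposition~\ref{P:L_k-is-a-projection-mu_r}) and $\wt{\bm{L}}_\gamma$ (Proposition~\ref{P:dual-Hardy-space-alt-rep}) are bounded projections with ranges $H^2(M_\gamma,\wt{\mu})$ and $H^2_{\sf dual}(M_\gamma,\wt{\mu}^*)$. For injectivity, if $g\in H^2_{\sf dual}$ pairs to zero against all of $H^2(M_\gamma,\wt{\mu})$, then for arbitrary $f\in L^2(M_\gamma,\wt{\mu})$ one gets $\langle\!\langle f,g\rangle\!\rangle = \langle\!\langle f,\wt{\bm{L}}_\gamma g\rangle\!\rangle = \langle\!\langle \bm{L}_\gamma f,g\rangle\!\rangle = 0$ since $\bm{L}_\gamma f\in H^2(M_\gamma,\wt{\mu})$; as $\chi_\gamma$ is injective, $g=0$. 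For surjectivity, given $\phi\in\big(H^2(M_\gamma,\wt{\mu})\big)^*$ let $h\in H^2(M_\gamma,\wt{\mu})$ be its Riesz representative (so $\phi=\langle\cdot,h\rangle_{\wt\mu}$ on $H^2(M_\gamma,\wt\mu)$ and $\norm{h}_{\wt\mu}=\norm{\phi}$), put $\Lambda(f):=\langle\bm{L}_\gamma f,h\rangle_{\wt\mu}=\langle f,\bm{L}_\gamma^{(*,\wt\mu)}h\rangle_{\wt\mu}$ and $g:=\chi_\gamma^{-1}(\Lambda)$. Then $\Lambda|_{H^2(M_\gamma,\wt\mu)}=\phi$ because $\bm{L}_\gamma$ fixes $H^2(M_\gamma,\wt{\mu})$; and substituting $\bm{L}_\gamma f$ for $f$ in $\langle\!\langle f,g\rangle\!\rangle=\Lambda(f)$, using $\bm{L}_\gamma^2=\bm{L}_\gamma$ and Lemma~\ref{L:PairingAdjoint}, gives $\langle\!\langle f,\wt{\bm{L}}_\gamma g\rangle\!\rangle=\langle\!\langle f,g\rangle\!\rangle$ for all $f$, so $\wt{\bm{L}}_\gamma g=g$ and $g\in H^2_{\sf dual}$; hence $\chih_\gamma(g)=\phi$.

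It then remains to compute $\norm{\chih_\gamma^{-1}}$. From the surjectivity construction, $\chih_\gamma^{-1}(\phi)=\chi_\gamma^{-1}(\Lambda)$ with $\norm{\chih_\gamma^{-1}(\phi)}_{\wt\mu^*}=\norm{\Lambda}=\norm{\bm{L}_\gamma^{(*,\wt\mu)}h}_{\wt\mu}$, while $\norm{\phi}=\norm{h}_{\wt\mu}$; since $h\mapsto\phi$ runs bijectively over $H^2(M_\gamma,\wt{\mu})\setminus\{0\}$, this yields $\norm{\chih_\gamma^{-1}}=\sup\{\norm{\bm{L}_\gamma^{(*,\wt\mu)}h}_{\wt\mu}/\norm{h}_{\wt\mu}:\,0\ne h\in H^2(M_\gamma,\wt{\mu})\}$. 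Decomposing $L^2(M_\gamma,\wt{\mu})=H\oplus H^\perp$ with $H=H^2(M_\gamma,\wt{\mu})$ and $\bm{K}\colon H^\perp\to H$ the restriction of $\bm{L}_\gamma$ as in the discussion preceding \eqref{E:AK}, the operator matrix of $\bm{L}_\gamma^{(*,\wt\mu)}$ is $\begin{pmatrix} I & 0 \\ \bm{K}^* & 0\end{pmatrix}$, so $\bm{L}_\gamma^{(*,\wt\mu)}h=(h,\bm{K}^*h)$ for $h\in H$ and $\norm{\bm{L}_\gamma^{(*,\wt\mu)}h}_{\wt\mu}^2=\norm{h}_{\wt\mu}^2+\norm{\bm{K}^*h}_{\wt\mu}^2$. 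The supremum therefore equals $1+\norm{\bm{K}^*}^2=1+\norm{\bm{K}}^2=\norm{\bm{L}_\gamma}_{\wt\mu}^2$ by \eqref{E:AK}, giving $\norm{\chih_\gamma^{-1}}=\norm{\bm{L}_\gamma}_{\wt\mu}$ as claimed.

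I expect the only real subtlety to be organizational: keeping the \emph{bilinear} pairing $\langle\!\langle\cdot,\cdot\rangle\!\rangle$ correctly matched against the three Hilbert-space inner products in play (those of $\wt{\mu}$, $\wt{\mu}^*$, and $\sigma$, the last proportional to $\nu^{A_\gamma}$), and verifying at the right moment that the candidate preimage $g=\chi_\gamma^{-1}(\Lambda)$ actually lies in $H^2_{\sf dual}$ rather than merely in $L^2(M_\gamma,\wt{\mu}^*)$ — this is exactly the step where Lemma~\ref{L:PairingAdjoint} and the normalization \eqref{E:spec-meas} (the choice of $c_2$ making $\nu^{A_\gamma}=\sqrt{\wt{\mu}\,\wt{\mu}^*}$) do the essential work. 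Beyond this bookkeeping I anticipate no genuine obstacle; every step above is soft once those two facts are invoked.
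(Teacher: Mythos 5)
Your argument is correct, but it reaches the norm identity by a genuinely different route than the paper. You and the paper share the same skeleton for bijectivity (both hinge on Lemma \ref{L:PairingAdjoint} and the fact that $\bm{L}_\gamma$ and $\wt{\bm{L}}_\gamma$ are bounded projections onto the two Hardy spaces), though even there the details differ: the paper proves injectivity by testing $\chih_\gamma(g)$ against the single function $\bm{L}_\gamma\wt{\bm{R}}\,\bar g$ and invoking the sharp Cauchy--Schwarz normalization, while you use $g=\wt{\bm{L}}_\gamma g$ plus injectivity of $\chi_\gamma$ on all of $L^2(M_\gamma,\wt\mu)$; for surjectivity the paper extends $T$ by a representative $h\in L^2(M_\gamma,\wt\mu^*)$ and takes $g=\wt{\bm{L}}_\gamma h$, whereas you extend via $\Lambda=\langle \bm{L}_\gamma\,\cdot,h\rangle_{\wt\mu}$ with $h$ the Riesz representative in the closed subspace $H^2(M_\gamma,\wt\mu)$. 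The real divergence is the norm computation: the paper bounds $\|\chih_\gamma^{-1}\|\le\|\wt{\bm{L}}_\gamma\|_{\wt\mu^*}=\|\bm{L}_\gamma\|_{\wt\mu}$ (Remark \ref{R:boundedness-of-the-dual-Leray-transform}) and gets the reverse inequality from the sharp inequality \eqref{E:cleanCS} applied to near-maximizing $h$, while you obtain the exact formula $\|\chih_\gamma^{-1}(\phi)\|_{\wt\mu^*}=\|\bm{L}_\gamma^{(*,\wt\mu)}h\|_{\wt\mu}$ for every functional and then evaluate $\sup_{0\ne h\in H}\|\bm{L}_\gamma^{(*,\wt\mu)}h\|_{\wt\mu}/\|h\|_{\wt\mu}=\sqrt{1+\|\bm{K}\|^2}=\|\bm{L}_\gamma\|_{\wt\mu}$ from the block identities \eqref{E:AK}. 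Your route buys a pointwise (per-functional) description of $\chih_\gamma^{-1}$ and avoids both the dual Leray transform's norm identity and the $\epsilon$-approximation argument, at the cost of leaning on the Kerzman--Stein-style operator-matrix decomposition and on $H^2(M_\gamma,\wt\mu)$ being a closed subspace (which holds, being $\ker(\bm{L}_\gamma-I)$); the paper's route is softer in that it only uses the sharp Cauchy--Schwarz pairing and the adjoint relation, which is why it transfers verbatim from the bounded setting of \cite{Bar16}. Your preliminary observation that $\chi_\gamma$ is a surjective isometry (via $\langle\!\langle f,g\rangle\!\rangle=\langle f,\mathfrak{c}(\wt{\bm{R}}g)\rangle_{\wt\mu}$ and Riesz) is also correct and is exactly what the paper uses implicitly when it asserts the existence of a representative $h$ with $\|h\|_{\wt\mu^*}=\|T\|$.
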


\begin{remark}
If $M_\gamma$ were the boundary of a smoothly bounded strongly $\C$-convex domain in $\C^2$  (or $\C\p^{2}$), then Theorem \ref{T:InvNorm} would just be a paraphrase of Corollary 26 from \cite{Bar16}; the argument offered below is an adaptation of that earlier proof.
$\lozenge$
\end{remark}

\begin{proof}

Let $g\in\ker\chih_\gamma$, i.e., $\chih_\gamma(g)$ is the zero functional on $H^2\big(M_\gamma, \wt{\mu}\big)$.  It can be easily checked that $\bm{L}_\gamma \wt{\bm{R}}\,\bar{g} \in H^2 \big(M_\gamma, \wt{\mu}\big)$.  This means that
\begin{align*}
0 = \chih_\gamma(g)\left( \bm{L}_\gamma \wt{\bm{R}}\,\bar{g} \right) = \left\langle\!\left\langle \bm{L}_\gamma \wt{\bm{R}}\,\bar{g} , g  \right\rangle\!\right\rangle = \left\langle\!\left\langle  \wt{\bm{R}}\,\bar{g} , \wt{\bm{L}}_\gamma g  \right\rangle\!\right\rangle = \left\langle\!\left\langle  \wt{\bm{R}}\,\bar{g} ,  g  \right\rangle\!\right\rangle = \norm{g}_{\wt{\mu}^*}^2.
\end{align*}
Thus $g\equiv0$ and $\chih_\gamma$ is injective.

Let $T \in \left(H^2\big(M_\gamma, \wt{\mu} \big)\right)^*$.  There is some $h \in L^2\big(M_\gamma, \wt{\mu}^*  \big)$ representing $T$ and satisfying
\begin{equation*}
\|h\|_{L^2(M_\gamma, \wt{\mu}^{*})} =\|T\|_{H^2(M_\gamma, \wt{\mu})^*}.
\end{equation*}
Now for all $f\in H^2(M_\gamma, \wt{\mu})$,
\begin{align*}
Tf &= \left\langle\!\left\langle f , h  \right\rangle\!\right\rangle = \left\langle\!\left\langle \bm{L}_\gamma f , h  \right\rangle\!\right\rangle = \left\langle\!\left\langle  f , \wt{\bm{L}}_\gamma h  \right\rangle\!\right\rangle = \chih_\gamma\left(\widetilde{\bm L}_{\gamma}h\right)(f)
\end{align*}
In other words, $T=\chih_\gamma\left(\widetilde{\bm L}_{\gamma}h\right)$ with $ \|\wt{\bm L}_{\gamma}h\|_{\wt{\mu}^*} \le \|\widetilde{\bm L}_{\gamma}\|_{\wt{\mu}^*} \|h\|_{\wt{\mu}^*} =  \|{\bm L}_{\gamma}\|_{\wt{\mu}} \|T\|$.
So $ \chih_\gamma$ is surjective with $\| \chih_\gamma^{\,-1} \| \le \|{\bm L}_{\gamma}\|_{\wt{\mu}} $.

To prove the reverse estimate let $f\in H^2(M_\gamma,\wt{\mu})$ and pick $h \in L^2\big(M_\gamma, \wt{\mu}^*  \big)$ such that $\norm{h}_{\wt{\mu}^*} = 1$.  Then
\begin{align*}
\left|\chih_\gamma(\widetilde{\bm L}_{\gamma}h)(f)\right| = \left| \langle\!\langle f, \widetilde{\bm L}_{\gamma} h  \rangle\!\rangle \right| = \left| \langle\!\langle \bm{L}_\gamma f, h  \rangle\!\rangle \right| = \left| \langle\!\langle f, h  \rangle\!\rangle \right| \le \|f\|_{\wt{\mu}}
\end{align*}
and so $\|\chih_\gamma(\widetilde{\bm L}_{\gamma}h)\|\le 1$ for any such $h$.  Let $\epsilon>0$ and choose a specific such $h$ so that $\|\wt{\bm L}_{\gamma}h\|_{\wt{\mu}^*} \ge \|{\bm L}_{\gamma}\|_{\wt{\mu}} -\epsilon$.  It easily follows that $\|\chih_\gamma^{-1}\| \ge \|\wt{\bm L}_{\gamma}h\|_{\wt{\mu}^*} \ge \|{\bm L}_{\gamma}\|_{\wt{\mu}} -\epsilon$. 
Since $\epsilon$ was arbitrary we have $\|\chih_\gamma^{-1}\|\ge\|{\bm L}_{\gamma}\|_{\wt{\mu}}$, concluding the proof.
\end{proof}

See Section 9 in \cite{Bar16} for related results.

\appendix

\section{Constructing holomorphic functions on $\Omega_\gamma$}\label{S:Appendix}

We verify here that for certain values of $r$, the Leray transform maps functions $f\in L^2(M_\gamma,\mu_r)$ to holomorphic functions on the domain $\Omega_{\gamma} = \left\{ (z_1,z_2)\in \C^2\colon \im(z_2) > \left|z_1\right|^\gamma\right\}.$

The main result is the following:
\begin{theorem}\label{T:range-of-r-building-holomorphic-full-Leray}
Let $\bm{L}$ denote the Leray transform.  $\bm{L}$ maps $L^2(M_\gamma,\mu_r) \to \co(\Omega_\gamma)$ for each $r \in \cj_0 := (-\gamma-1, 2\gamma-1)$.
\end{theorem}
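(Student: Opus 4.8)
The plan is to fix $r\in\cj_0=(-\gamma-1,2\gamma-1)$ and show that $\bm L f$ is holomorphic on $\Omega_\gamma$ for every $f\in L^2(M_\gamma,\mu_r)$, by exhibiting $\bm L f$ as an absolutely convergent integral (locally uniformly in $z\in\Omega_\gamma$) of an integrand that is holomorphic in $z$. Recall from \eqref{E:LerayMgamma} and the reparametrization \eqref{E:IntKernelAB}--\eqref{E:DefofAandB} that
\begin{equation*}
\bm L f(z)=\frac{\gamma^2}{4\pi^2}\int_{M_\gamma}\frac{f(\zeta)}{\bigl[A-Be^{i(\theta_z-\theta_\zeta)}\bigr]^2}\,\alpha_\zeta^{\gamma-1}\,ds_\zeta\,d\alpha_\zeta\,d\theta_\zeta,
\end{equation*}
where $A=(\gamma-1)\alpha_\zeta^\gamma+\alpha_z^\gamma+i(s_\zeta-s_z)$ and $B=\gamma\alpha_z\alpha_\zeta^{\gamma-1}$. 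For $z\in\Omega_\gamma$ the point $z$ lies strictly on the $\C$-convex side of $M_\gamma$, so the denominator $\langle\partial\rho(\zeta),\zeta-z\rangle$ never vanishes; the first step is to prove the quantitative lower bound $|\langle\partial\rho(\zeta),\zeta-z\rangle|\gtrsim \bigl((\gamma-1)\alpha_\zeta^\gamma+\alpha_z^\gamma\bigr)$, or equivalently $|A-Be^{i(\theta_z-\theta_\zeta)}|\ge |A|-|B|$ together with a lower bound for $|A|-|B|$ coming from Lemma~\ref{L:GeneralAMGM} (strengthened to capture the gap when $z$ is off $M_\gamma$, i.e. when $\im z_2>|z_1|^\gamma$). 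Since $z\in\Omega_\gamma$, write $\im(z_2)=\alpha_z^\gamma+\delta$ with $\delta>0$; then $\re(iA)=\delta+\bigl((\gamma-1)\alpha_\zeta^\gamma+\alpha_z^\gamma-\text{(the }M_\gamma\text{ value)}\bigr)$ and one gets a bound like $|A-Be^{i(\theta_z-\theta_\zeta)}|\gtrsim \delta+(\gamma-1)\alpha_\zeta^\gamma+|s_\zeta-s_z|$, uniformly for $z$ in a compact subset of $\Omega_\gamma$.

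Granting such a bound, holomorphicity follows from the standard Morera/dominated-convergence argument (differentiation under the integral sign), provided the majorant
\begin{equation*}
\int_{M_\gamma}\frac{|f(\zeta)|\,\alpha_\zeta^{\gamma-1}}{\bigl(\delta+(\gamma-1)\alpha_\zeta^\gamma+|s_\zeta-s_z|\bigr)^2}\,ds_\zeta\,d\alpha_\zeta\,d\theta_\zeta
\end{equation*}
is finite. The second and main step is therefore to estimate this integral. I would apply Cauchy--Schwarz in $(s_\zeta,\alpha_\zeta,\theta_\zeta)$ with weight $\alpha_\zeta^{r}$: bound it by $\|f\|_{L^2(M_\gamma,\mu_r)}$ times
\begin{equation*}
\left(\int_{M_\gamma}\frac{\alpha_\zeta^{2(\gamma-1)-r}}{\bigl(\delta+(\gamma-1)\alpha_\zeta^\gamma+|s_\zeta-s_z|\bigr)^{4}}\,ds_\zeta\,d\alpha_\zeta\,d\theta_\zeta\right)^{1/2}.
\end{equation*}
The $s_\zeta$-integral of $(\,c+|s_\zeta-s_z|)^{-4}$ is $\sim c^{-3}$ with $c=\delta+(\gamma-1)\alpha_\zeta^\gamma$, leaving $\int_0^\infty \alpha_\zeta^{2(\gamma-1)-r}(\delta+(\gamma-1)\alpha_\zeta^\gamma)^{-3}\,d\alpha_\zeta$. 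Convergence at $\alpha_\zeta=0$ needs $2(\gamma-1)-r>-1$, i.e. $r<2\gamma-1$; convergence at $\alpha_\zeta=\infty$ needs $2(\gamma-1)-r-3\gamma<-1$, i.e. $r>-\gamma-1$. These are exactly the two endpoints of $\cj_0$, which is the source of the stated interval. (One should also split off the $\theta_\zeta$-integral, which contributes only a factor $2\pi$, and, if one wants uniformity on compacts, absorb the $s_z$-dependence into the translation invariance of the $s_\zeta$-integral.)

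The last step is bookkeeping: deduce from the finite majorant that $z\mapsto\bm Lf(z)$ is well-defined and, by differentiating under the integral (each $z$-derivative of $[A-Be^{i(\theta_z-\theta_\zeta)}]^{-2}$ produces extra factors bounded by a constant times the same majorant on compact subsets of $\Omega_\gamma$), is holomorphic on $\Omega_\gamma$. The main obstacle I anticipate is the first step: getting a clean, uniform lower bound on the Leray denominator for $z$ ranging over a compact subset of $\Omega_\gamma$ and $\zeta\in M_\gamma$, handling simultaneously the regime $\alpha_\zeta$ or $\alpha_z$ near $0$ (where $M_\gamma$ is only $C^\gamma$ and the convexity degenerates for $\gamma>2$) and the regime of large $\alpha_\zeta$; the AM--GM inequality of Lemma~\ref{L:GeneralAMGM} gives the borderline estimate on $M_\gamma$ itself, and the point is to show the strict positivity $\delta>0$ propagates into a usable bound of the form $|A-Be^{i(\theta_z-\theta_\zeta)}|\gtrsim \delta+(\gamma-1)\alpha_\zeta^\gamma+|s_\zeta-s_z|$. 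Everything after that is the routine Cauchy--Schwarz computation above.
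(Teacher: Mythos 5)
Your proposal is correct, and it reaches the theorem by a more elementary route than the paper's. The decisive step is the same in both arguments: split the Leray--Levi weight $\alpha_\zeta^{\gamma-1}=\alpha_\zeta^{r/2}\cdot\alpha_\zeta^{\gamma-1-r/2}$ and apply Cauchy--Schwarz, so that the admissible range of $r$ is read off from convergence of a one-variable $\alpha_\zeta$-integral at $0$ and at $\infty$; your two exponent conditions are exactly the paper's requirement that $r'=2(\gamma-1)-r$ lie in $\cj'_0=(-1,3\gamma-1)$. The difference is in execution: the paper evaluates the weighted kernel norm $\Theta_{r'}(z)=\int_{M_\gamma}|\ell(z,\zeta)|^2\mu_{r'}(\zeta)$ \emph{exactly}, using the $t_s, r_\theta, \delta_a$ symmetries to reduce to $\Theta_{r'}(1,iq)$ and then a Fourier expansion in $\theta_\zeta$ plus two residue integrals (Propositions \ref{P:AppendixSIntegral} and \ref{P:AppendixAlphaIntegral}), and deduces holomorphy by approximating $f$ by compactly supported functions, with uniformity on the sets of $\sB$ (unbounded in $s$); you instead bound the Leray denominator from below pointwise off $M_\gamma$ and differentiate under the integral, which avoids the residue calculus at the cost of inexplicit constants and uniformity only on compacts -- enough for holomorphy. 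Your anticipated obstacle, the bound $\gtrsim_K \delta+(\gamma-1)\alpha_\zeta^\gamma+|s_\zeta-s_z|$, is true but needs one adjustment to your sketch: the angular term $Be^{i(\theta_z-\theta_\zeta)}$ also contributes to the imaginary part, so instead of separating real and imaginary parts use $|A'-Be^{i\phi}|\ge |A'|-B$ with $A'=(\gamma-1)\alpha_\zeta^\gamma+\alpha_z^\gamma+\delta+i(s_\zeta-s_z)$; for $\alpha_\zeta\ge \tfrac{2\gamma}{\gamma-1}\sup_K\alpha_z$ one has $B\le\tfrac12\re A'$, hence $|A'|-B\ge\tfrac14\big(\re A'+|s_\zeta-s_z|\big)$, while for smaller $\alpha_\zeta$ all terms except $|s_\zeta-s_z|$ are bounded on $K$ and Lemma \ref{L:GeneralAMGM} gives $\re A'-B\ge\delta\ge\delta_K>0$, with large $|s_\zeta-s_z|$ handled by $|A'|-B\ge|s_\zeta-s_z|-B$. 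With that bound, your majorant, the estimate $\int_\R(c+|s_\zeta-s_z|)^{-4}ds_\zeta\sim c^{-3}$, and the observation that each $z$-derivative costs a factor $\alpha_\zeta^{\gamma-1}$ absorbed by the extra power of the denominator, the argument closes as you describe. What the paper's heavier computation buys is the explicit formula for $\Theta_r$ (including its divergence for $r\notin\cj'_0$) and uniformity over $\sB$; what yours buys is brevity for the qualitative statement of Theorem \ref{T:range-of-r-building-holomorphic-full-Leray}.
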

Notice that $\cj_0 \supsetneq \ci_0 = (-1,2\gamma-1)$, the interval of $r$ values for which $\bm{L}$ is bounded from $L^2(M_\gamma,\mu_r) \to L^2(M_\gamma,\mu_r)$.  In Section \ref{SS:related-operators}, we defined the Hardy space $H^2(M_\gamma,\mu_r) := \bm{L}\big(L^2(M_\gamma,\mu_r)\big)$ for each $r \in \ci_0$.  Theorem \ref{T:range-of-r-building-holomorphic-full-Leray} confirms that these Hardy spaces can be viewed as consisting entirely of boundary values of holomorphic functions.

\subsection{Symmetries of the Leray kernel}

Partition $\Omega_\gamma$ into translated copies of $M_\gamma$ by defining
\begin{equation}\label{D:MGammaEpsilon}
M_\gamma^\epsilon = \left\{ (z_1,z_2) :  \im(z_2) = |z_1|^\gamma + \epsilon \right\}.
\end{equation}
It is clear that $\Omega_\gamma = \bigcup_{\epsilon > 0} M_\gamma^\epsilon$ and that this is a union of disjoint sets.

Let $z \in \Omega_\gamma$.  Equations \eqref{E:ell_M(z,zeta)}, \eqref{E:LerayLevi} and \eqref{E:LerayMgamma} provide the formula for the Leray transform:
\begin{equation}\label{E:Leray-transform-appendix}
\bm{L}f(z) = \int_{M_\gamma} f(\zeta)\ell_\rho(z,\zeta) \lambda_\rho(\zeta) = \frac{\gamma^2}{{16 \pi^2 } } \int_{M_\gamma} f(\zeta)\ell_\rho(z,\zeta) \sigma(\zeta),
\end{equation}
where the kernel $\ell_\rho = \ell$ and measure $\sigma$ are
\begin{align*}
\ell(z,\zeta) &= \frac{4}{\left(\gamma\bar{\zeta}_1|\zeta_1|^{\gamma-2}(\zeta_1-z_1)+i(\zeta_2-z_2)\right)^2}, \qquad \sigma(\zeta) = \frac{1}{2 i} |\zeta_1|^{\gamma-2}\, d\zeta_2\wedge d\bar{\zeta}_1\wedge d\zeta_1.
\end{align*}

The measure $\sigma$ is special instance of $\mu_r$, given in rectangular coordinates for $r\in \R$ by
\begin{align*}
\mu_r(\zeta) = \frac{1}{2 i} |\zeta_1|^{r-1}\, d\zeta_2\wedge d\bar{\zeta}_1\wedge d\zeta_1.
\end{align*}
Recall the three types of automorphisms of $M_\gamma$ from Section \ref{SS:Symmetries}: $t_s$ (translations in $\re(\zeta_2)$), $r_\theta$ (rotations in $\zeta_1$), and $\delta_a$ (non-isotropic dilations).  These maps transform $\ell(z,\zeta)$ in the following way:
\begin{subequations}
\begin{align}
\ell(t_s(z),t_s(\zeta)) &= \ell(z,\zeta), \qquad\qquad s\in \R \label{E:Leray-interacting-with-t_s}\\
\ell(r_\theta(z),r_\theta(\zeta)) &= \ell(z,\zeta), \qquad\qquad \theta\in[0,2\pi) \label{E:Leray-interacting-with-r_theta}\\
\ell(\delta_a(z),\delta_a(\zeta)) &= a^{-2\gamma}\ell(z,\zeta), \qquad a >0. \label{E:Leray-interacting-with-delta_alpha}
\end{align}
The measure $\mu_r$ is easily seen to be invariant under such translations and rotations.  Further computation reveals that
\begin{align}\label{E:mu_r-interacting-with-delta_alpha}
\delta_a^*\mu_r(\zeta) &= a^{\gamma+r+1}\mu_r(\zeta).
\end{align}
\end{subequations}

\subsection{$L^2$-norms of the kernel function}
For $r \in \R$, define $\Theta_r:\Omega_\gamma \to [0,\infty]$ by
\begin{equation}\label{E:def-of-Theta_r}
\Theta_r(z) := \int_{M_\gamma} |\ell(z,\zeta)|^2\,\mu_r(\zeta).
\end{equation}

Applying $t_s$ and $r_\theta$ for appropriate values of $s$ and $\theta$, we see from \eqref{E:Leray-interacting-with-t_s} and \eqref{E:Leray-interacting-with-r_theta} that for $z \in \Omega_\gamma$,
\begin{equation}\label{E:Theta_r-invariance-translations-rotations}
\Theta_r(z) = \Theta_r(z_1,z_2) = \Theta_r(|z_1|,i\, \im(z_2)).
\end{equation}
Also observe that for $a>0$, \eqref{E:Leray-interacting-with-delta_alpha} and \eqref{E:mu_r-interacting-with-delta_alpha} imply
\begin{align}
\Theta_r(\delta_a(z)) = \int_{M_\gamma} |\ell(\delta_a(z),\zeta)|^2\,\mu_r(\zeta) &= \int_{M_\gamma} |\ell(\delta_a(z),\delta_a(\zeta))|^2\,\delta_a^*\mu_r(\zeta) \notag\\
&= a^{r+1-3\gamma}\, \Theta_r(z). \label{E:Theta_r-transformation-dilations}
\end{align}

Now let $\epsilon > 0$ be the unique number with $z \in M_\gamma^\epsilon$.  If $z_1 \neq 0$, set $a = |z_1|^{-1}$ and combine \eqref{E:Theta_r-invariance-translations-rotations} and \eqref{E:Theta_r-transformation-dilations} to see
\begin{align}
\Theta_r(z) = \Theta_r(|z_1|,i\, \im(z_2)) &= a^{3\gamma-r-1} \Theta_r(\delta_a(|z_1|,i\, \im(z_2))) \notag \\
&= |z_1|^{r+1-3\gamma} \Theta_r\Big(1,i\, \tfrac{\im(z_2)}{|z_1|^\gamma}\Big) \notag \\
&= |z_1|^{r+1-3\gamma} \Theta_r\Big(1,i \left( 1 + \tfrac{\epsilon}{|z_1|^\gamma}\right) \Big). \label{E:Theta_r_normalized_z1=/=0}
\end{align}
Otherwise if $z_1 = 0$,
\begin{equation}\label{E:Theta_r_z1=0}
\Theta_r(z) = \Theta_r(0,i\,\im{z_2}) = \Theta_r(0,i\epsilon).
\end{equation}

Now endow $\Omega_\gamma$ with a coordinate system based on the $(\alpha, \theta, s)$-coordinatization of $M_\gamma$ that was introduced in Section \ref{SS:LerayReparam}.  
For $z \in M_\gamma^\epsilon$, write
\begin{align}\label{E:Omega_gammaParam}
z &= (\alpha e^{i\theta}, s + i(\alpha^\gamma+\epsilon)).
\end{align}
Points in $\Omega_\gamma$ may now be described in $(\alpha, \theta, s, \epsilon)$-coordinates, where $\alpha \ge 0$, $\theta \in [0,2\pi)$, $s \in \R$, $\epsilon >0$.  In these coordinates, denote the quantity appearing in the innermost set of parentheses of \eqref{E:Theta_r_normalized_z1=/=0} as follows:
\begin{equation}\label{E:def-of-q(alpha,epsilon)}
1 + \frac{\epsilon}{|z_1|^\gamma} = 1 + \frac{\epsilon}{\alpha^\gamma} := q(\alpha,\epsilon).
\end{equation}
The goal is now to understand the function $z \mapsto \Theta_r(z)$ from the perspective of
\begin{equation}\label{E:Theta_r-written-in-alpha-epsilon-variables}
(\alpha, \theta, s, \epsilon) \mapsto \alpha^{r+1-3\gamma} \Theta_r(1, i q(\alpha,\epsilon)).
\end{equation}

\begin{proposition}\label{P:Theta_r(1,iq)-calculation}
Let $r \in \cj'_0 := (-1, 3\gamma-1)$ and $q >1$.  Then 
$$
\Theta_r\left(1,i q \right) = q^{\frac{1+r}{\gamma} - 3} F_r(q),
$$
where $F_r$ is positive, strictly decreasing and real-analytic on $(1,\infty)$.  Furthermore,
\begin{itemize}
\item[$(a)$] $\lim_{q \to 1^+ }F_r(q) = \infty.$
\item[$(b)$] $\lim_{q \to \infty} F_r(q)$ is a positive number.
\end{itemize}
If $r \notin \cj'_0$, then $\Theta_r\left(1,i q \right) = \infty$.
\end{proposition}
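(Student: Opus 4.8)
\textbf{Proof plan for Proposition \ref{P:Theta_r(1,iq)-calculation}.}
The plan is to compute $\Theta_r(1,iq)$ directly from the definition \eqref{E:def-of-Theta_r} by inserting the explicit kernel $\ell$ and measure $\mu_r$, and then to exploit the rotational and translational symmetries already established to collapse two of the three integrations. Concretely, with $z=(1,iq)$ and writing $\zeta=(\alpha e^{i\theta},s+i\alpha^\gamma)$, we have
\begin{equation*}
\Theta_r(1,iq) = \int_0^{2\pi}\!\!\int_0^\infty\!\!\int_{-\infty}^\infty \frac{16\,\alpha^{r}}{\left|\gamma\bar\zeta_1|\zeta_1|^{\gamma-2}(\zeta_1-1)+i(\zeta_2-iq)\right|^4}\,ds\,d\alpha\,d\theta.
\end{equation*}
The denominator, written out, is $\bigl|\,(\alpha^\gamma+q)-\gamma\alpha^{\gamma-1}e^{-i\theta} + i s\,\bigr|^4$ up to sign conventions; the $s$-integral $\int_{-\infty}^\infty |a+is|^{-4}\,ds$ (with $a = a(\alpha,\theta)$ the real part) is an elementary integral equal to a constant multiple of $a^{-3}$, finite precisely because $\mathrm{Re}$ of the bracket is strictly positive away from a measure-zero set — this is exactly Lemma \ref{L:GeneralAMGM} guaranteeing $\alpha^\gamma+q > \gamma\alpha^{\gamma-1}$ for $q\ge 1$. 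Performing the $\theta$-integral next leaves a single integral in $\alpha$ whose integrand behaves like $\alpha^{r}$ times a rational expression; the scaling $\alpha \mapsto q^{1/\gamma}\alpha$ pulls out the factor $q^{(1+r)/\gamma - 3}$ and leaves $F_r(q)$ as a one-parameter integral. I would then take the definition of $F_r$ to be whatever remains; its real-analyticity on $(1,\infty)$ follows from differentiating under the integral sign (justified by local uniform bounds, using $q>1$ strictly), and strict monotonicity from checking $\partial_q$ of the integrand is strictly negative.

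\textbf{Convergence dichotomy.} The condition $r\in\cj'_0=(-1,3\gamma-1)$ should emerge as the exact range in which the $\alpha$-integral converges at both endpoints. Near $\alpha=0$ the integrand behaves like $\alpha^{r}$ (the rest of the expression being bounded and bounded away from zero when $q>1$), so convergence at $0$ requires $r>-1$. Near $\alpha=\infty$, the leading term of the real part of the bracket is $\alpha^\gamma$, so after the $\theta$-integral the integrand decays like $\alpha^{r}\cdot\alpha^{-3\gamma}=\alpha^{r-3\gamma}$, giving convergence at $\infty$ iff $r<3\gamma-1$. For $r\notin\cj'_0$ one of these two divergences occurs and $\Theta_r(1,iq)=+\infty$; this should be stated as a clean lemma about the $\alpha$-integral and then applied. (Note $\cj'_0$ is strictly larger than the interval $\cj_0$ of the main Appendix theorem, consistent with the fact that $L^2$-membership of $f$, not just finiteness of $\Theta_r$, is what forces the tighter range there.)

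\textbf{The limits $(a)$ and $(b)$.} For $(b)$: as $q\to\infty$, the scaled integrand defining $F_r(q)$ converges pointwise (and dominatedly, again using the AM--GM lower bound uniformly) to a limiting integrand that is still integrable on $(0,\infty)$ under $r\in\cj'_0$; the dominated convergence theorem then gives $\lim_{q\to\infty}F_r(q)$ equal to an explicit positive constant (a Beta/Gamma-type integral). For $(a)$: as $q\to 1^+$, the obstruction is that the real part of the bracket, $\alpha^\gamma+q-\gamma\alpha^{\gamma-1}\cos\theta$ at $\theta=0$, degenerates to $\alpha^\gamma+1-\gamma\alpha^{\gamma-1}$, which vanishes to second order at $\alpha=1$ (the equality case of Lemma \ref{L:GeneralAMGM}); the $a^{-3}$ from the $s$-integral then produces a non-integrable singularity in a neighborhood of $(\alpha,\theta)=(1,0)$ as $q\downarrow 1$, forcing $F_r(q)\to\infty$. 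I would make this rigorous by a local lower bound: restrict the integral to a fixed small box around $(1,0)$, Taylor-expand $a(\alpha,\theta)$ to see $a \sim c_1(\alpha-1)^2 + c_2\theta^2 + (q-1)$, and estimate the resulting model integral $\iint (\,(\alpha-1)^2+\theta^2+(q-1)\,)^{-3}\,d\alpha\,d\theta \asymp (q-1)^{-2}\to\infty$.

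\textbf{Expected main obstacle.} The routine part is the $s$- and $\theta$-integrations and the scaling; the delicate part is item $(a)$ — correctly identifying the order of vanishing of the real part of the kernel's denominator at the AM--GM equality point and extracting a sharp enough lower bound to conclude divergence, rather than merely "the integrand is large." Getting the Taylor coefficients $c_1,c_2$ right (both strictly positive, which uses strict convexity in Lemma \ref{L:GeneralAMGM}) and handling the interaction between the $(\alpha-1)^2$ and $\theta^2$ directions is where care is needed. A secondary nuisance is keeping track of which branch/normalization of constants is used so that $F_r$ is unambiguously defined; I would simply fix $F_r(q) := q^{3-(1+r)/\gamma}\,\Theta_r(1,iq)$ as the definition and derive all stated properties from the integral representation.
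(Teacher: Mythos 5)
Your route is correct in outline but genuinely different from the paper's. The paper never evaluates the $\theta$-integral in closed form: it expands $|A-Be^{-i\theta}|^{-4}$ as a power series in $|B/A|^2$ (legitimate by Lemma \ref{L:GeneralAMGM}), computes the $s$- and $\alpha$-integrals term by term via the residue formulas of Propositions \ref{P:AppendixSIntegral} and \ref{P:AppendixAlphaIntegral}, and arrives at the explicit positive-coefficient power series \eqref{E:def-of-appendix-function-Fr} for $F_r$ in the variable $q^{-2/\gamma}$; Stirling asymptotics of the coefficients then deliver everything at once (the radius of convergence corresponds exactly to $q=1$, positivity/monotonicity/analyticity are read off the series, item $(a)$ is divergence at the radius, item $(b)$ is the $k=0$ term, and the $r\notin\cj'_0$ dichotomy is divergence of the $k=0$ $\alpha$-integral). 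Your plan instead does the $s$-integral ($\propto a^{-3}$, $a$ the real part of the bracket), then the $\theta$-integral, rescales $\alpha\mapsto q^{1/\gamma}\alpha$, and studies the resulting one-parameter integral by real-variable means (dominated convergence for $(b)$, a local Taylor lower bound at the equality point of Lemma \ref{L:GeneralAMGM} for $(a)$, endpoint analysis for the dichotomy). The paper's method buys the explicit Gamma-series formula and the exact value of the limit in $(b)$, reused for $\Theta_r(0,i\epsilon)$; yours buys a more geometric explanation of $(a)$ and avoids the keyhole contour.

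Two points must be repaired or checked before this closes. First, the real part of the bracket is $(\gamma-1)\alpha^\gamma+q-\gamma\alpha^{\gamma-1}\cos\theta$, not $\alpha^\gamma+q-\gamma\alpha^{\gamma-1}\cos\theta$: with your written expression positivity actually fails for $\gamma>2$ (the minimum of $\alpha^\gamma+1-\gamma\alpha^{\gamma-1}$ is $1-(\gamma-1)^{\gamma-1}<0$), whereas the correct expression is the one covered by Lemma \ref{L:GeneralAMGM}, vanishing only at $\alpha=1$, $\theta=0$, $q=1$, to second order with positive quadratic coefficients $\gamma(\gamma-1)/2$ and $\gamma/2$ --- exactly what your argument for $(a)$ needs. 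Second, the monotonicity step is not as automatic as ``$\partial_q$ of the integrand is strictly negative'': after the rescaling, $q$ enters only through the term $-\gamma q^{-1/\gamma}\beta^{\gamma-1}\cos\theta$, whose contribution has either sign depending on $\theta$, so the scaled integrand is not pointwise monotone in $q$ before the $\theta$-integration; and monotonicity of $\Theta_r(1,iq)$ itself does not suffice because $F_r$ carries the increasing prefactor $q^{3-(1+r)/\gamma}$ (the exponent is positive on $\cj'_0$). You must first carry out the $\theta$-integral, which equals $\pi(2c^2+b^2)(c^2-b^2)^{-5/2}$ with $c=(\gamma-1)\alpha^\gamma+q$, $b=\gamma\alpha^{\gamma-1}$ (or pair $\theta$ with $\pi-\theta$), and then verify that after rescaling the integrand is strictly increasing in $q^{-2/\gamma}$; this is true, so the step does close, but it requires that computation rather than a pointwise sign check.
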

\begin{proof}
Denote $\zeta \in M_\gamma$ by $\zeta = (\alpha_\zeta e^{i\theta_\zeta}, s_\zeta + i\alpha_\zeta^\gamma)$, which yields
\begin{equation*}
\ell((1,iq),\zeta) = \frac{4}{\big((\gamma-1)\alpha_\zeta^\gamma + q + is_\zeta - \gamma \alpha_\zeta^{\gamma-1}e^{-i \theta_\zeta}\big)^2}.
\end{equation*}
Therefore
\begin{align}
\Theta_r(1,iq) &= 16 \int_{M_\gamma} \frac{\alpha_\zeta^{r}\, ds_\zeta\wedge d\alpha_\zeta\wedge d\theta_\zeta}{\big|(\gamma-1)\alpha_\zeta^\gamma + q + i s_\zeta - \gamma \alpha_\zeta^{\gamma-1} e^{-i\theta_\zeta} \big|^4} \notag\\
&= 16 \int_0^\infty \alpha_\zeta^{r} \int_{-\infty}^\infty \left\{ \int_0^{2\pi} \frac{d\theta_\zeta}{|A-B e^{-i \theta_\zeta}|^4} \right\}\,ds_\zeta\,d\alpha_\zeta \label{E:AppendixInt1},
\end{align}
where
\begin{equation}\label{D:AppendixDefAB}
A = (\gamma-1)\alpha_\zeta^\gamma + q + i s_\zeta, \qquad B = \gamma \alpha_\zeta^{\gamma-1}.
\end{equation}
Since $q>1$, Lemma \ref{L:GeneralAMGM} shows that $|B| < |A|$ for all choices of $\alpha_\zeta, s_\zeta$.  Focus now on the innermost integral in \eqref{E:AppendixInt1}:
\begin{align}
\int_0^{2\pi} \frac{d\theta_\zeta}{|A-B e^{i \theta_\zeta}|^4} &= \int_0^{2\pi} \frac{1}{(A-B e^{-i \theta_\zeta})^2} \cdot \bar{\frac{1}{(A-B e^{-i \theta_\zeta})^2}}\,  d\theta_\zeta \notag\\
&= \frac{1}{|A|^4} \int_0^{2\pi} \sum_{j,k=0}^\infty (j+1)(k+1) \left( \frac{B}{A} \right)^j \bar{\left( \frac{B}{A} \right)^k} e^{i(k-j)\theta_\zeta}\,d\theta_\zeta \notag \\
&= \frac{2\pi}{|A|^4} \sum_{k=0}^\infty (k+1)^2 \left|\frac{B}{A}\right|^{2k}. \label{E:AppendixThetaIntegral}
\end{align}
Plugging \eqref{E:AppendixThetaIntegral} into \eqref{E:AppendixInt1} gives
\begin{align}
\Theta_r(1,iq) &= 32\pi \sum_{k=0}^\infty (k+1)^2 \int_0^\infty \alpha_\zeta^{r} \left|B^{2k}\right| \left\{\int_{-\infty}^\infty \frac{d s_\zeta}{|A|^{2k+4}} \right\}\,d\alpha_\zeta \notag \\
& =32\pi \sum_{k=0}^\infty (k+1)^2 \gamma^{2k} \int_0^\infty \alpha_\zeta^{2k(\gamma-1)+r} \left\{\int_{-\infty}^\infty \frac{d s_\zeta}{|(\gamma-1)\alpha_\zeta^\gamma + q + i s_\zeta|^{2k+4}} \right\}\,d\alpha_\zeta. \label{E:AppendixIntegral2}
\end{align}

We now import the first of two integral formulas which are established in Section \ref{SS:appendix-integral-formulas}.  Proposition \ref{P:AppendixSIntegral} shows that for any $C>0$,
\begin{equation}\label{E:appendix-s-integral-formula}
\int_{-\infty}^\infty \frac{ds}{|C+is|^{2k+4}} = \frac{\pi}{4^{k+1}C^{2k+3}} \frac{\Gamma(2k+3)}{\Gamma(k+2)^2}.
\end{equation}
Substituting this into \eqref{E:AppendixIntegral2} with $C = (\gamma-1)\alpha_\zeta^\gamma + q$ shows
\begin{equation}\label{E:AppendixIntegral3}
\Theta_r(1,iq) =  8\pi^2 \sum_{k=0}^\infty \frac{\Gamma(2k+3)}{\Gamma(k+1)^2} \left( \frac{\gamma}{2} \right)^{2k} \int_0^\infty \frac{\alpha_\zeta^{2k(\gamma-1)+r}}{((\gamma-1)\alpha_\zeta^\gamma + q)^{2k+3}} d\alpha_\zeta.
\end{equation}

For each fixed $k$, two conditions on $r$ are required for the integral in \eqref{E:AppendixIntegral3} to converge:  
\begin{itemize}
\item[(1)] For $\alpha_\zeta$ near $0$, we need $2k(\gamma-1)+r > -1$.
\item[(2)] As $\alpha_\zeta$ tends to $\infty$, we need $\gamma(2k+3) - 2k(\gamma-1)-r > 1$.  
\end{itemize}
These conditions suggest the definition of the following interval of $r$ values
\begin{equation}\label{def-of-interval-J'_k}
\cj'_k := (- 2k(\gamma-1) -1  , 3\gamma - 1 +2k).
\end{equation}
Note that $\cj'_0 \subset \cj'_k$ for all $k$, so the convergence of the $k=0$ integral implies the convergence of all other integrals in the sum.  But if the $k=0$ integral diverges, i.e., $r \notin \cj'_0$, then $\Theta_r(1,iq) = \infty$.  Now make use of the second integral formula established in Section \ref{SS:appendix-integral-formulas}.  Proposition \ref{P:AppendixAlphaIntegral} shows that, for $q>0$, $\gamma>1$, $r \in \cj'_k$ and $k$ a non-negative integer,
\begin{equation}\label{E:appendix-alpha-integral-formula}
\int_0^\infty \frac{\alpha^{2k(\gamma-1)+r}}{((\gamma-1)\alpha^\gamma + q)^{2k+3}} d\alpha =  \frac{\Gamma\big(3 + \frac{2k-1-r}{\gamma} \big) \Gamma\big(2k +  \frac{1+r-2k}{\gamma} \big) }{ \gamma (\gamma-1)^{\frac{1+r}{\gamma}} q^{3-\frac{1+r}{\gamma}} \Gamma\big(2k+3\big)  } \left( q (\gamma-1)^{\gamma-1}  \right)^{-\frac{2k}{\gamma}}.
\end{equation}
Plugging this into \eqref{E:AppendixIntegral3} yields
\begin{equation}\label{E:AppendixIntegral4}
\Theta_r(1,iq) = \frac{8\pi^2}{\gamma (\gamma-1)^{\frac{1+r}{\gamma}}} q^{\frac{1+r}{\gamma}-3} \sum_{k=0}^\infty \tfrac{\Gamma\left(3 + \frac{2k-1-r}{\gamma} \right) \Gamma\left(2k +  \frac{1+r-2k}{\gamma} \right) }{\Gamma\big(k+1\big)^2} \left( \frac{\gamma^2}{ 4(\gamma-1)^{2-\frac{2}{\gamma}} q^{\frac{2}{\gamma}}  } \right)^{k}.
\end{equation}

We study the convergence of this sum by considering (for $r \in \cj'_k$) the related series 
\begin{equation}\label{E:appendix-series-ak}
\sum_{k=0}^\infty a_k x^k, \qquad \textrm{where} \qquad a_k = \frac{\Gamma\big(3 + \frac{2k-1-r}{\gamma} \big) \Gamma\big(2k +  \frac{1+r-2k}{\gamma} \big) }{\Gamma\big(k+1\big)^2}.
\end{equation}
Sterling's formula says
\begin{align*}
\Gamma({k+1})^2 &\sim 2\pi e^{-2k} k^{2k+1}, \\
\Gamma\big(3 + \tfrac{2k-1-r}{\gamma} \big) &\sim  \sqrt{2\pi} \cdot e^{\frac{1+r-2k}{\gamma}-2} \left(2 + \tfrac{2k-1-r}{\gamma} \right)^{\frac{5}{2} + \frac{2k-1-r}{\gamma}}, \\
\Gamma\big(2k +  \tfrac{1+r-2k}{\gamma} \big) &\sim  \sqrt{2\pi}\cdot e^{1-2k-\frac{1+r-2k}{\gamma}} \left( 2k - 1+ \tfrac{1+r-2k}{\gamma}  \right)^{2k - \frac{1}{2}+ \frac{1+r-2k}{\gamma}}.
\end{align*}
Combining these shows that
\begin{align}
a_k &\sim \frac{1}{e k^{2k+1}} \cdot \left(2 + \tfrac{2k-1-r}{\gamma} \right)^{\frac{5}{2} + \frac{2k-1-r}{\gamma}}  \left( 2k - 1+ \tfrac{1+r-2k}{\gamma}  \right)^{2k - \frac{1}{2}+ \frac{1+r-2k}{\gamma}} \notag\\
&= \tfrac{k}{e} \cdot \left(\tfrac{2}{\gamma} + \tfrac{2\gamma-1-r}{\gamma k} \right)^{\frac{5}{2} + \frac{2k-1-r}{\gamma}}  \left( 2 - \tfrac{2}{\gamma} + \tfrac{1+r-\gamma}{\gamma k}  \right)^{2k - \frac{1}{2}+ \frac{1+r-2k}{\gamma}} \notag \\
&\sim \tfrac{k}{e} \cdot \left( \tfrac{2}{\gamma} \right)^{\frac{5}{2}-\frac{1+r}{\gamma}} \left(\tfrac{2}{\gamma} + \tfrac{2\gamma-1-r}{\gamma k} \right)^\frac{2k}{\gamma} \left(2-\tfrac{2}{\gamma} \right)^{\frac{1+r}{\gamma} -\frac{1}{2}}  \left( 2 - \tfrac{2}{\gamma} + \tfrac{1+r-\gamma}{\gamma k}  \right)^{2k - \frac{2k}{\gamma}} \notag \\
&= \tfrac{k}{e} \cdot \left( \tfrac{2}{\gamma} \right)^2 (\gamma-1)^{\frac{1+r}{\gamma} -\frac{1}{2}} \left( \tfrac{2}{\gamma} \right)^\frac{2k}{\gamma} \left(1 + \left(\tfrac{2\gamma-1-r}{\gamma} \right) \tfrac{\gamma}{2k}  \right)^{\frac{2k}{\gamma}} \notag \\
& \qquad \qquad \qquad \qquad \qquad \qquad \cdot \left(2 - \tfrac{2}{\gamma} \right)^{2k - \frac{2k}{\gamma}} \left( 1 + \left( \tfrac{1+r-\gamma}{\gamma}\right) \left( 2k - \tfrac{2k}{\gamma} \right)^{-1} \right)^{2k - \frac{2k}{\gamma}}.  \label{E:ak-calculation-1}
\end{align}
Since $\lim_{t \to \infty} \left(1+ \tfrac{c}{t}\right)^{t} = e^c$, we see from \eqref{E:ak-calculation-1} that
\begin{align}
a_k &\sim \tfrac{k}{e} \cdot \left( \tfrac{2}{\gamma} \right)^2 (\gamma-1)^{\frac{1+r}{\gamma} -\frac{1}{2}} \left( \tfrac{2}{\gamma} \right)^\frac{2k}{\gamma} e^{\frac{2\gamma-1-r}{\gamma}} \left(2 - \tfrac{2}{\gamma} \right)^{2k - \frac{2k}{\gamma}} e^{\frac{1+r-\gamma}{\gamma}} \notag\\
&= 4\gamma^{-2} (\gamma-1)^{\frac{1+r}{\gamma} -\frac{1}{2}} k \left( 4 \gamma^{-2} (\gamma-1)^{2-\frac{2}{\gamma}} \right)^k. \label{E:ak-asymptotic-formula}
\end{align}
It is easy to see that $\lim_{k \to \infty} \sqrt[k]{a_k} = 4 \gamma^{-2} (\gamma-1)^{2-\frac{2}{\gamma}}$, so the root test guarantees the convergence of $\sum_{k=0}^\infty a_k x^k$ on the interval
\begin{equation}\label{E:interval-of-convergence-ak-series}
0 \le x < \frac{\gamma^2}{4(\gamma-1)^{2-\frac{2}{\gamma}}}.
\end{equation}
Also from \eqref{E:ak-asymptotic-formula}, it is immediate that $\sum_{k=0}^\infty a_k x^k$ diverges at the right endpoint of \eqref{E:interval-of-convergence-ak-series}, since the sum $\sum_{k=0}^\infty k$ diverges.

These observations about $\sum a_k x^k$ yield immediate analogues for $\Theta_r(1,iq)$.  We now see that, for any $r \in \cj'_0$, formula \eqref{E:AppendixIntegral4} converges for $q>1$.  Indeed,
\begin{equation*}
\Theta_r(1,iq) = q^{\frac{1+r}{\gamma}-3} F_r(q),
\end{equation*}
where
\begin{equation}\label{E:def-of-appendix-function-Fr}
F_r(q) = \frac{8\pi^2}{\gamma (\gamma-1)^{\frac{1+r}{\gamma}}} \sum_{k=0}^\infty \frac{\Gamma\big(3 + \frac{2k-1-r}{\gamma} \big) \Gamma\big(2k +  \frac{1+r-2k}{\gamma} \big) }{\Gamma\big(k+1\big)^2} \left( \frac{\gamma^2}{4 (\gamma-1)^{2-\frac{2}{\gamma}} q^{\frac{2}{\gamma}}} \right)^{k}.
\end{equation}
It is now clear that $F_r$ is positive, strictly decreasing and real-analytic on $(1,\infty)$.  Furthermore, $\lim_{q\to1^+} F_r(q) = \infty$.  It is also clear that 
\begin{equation}\label{value-of-F_r-at-infty)}
\lim_{q\to\infty} F_r(q) =  \frac{8\pi^2 \Gamma\big(3-\frac{1+r}{\gamma}\big)\Gamma\big(\frac{1+r}{\gamma}\big)}{\gamma (\gamma-1)^{\frac{1+r}{\gamma}}}.
\end{equation}
This concludes the proof of Proposition \ref{P:Theta_r(1,iq)-calculation}.
\end{proof}

Equation \eqref{E:Theta_r_normalized_z1=/=0} in conjunction with Proposition \ref{P:Theta_r(1,iq)-calculation} gives a useful description of $\Theta_r(z)$ when $z_1 \neq 0$.  It still remains to consider the $z_1 = 0$ case, which by \eqref{E:Theta_r_z1=0} amounts to understanding $\Theta_r(0,i\epsilon)$.  Observe that 
\begin{align*}
\ell((0,i\epsilon),(\zeta_1,\zeta_2)) &=  \frac{4}{(\gamma|\zeta_1|^\gamma + \epsilon + is_\zeta)^2} =\frac{4}{((\gamma-1)\alpha_\zeta^\gamma + \epsilon + i s_\zeta)^2},
\end{align*}
and therefore
\begin{align*}
\Theta_r(0,i\epsilon)& = 16\int_{M_\gamma} \frac{1}{|(\gamma-1)\alpha_\zeta^\gamma + \epsilon + i s_\zeta|^4}\,\mu_r(\zeta) \\
&= 32\pi \int_{0}^\infty \alpha_\zeta^r \int_{-\infty}^\infty \frac{1}{|(\gamma-1)\alpha_\zeta^\gamma + \epsilon + i s_\zeta|^4} ds_{\zeta} d\alpha_\zeta.
\end{align*}
The inner  integral is calculated by a special case of \eqref{E:appendix-s-integral-formula}, while the outer integral is a special case of \eqref{E:appendix-alpha-integral-formula}.  We conclude that  
\begin{equation}\label{E:value-of-Theta_r(0,i_epsilon)}
\Theta_r(0,i\epsilon) =  \frac{8\pi^2 \Gamma\big(3-\frac{1+r}{\gamma}\big)\Gamma\big(\frac{1+r}{\gamma}\big)}{\gamma (\gamma-1)^{\frac{1+r}{\gamma}}}\, \epsilon^{\frac{1+r}{\gamma}-3}.
\end{equation}

These two results can be incorporated into a single statement.  Indeed, combining \eqref{E:Theta_r-written-in-alpha-epsilon-variables} with Proposition \ref{P:Theta_r(1,iq)-calculation} shows that if $|z_1| = \alpha \neq 0$, $r \in \cj'_0$,
\begin{align}
\Theta_r(z) = \Theta_r(\alpha, \theta, s, \epsilon) = \alpha^{r+1-3\gamma} \Theta_r(1, i q(\alpha,\epsilon)) &= \alpha^{r+1-3\gamma} q(\alpha,\epsilon)^{\frac{r+1}{\gamma}-3} F_r(q(\alpha,\epsilon)) \notag \\
&= (\alpha^\gamma+\epsilon)^{\frac{r+1}{\gamma}-3} F_r(q(\alpha,\epsilon)). \label{E:Theta_r-formula-alpha-epsilon}
\end{align}
Since $z \in M_\gamma^\epsilon$ for some $\epsilon > 0$, $q(0,\epsilon) = \infty$.  Equation \eqref{value-of-F_r-at-infty)} lets us define $F_r(\infty) := \lim_{q \to \infty} F_r(q)$.  Comparing \eqref{E:value-of-Theta_r(0,i_epsilon)} and \eqref{E:Theta_r-formula-alpha-epsilon} side by side shows that the latter now encompasses the former.

\begin{remark}
If $\gamma = 2$ and $r = 1$ (i.e. the Heisenberg group equipped with Euclidean measure on its parameter space $\R^3$), the expression for $\Theta_r(z)$ in \eqref{E:Theta_r-formula-alpha-epsilon} is independent of $\alpha$.  This is an instance of a phenomenon occurring on the $\cs_\beta$ hypersurfaces defined in \eqref{D:def-of-S_beta}.  Refer to the Appendix in \cite{BarEdh17} for more information. $\lozenge$
\end{remark}

\subsection{Proof of Theorem \ref{T:range-of-r-building-holomorphic-full-Leray}}

Let $\mathscr{B}$ denote the set of all subsets $K \subset \Omega_\gamma$ satisfying the following two properties:
\begin{enumerate}
\item $\alpha_K := \sup\{ \alpha: (\alpha,\theta,s,\epsilon) \in K \} < \infty$.
\item $\epsilon_K := \inf\{\epsilon: (\alpha,\theta,s,\epsilon) \in K \} > 0$.
\end{enumerate}
Given a fixed $K \in \sB$, observe that the infimum of $q(\alpha,\epsilon)$ when restricted to $K$ remains strictly greater than $1$. Indeed, if we define
\begin{equation}\label{E:def-q_K}
q_K := \inf\{q(\alpha,\epsilon) : (\alpha,\theta,s,\epsilon) \in K \},
\end{equation}
it is clear that $q_K \ge 1+ \tfrac{\epsilon_K}{\alpha_K^\gamma} > 1$.  $\sB$ contains, for instance, all compact subsets of $\Omega_\gamma$; it also contains unbounded subsets of $\Omega_\gamma$, as it places no restriction on $s$.

Fix some $r \in \cj_0 = (-1-\gamma, 2\gamma-1)$.  This determines a unique $r'$ value with $\frac{r+r'}{2} = \gamma-1$, and it is clear that $r' \in  \cj'_0 = (-1,3\gamma - 1)$.  Now observe that if $K \in \sB$, then $\sup_{z\in K} \Theta_{r'}(z)$ is bounded from above.  Indeed, for $z \in K$, \eqref{E:Theta_r-formula-alpha-epsilon} shows
\begin{equation}\label{E:upper-bound-on-Theta_{r'}}
 \Theta_{r'}(z) = (\alpha^\gamma+\epsilon)^{\frac{r'+1}{\gamma}-3} F_{r'}(q(\alpha,\epsilon)) \le \epsilon_K^{\frac{r'+1}{\gamma}-3} F_{r'}(q_K).
\end{equation}
Proposition \ref{P:Theta_r(1,iq)-calculation} guarantees that this is finite since $q_K > 1$.

If $f \in L^2(M_\gamma,\mu_r)$, $r\in \cj_0$, we will show that $\bm{L}f \in \co(\Omega_\gamma)$ by proving it is holomorphic on each $K\in \sB$.  First assume $f \in L^2(M_\gamma,\mu_r)$ is compactly supported.  Then $\bm{L}f$ is seen to be holomorphic by differentiating under the integral sign in \eqref{E:Leray-transform-appendix}.  For a general $f \in L^2(M_\gamma,\mu_r)$, choose a sequence of compactly supported $f_j$ tending to $f$ in $L^2(M_\gamma,\mu_r)$.
\begin{align}
\sup_{z \in K} \left|\bm{L}f_j(z) - \bm{L}f(z)\right| &= \frac{\gamma^2}{16 \pi^2 } \sup_{z \in K}  \left| \int_{M_\gamma} \ell(z,\zeta)(f_j(\zeta) - f(\zeta)) \sigma(\zeta) \right| \notag \\
&\le \frac{\gamma^2}{16 \pi^2 } \sup_{z \in K}  \left( \int_{M_\gamma} |\ell(z,\zeta)|^2 \mu_{r'}(\zeta) \right) \left( \int_{M_\gamma} |f_j(\zeta) - f(\zeta)|^2 \mu_r(\zeta) \right) \notag \\
&= C_{r',K} \norm{f_j - f}_{\mu_r}^2, \notag
\end{align}
where the constant only depends on $r'$ and $K$.  This shows that $\bm{L}f_j \to \bm{L}f$ uniformly on $K$, implying $\bm{L}f \in \co(K)$, being a uniform limit of holomorphic functions. Since $K \in \sB$ was arbitrary, we conclude that $\bm{L}f$ is holomorphic on all of $\Omega_\gamma$. \qed

\begin{remark}
Calculations seen throughout this appendix can be easily adapted to the sub-Leray operator $\bm{L}_k$.  Suppose that $r' \in \cj'_k = (- 2k(\gamma-1) -1  , 3\gamma - 1 +2k)$, the interval defined in \eqref{def-of-interval-J'_k}.  If $\frac{r+r'}{2} = \gamma-1$, it holds that $r \in (-\gamma-1-2k, 2\gamma -1 + 2k(\gamma-1)) := \cj_k$.
It can be verified that if $r \in \cj_k$ and $f \in L^2(M_\gamma,\mu_r)$, then $\bm{L}_k f \in \co(\Omega_\gamma)$.  Recall now that the interval of $r$ values for which $\bm{L}_k$ is bounded from $L^2(M_\gamma,\mu_r) \to L^2(M_\gamma,\mu_r)$ was given in \eqref{D:def-interval-Ik} by $\ci_k = (-2k-1 , (2k+2)(\gamma-1)+1)$.  It is immediate that $\ci_k \subset \cj_k$.  $\lozenge$
\end{remark}

\subsection{Two residue integral computations}\label{SS:appendix-integral-formulas}

\begin{proposition}\label{P:AppendixSIntegral}
For $C>0$ and $k$ a non-negative integer,
\begin{equation*}
\int_{-\infty}^\infty \frac{ds}{|C+is|^{2k+4}} = \frac{\pi}{4^{k+1}C^{2k+3}} \cdot \frac{\Gamma(2k+3)}{\Gamma(k+2)^2}.
\end{equation*}
\end{proposition}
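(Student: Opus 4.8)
The plan is to first reduce the statement to a genuinely real integral: since $|C+is|^2=C^2+s^2$, we have $|C+is|^{2k+4}=(C^2+s^2)^{k+2}$, so the claim is equivalent to
\[
\int_{-\infty}^\infty \frac{ds}{(C^2+s^2)^{k+2}}=\frac{\pi}{4^{k+1}C^{2k+3}}\cdot\frac{\Gamma(2k+3)}{\Gamma(k+2)^2}.
\]
I would evaluate this by a contour argument in the spirit of Proposition \ref{P:InverseFourierComputation}. Factor $C^2+z^2=(z-iC)(z+iC)$ and consider the meromorphic function $H(z)=\big((z-iC)(z+iC)\big)^{-(k+2)}$, whose only singularity in the closed upper half-plane is a pole of order $k+2$ at $z=iC$. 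Integrating over the boundary of the upper semicircular region of radius $R$ and noting that the arc contributes $O(R^{-(2k+3)})\to0$ as $R\to\infty$ (because $2k+4>1$), the residue theorem gives $\int_{-\infty}^\infty H(s)\,ds=2\pi i\cdot(\text{residue of }H\text{ at }z=iC)$.

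Next I would compute the residue. Because the pole has order $k+2$,
\[
\text{Res}_{z=iC}H=\frac{1}{(k+1)!}\left.\frac{d^{k+1}}{dz^{k+1}}\left[(z+iC)^{-(k+2)}\right]\right|_{z=iC}.
\]
The $(k+1)$-st derivative of $(z+iC)^{-(k+2)}$ equals $(-1)^{k+1}\tfrac{(2k+2)!}{(k+1)!}(z+iC)^{-(2k+3)}$; evaluating at $z=iC$ gives the factor $(2iC)^{-(2k+3)}$, and collecting the powers of $i$ (using $i^{-(2k+3)}=(-1)^k i$) yields $\text{Res}_{z=iC}H=-\,i\,\tfrac{(2k+2)!}{((k+1)!)^2}(2C)^{-(2k+3)}$. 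Hence $\int_{-\infty}^\infty H(s)\,ds=2\pi\,\tfrac{(2k+2)!}{((k+1)!)^2}(2C)^{-(2k+3)}$, and rewriting $(2C)^{-(2k+3)}=\tfrac{1}{2\cdot 4^{k+1}}C^{-(2k+3)}$ together with $(2k+2)!=\Gamma(2k+3)$ and $(k+1)!=\Gamma(k+2)$ gives exactly the asserted identity.

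The only delicate step is the sign-and-phase bookkeeping in the residue computation: keeping straight the product $(-(k+2))(-(k+3))\cdots(-(2k+2))$ coming from the repeated differentiation and the value of $i^{-(2k+3)}$. As a cross-check — and a fully self-contained alternative — one can instead use the substitution $s=C\tan\theta$, which reduces the integral to $C^{-(2k+3)}\int_{-\pi/2}^{\pi/2}\cos^{2k+2}\theta\,d\theta$, and then invoke the classical Wallis value $\int_{-\pi/2}^{\pi/2}\cos^{2n}\theta\,d\theta=\pi\,\tfrac{(2n)!}{4^n(n!)^2}$ with $n=k+1$; or one can verify the formula by induction on $k$, using the integration-by-parts reduction relating $\int(C^2+s^2)^{-(k+2)}\,ds$ to $\int(C^2+s^2)^{-(k+1)}\,ds$, with base case $k=0$ (where both sides equal $\pi/(2C^3)$).
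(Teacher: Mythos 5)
Your proposal is correct and follows essentially the same route as the paper: reduce to $\int_{-\infty}^\infty (C^2+s^2)^{-(k+2)}\,ds$, close the contour by an upper half-plane semicircle whose arc contribution vanishes, and compute the order-$(k+2)$ residue at $z=iC$, which matches the paper's computation (your sign and phase bookkeeping, including $i^{-(2k+3)}=(-1)^k i$, checks out). The Wallis-integral and induction cross-checks you mention are fine alternatives but not needed.
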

\begin{proof}
Write
\begin{equation*}
\int_{-\infty}^\infty \frac{ds}{|C+is|^{2k+4}} = \int_{-\infty}^\infty \frac{ds}{(s^2 + C^2)^{k+2}}
\end{equation*}

Let $\mathcal{P}_R$ denote a counter-clockwise oriented, closed half-circle contour with radius $R$ lying in the upper-half plane with base lying on the interval $[-R,R]$.  If we set $g(z) = {(z^2+C^2)^{-k-2}}$, the residue theorem says (provided $R>C$),
\begin{equation}\label{E:AppendixResidueThm1}
2\pi i \,\textrm{Res}(g,iC) = \int_{\mathcal{P}_R} g(z)\,dz.
\end{equation}
We calculate
\begin{align*}
\textrm{Res}(g,iC) &= \frac{1}{\Gamma(k+2)} \frac{d^{k+1}}{dz^{k+1}}\left[ \frac{1}{(z+iC)^{k+2}}\right] \Bigg|_{z=iC} \\
&= \frac{1}{i 2^{2k+3}C^{2k+3}} \cdot \frac{\Gamma(2k+3)}{\Gamma(k+2)^2}.
\end{align*}

Now send $R\to\infty$ and note that the circular portion of the contour integral in \eqref{E:AppendixResidueThm1} tends to $0$.  This gives the result.
\end{proof}

\begin{proposition}\label{P:AppendixAlphaIntegral}
Let $q>0$, $\gamma>1$, $r \in \cj'_k = (- 2k(\gamma-1) -1  , 3\gamma - 1 +2k)$ and $k$ a non-negative integer.  Then
\begin{equation*}
\int_0^\infty \frac{\alpha^{2k(\gamma-1)+r}}{((\gamma-1)\alpha^\gamma + q)^{2k+3}} d\alpha =  \frac{\Gamma\left(3 + \frac{2k-1-r}{\gamma} \right) \Gamma\left(2k +  \frac{1+r-2k}{\gamma} \right) }{ \gamma (\gamma-1)^{\frac{1+r}{\gamma}} q^{3-\frac{1+r}{\gamma}} \Gamma\big(2k+3\big)  } \left( q (\gamma-1)^{\gamma-1}  \right)^{-\frac{2k}{\gamma}}.
\end{equation*}
\end{proposition}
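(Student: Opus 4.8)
The plan is to reduce the integral to the classical Beta-function identity
\[
\int_0^\infty \frac{t^{a-1}}{(1+t)^{a+b}}\,dt = B(a,b) = \frac{\Gamma(a)\Gamma(b)}{\Gamma(a+b)}, \qquad a,b>0,
\]
via a single change of variables that clears the affine expression $(\gamma-1)\alpha^\gamma+q$ appearing in the denominator. (If one prefers not to cite this identity, it follows from $B(a,b)=\int_0^1 u^{a-1}(1-u)^{b-1}\,du$ after the substitution $u=t/(1+t)$.)

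First I would substitute $t = \tfrac{(\gamma-1)\alpha^\gamma}{q}$, so that $\alpha = \bigl(\tfrac{qt}{\gamma-1}\bigr)^{1/\gamma}$, $d\alpha = \tfrac1\gamma\bigl(\tfrac{q}{\gamma-1}\bigr)^{1/\gamma} t^{\frac1\gamma-1}\,dt$, and $(\gamma-1)\alpha^\gamma + q = q(1+t)$. Substituting and collecting all powers of $t$ shows the integrand becomes a constant multiple of $t^{a-1}(1+t)^{-(2k+3)}$ with
\[
a = \frac{2k(\gamma-1)+r+1}{\gamma} = 2k + \frac{1+r-2k}{\gamma}, \qquad b := (2k+3) - a = 3 + \frac{2k-1-r}{\gamma}.
\]
The constant in front is $\tfrac1\gamma\bigl(\tfrac{q}{\gamma-1}\bigr)^{1/\gamma}\cdot q^{-(2k+3)}\cdot\bigl(\tfrac{q}{\gamma-1}\bigr)^{\frac{2k(\gamma-1)+r}{\gamma}}$, which collapses to $\tfrac1\gamma\,q^{-b}(\gamma-1)^{-a}$; a short computation (checking $a-(2k+3)=-b$ and $-a = -\tfrac{1+r}{\gamma} - \tfrac{2k(\gamma-1)}{\gamma}$) confirms this equals $\dfrac{q^{-(3-\frac{1+r}{\gamma})}\,\bigl(q(\gamma-1)^{\gamma-1}\bigr)^{-2k/\gamma}}{\gamma\,(\gamma-1)^{(1+r)/\gamma}}$.

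Next, the two hypotheses $r > -2k(\gamma-1)-1$ and $r < 3\gamma-1+2k$ defining $\cj'_k$ are exactly $a>0$ and $b>0$, so the Beta identity applies and gives $\int_0^\infty t^{a-1}(1+t)^{-(2k+3)}\,dt = \Gamma(a)\Gamma(b)/\Gamma(2k+3)$. Multiplying by the prefactor above and substituting the explicit values of $a$ and $b$ into $\Gamma(a)$, $\Gamma(b)$ produces precisely the stated formula. There is no genuine obstacle here beyond the exponent bookkeeping; the only step warranting care is verifying that the $q$-power and $(\gamma-1)$-power in my reduced form agree with the two-factor form written in the statement, which is the elementary identity check noted above.
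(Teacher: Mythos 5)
Your proof is correct, and it takes a genuinely different route from the paper. You substitute $t=\tfrac{(\gamma-1)\alpha^\gamma}{q}$ and reduce the integral directly to the Beta identity $\int_0^\infty t^{a-1}(1+t)^{-(a+b)}\,dt=\Gamma(a)\Gamma(b)/\Gamma(a+b)$ with $a=2k+\tfrac{1+r-2k}{\gamma}$, $b=3+\tfrac{2k-1-r}{\gamma}$, $a+b=2k+3$; your bookkeeping of the prefactor $\tfrac1\gamma q^{-b}(\gamma-1)^{-a}$ checks out against the two-factor form in the statement, and you correctly observe that the hypotheses defining $\cj'_k$ are exactly $a>0$ and $b>0$, so the convergence conditions come out for free. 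The paper instead substitutes $x=\alpha^\gamma$ and evaluates $\int_0^\infty x^{2k-1+\frac{r+1-2k}{\gamma}}(x+E)^{-(2k+3)}\,dx$ by a keyhole contour around the branch cut $[0,\infty)$: it computes the order-$(2k+3)$ residue at $z=-E$, then uses the reflection formula $\Gamma(z)\Gamma(1-z)=\pi\csc(\pi z)$ and Gamma functional equations to massage the resulting cosecant into the stated Gamma quotient. Your argument is shorter and more elementary (it avoids complex analysis, multivalued branches, and the decay estimates on the large and small circles), at the cost of citing the standard Beta-integral evaluation — which, as you note, is itself elementary via $u=t/(1+t)$ — whereas the paper's residue computation is self-contained within the complex-analytic toolkit it already uses elsewhere (e.g.\ Proposition \ref{P:AppendixSIntegral}).
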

\begin{proof}

Making the change of variable $x = \alpha^\gamma$, see that
\begin{align}\label{E:AppendixAlphaIntComp1}
\int_0^\infty  \frac{\alpha^{2k(\gamma-1)+r}}{((\gamma-1)\alpha^\gamma + q)^{2k+3}}\,d\alpha &= \frac{1}{\gamma(\gamma-1)^{2k+3}} \int_0^\infty \frac{x^{2k-1 + \frac{r+1-2k}{\gamma}}}{(x+E)^{2k+3}}\,dx,
\end{align}
with $E = \frac{q}{\gamma-1}$.  Now define the function
\begin{equation}
h(z) = \frac{z^{2k-1 + \frac{r+1-2k}{\gamma}}}{(z+E)^{2k+3}},
\end{equation}
which is multi-valued whenever the numerator exponent is a non-integer.  Restrict attention to a single-valued meromorphic branch of $h$ defined on $\C\backslash [0,\infty)$.  Note that $h$ has a pole of order $2k+3$ at $z = -E$.

For $0< \delta <R$, define the closed, positively oriented contour $\mathcal{P}_{\delta,R}$ by traversing the following paths in sequence:
\begin{itemize}
\item[$I$.] A line segment on the positive $x$-axis moving {\em right} from $\delta$ to $R$.
\item[$II$.] A circle of radius $R$ centered at the origin, starting on the positive $x$-axis and traveling {\em counterclockwise}.
\item[$III$.] A line segment on the positive $x$-axis moving {\em left} from $R$ to $\delta$.
\item[$IV$.] A circle of radius $\delta$ centered at the origin, starting on the positive $x$-axis and traveling {\em clockwise}.
\end{itemize}
As long as $\delta < E < R$, the residue theorem says
\begin{align}
2\pi i \, \textrm{Res}(h,-E) &= \int_{\mathcal{P}_{\delta,R}} h(z)\,dz \notag\\
&= \int_{I} h(z)\,dz + \int_{II} h(z)\,dz + \int_{III} h(z)\,dz + \int_{IV} h(z)\,dz. \label{E:AppendixAlphaIntComp2}
\end{align}
Keeping in mind the multi-valued nature of $h$, combine the $I$ and $III$ integrals:
\begin{align}
\int_{I} h(z)\,dz +  \int_{III} h(z)\,dz &= \int_\delta^R h(x)\,dx + \int^\delta_R h(x e^{2\pi i})\,dx \notag \\
&=  \int_\delta^R \frac{x^{2k-1 + \frac{r+1-2k}{\gamma}}}{(x+E)^{2k+3}} \,dx +  \int^\delta_R \frac{ x^{2k-1 + \frac{r+1-2k}{\gamma}} e^{2\pi i(2k-1 + \frac{r+1-2k}{\gamma})} }{(x e^{2\pi i}+E)^{2k+3}}\,dx \notag \\
&= \left(1 - e^{\frac{2\pi i (r+1-2k)}{\gamma}}\right) \int_\delta^R \frac{ x^{2k-1 + \frac{r+1-2k}{\gamma}} }{(x+E)^{2k+3}}\, dx. \label{E:AppendixAlphaInt_I+III}
\end{align}

Standard estimates show that since $r \in \cj'_k$,
\begin{equation}\label{E:AppendixAlphaIntLimit1}
\lim_{R \to \infty} \int_{II} h(z)\,dz = 0 = \lim_{\delta \to 0} \int_{IV} h(z)\,dz.
\end{equation}

Combining \eqref{E:AppendixAlphaIntLimit1} with \eqref{E:AppendixAlphaIntComp2} and \eqref{E:AppendixAlphaInt_I+III} shows
\begin{equation}\label{E:AppendixAlphaIntComp3}
2\pi i \, \textrm{Res}(h,-E) =  \left(1 - e^{\frac{2\pi i (r+1-2k)}{\gamma}}\right) \int_0^\infty \frac{ x^{2k-1 + \frac{r+1-2k}{\gamma}} }{(x+E)^{2k+3}}\, dx.
\end{equation}
Now calculate this residue.
\begin{align}
 \textrm{Res}(h,-E) &= \frac{1}{\Gamma(2k + 3)} \frac{d^{2k+2}}{dz^{2k+2}}\left( z^{2k-1 + \frac{r+1-2k}{\gamma}} \right) \Bigg|_{z=-E} \notag \\
 &= \frac{-\Gamma\big({2k+\frac{r+1-2k}{\gamma}\big)}}{\Gamma\big(2k + 3\big) \Gamma\big(\frac{r+1-2k}{\gamma} - 2 \big)} E^{\frac{r+1-2k}{\gamma}-3} e^{\frac{\pi i(r+1-2k)}{\gamma}}. \label{E:AppendizAlphaIntResidue}
\end{align}

Combining \eqref{E:AppendixAlphaIntComp3} and \eqref{E:AppendizAlphaIntResidue} shows that
\begin{align}
\int_0^\infty \frac{x^{2k(1-\frac{1}{\gamma})}}{(x+E)^{2k+3}}\, dx &= \frac{-2\pi i}{ 1 - e^{\frac{2\pi i (r+1-2k)}{\gamma}} } \cdot \frac{\Gamma\big({2k+\frac{r+1-2k}{\gamma}\big)}}{\Gamma\big(2k + 3\big) \Gamma\big(\frac{r+1-2k}{\gamma} - 2 \big)} E^{\frac{r+1-2k}{\gamma}-3} e^{\frac{\pi i(r+1-2k)}{\gamma}} \notag \\
&= \pi \csc\left(\pi\big(\tfrac{r+1-2k}{\gamma}\big)\right) \frac{\Gamma\big({2k+\frac{r+1-2k}{\gamma}\big)}}{\Gamma\big(2k + 3\big) \Gamma\big(\frac{r+1-2k}{\gamma} - 2 \big)} E^{\frac{r+1-2k}{\gamma}-3}  \notag \\
&=  \frac{\Gamma\big(\frac{r+1-2k}{\gamma}\big) \Gamma\big(1-\frac{r+1-2k}{\gamma}\big) \Gamma\big(2k+\frac{r+1-2k}{\gamma}\big)}{\Gamma\big(2k + 3\big) \Gamma\big(\frac{r+1-2k}{\gamma} - 2 \big)} E^{\frac{r+1-2k}{\gamma}-3} \notag \\
&= \big(\tfrac{r+1-2k}{\gamma} - 1\big) \big(\tfrac{r+1-2k}{\gamma}- 2 \big) \frac{\Gamma\big(1-\frac{r+1-2k}{\gamma}\big) \Gamma\big(2k+\frac{r+1-2k}{\gamma}\big)}{\Gamma\big(2k + 3\big)} E^{\frac{r+1-2k}{\gamma}-3} \notag \\
&=  \frac{\Gamma\big(3+\frac{2k-r-1}{\gamma}\big) \Gamma\big(2k+\frac{r+1-2k}{\gamma}\big)}{\Gamma\big(2k + 3\big)} E^{\frac{r+1-2k}{\gamma}-3}. \label{E:AppendixAlphaIntComp4}
\end{align}
Combining \eqref{E:AppendixAlphaIntComp1} and \eqref{E:AppendixAlphaIntComp4} gives the result.
\end{proof}

%%%%%%%%%%%%%%%%%

\bibliographystyle{acm}
\bibliography{BarEdh21}

\begin{thebibliography}{10}

\bibitem{ScandBook04}
{\sc Andersson, M., Passare, M., and Sigurdsson, R.}
\newblock {\em Complex convexity and analytic functionals}, vol.~225 of {\em
  Progress in Mathematics}.
\newblock Birkh\"auser Verlag, Basel, 2004.

\bibitem{AndAskRoyBOOK99}
{\sc Andrews, G.~E., Askey, R., and Roy, R.}
\newblock {\em Special functions}, vol.~71 of {\em Encyclopedia of Mathematics
  and its Applications}.
\newblock Cambridge University Press, Cambridge, 1999.

\bibitem{AxlBouRam}
{\sc Axler, S., Bourdon, P., and Ramey, W.}
\newblock {\em Harmonic function theory}, second~ed., vol.~137 of {\em Graduate
  Texts in Mathematics}.
\newblock Springer-Verlag, New York, 2001.

\bibitem{Bar16}
{\sc Barrett, D.~E.}
\newblock Holomorphic projection and duality for domains in complex projective
  space.
\newblock {\em Trans. Amer. Math. Soc. 368}, 2 (2016), 827--850.

\bibitem{BarBol07}
{\sc Barrett, D.~E., and Bolt, M.}
\newblock Cauchy integrals and {M}\"obius geometry of curves.
\newblock {\em Asian J. Math. 11}, 1 (2007), 47--53.

\bibitem{BarEdh17}
{\sc Barrett, D.~E., and Edholm, L.~D.}
\newblock The {L}eray transform: {F}actorization, dual {CR} structures, and
  model hypersurfaces in {CP}2.
\newblock {\em Adv. Math. 364\/} (2020), 107012.

\bibitem{BarGru18}
{\sc Barrett, D.~E., and Grundmeier, D.~E.}
\newblock Sums of {CR} functions from competing {CR} structures.
\newblock {\em Pacific J. Math. 293}, 2 (2018), 257--275.

\bibitem{BarGru20}
{\sc Barrett, D.~E., and Grundmeier, D.~E.}
\newblock Projective-umbilic points of circular real hypersurfaces in
  {$\Bbb{C}^2$}.
\newblock {\em Proc. Amer. Math. Soc. 148}, 12 (2020), 5241--5248.

\bibitem{BarLan09}
{\sc Barrett, D.~E., and Lanzani, L.}
\newblock The spectrum of the {L}eray transform for convex {R}einhardt domains
  in {$\mathbb{C}^2$}.
\newblock {\em J. Funct. Anal. 257}, 9 (2009), 2780--2819.

\bibitem{Bell16}
{\sc Bell, S.~R.}
\newblock {\em The {C}auchy transform, potential theory and conformal mapping},
  second~ed.
\newblock Chapman \& Hall/CRC, Boca Raton, FL, 2016.

\bibitem{Bol05}
{\sc Bolt, M.}
\newblock A geometric characterization: complex ellipsoids and the
  {B}ochner-{M}artinelli kernel.
\newblock {\em Illinois J. Math. 49}, 3 (2005), 811--826.

\bibitem{Bol07}
{\sc Bolt, M.}
\newblock A lower estimate for the norm of the {K}erzman-{S}tein operator.
\newblock {\em J. Integral Equations Appl. 19}, 4 (2007), 453--463.

\bibitem{Bol06}
{\sc Bolt, M.}
\newblock Spectrum of the {K}erzman-{S}tein operator for the ellipse.
\newblock {\em Integral Equations Operator Theory 57}, 2 (2007), 167--184.

\bibitem{Bol08}
{\sc Bolt, M.}
\newblock The {M}\"obius geometry of hypersurfaces.
\newblock {\em Michigan Math. J. 56}, 3 (2008), 603--622.

\bibitem{Bol10}
{\sc Bolt, M.}
\newblock The {M}\"obius geometry of hypersurfaces, {II}.
\newblock {\em Michigan Math. J. 59}, 3 (2010), 695--715.

\bibitem{BolRai15}
{\sc Bolt, M., and Raich, A.}
\newblock The {K}erzman-{S}tein operator for piecewise continuously
  differentiable regions.
\newblock {\em Complex Var. Elliptic Equ. 60}, 4 (2015), 478--492.

\bibitem{CowMacBook95}
{\sc Cowen, C.~C., and MacCluer, B.~D.}
\newblock {\em Composition operators on spaces of analytic functions}.
\newblock Studies in Advanced Mathematics. CRC Press, Boca Raton, FL, 1995.

\bibitem{Diaz87}
{\sc Diaz, K.~P.}
\newblock The {S}zego kernel as a singular integral kernel on a family of
  weakly pseudoconvex domains.
\newblock {\em Trans. Amer. Math. Soc. 304}, 1 (1987), 141--170.

\bibitem{EdhShe22}
{\sc Edholm, L.~D., and Shelah, Y.}
\newblock The {L}eray transform on rigid {H}artogs hypersurfaces.
\newblock {\em (In preparation)\/} (2022).

\bibitem{Fef79}
{\sc Fefferman, C.}
\newblock Parabolic invariant theory in complex analysis.
\newblock {\em Adv. in Math. 31}, 2 (1979), 131--262.

\bibitem{GKPBook}
{\sc Graham, R.~L., Knuth, D.~E., and Patashnik, O.}
\newblock {\em Concrete mathematics}, second~ed.
\newblock Addison-Wesley Publishing Company, Reading, MA, 1994.
\newblock A foundation for computer science.

\bibitem{GreSte78}
{\sc Greiner, P.~C., and Stein, E.~M.}
\newblock On the solvability of some differential operators of type {$
  \square_{b}$}.
\newblock In {\em Several complex variables ({C}ortona, 1976/1977)}. Scuola
  Norm. Sup. Pisa, Pisa, 1978, pp.~106--165.

\bibitem{Hirachi90}
{\sc Hirachi, K.}
\newblock Transformation law for the {S}zego projectors on {CR} manifolds.
\newblock {\em Osaka J. Math. 27}, 2 (1990), 301--308.

\bibitem{KerSte78a}
{\sc Kerzman, N., and Stein, E.~M.}
\newblock The {C}auchy kernel, the {S}zeg{\H o} kernel, and the {R}iemann
  mapping function.
\newblock {\em Math. Ann. 236}, 1 (1978), 85--93.

\bibitem{KerSte78b}
{\sc Kerzman, N., and Stein, E.~M.}
\newblock The {S}zeg{\H o} kernel in terms of {C}auchy-{F}antappi\`e kernels.
\newblock {\em Duke Math. J. 45}, 2 (1978), 197--224.

\bibitem{LanSte13}
{\sc Lanzani, L., and Stein, E.~M.}
\newblock Cauchy-type integrals in several complex variables.
\newblock {\em Bull. Math. Sci. 3}, 2 (2013), 241--285.

\bibitem{LanSte14}
{\sc Lanzani, L., and Stein, E.~M.}
\newblock The {C}auchy integral in {$\mathbb{C}^n$} for domains with minimal
  smoothness.
\newblock {\em Adv. Math. 264\/} (2014), 776--830.

\bibitem{LanSte17a}
{\sc Lanzani, L., and Stein, E.~M.}
\newblock The {C}auchy-{S}zeg{\H o} projection for domains in $\mathbb{C}^n$
  with minimal smoothness.
\newblock {\em Duke Math. J. 166}, 1 (2017), 125--176.

\bibitem{LanSte17c}
{\sc Lanzani, L., and Stein, E.~M.}
\newblock The role of an integration identity in the analysis of the
  {C}auchy-{L}eray transform.
\newblock {\em Sci. China Math. 60}, 11 (2017), 1923--1936.

\bibitem{LanSte17b}
{\sc Lanzani, L., and Stein, E.~M.}
\newblock The {C}auchy-{L}eray integral: counterexamples to the {$L^p$}-theory.
\newblock {\em Indiana Univ. Math. J. 68}, 5 (2019), 1609--1621.

\bibitem{Lefevre09}
{\sc Lef\`evre, P.}
\newblock Generalized essential norm of weighted composition operators on some
  uniform algebras of analytic functions.
\newblock {\em Integral Equations Operator Theory 63}, 4 (2009), 557--569.

\bibitem{Meyer82}
{\sc Meyer, Y.}
\newblock {\em Solution Des Conjectures De Calderon}.
\newblock Universidad Autonoma de Madrid, Madrid, Spain, 1982.

\bibitem{Range86}
{\sc Range, R.~M.}
\newblock {\em Holomorphic functions and integral representations in several
  complex variables}, vol.~108 of {\em Graduate Texts in Mathematics}.
\newblock Springer-Verlag, New York, 1986.

\bibitem{ShelahThesis}
{\sc Shelah, Y.}
\newblock A spectral exploration of the {Leray} transform in two different
  settings in {$\mathbb{C}^2$}.
\newblock {\em Ph.D. Thesis\/} (2021).

\bibitem{SimonBookOpTheory}
{\sc Simon, B.}
\newblock {\em Operator theory}.
\newblock A Comprehensive Course in Analysis, Part 4. American Mathematical
  Society, Providence, RI, 2015.

\end{thebibliography}
 
\end{document}